\documentclass[12pt]{article}
\title{Definable groups and fields in t-minimal theories}
\author{Will Johnson}

\usepackage{amsmath, amssymb, amsthm}    	% need for subequations
\usepackage{fullpage} 	% without this, will have wide math-article-paper margins
\usepackage{amscd}
\usepackage{hyperref}
\usepackage[all]{xy}
\usepackage{centernot}
\usepackage{enumitem}

\DeclareMathOperator*{\forkindep}{\raise0.2ex\hbox{\ooalign{\hidewidth$\vert$\hidewidth\cr\raise-0.9ex\hbox{$\smile$}}}}

\newcommand{\rra}{\rightrightarrows}

\newcommand{\alg}{\mathrm{alg}}
\newcommand{\sep}{\mathrm{sep}}

\newcommand{\ACF}{\operatorname{ACF}}

\newcommand{\mult}{\operatorname{mult}}

\newcommand{\res}{\operatorname{res}}

\newcommand{\Aut}{\operatorname{Aut}}

\newcommand{\id}{\operatorname{id}}

\newcommand{\acl}{\operatorname{acl}}
\newcommand{\im}{\operatorname{im}}
\newcommand{\dcl}{\operatorname{dcl}}
\newcommand{\tp}{\operatorname{tp}}

\newcommand{\val}{\operatorname{val}}

\newcommand{\trdeg}{\operatorname{tr.deg}}
\newcommand{\bd}{\operatorname{bd}}

\newcommand{\cl}{\operatorname{cl}}

\newcommand{\ter}{\operatorname{int}}

\newtheorem{theorem}{Theorem}[section] % numbered like the section
\newtheorem{nontheorem}[theorem]{Non-Theorem}
\newtheorem{lemma}[theorem]{Lemma}

\newtheorem{corollary}[theorem]{Corollary}
\newtheorem{fact}[theorem]{Fact}

\newtheorem{assumption}[theorem]{Assumption}

\newtheorem{question}[theorem]{Question}
\newtheorem{proposition}[theorem]{Proposition}
\newtheorem*{theorem-star}{Theorem}
\newtheorem*{lemma-star}{Theorem}
\newtheorem*{conjecture-star}{Conjecture}

\theoremstyle{definition}
\newtheorem{definition}[theorem]{Definition}
\newtheorem{example}[theorem]{Example}

\newtheorem{remark}[theorem]{Remark}

\newtheorem{claim}[theorem]{Claim}

\newtheorem*{acknowledgment}{Acknowledgments}

\theoremstyle{remark}
\newtheorem{nonclaim}[theorem]{Non-Claim}

\newcommand{\Qq}{\mathbb{Q}}

\newcommand{\Nn}{\mathbb{N}}

\newcommand{\Mm}{\mathbb{M}}

\newcommand{\Pp}{\mathbb{P}}
\newcommand{\Ll}{\mathcal{L}}

\newcommand{\ba}{{\bar{a}}}
\newcommand{\bb}{{\bar{b}}}

\newcommand{\bx}{{\bar{x}}}
\newcommand{\by}{{\bar{y}}}

\newenvironment{claimproof}[1][\proofname]
               {
                 \proof[#1]
                 
               }
               {
                 \endproof
               }

\begin{document}
\maketitle

\begin{abstract}
  Let $T$ be a theory which is t-minimal, meaning that with respect to
  some definable topology, a unary definable set $D \subseteq M$ has
  non-empty interior iff it is infinite.  If $K$ is a definable field
  in $T$, then $K$ is finite or ``large'' in the sense of Pop: any
  smooth algebraic curve $C$ over $K$ with at least one $K$-rational
  point has infinitely many $K$-rational points.  We also assign a
  canonical topology to any abelian definable group $G$ in a t-minimal
  theory.  In the case where the t-minimal theory is ``visceral'' in
  the sense of Dolich and Goodrick, meaning that the definable
  topology is induced by a definable uniformity, we can drop the
  assumption of abelianity of $G$, and the resulting topology on $G$
  is a definable manifold in the style of Acosta L\'opez and Hasson.
\end{abstract}

\section{Introduction}

By a ``definable family'' we mean a family $\{D_a\}_{a \in X}$ where
$D_a = \{b : (a,b) \in D\}$ for some definable sets $D, X$.  A
``definable topology'' on a definable set $D$ is a topology $\tau$
such that some definable family is a basis of opens for $\tau$.  A
1-sorted theory $T$ is \emph{topologically minimal} or
\emph{t-minimal} in the sense of
Mathews~\cite[Definition~2.4]{mathews} if there's a definable
Hausdorff topology on models of $T$ (on the home sort $M^1$), such
that for any unary definable set $D \subseteq M$, we have
\begin{equation*}
  \ter(D) \ne \varnothing \iff |D| = \infty,
\end{equation*}
where $\ter(D)$ denotes the interior of $D$.  An equivalent condition
is that
\begin{itemize}
\item There are no isolated points in $M$, and
\item Every unary definable set $D \subseteq M$ has finite boundary.
\end{itemize}
Many important classes of theories are t-minimal, such as (dense)
o-minimal theories, P-minimal theories, (dense) C-minimal theories,
and hensel minimal theories.  Many of these examples satisfy a
stronger condition---they are \emph{visceral} in the sense of Dolich
and Goodrick~\cite[Definition~3.3]{visceral}.  The operational
difference between t-minimal and visceral is that in a visceral
theory, the definable topology is induced by a definable
\emph{uniformity} (i.e., a ``uniform structure'' in the sense of
pointset topology).  See~\cite{visceral} for a precise formulation.
This change makes a world of difference: visceral theories have
theorems of generic continuity and cell decomposition
\cite{simonWalsberg,visceral,wj-visc-1}.  Oddly, these properties
can fail in a t-minimal theory \cite[Remark~1.16]{wj-visc-1}.
Nevertheless, t-minimality yields a peculiar sort of dimension theory
on definable sets \cite[\S2]{wj-visc-1}, whose principal defect is
that a definable surjection $f : X \to Y$ can have $\dim(X) <
\dim(Y)$.  (On the other hand, dimension behaves as expected for
definable injections, cartesian products, and unions, and it's even
definable and subadditive in some sense.)

The point of this paper is that from even the relatively weak
condition of t-minimality, one can say something non-trivial about
definable (not interpretable!) fields.
\subsection{Definable fields}
Recall that a field
$(K,+,\cdot)$ is \emph{large} if it satisfies one of the following
equivalent conditions~\cite{Pop-little}:
\begin{itemize}
\item In explicit terms, if $P(x,y)$ is a polynomial over $K$ and
  $P(a,b) = 0 \ne \frac{\partial P}{\partial y}(a,b)$ for some $(a,b)
  \in K^2$, then the zeroset $\{(x,y) \in K^2 : P(x,y)=0\}$ is
  infinite.
\item In algebro-geometric terms, if $C$ is a smooth curve over $K$
  and the set of $K$-rational points $C(K)$ is non-empty, then $C(K)$
  is infinite.
\end{itemize}
For example, $\Qq$ fails to be large because the equation $x^4 + y^4 -
1 = 0$ has only four solutions by the $n=4$ case of Fermat's last
theorem.  On the other hand, most fields with a tame model theory are
large (or finite).  For example, until the recent examples of
``curve-excluding fields'' were discovered \cite{CXF}, one could say
that all known examples of fields with decidable first-order theories
were large or finite.

We can now state our main theorem.
\begin{theorem}[{=Corollary~\ref{target}}] \label{main-1}
  If $K$ is a definable field in a t-minimal theory, then $K$ is large
  or finite.
\end{theorem}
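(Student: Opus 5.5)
The plan is to turn largeness into a statement about the dimension theory and the topology that t-minimality provides on $K$. We may assume $K$ is infinite. By the results on canonical topologies for abelian definable groups (announced in the abstract), the additive group $(K,+)$ carries a canonical definable Hausdorff non-discrete group topology. First we check that this topology is a field topology. The point is that canonicity makes it invariant under definable group automorphisms, in particular under $x \mapsto cx$ for $c \in K^\times$. From this, continuity of multiplication, and of inversion at $1$, should follow by a dimension count. We also record the two properties we need:
\begin{itemize}
\item a definable $X \subseteq K^n$ of full dimension $n\cdot\dim(K)$ has non-empty interior;
\item dimension does not drop under definable injections.
\end{itemize}

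Now let $P(x,y) \in K[x,y]$ with $P(a,b)=0\neq \partial_y P(a,b)$, and suppose for contradiction that the zero set $Z=\{P=0\}$ is finite. Consider the polynomial map $F(x,y)=(x,P(x,y))$. It is étale at $(a,b)$, and since $\partial_yP$ does not vanish there, the fibres $F^{-1}(x,c)$ have size at most $\deg_y P$. The key intermediate step is a ``local open mapping'' statement: $F$ maps every neighbourhood $U$ of $(a,b)$ onto a set containing a neighbourhood of $(a,0)$. Granting this, every $x$ near $a$ admits a $y$ with $P(x,y)=0$. Since $K$ has no isolated points, $Z$ is then infinite, which is the contradiction.

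To prove the local open mapping step, I would first pass to a small neighbourhood $U$ on which $F$ is injective. Bounded finite fibres let us split $U$ into finitely many definable pieces on each of which $F$ is injective, using the uniform bound $\deg_y P$ and choosing the piece containing a generic point. Then $F(U)$ has full dimension, hence has interior. Next we must move that interior to $(a,0)$. For this I would use the group structure: the translates $P(x,y)-c$ for $c$ in a neighbourhood form a definable family of curves, each with a simple point. Genericity arguments show that for $c$ generic (in the dimension-theoretic sense) the fibre $\{P=c\}$ is infinite. The ``finite'' locus $\{c : |\{P=c\}|<\infty\}$ is definable. If it contained a neighbourhood of $0$, then the total space $\bigcup_c\{P=c\}\cap U$ would have dimension at most $\dim K$, whereas it equals $U$, which has dimension $2\dim K$. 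So the infinite locus is dense near $0$. We then upgrade ``dense'' to ``contains $0$'' via the affine change of variables $(x,y)\mapsto(x,y+t)$ and the homogeneity of the field topology.

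The main obstacle is exactly this last step: going from ``generic fibres are infinite'' to ``the specific fibre $P=0$ is infinite''. The difficulty is that t-minimality lacks generic continuity and cell decomposition, and dimension can increase under surjections. I would first try to avoid any continuity of $F^{-1}$. The idea is to work purely with counting dimensions of definable sets. We show that the set $B$ of ``bad'' polynomials $Q$ (with a simple zero but finitely many zeros) in the finite-dimensional coefficient space is definable and invariant under the affine substitutions $x\mapsto \alpha x+\beta$, $y\mapsto \gamma y+\delta$ and under adding constants. Then we derive a contradiction from $P\in B$ together with the dimension estimate on the total space above.
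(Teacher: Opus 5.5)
\textbf{There is a genuine gap.} It sits exactly at the step you flag, and the proposed fix does not close it.

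Your dimension count correctly shows that $\{c : \{P=c\}\text{ is finite}\}$ has empty interior near $0$. But nothing in the plan transfers this from generic levels $c$ to the specific level $c=0$. The substitutions $x\mapsto\alpha x+\beta$, $y\mapsto\gamma y+\delta$, and translations of the plane, carry each level curve $\{P=c\}$ bijectively onto the corresponding level curve of the new polynomial. So they never change \emph{which} levels are infinite. Your bad set $B$ is also not invariant under adding constants: your own argument shows $P-c\notin B$ for many $c$ near $0$, while $P\in B$ by hypothesis.

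A single point $P\in B$ contradicts no dimension estimate unless you know one of two things. Either $B$ is open, or $P$ can be moved by a continuous, $B$-preserving symmetry off some small-dimensional exceptional set. Likewise, knowing that $F(U)$ has non-empty interior says nothing about $(a,0)$.

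Your ``local open mapping'' step, for an arbitrary $P$ with merely a simple zero and with $y$ confined to $U$, is a case of the polynomial implicit function theorem. That theorem characterizes gold t-henselianity, which the paper explicitly does not manage to prove. Its methods only handle separable polynomials, so you must reduce to that situation first.

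The paper supplies both missing pieces.
\begin{enumerate}
\item \emph{Reduction to the separable case.} By Lemma~\ref{arc} and an elementary-extension argument, after a substitution $y\mapsto y+ax^2+bx$ one may assume $P(x,0)$ is separable in $x$ with root $0$. Largeness then follows from bronze t-henselianity (Theorem~\ref{bronze-large}): the set $S$ of separable monic degree-$d$ polynomials with a $K$-root is open.
\item \emph{The symmetry proving openness} (Theorem~\ref{A-thm}). It is not affine substitution but the operation $P\star Q$, ``apply $P$ to the roots of $Q$''. Fix a separable $Q_0\in S$.
\begin{itemize}
\item The map $P\mapsto P\star Q_0$ on $K[X]^{<d}$ is finite-to-one. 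Hence the preimage of the small set $\bd(S)\cup\{\text{non-separable}\}$ is small.
\item So for generic $P_0$, the polynomial $P_0\star Q_0$ is separable and lies off $\bd(S)$. Like $Q_0$, it has a $K$-root, so it lies in $\ter(S)$.
\item The map $Q\mapsto P_0\star Q$ is polynomial, hence continuous. When $Q$ and $P_0\star Q$ are both separable, it preserves having a $K$-rational root in both directions. The roots correspond bijectively, and a lifted root lies in $\dcl(K)\cap K^{\sep}=K$.
\item Pulling back $\ter(S)$ and intersecting with the open separable locus gives a neighbourhood of $Q_0$ inside $S$.
\end{itemize}
This is the generic-to-specific transfer your plan lacks.
\end{enumerate}

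Two smaller points.
\begin{itemize}
\item The abelian-group results do not give Hausdorffness. The paper proves it only for fields, using the scalar action.
\item Continuity of multiplication, and ``full dimension implies interior in $K^n$'', need Theorems~\ref{gencon} and \ref{product-top}, not just a dimension count.
\end{itemize}
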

For example, the field $\Qq$ is not definable in any t-minimal theory,
nor are the non-large curve-excluding fields of \cite{CXF}.
\begin{remark}
  It is essential that we say ``definable'' rather than
  ``interpretable'' in the theorem.  For example, the field $\Qq((t))$
  of Laurent series over $\Qq$ is known to be t-minimal, but it
  interprets the non-large field $\Qq$.
\end{remark}
\begin{remark}
  The non-definability of $\Qq$ in t-minimal theories can be seen more
  directly from the classic work of J. Robinson showing that all
  computable functions are definable in $\Qq$.  In particular, there
  is a definable bijection $f : \Qq^2 \to \Qq$.  If $K$ is definable
  in a t-minimal theory and $K \equiv \Qq$, then $K$ would define a
  bijection $K^2 \to K$.  However, this contradicts the properties of
  dimension in t-minimal theories: the bijection would show $\dim(K) =
  \dim(K^2) = 2 \dim(K)$, so $\dim(K)=0$ and $K$ is finite.  (A
  similar argument shows that definable fields must be
  perfect~\cite[Corollary~2.53]{wj-visc-1}.)

  On the other hand, such arguments cannot be used to show that
  t-minimal theories fail to define the curve-excluding fields of
  \cite{CXF}, as CXF is a geometric theory with an excellent notion of
  dimension for definable sets \cite[Theorem~1.11(2)]{CXF}.  So we
  \emph{are} proving something new.
\end{remark}
The proof of Theorem~\ref{main-1} involves building a canonical
topology on $K$:
\begin{theorem}[{$\subseteq$ Theorem~\ref{fieldtop} $\cup$ Corollary~\ref{target}}] \label{main-2}
  Let $K$ be an infinite definable field in a t-minimal theory.  There
  is a unique definable field topology $\tau$ on $K$ such that the
  following condition holds:
  \begin{itemize}
  \item $\ter(D) \ne \varnothing \iff \dim(D) = \dim(K)$ for definable
    $D \subseteq K$.
  \end{itemize}
  Moreover, $\tau$ satisfies the following conditions:
  \begin{itemize}
  \item $\tau$ is Hausdorff and non-discrete.
  \item Let $P(x) \in K[x]$ be a separable polynomial of degree $d$.
    If $a \in K$ is a simple root of $P$ and $U \ni a$ is a
    $\tau$-neighborhood, then the set
    \begin{equation*}
      \{Q(x) \in K[x] : Q \text{ has a root in } U\}
    \end{equation*}
    is a neighborhood of $P$.
  \end{itemize}
\end{theorem}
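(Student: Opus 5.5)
The strategy is to build $\tau$ from ``large'' definable sets, in the spirit of the Hrushovski--Pillay and Peterzil--Starchenko constructions of group topologies from generics. I will use the t-minimal dimension theory of \cite[\S2]{wj-visc-1}: dimension of definable sets, its good behaviour under definable injections, products and finite unions, and its definability. Put $n = \dim(K) \ge 1$ and call a definable $D \subseteq K$ \emph{large} if $\dim(D) = n$. Since $K$ is definable, it is a definable subset of some $M^m$ and carries the subspace topology from $M^m$; large sets are exactly those that have nonempty interior in a suitable local sense there.

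First I construct an additive group topology. I will show that the family $\{(U - U) : U \text{ open in } K \subseteq M^m,\ \dim U = n\}$, translated around, forms a neighbourhood basis of $0$ for a definable group topology on $(K,+)$. The key inputs are two facts:
\begin{itemize}
\item for large $D$, the set $D - D$ is a neighbourhood of $0$ in the new topology;
\item the intersection of two large sets containing a common ``generic'' point is again large near that point.
\end{itemize}
These follow from t-minimal dimension being subadditive and definable, together with the fact that translation is a definable bijection and so preserves dimension. Definability of $\tau$ follows because dimension is definable in families. Non-discreteness is immediate from $n \ge 1$, since neighbourhoods are large and hence infinite. For Hausdorffness, I intersect the neighbourhoods $D - D$ over a definable family of small $D$ and use the fact that the ambient topology is Hausdorff.

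Next I show that multiplication is continuous, using that $x \mapsto ax$ and $x \mapsto x^{-1}$ are definable bijections and so preserve large sets. The equivalence $\ter(D) \neq \varnothing \iff \dim D = n$ should then follow from the construction. For uniqueness, any two topologies with this property have the same definable neighbourhoods of $0$ up to the $D-D$ trick, so they coincide.

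The main obstacle is the condition on roots of separable polynomials, which amounts to a Hensel-type, or ``implicit function'', property. My plan is to go through the map
\[
\Phi\colon K^d \to K^d,
\]
which sends the tuple of roots to the tuple of coefficients of $\prod(x - a_i)$. This map is finite-to-one, so by the injection and finite-union properties of dimension, $\Phi$ maps large subsets of $K^d$ to large subsets. Hence the image of a box around the roots has nonempty interior. A genericity argument then moves that interior point to $P$ itself, using translations and dilations $x \mapsto \lambda x + \mu$, which act on both sides compatibly. The subtle case is when $P$ has roots outside $K$. There I will pass to the splitting extension $L$, which is definable as $K^e$, and give $L$ the analogous topology. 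Since $\dim(L) = e\,n$ and the Galois action is definable, I expect to be able to reduce to the split case over $L$, but checking that the topologies on $K$ and $L$ are compatible is where I anticipate the real work. Largeness, i.e.\ Theorem~\ref{main-1}, is then a corollary via the implicit function theorem for the curve $P(x,y) = 0$.
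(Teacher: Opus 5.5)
\textbf{The root condition.} This is where your plan breaks. Knowing that $\Phi$ maps a box around the roots onto a set with non-empty interior, or even that the root correspondence is continuous off a set of dimension $<d\dim(K)$, tells you nothing at a \emph{particular} $P$. You would need to move $P$ into the good locus by maps compatible with roots. The affine substitutions $x\mapsto\lambda x+\mu$ are too few for this: they form a family of dimension $2\dim(K)$, while $K[X]^d_1$ has dimension $d\dim(K)$. For $d\ge 3$ the whole affine orbit of $P$ can lie inside the small bad set; in the split case, for instance, every affine map preserves the ratio $(r_3-r_1)/(r_2-r_1)$ of the roots. Passing to a splitting field $L$ does not help, since over $L$ you face the same dimension deficit, and you would still have to show that the root near $a$ is $K$-rational.

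The missing idea is the operation $P\star Q$, which applies $P\in K[X]^{<d}$ to the roots of $Q$.
\begin{itemize}
\item For separable $Q_0$, the map $P\mapsto P\star Q_0$ is finite-to-one on a $d\dim(K)$-dimensional space, so some $P_0\star Q_0$ avoids any given set of lower dimension.
\item Such a $P_0\star Q_0$ therefore lies in the interior of the set $S_k$ of separable polynomials with exactly $k$ roots in $K$, at a continuity point of the root correspondence. This uses generic continuity of correspondences (Theorem~\ref{gencon2}), which itself needs Hausdorffness.
\item When $Q$ and $P_0\star Q$ are separable, $P_0$ bijects their roots and preserves $K$-rationality, since $\dcl(K)\cap K^{\sep}=K$ (Remark~\ref{star-facts}).
\item Zariski's Main Theorem makes $(P,Q,c)\mapsto(P,Q,P(c))$ an isomorphism $V^0_d\to W^0_d$ (Lemma~\ref{lem-vw}). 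Hence the roots of $Q$ near $Q_0$ depend continuously on the roots of $P_0\star Q$.
\end{itemize}
This argument works uniformly whether or not $P$ splits.

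\textbf{The topology itself.} The earlier steps also have genuine holes.
\begin{itemize}
\item \emph{Hausdorffness.} It cannot be inherited from the ambient topology. Subtraction need not be continuous there, so the sets $D-D$ bear no relation to ambient neighbourhoods. In fact, for a general definable abelian group it is open whether this topology is Hausdorff (Question~\ref{hq}). For a field you must use multiplication. First get non-triviality: split $K$ into two disjoint big sets and slide one to obtain a big $Z$ with $\delta\notin Z-Z$ (Theorem~\ref{non-triv}). Take $V$ with $V-V\subseteq Z-Z$. For $a\ne b$, choose $s$ with $a+s\delta=b$; then $a+sV$ and $b+sV$ are disjoint. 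Here $x\mapsto sx$ is a homeomorphism because it is a definable automorphism of $(K,+)$.
\item \emph{Continuity of inversion and multiplication.} That $x\mapsto x^{-1}$ preserves large sets only shows that preimages of neighbourhoods have non-empty interior \emph{somewhere}, not that they are neighbourhoods of the given point. Likewise, separate continuity of multiplication is not joint continuity. You need generic continuity of definable maps $K^2\to K$ (Theorem~\ref{gencon}, via the hammer lemma and local dimension), together with Theorem~\ref{product-top} identifying the canonical topology on $K^2$ with the product topology. Then translate the point of continuity.
\item \emph{The Lemma on Products.} Theorem~\ref{product-top}, the step producing a basic $V$ with $V-V\subseteq U$, and the implication ``big $\Rightarrow$ non-empty interior'' all rest on the fact that every big $D$ contains some $X-X+\delta$ with $X$ big. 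That fact comes from the Lemma on Products (Fact~\ref{prod-fact}). It does not follow from subadditivity and translation invariance; it is the main technical work of the paper.
\item \emph{Restricting to subspace-open sets.} Using only sets open in the subspace topology on $K\subseteq M^m$ is a misstep. In a t-minimal theory that topology is not controlled on lower-dimensional sets.
\item \emph{Definability.} Definability of $\tau$ needs a single definable family cofinal among the basic neighbourhoods. The paper gets this from good families (Proposition~\ref{good-prop}); definability of dimension alone does not provide it.
\end{itemize}
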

We say a field topology $\tau$ is \emph{st-henselian} if it satisfies
final condition of Theorem~\ref{main-2}.  If one drops the word
``separable'', this turns into the \emph{generalized t-henselianity}
or \emph{gt-henselianity} of Dittmann, Walsberg, and Ye
\cite{hensquot2}.  It would be nice to strengthen Theorem~\ref{main-2}
from st-henselianity to gt-henselianity, but I could not see how to do
it.  The connection to largeness (Theorem~\ref{main-1}) is that
st-henselian fields are large (Theorem~\ref{bronze-large}).  In fact,
one only needs a weaker condition called
\emph{bt-henselianity}.\footnote{The initials g, s, and b stand for
gold, silver, and bronze, or if you like ``generalized
t-henselianity'', ``shoddy t-henselianity'', and ``barely
t-henselianity''.}  See \S\ref{sec-vars} for more about these
conditions and their connection to largeness.
\begin{question}\label{psf-yeesh}
  Can a pseudofinite field be definable in a t-minimal theory?
\end{question}
Pseudofinite fields are large, so Theorem~\ref{main-1} does not
exclude this possibility.  My original hope was to answer
Question~\ref{psf-yeesh} negatively by proving the following:
\begin{itemize}
\item The canonical topology on a definable field in a t-minimal
  theory is gt-henselian.
\item Pseudofinite fields do not admit gt-henselian topologies.\footnote{Gt-henselian topologies generalize the ``t-henselian'' topologies of Prestel and Ziegler \cite{prestel-ziegler}, and pseudofinite fields \emph{definitely} don't admit t-henselian topologies.}
\end{itemize}
I was unable to prove the first point, and the second point turns out
to be false---any countable pseudofinite field admits a gt-henselian
topology \cite{large-gt}.  On the other hand, it feels rather
outlandish that one could put a reasonable st-henselian topology on a
pseudofinite field and make definable sets be topologically
tame.\footnote{For example, in characteristic $\ne 2$, one can use
st-henselianity to show that the set of non-zero squares is open.
Given that this set is ``random'' in some sense, one would have
expected instead for the set of squares to be dense and codense.}
Thus Question~\ref{psf-yeesh} remains wide open.

Returning to the general setting of an infinite definable field $K$ in
a t-minimal theory, suppose that $\dim(K)=1$.  Then the first point of
Theorem~\ref{main-2} shows that the induced structure on $K$ is
t-minimal---even visceral---with respect to the canonical topology
$\tau$.\footnote{Definable field topologies yield definable
uniformities by translating neighborhoods of 0.}  It follows that all
the topological tameness theorems of \cite{visceral,wj-visc-1} hold,
such as generic continuity, cell decomposition, etc., \emph{even if
these properties failed in the base t-minimal theory}.

Focusing in further, suppose $T$ is a theory of fields, expanded by
extra structure, which is t-minimal with respect to some topology
$\tau_0$.  Then Theorem~\ref{main-2} gives a new \underline{field}
topology $\tau$ making $T$ into a \underline{visceral} theory.  One
should think of $\tau$ as $\tau_0$ with the problems fixed.  A typical
example is the case where $T$ is RCF and $\tau_0$ is the Sorgenfrey
topology---the one with basic open sets $[a,b)$.  This topology isn't
  a field topology, and there are definable functions like $f(x)=-x$
  that are nowhere continuous.  The topology $\tau$ is the correct,
  standard topology.

In particular, t-minimal theories of fields are automatically
visceral, without assuming that the original topology respected the
field operations.

If $K$ is a definable field with $\dim(K)>1$, then $K$ with the
induced structure and canonical topology won't be visceral.
Nevertheless, one can prove some tame topology theorems for $K$, such
as the generic continuity of definable functions $f : U \to K^m$ with
$U \subseteq K^n$ \underline{open} Theorem~\ref{gencon}.  It's less
clear what happens for $f : D \to K^m$ with $D \subseteq K^n$
arbitrary.
\begin{question}
  Which further ``tame topology'' theorems can we proven in
  $(K,\tau)$, where $K$ is a definable infinite field in a t-minimal
  theory and $\tau$ is its canonical topology from
  Theorem~\ref{main-2}?
\end{question}

\subsection{Definable groups}
Underlying the theorems on definable fields are some theorems on
definable groups.
\begin{theorem}[{= Theorem~\ref{cantop} $\cup$ Lemma~\ref{characterization} $\cup$ Theorem~\ref{non-triv}}] \label{main-3}
  Let $(G,+)$ be a definable abelian group in a t-minimal theory.
  Then there is a unique definable group topology $\tau$ on $G$ with
  the following property:
  \begin{itemize}
  \item $\ter(D) \ne \varnothing \iff \dim(D) = \dim(G)$ for definable
    $D \subseteq G$.
  \end{itemize}
  Moreover, $\tau$ is non-trivial when $|G|>1$.
\end{theorem}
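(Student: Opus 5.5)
The plan is to build $\tau$ from ``large'' definable sets. Write $n = \dim(G)$, using the dimension theory of \cite[\S2]{wj-visc-1}: dimension is definable in families, subadditive, and additive on cartesian products, and definable bijections preserve it. Call a definable $D \subseteq G$ \emph{large} if $\dim(G \setminus D) < n$. I would take as neighborhood basis of $0$ the family of sets of the form $D - D$, or more precisely $\{x : \dim(D \cap (D+x)) = n\}$, with $D$ ranging over large sets. The key point is that these sets can be defined uniformly: by definability of dimension, the large sets in a definable family form a definable subfamily. This will be what makes $\tau$ a \emph{definable} topology. Because $G$ is abelian, the basic sets are automatically symmetric and conjugation-invariant, so the group topology axioms reduce to one requirement: for each basic $U$ there is a basic $V$ with $V + V \subseteq U$. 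For this I will use the ``generic'' trick. If $D$ is large, then for every $a$ outside a small set, both $D \cap (D + a)$ and $D \cap (D - a)$ are large. Intersecting finitely many large sets stays large, since the complement of the intersection is a union of small sets. So the filter of large sets is translation-invariant, and $\{D - D\}$ behaves like the Stabilizer construction of generics.

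The main obstacle is the characterization $\ter(D) \neq \varnothing \iff \dim(D) = n$. If $\ter(D) \neq \varnothing$, then $D$ contains a translate of some $E - E$ with $E$ large, and $\dim(E - E) = n$. So the real content is the other direction: I must show that a definable $D$ with $\dim(D) = n$ has interior. Since the topology depends only on dimension, this should come from a Neumann-type covering lemma. If $\dim(D) = n$, then finitely many translates of $D - D$ cover a large set. Taking a finite cover and using the fact that $D - D$ is itself of full dimension, I would show that $D$ contains $a + (E - E)$ for a suitable large $E$ built as $\{x : \dim(D \cap (D + x))= n\}$-type sets.

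The failure of surjection monotonicity in t-minimal dimension is the danger here. Dimension can increase under definable surjections, so I will only use injections and fibres. I plan to express everything via the fibres of $(x,y) \mapsto x - y$, using the addition formula for the graph $D \times D$ with the projection. If that step is hard, I would first reduce to the case where $\dim$ is computed via coordinate projections into $M^n$, where t-minimality gives interior in $M^n$. I would then compare $\tau$ with the ambient product topology on a generic open box.

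Uniqueness is easier. Suppose $\tau'$ is another definable group topology with the same characterization. A basic $\tau'$-open neighborhood $U'$ of $0$ has dimension $n$, so it has $\tau$-interior; by translation and continuity of subtraction, $U' - U'$ is a $\tau$-neighborhood of $0$. Since $U'$ ranges over a basis, this gives $\tau' \subseteq \tau$, and symmetrically $\tau \subseteq \tau'$. Non-triviality when $|G| > 1$ needs two cases. If $G$ is finite, then $n = 0$, and every nonempty set has dimension $0$, hence interior, so $\tau$ is discrete and non-trivial. If $G$ is infinite, then $n \geq 1$. Take $g \neq 0$ and a definable set $D$ with $\dim(D) = \dim(G \setminus D) = n$, obtained by splitting via a coordinate and t-minimality. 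Then both $D$ and its complement have interior, so $\tau$ is not the indiscrete topology.
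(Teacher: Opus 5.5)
\paragraph{The construction collapses.} With ``large'' meaning $\dim(G\setminus D)<n$, take any $x\in G$. The complement of $D\cap(D+x)$ is $(G\setminus D)\cup((G\setminus D)+x)$, which has dimension $<n$. So $\dim(D\cap(D+x))=n$, and in particular $D\cap(D+x)\neq\varnothing$, i.e.\ $x\in D-D$. Thus
\[
D-D=\{x:\dim(D\cap(D+x))=n\}=G
\]
for every large $D$, and your $\tau$ is the indiscrete topology. That contradicts the characterization, and it also contradicts non-triviality. For infinite $G$, Proposition~\ref{splitting} gives $D$ with $\dim(D)=\dim(G\setminus D)=n$, and such a $D$ has empty interior in the indiscrete topology.

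The stabilizer analogy misleads you. There the formulas come from a generic \emph{type}, so they are merely of full dimension, not co-small. The basic neighborhoods have to be $X-X$ with $\dim(X)=n$ (``big'' sets, as in the paper). Your later claim that ``$D$ contains $a+(E-E)$ for a suitable large $E$'' can only hold with $E$ big.

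\paragraph{What is missing after that correction.} The ``generic trick'' is gone. For big $D$, the set $D\cap(D+a)$ is big only for $a$ in a subset of $D-D$, not for all $a$ off a small set (take $D=(0,1)$ in $(\mathbb{R},+)$). So neither the filter property nor $V-V\subseteq U$ comes for free. The hard direction of the characterization is left with only the covering sketch, which does not place a translate of $E-E$ inside $D$. The missing ingredients are these.
\begin{enumerate}
\item Sliding: for big $X,Y$ there is $\delta$ with $X\cap(Y+\delta)$ big. Take $(a,b)\in X\times Y$ of dimension $2n$ over the parameters, set $\delta=a-b$, and use subadditivity. This gives the filter property.
\item The key fact: every big $D$ contains $X-X+\delta$ with $X$ big. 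The paper applies the Lemma on Products (Fact~\ref{prod-fact}) to $\{(x,y):x-y\in D\}$, which has dimension $2n$, and then slides. This single fact yields both $V-V\subseteq U$ and nonempty interior of big sets.
\item Definability: you need a single definable family that is cofinal among big sets up to translation. Definability of dimension alone does not give this, because the big sets do not form one definable family. The paper uses good families (Proposition~\ref{good-prop}) plus sliding.
\end{enumerate}

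The Lemma on Products is the real difficulty in the t-minimal setting. You cannot get it by comparing with an open box in $\Mm^n$, since the group law need not be generically continuous for the ambient topology. The paper proves it via the cloning lemma and a minimization of multiplicities.

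Your uniqueness and non-triviality arguments are fine once existence with the characterization is established.
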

Unfortunately, I was not able to drop the assumption of abelianity,
nor to prove that $\tau$ is Hausdorff, except in the case of fields.
In the setting of Theorem~\ref{main-3}, one can prove some weak
results about tame topology (see \S\ref{apof}, especially
Theorems~\ref{gencon} and \ref{gencon2}).  As a corollary,
homomorphisms are continuous (Corollary~\ref{homcor}).

The situation is much better in visceral theories, or more generally,
t-minimal theories with generic continuity of definable
correspondences.
\begin{theorem}[{$\approx$ Theorem~\ref{thm-unique}}] \label{main-4}
  Let $(G,\cdot)$ be a definable group in a visceral theory.  Then
  there is a unique definable group topology $\tau$ on $G$ making $G$
  into a definable manifold.  Moreover,
  \begin{itemize}
  \item $\tau$ is Hausdorff
  \item $\tau$ is non-discrete when $G$ is infinite.
  \item $\ter(D) \ne \varnothing \iff \dim(D) = \dim(G)$ for definable
    $D \subseteq G$.  In particular, $\tau$ agrees with the canonical
    topology from Theorem~\ref{main-3} when $G$ is abelian.
  \end{itemize}
\end{theorem}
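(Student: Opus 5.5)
We plan to build the topology by the standard Weil/Pillay-style group chunk argument, using the tame topology available in visceral theories: generic continuity of definable functions, cell decomposition, and the dimension theory of \cite{wj-visc-1}. Write $n=\dim(G)$.

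First, we fix a definable family of sets covering $G$ generically by pieces in definable bijection with open subsets of $M^n$. Concretely, cell decomposition gives a decomposition of $G$ into finitely many definable pieces, each in definable bijection with a cell. The pieces of full dimension $n$ are in bijection with open subsets of $M^n$, after coordinate projection. The remaining pieces have dimension $<n$.

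Second, we transport the product topology along these charts to get a topology on a large definable subset $V\subseteq G$, with $\dim(G\setminus V)<n$. Generic continuity, applied to the definable maps $(x,y)\mapsto xy$ and $x\mapsto x^{-1}$ in charts, gives a further large definable set on which multiplication and inversion are continuous. Here ``large'' means the complement has dimension $<\dim$, and in the case of multiplication it is taken relative to generic pairs in $V\times V$. We then use the usual generic-point argument: for $g\in G$ and $\dim$-generic $a$, the translate $gV\cap V$ is large. Covering $G$ by finitely many translates $g_iV$ requires a dimension argument. If $\dim(D)=n$ for $D=G\setminus\bigcup_{i<k}g_iV$, pick a generic point of $D$ and translate; since $\dim$ is additive on products in visceral theories, this terminates in finitely many steps. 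This gives a definable manifold structure on $G$ in the sense of Acosta L\'opez and Hasson, and left translations are homeomorphisms. Continuity of multiplication everywhere then follows by translating the generic continuity at generic pairs, as in Pillay's proof for o-minimal groups. Hausdorffness follows from the Hausdorffness of $M^n$ plus translation. Non-discreteness for infinite $G$ holds because $n>0$ and $M$ has no isolated points.

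Third, the property $\ter(D)\neq\varnothing\iff\dim(D)=n$ holds in any such manifold topology. A definable set of full dimension contains a full-dimensional chart piece, and a full-dimensional subset of $M^n$ has nonempty interior by visceral dimension theory. Conversely, opens have dimension $n$. Uniqueness comes next. Given two such topologies $\tau,\tau'$, the identity map $(G,\tau)\to(G,\tau')$ is definable, so by generic continuity it is continuous at some point. Homogeneity under translation then makes it continuous everywhere, and symmetry gives a homeomorphism. Agreement with Theorem~\ref{main-3} for abelian $G$ then follows from the uniqueness clause there.

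The main obstacle is the second step: making the generic continuity of multiplication in charts hold uniformly enough to be spread out by translation. In the absence of o-minimal-style monotonicity, we must control ``generic'' via $\dim$ and the definability of dimension. We would first try to run the argument with types, using dimension-generic elements over a small model and the fact that $\dim(a/b)$ behaves additively in visceral theories. This should let us prove that $(a,b)\mapsto ab$ is continuous at every pair where $a$ is generic over $b$ and $ab$ is generic.
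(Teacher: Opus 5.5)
There is a genuine gap. It sits exactly at the step you call the ``main obstacle''. Your first step also rests on a false claim.

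\textbf{The charts do not exist.} In a visceral theory a cell is the graph of a continuous finite \emph{correspondence} over an open set. So the coordinate projection of a full-dimensional cell is only finite-to-one, not a bijection onto an open subset of $\Mm^n$. Discarding a lower-dimensional set does not repair this. Appendix~\ref{not-nice} shows that in ACVF$_{0,0}$ a full-dimensional point of an elliptic curve need not be interdefinable with any single element. Hence no finite family of definable charts onto open subsets of $\Mm^1$ covers the curve, even up to a finite set.

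Consequently there is no product topology to transport. You must use the subspace topology on the cells themselves, which is the paper's disjoint-union topology $\tau_1$. Then ``a full-dimensional set has interior'' and ``the identity $(G,\tau)\to(G,\tau')$ is generically continuous'' need the tame topology of locally Euclidean semimanifolds (Lemma~\ref{new-tame-sane}, Lemma~\ref{interior-duh}), not facts about $\Mm^n$. Hausdorffness likewise should come from $T_1$ plus the group structure (Remark~\ref{oh-its}).

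\textbf{Step 2 is not carried out.} Generic continuity tells you, for each $g$, only that $x\mapsto gx$ is continuous off a small set depending on $g$. To glue a left-invariant topology from a chunk $V$, you need that for \emph{every} $g\in G$, including non-generic ones, $x\mapsto gx$ is continuous on the whole set $g^{-1}V\cap V$.

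To spread continuity of multiplication from generic pairs, you also need right translations and inversion to be continuous everywhere, and you never obtain these. A Pillay-style chunk argument gets them by writing arbitrary elements as products of suitably independent generics, using additivity of dimension for types. The paper uses only subadditivity $\dim(ab/C)\le\dim(a/bC)+\dim(b/C)$, so your appeal to additivity is an unjustified extra hypothesis.

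Your covering argument is also incomplete. Pushing $\dim(D)$ below $n$ does not make $D$ empty. What is needed is a small set of group elements contained in no definable set of dimension $<n$, together with saturation (Proposition~\ref{covers}).

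\textbf{The missing idea.} Define $a\sim b$ iff $\lambda_{a,b}$ is $\tau_1$-continuous at $a$ and $\lambda_{b,a}$ is $\tau_1$-continuous at $b$.
\begin{enumerate}
\item Subadditivity, plus generic continuity of the single map $\lambda_c$ on $(G,\tau_1)$, shows that generic pairs are equivalent. So some class $C$ has $\dim(G\setminus C)<n$.
\item The interior $U$ of $C$ has the property that every $c\in G$ maps $c^{-1}U\cap U\to cU\cap U$ continuously.
\item Finitely many translates of $U$ then glue to a left-invariant manifold topology $\tau$ (Lemma~\ref{cover-confusion}).
\item Your own uniqueness argument, applied to left-invariant semimanifold topologies rather than only group topologies (Proposition~\ref{prop-unique}), gives $\rho(\tau)=\tau$ for right translations $\rho$ and $i(\tau)=\tau$ for inversion $i$. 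This holds because $\rho(\tau)$ and $i(\tau)$ are again left-invariant manifold topologies.
\item Only then do you need a single point of continuity of multiplication, from generic continuity on the manifold $G\times G$. Two-sided translation then gives continuity everywhere.
\end{enumerate}
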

See \S\ref{defman} for a precise definition of ``definable manifold'',
which is similar to the classic definition in the o-minimal setting
\cite{Peterzil-Steinhorn}, but allowing for finite covers \emph{a la}
Acosta L\'opez and Hasson~\cite{acosta-hasson}.  The tame topology for
definable manifolds is much better than in the vague setting of
Theorem~\ref{main-3}; see \S\ref{tame-man}.

%% In a t-minimal theory of fields with \emph{generic differentiability},
%% I suppose one could define a more precise notion of ``definable
%% $C^k$-manifold'' and show that definable groups have an adjoint
%% action.  We leave this for further work---except for the case of
%% definable fields, which seems like a shame to omit.

%% Further directions: generic differentiability etc.

\subsection{Outline}

The paper divides into three parts.
\begin{itemize}
\item In Sections~\ref{sec-vars}--\ref{via-dims}, we focus on the
  abstract conditions of gold/silver/bronze-t-henselianity.  We show
  that these conditions imply largeness (Theorem~\ref{bronze-large}).
  We also give an abstract dimension-theoretic condition which ensures
  that these topological conditions hold (Theorems~\ref{A-thm},
  \ref{silver-theorem}).  This may be of independent interest.
  Nothing in these sections is specific to t-minimal theories, and I
  expect the criterion of Theorems~\ref{A-thm} and
  \ref{silver-theorem} to have further applications in later papers.
\item In Sections~\ref{new-insert}--\ref{atdf}, we focus on the case of
  visceral theories.  We build up the machinery of definable manifolds
  and their tame topology, and use the usual methods to prove
  Theorems~\ref{main-4} and \ref{main-2}.
\item In Sections~\ref{ground}--\ref{elevensy} we focus on the much
  harder case of t-minimal theories, proving Theorems~\ref{main-3} and \ref{main-2} in this case.
\end{itemize}
The reason for handling the visceral case separately in
Sections~\ref{new-insert}--\ref{atdf} is because the proofs are easier and
the results are stronger.

Appendix~\ref{not-nice} explains why we need finite covers in our
definition of ``definable manifold'', and Appendix~\ref{app-B} gives
the lengthy proof of a technical lemma needed to deal with the
t-minimal case in Sections~\ref{ground}--\ref{elevensy}.

% PART 1: largeness and field topologies and the abstract criterion

\section{Variants of gt-henselianity} \label{sec-vars}
\begin{definition} \label{def:gold}
  Let $K$ be a field.  A field topology $\tau$ on $K$ is
  \emph{generalized t-henselian} or \emph{gold t-henselian} if the
  following equivalent conditions hold:
  \begin{enumerate}
  \item \label{v1} For any $d \ge 2$ and neighborhood $U \ni -1$,
    there is a neighborhood $V \ni 0$ such that if $a_0, a_1, \ldots,
    a_{d-2} \in V$, then the polynomial $X^d + X^{d-1} +
    a_{d-2}X^{d-2} + \cdots + a_1X + a_0$ has a root in $U$.
  \item \label{v2} If $f : K^n \to K^n$ is a polynomial map and the Jacobian
    matrix of $f$ at some point $\ba \in K^n$ is invertible, then
    $f$ is a local homeomorphism at $\ba$, meaning that there are open
    neighborhoods $U \ni \ba$ and $V \ni f(\ba)$ such that $f$ induces
    a homeomorphism $U \to V$.
  \item \label{v3} If $f : K^n \times K^m \to K^m$ is a polynomial map such that
    $f(\ba,\bb) = \bar{0}$ and the matrix $\frac{\partial}{\partial
    y} f(\bx,\by)$ is invertible at $(\ba,\bb)$, then there are
    neighborhoods $U \ni \ba$ and $V \ni \bb$ and a continuous
    function $g : U \to V$ such that
    \begin{equation*}
      g(\bx) = \by \iff f(\bx,\by) = \bar{0} \text{ for } (\bx,\by)
      \in U \times V.
    \end{equation*}
  \item \label{v4} If $f : V \to W$ is an \'etale morphism of
    varieties over $K$, then $V(K) \to W(K)$ is a local homeomorphism,
    with respect to the topology induced by $\tau$.
  \end{enumerate}
\end{definition}
For the equivalence, see \cite[Proposition~6.2]{tops-rings}, though
the proof mostly comes from \cite{hensquot2}.  Condition (\ref{v1}) is
the original definition from \cite[Definition~8.1]{hensquot2}.
Conditions (\ref{v2}) and (\ref{v3}) are the inverse function theorem
and implicit function theorems for polynomials.  Condition (\ref{v4})
is a conceptually nice generalization of (\ref{v2}), which we will not
use.
\begin{fact}[{\cite[Proposition~8.3]{hensquot2}}] \label{isect}
  A topological field $(K,\tau)$ is t-henselian if it is gt-henselian
  and V-topological.
\end{fact}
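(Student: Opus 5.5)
We need to prove: if $\tau$ is a gt-henselian field topology which is V-topological, then $(K,\tau)$ is t-henselian in the sense of Prestel--Ziegler. Here a V-topology is one induced by a non-trivial absolute value or valuation. t-henselian means the following: for every $d$ and every neighborhood $U \ni -1$, there is a neighborhood $V \ni 0$ such that every polynomial $X^{d+1}+X^d+a_{d-1}X^{d-1}+\cdots+a_0$ with all $a_i\in V$ has a root in $U$. This is the Prestel--Ziegler definition, and it is elementary in the topology.

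The plan is to notice that this condition is literally condition~(\ref{v1}) of Definition~\ref{def:gold}, up to reindexing the degree ($d \ge 2$ there versus $d+1$ here). Condition~(\ref{v1}) holds by the gt-henselian hypothesis. The only subtlety is that Prestel and Ziegler define ``t-henselian'' only for V-topological fields. So the steps are as follows.

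First, recall the Prestel--Ziegler definition precisely: a V-topological field $(K,\tau)$ is t-henselian if for each $n\ge 1$ and each neighborhood $U$ of $-1$ there is a neighborhood $V$ of $0$ such that every polynomial $X^{n+1}+X^n+a_{n-1}X^{n-1}+\dots+a_0$ with $a_i\in V$ has a zero in $U$. (Some formulations instead use $X^{n}+X^{n-1}+\cdots$ with a root near $-1$, or equivalently a simple root near a given point. Up to the change of variables $X\mapsto -X$ or rescaling, these are all the same.)

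Second, match this with condition~(\ref{v1}) by setting $d=n+1\ge 2$. If the cited definition uses a slightly different normalization, I would translate using that $\tau$ is a field topology: affine substitutions $X\mapsto cX+e$ and scaling coefficients are homeomorphisms. Neighborhoods of $-1$ and $0$ therefore correspond, and the translated statement follows from~(\ref{v1}).

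Third, the V-topological hypothesis supplies exactly the remaining clause of the Prestel--Ziegler definition, so $(K,\tau)$ is t-henselian.

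The only potential obstacle is bookkeeping: making sure the normalization in \cite{prestel-ziegler} matches (\ref{v1}), and handling the case $d=1$ if it appears there, which is trivial since $X+1$ has root $-1$. Since this fact is cited from \cite[Proposition~8.3]{hensquot2}, where (\ref{v1}) was designed precisely as the topology-free version of Prestel--Ziegler's condition, I expect no real difficulty.
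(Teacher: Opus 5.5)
Your argument is correct and is essentially all there is to this direction. The paper gives no proof beyond the citation to \cite[Proposition~8.3]{hensquot2}, and the implication is a matter of unwinding definitions: for a V-topology, Prestel and Ziegler only ask that $X^{n+1}+X^n+a_{n-1}X^{n-1}+\cdots+a_0$ have a root in $K$ once the $a_i$ are small, while condition~(\ref{v1}) with $d=n+1$ gives a root in any prescribed neighborhood of $-1$, hence a fortiori in $K$. One small correction to your hedging: the normalization asking for a root anywhere in $K$ and the one asking for a root near $-1$ are not interchangeable by affine substitutions, since their equivalence for V-topologies is a substantive Prestel--Ziegler result needed only for the converse; your proof uses only the trivial implication from the stronger condition to the weaker, so nothing breaks.
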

See \cite[Sections~3, 7]{prestel-ziegler} for the definitions of
t-henselian topologies and V-topologies.
\begin{fact}[{\cite[Corollary~8.15]{hensquot2}}]
  If $(K,\tau)$ is a gt-henselian topological field, then $K$ is
  large.
\end{fact}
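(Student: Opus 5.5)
We use the explicit characterization of largeness from the introduction. Let $P(x,y) \in K[x,y]$ with $P(a,b) = 0 \ne \frac{\partial P}{\partial y}(a,b)$. We must show that the zeroset $Z = \{(x,y) \in K^2 : P(x,y) = 0\}$ is infinite. We may assume $K$ is infinite, since finite fields are not large. We will use Definition~\ref{def:gold}(\ref{v3}), the implicit function theorem, with $n = m = 1$ and $f = P$. It gives neighborhoods $U \ni a$ and $V \ni b$ and a continuous $g : U \to V$ such that $\{(x,y) \in U \times V : P(x,y)=0\}$ is exactly the graph of $g$. Hence $Z \supseteq \{(x,g(x)) : x \in U\}$, which is in bijection with $U$. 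So it suffices to show that every neighborhood $U$ of $a$ in $\tau$ is infinite.

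The remaining point is the infinitude of $U$. The statement as given does not explicitly assume that $\tau$ is non-discrete. The conclusion can fail for the discrete topology, which is trivially gt-henselian: with the discrete topology, condition (\ref{v3}) holds by taking $U=\{a\}$. So we must use a standing convention that field topologies are Hausdorff and non-discrete, as in Prestel--Ziegler and \cite{hensquot2}. I would state this assumption explicitly.

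Given non-discreteness, we show every nonempty open set is infinite. Translation is a homeomorphism, so it suffices to treat neighborhoods of $0$. Suppose some neighborhood $W \ni 0$ were finite. By Hausdorffness, we can shrink $W$ to exclude its finitely many nonzero points, so $\{0\}$ would be open. This contradicts non-discreteness. Hence $U$ is infinite, so $Z$ is infinite and $K$ is large.

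The only real obstacle is this point about which hypotheses are built into ``field topology.'' For the $T_1$ step, the trivial (indiscrete) topology is also excluded by the convention, and it would anyway give $U = K$, which is infinite. If Hausdorffness is not assumed, I would instead note that a non-discrete group topology has every neighborhood of $0$ infinite. Indeed, if $W \ni 0$ were finite, then the intersection of the finitely many open sets $W - w$ over those $w\in W$ with $w \notin \ter$-closure issues could be avoided by simply noting the following. In any topological group, the minimal open neighborhood $N$ of $0$ (which exists when some neighborhood is finite) is a subgroup. For a field topology, $N$ is then an ideal-like set: multiplication by a scalar $c$ is continuous at $0$, so $cN \subseteq N$. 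Therefore $N = 0$ or $N = K$. The case $N=0$ gives discreteness, and the case $N=K$ gives $N$ infinite, contradicting finiteness. So either way every neighborhood is infinite.
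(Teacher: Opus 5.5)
Your argument is correct, given the convention you make explicit (field topologies are Hausdorff and non-discrete). The paper relies on the same convention without stating it, e.g.\ ``Because $K$ admits a field topology, it is infinite'' in the proof of Theorem~\ref{bronze-large}.

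\textbf{Comparison with the paper.} The paper does not prove this Fact; it quotes it from \cite{hensquot2}. It does, however, prove the stronger Theorem~\ref{bronze-large}, which covers the Fact since gt-henselian $\implies$ st-henselian $\implies$ bt-henselian. That proof cannot use the implicit function theorem, because bt-henselianity only applies to \emph{separable} polynomials and gives no control over where the new root lies. So the paper first uses Lemma~\ref{arc} and a Tarski--Vaught argument to find $a,b \in K$ such that, after the change of variables $Q(x,y) = P(x,ax^2+bx+y)$, the polynomial $Q(x,0)$ is separable with root $0$. It then applies bt-henselianity to the family $Q(x,c)$ for $c$ in a neighborhood of $0$. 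You instead use the full strength of gt-henselianity, Definition~\ref{def:gold}(\ref{v3}), to write the zero set locally as the graph of a function on a neighborhood of $a$. This makes the separability trick unnecessary and gives a shorter proof. The paper's route buys the weaker hypothesis, which is what the t-minimal application needs, since there only st-henselianity is established.

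\textbf{Two small corrections.}
\begin{enumerate}
\item The sentence ``We may assume $K$ is infinite, since finite fields are not large'' has the logic backwards. Precisely because finite fields are not large, the finite case must be ruled out by the hypotheses, not dismissed. Your Hausdorff argument already does this: it shows every non-empty open set is infinite without assuming $K$ infinite. So simply delete that sentence.
\item The last paragraph on non-Hausdorff topologies is unnecessary under the convention, and one of its sentences is garbled. The ideal argument in it is fine: it shows a field topology is either Hausdorff or indiscrete. In the case $N = K$, though, it leans on the unjustified infinitude of $K$. This is harmless, because the indiscrete topology on a finite field is not gt-henselian. Condition (\ref{v1}) with $d=2$ would force $X^2+X+c$ to have a root for every $c \in K$. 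But $x \mapsto x^2+x$ identifies $0$ and $-1$, so it is not surjective on a finite field.
\end{enumerate}
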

Our goal in this section is to define two weakenings of
gt-henselianity, and show that they too imply largeness.  First, we
reformulate gt-henselianity as a continuity of roots condition.
Following \cite[Section~7]{prestel-ziegler}, let $K[X]^d_1$ denote the
space of monic polynomials of degree $d$.  We topologize $K[X]^d_1$ by
identifying it with $K^d$.
\begin{proposition} \label{prop-gold}
  A topological field $(K,\tau)$ is gt-henselian if and only if the
  following condition holds:
  \begin{enumerate}
    \setcounter{enumi}{4}
  \item \label{v5} If $Q(X) \in K[X]^d_1$ has a simple root $a \in K$, then
    for any neighborhood $U$ of $a$, there is a neighborhood $Q \in V
    \subseteq K[X]^d_1$ such that if $P \in V$, then $P$ has a root in
    $U$.
  \end{enumerate}
\end{proposition}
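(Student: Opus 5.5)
Since the equivalence (\ref{v1})$\iff$(\ref{v2})$\iff$(\ref{v3}) is already known (\cite[Proposition~6.2]{tops-rings}), it suffices to show (\ref{v5})$\Rightarrow$(\ref{v1}) and (\ref{v3})$\Rightarrow$(\ref{v5}).

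\emph{(\ref{v5}) implies (\ref{v1}).} This is direct. Take $Q(X) = X^d + X^{d-1} = X^{d-1}(X+1)$. Then $-1$ is a simple root of $Q$, since $Q'(-1) = (-1)^{d-1} \ne 0$ (as $d-1 \geq 1$ and $-1$ is a simple factor). Given a neighborhood $U \ni -1$, condition (\ref{v5}) supplies a neighborhood $V$ of $Q$ in $K[X]^d_1 \cong K^d$. We may shrink $V$ to a basic box: the $X^{d-1}$-coefficient lies in a neighborhood of $1$ and the lower coefficients lie in a neighborhood $V_0$ of $0$. Every polynomial $X^d + X^{d-1} + a_{d-2}X^{d-2} + \cdots + a_0$ with all $a_i \in V_0$ then lies in $V$, so it has a root in $U$. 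This is exactly (\ref{v1}).

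\emph{(\ref{v3}) implies (\ref{v5}).} Apply the implicit function theorem with $n = d$ and $m = 1$ to the polynomial map
\begin{equation*}
  f(c_0,\ldots,c_{d-1}, y) = y^d + c_{d-1}y^{d-1} + \cdots + c_0.
\end{equation*}
Write $\bar{c}$ for the coefficient tuple of $Q$. Then $f(\bar{c},a) = 0$, and $\frac{\partial f}{\partial y}(\bar{c},a) = Q'(a) \ne 0$ because $a$ is a simple root. Condition (\ref{v3}) gives neighborhoods $U' \ni \bar{c}$ and $V' \ni a$ and a continuous $g : U' \to V'$ such that $f(\bar{x}, g(\bar{x})) = 0$ for all $\bar{x} \in U'$. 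Now let $U \ni a$ be a given neighborhood. By continuity, $W = g^{-1}(U \cap V')$ is a neighborhood of $\bar{c}$, since $g(\bar{c}) = a$: indeed $f(\bar{c},a) = 0$ with $(\bar{c},a) \in U' \times V'$, so $g(\bar{c}) = a$ by the uniqueness clause. For every $P \in W$, the element $g(P) \in U$ is a root of $P$, which proves (\ref{v5}).

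No step is a serious obstacle. The only points to check are that $-1$ is indeed a simple root of $X^d + X^{d-1}$, and that the uniqueness clause of (\ref{v3}) forces $g(\bar{c}) = a$, which is needed to make $W$ a neighborhood of $Q$ rather than just some open set.
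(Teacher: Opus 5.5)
Your proposal is correct and follows the paper's proof: (5)$\Rightarrow$(1) by taking $Q = X^d + X^{d-1}$ with its simple root $-1$, and (3)$\Rightarrow$(5) by applying the implicit function theorem to the generic monic polynomial and pulling $U$ back along the continuous root function $g$. Your extra checks are details the paper leaves implicit: that $Q'(-1) = (-1)^{d-1} \neq 0$, the shrinking of $V$ to a box, and that $g(\bar{c}) = a$ follows from the uniqueness clause.
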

\begin{proof}
  We compare Condition (\ref{v5}) to conditions (\ref{v1}) and
  (\ref{v3}) in Definition~\ref{def:gold}.
  \begin{description}
  \item[$(\ref{v5})\implies(\ref{v1}):$] Take $Q = X^d + X^{d-1}$ and
    $a = -1$.
  \item[$(\ref{v3})\implies(\ref{v5}):$] The proof is straightforward,
    but we include the details for completeness.  Write $Q(X)$ as $X^d
    + b_{d-1}X^{d-1} + \cdots + b_1X + b_0$.  Let
    $f(y_0,\ldots,y_{d-1},x) = x^d + y_{d-1}x^{d-1} + \cdots + y_1x +
    y_0$.  Then
    \begin{gather*}
      f(\bb,a) = Q(a) = 0 \\
      \frac{\partial}{\partial x}f(\bb,a) = Q'(a) \ne 0.
    \end{gather*}
    By the polynomial implicit function theorem, there are
    neighborhoods $U_0 \ni a$ and $V_0 \ni \bb$ and a continuous
    function $g : V_0 \to U_0$ such that
    \begin{equation*}
      x^d + y_{d-1}x^{d-1} + \cdots + y_1x + y_0 = 0 \iff x =
      g(y_0,\ldots,y_{d-1}) \text{ for } x \in U_0 \text{ and } \by
      \in V_0.
    \end{equation*}
    Let $V = g^{-1}(U \cap U_0)$.  Then for any $\by \in V$, the
    polynomial $X^d + y_{d-1}X^{d-1} + \cdots + y_1X + y_0$ has a root
    in $U$, namely $X = g(\by)$.  \qedhere
  \end{description}
\end{proof}
Our two weakenings of gt-henselianity are obtained by weakening the
criterion in Proposition~\ref{prop-gold}:
\begin{definition}
  Let $\tau$ be a field topology on $K$.
  \begin{enumerate}
  \item $\tau$ is \emph{shoddy t-henselian} or \emph{silver t-henselian} if
    the following holds: if $Q(X) \in K[X]^d_1$ is separable and has a
    root $a \in K$, then for any neighborhood $U$ of $a$, there is a
    neighborhood $Q \in V \subseteq K[X]^d_1$ such that if $P \in V$,
    then $P$ has a root in $U$.
  \item $\tau$ is \emph{barely t-henselian} or \emph{bronze t-henselian} if
    the following holds: if $Q(X) \in K[X]^d_1$ is separable and has a
    root $a \in K$, then there is a neighborhood $Q \in V \subseteq
    K[X]^d_1$ such that if $P \in V$, then $P$ has a root in $K$.
  \end{enumerate}
\end{definition}
Silver t-henselianity is weaker than gold t-henselianity, because we
only consider the case where $Q$ is separable---so \emph{all} the
roots of $Q$ over $K^{\alg}$ are simple, not just $a$.  Bronze
t-henselianity is weaker than silver t-henselianity, because we have
no control over where $P$'s roots are.
\begin{remark}
  Bronze t-henselianity can be rephrased more simply as follows: the
  set $\{P(X) \in K[X]^d_1 : P \text{ is separable and has a root in
    $K$}\}$ is open in $K[X]^d_1$.  To prove this, one needs only
  observe that the set of separable polynomials is open in $K[X]^d_1$
  (use discriminants).
\end{remark}
\noindent We care about st-henselianity because:
\begin{itemize}
\item Infinite definable fields in t-minimal theories will carry
  natural st-henselian topologies.
\item In a later paper, we will see that NIP large fields (other than separably closed fields) also carry natural st-henselian topologies.
\end{itemize}
We care about bt-henselianity because:
\begin{itemize}
\item Bronze t-henselianity is already sufficient to prove largeness.
\item Bronze t-henselianity gives a simpler proof that infinite
  definable fields in t-minimal theories are large.
\end{itemize}
\begin{question} \label{q-implies}
  Are the implications gt-henselian $\implies$ st-henselian $\implies$
  bt-henselian strict?
\end{question}
We will say more about this in a later paper.  For
now, we prove that bt-henselian fields are large.
\begin{lemma} \label{arc}
  Let $K/k$ be an extension of fields.  Let $P(x,y)$ be an irreducible
  polynomial over $k$, such that $P(0,0) = 0$, but the partial
  derivatives $\frac{\partial P}{\partial x}$ and $\frac{\partial
    P}{\partial y}$ do not both vanish at $(0,0)$.  Let $(a,b) \in
  K^2$ be such that $\{a,b\}$ is algebraically independent over $k$.
  Then the polynomial $P(z,az^2 + bz) \in K[z]$ is separable.
\end{lemma}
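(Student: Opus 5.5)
The plan is to reduce to the case where $a,b$ are indeterminates, and then find a repeated root by a transcendence-degree count.

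\emph{Reduction.} Since $\{a,b\}$ is algebraically independent over $k$, we have $k(a,b) \cong k(A,B)$, the rational function field in two indeterminates. Let $Q(z) = P(z,Az^2+Bz) \in k[A,B][z]$.

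First I check that $Q \neq 0$. If $Q$ vanished identically, then $y - (Az^2+Bz)$ would divide $P$ in $k(A,B)[z,y]$. This is absurd, because $P$ has coefficients in $k$ and $P \neq 0$.

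Let $Q$ have degree $n$. Its leading coefficient and its discriminant are polynomials in $k[A,B]$. Specializing $(A,B) \mapsto (a,b)$ is injective on $k[A,B]$, so it preserves both the degree and the nonvanishing of the discriminant. Hence it suffices to show that $Q(z)$ is separable over $k(A,B)$, i.e.\ that $Q$ has no repeated root $c$ in an algebraic closure of $k(A,B)$.

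\emph{Repeated roots.} Suppose $c$ is a repeated root, and put $d = Ac^2+Bc$. Then
\begin{equation*}
  P(c,d) = 0 \quad \text{and} \quad P_x(c,d) + (2Ac+B)P_y(c,d) = 0.
\end{equation*}

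If $c = 0$, then $d = 0$ and the second equation reads $P_x(0,0) + B\,P_y(0,0) = 0$. Since $B$ is transcendental over $k$, this forces $P_x(0,0) = P_y(0,0) = 0$, contradicting the hypothesis.

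If $c \ne 0$, then $B = d/c - Ac$, so $k(A,B) \subseteq k(c,d,A)$ and $\trdeg k(c,d,A)/k \ge 2$. On the other hand $P(c,d) = 0$ gives $\trdeg k(c,d)/k \le 1$. Therefore $\trdeg k(c,d)/k = 1$, and $A$ is transcendental over $k(c,d)$. The derivative equation becomes
\begin{equation*}
  P_x(c,d) + (Ac + d/c)\,P_y(c,d) = 0.
\end{equation*}
If $P_y(c,d) \ne 0$, we can solve for $A$ and get $A \in k(c,d)$, which is a contradiction. So $P_y(c,d) = 0$, and then also $P_x(c,d) = 0$.

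\emph{Main step.} The heart of the argument is the last case. Since $\trdeg k(c,d)/k = 1$ and $P$ is irreducible, $(c,d)$ is a generic point of the curve $P = 0$. The ideal of polynomials over $k$ vanishing at $(c,d)$ is therefore exactly $(P)$. So $P$ divides both $P_x$ and $P_y$. These have smaller degree than $P$ in the relevant variable, so $P_x = P_y = 0$ as polynomials. This contradicts the assumption that they do not both vanish at $(0,0)$.

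Note that in characteristic $p$ one of $P_x, P_y$ may well vanish identically; the argument only needs that not both do, which is exactly what the hypothesis at the origin provides.
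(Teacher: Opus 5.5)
Your proof is correct and is essentially the paper's argument: the same split according to whether the repeated root is $0$, the same transcendence-degree count, and the same key fact that $(c,d)$ is a generic point of the irreducible curve $P=0$, so its vanishing ideal over $k$ is $(P)$. The differences are cosmetic. You reduce first to indeterminates $A,B$, whereas the paper works directly with $a,b$ in an algebraically closed extension. You solve only for $A$, where the paper solves the $2\times 2$ system for $(a,b)$. And you conclude $P_x=P_y=0$ by a degree count, where the paper evaluates $P_x=Q_1P$ and $P_y=Q_2P$ at the origin.
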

\begin{proof}
  Enlarging $K$, we may assume that $K$ is a monster model of $\ACF$.  Let
  \begin{gather*}
    P_1 = \frac{\partial P}{\partial x} \\ P_2 = \frac{\partial
      P}{\partial y}.
  \end{gather*}
  Note that
  \begin{equation*}
    \frac{d}{dz} P(z,az^2 + bz) = P_1(z,az^2+bz) + P_2(z,az^2+bz)(2az+b).
  \end{equation*}
  Suppose $P(z,az^2+bz) \in K[z]$ has a double root at $z = t$.  Then
  $P(z,az^2+bz)$ and its derivative both vanish at $z = t$, so
  \begin{gather*}
    P(t,at^2+bt) = 0 \\
    P_1(t,at^2+bt) + P_2(t,at^2+bt)(2at+b) = 0.
  \end{gather*}
  If $t = 0$, then the second line gives
  \begin{equation*}
    P_1(0,0) + P_2(0,0)b = 0.
  \end{equation*}
  Since $P_i(0,0) \in k$ for $i = 1, 2$ and $b$ is transcendental over
  $k$, this implies $P_1(0,0) = P_2(0,0) = 0$, contradicting the
  assumptions on $P$.

  Next suppose $t \ne 0$.  Let $s = at^2 + bt$, so that
  \begin{gather*}
    P(t,s) = 0 \\
    P_1(t,s) + P_2(t,s)(2at+b) = 0. \tag{$\ast$}
  \end{gather*}
  The equation $P(t,s) = 0$ shows that $\trdeg(s,t/k) \le 1$.  If $s,t
  \in k^{\alg}$, then the equation $s = at^2 + bt$ ensures $b \in
  k(a)^{\alg}$ (as $t \ne 0$), and this contradicts the algebraic
  independence of $a$ and $b$ over $k$.  So instead $\trdeg(s,t/k) =
  1$.  Then the ideal
  \begin{equation*}
    I = \{A(x,y) \in k[x,y] : A(s,t) = 0\}
  \end{equation*}
  is a height 1 prime ideal containing the irreducible polynomial
  $P(x,y)$, so it must be the principal ideal $P(x,y) \cdot k[x,y]$.
  Now break into cases:
  \begin{itemize}
  \item If $P_2(t,s) = 0$, then $P_1(t,s) = 0$ by ($\ast$), and both
    $P_1$ and $P_2$ are contained in $I$.  Then
    \begin{gather*}
      P_1(x,y) = Q_1(x,y)P(x,y) \\
      P_2(x,y) = Q_2(x,y)P(x,y)
    \end{gather*}
    for some polynomials $Q_1(x,y), Q_2(x,y) \in k[x,y]$.
    Substituting $(x,y) = (0,0)$, it follows that $P_1(0,0) =
    P_2(0,0)$, a contradiction.\footnote{The point is that $(s,t)$ is
    generic on the curve $C$ defined by $P$.  Since $P$ is irreducible
    and there is at least \emph{one} smooth point $(0,0)$, any generic
    point must be smooth.}
  \item If $P_2(t,s) \ne 0$, then
    \begin{align*}
      at^2 + bt &= s \\
      2at + b &= \frac{-P_1(t,s)}{P_2(t,s)}
    \end{align*}
    or equivalently,
    \begin{equation*}
      \begin{pmatrix} t^2 & t \\
        2t & 1
      \end{pmatrix} 
      \begin{pmatrix}
        a \\ b
      \end{pmatrix}
      =
      \begin{pmatrix}
        s \\ \frac{-P_1(t,s)}{P_2(t,s)}.
      \end{pmatrix}
    \end{equation*}
    The matrix on the left has determinant $t^2 - 2t^2 = -t^2 \ne 0$,
    so it is invertible.  Then $a,b \in k(s,t)$, contradicting the
    fact that $\trdeg(a,b/k) = 2$ and $\trdeg(s,t/k) = 1$.  \qedhere
  \end{itemize}
\end{proof}
\begin{theorem} \label{bronze-large}
  If $(K,\tau)$ is bt-henselian, then $K$ is large.
\end{theorem}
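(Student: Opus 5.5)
The plan is to verify the explicit form of largeness. Start with a polynomial $P(x,y)$ over $K$ with $P(a_0,b_0) = 0 \ne \frac{\partial P}{\partial y}(a_0,b_0)$, and suppose toward a contradiction that its zero set in $K^2$ is finite. Replacing $P$ by an irreducible factor that vanishes at $(a_0,b_0)$ and is smooth there, and translating, we may assume $P$ is irreducible, $P(0,0)=0$, and the partials do not both vanish at $(0,0)$. I also assume, as is implicit, that $\tau$ is a Hausdorff non-discrete field topology. Otherwise the discrete topology would make every field bronze t-henselian, so the theorem needs non-discreteness. In particular every nonempty open subset of $K$ is infinite. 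The idea is to intersect the curve $P=0$ with a three-parameter family of parabolas $y = \alpha z^2 + \beta z + \gamma$ and use bt-henselianity to produce many intersection points.

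\emph{Step 1: choose a good parabola through the origin.} Let $k$ be the field generated by the coefficients of $P$, and work with three indeterminates $\alpha,\beta,\gamma$. By Lemma~\ref{arc}, applied to the extension $k(\alpha,\beta)/k$, the polynomial $P(z,\alpha z^2+\beta z)$ is separable. Consequently the leading coefficient of $F_{\alpha,\beta,\gamma}(z) := P(z,\alpha z^2+\beta z+\gamma)$ in $z$ and the discriminant of $F_{\alpha,\beta,0}$ are nonzero polynomials in $k[\alpha,\beta]$. The leading coefficient does not involve $\gamma$, so the degree $d$ of $F$ in $z$ is constant off its zero set. Since $K$ is infinite, we can pick $(a,b) \in K^2$ at which neither polynomial vanishes. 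Then $Q(z) := F_{a,b,0}(z)/(\text{lead coeff})$ is a monic separable polynomial of degree $d$ with the root $z=0$.

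\emph{Step 2: apply bt-henselianity.} The map $(\alpha,\beta,\gamma) \mapsto F_{\alpha,\beta,\gamma}/\mathrm{lc}(\alpha,\beta)$ into $K[X]^d_1$ is continuous near $(a,b,0)$, because its coefficients are rational functions with denominator nonvanishing near $(a,b)$, and $\tau$ is a field topology. Bronze t-henselianity gives a neighborhood $V \ni Q$ in which every polynomial has a root in $K$. Pulling back, we get an open $W \ni (a,b,0)$ in $K^3$ such that for every $(a',b',c') \in W$ there is $t \in K$ with $P(t, a't^2+b't+c')=0$. This yields a point $(t, a't^2+b't+c')$ on the curve.

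\emph{Step 3: count points.} This is the step needing the most care, since the root $t$ may be uncontrolled and the same point may arise from many parameters. A point $(t,s) \in K^2$ arises from $(a',b',c')$ only if $(a',b',c')$ lies on the affine plane $\{a't^2+b't+c' = s\}$ in $K^3$, which is a genuine plane because the coefficient of $c'$ is $1$. If the curve had finitely many $K$-points, $W$ would be covered by finitely many planes. I would rule this out as follows. Fix $(a',b')$ with $(a',b',c') \in W$ for all $c'$ in a neighborhood $N$ of $0$, which is possible by shrinking $W$ to a product neighborhood. Each plane meets the line $\{(a',b')\}\times K$ in at most one point, while $N$ is infinite by non-discreteness. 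This contradiction shows $P$ has infinitely many zeros in $K^2$, so $K$ is large.
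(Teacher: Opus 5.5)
Your proposal is correct and follows the paper's proof. Lemma~\ref{arc} gives a parabola $y = az^2+bz$ through the origin along which $P$ restricts to a separable polynomial; bronze t-henselianity, applied as the constant term $\gamma$ varies near $0$, then produces infinitely many distinct points on the curve. Only the $\gamma$-direction of your three-parameter family is used.

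The differences from the paper are minor. You specialize the nonzero polynomial $\mathrm{lc}\cdot\mathrm{disc} \in k[\alpha,\beta]$ at a point of $K^2$, where the paper uses a saturated extension and Tarski--Vaught. You also make explicit two things the paper leaves implicit: the monic normalization (the $z$-leading coefficient does not involve $\gamma$), and the non-discreteness convention, which the theorem really does need.
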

\begin{proof}
  Let $P(x,y)$ be a polynomial such that $P(0,0) = 0 \ne
  \frac{\partial}{\partial y}P(0,0)$.  We must show that $\{(a,b) \in
  K^2 : P(a,b) = 0\}$ is infinite.  Replacing $P$ with one of its
  irreducible factors that vanishes at $(0,0)$, we may assume that $P$
  is irreducible.  Let $K^* \succ K$ be a highly saturated elementary
  extension.  Because $K$ admits a field topology, it is infinite, and
  thus $\trdeg(K^*/K) \ge 2$.  Then we can find $a_0, b_0 \in K^*$
  algebraically independent over $K$.  By Lemma~\ref{arc}, the
  polynomial $P(z,a_0z^2 + b_0z) \in K^*[z]$ is separable.  Applying
  the Tarski-Vaught criterion to the set $\{(a,b) \in K^* \times K^* :
  P(z,az^2 + bz) \text{ is separable}\}$, we get $(a,b) \in K^2$ such
  that $P(z,az^2 + bz) \in K[z]$ is separable.  Let $Q(x,y) =
  P(x,ax^2+bx+y)$.  Then $Q(x,0) = P(x,ax^2+bx)$, which is separable.
  Moreover, $Q(0,0) = P(0,0) = 0$, so $0$ is a root of $Q(x,0)$.
  Since $P$ and $Q$ are related by a change of variables, $Q$ has
  infinitely many roots if and only if $P$ does.  Therefore, replacing
  $P$ with $Q$, we may assume $P(x,0)$ is separable.

  Because $P(x,0)$ is separable, there is a neighborhood $U \ni 0$
  such that if $b \in U$, then $P(x,b)$ has a root, by
  bt-henselianity.  Thus $P$ has infinitely many roots.
\end{proof}

\section{Bronze and silver t-henselianity via dimension theory} \label{via-dims}
Let $K$ be a field.  As in the previous section, let $K[X]^d_1$ denote
the set of monic polynomials of degree $d$.  Let $K[X]^{<d}$ denote
the set of polynomials of degree $< d$.  Both $K[X]^d_1$ and
$K[X]^{<d}$ can be identified with $K^d$.
\begin{definition}
  Let $\star : K[X]^{<d} \times K[X]^d_1 \to K[X]^d_1$ be the map such
  that if
  \begin{equation*}
    Q(X) = (X-r_1) \cdots (X-r_d) \text{ for some } r_1,\ldots,r_d \in K^\alg,
  \end{equation*}
  then
  \begin{equation*}
    (P \star Q)(X) = (X - P(r_1)) \cdots (X - P(r_d)).
  \end{equation*}
  In other words, $P \star Q$ is the monic polynomial of degree $d$
  obtained by applying $P$ to the roots of $Q$.
\end{definition}
Using elementary symmetric polynomials, one sees that the map $(P,Q)
\mapsto P \star Q$ is given by polynomials, which is incidentally why
$P \star Q$ lands in $K[X]^d_1$ rather than $K^{\alg}[X]^d_1$.  As a
polynomial map, $(P,Q) \to P \star Q$ is continuous with respect to
any field topology on $K$.
\begin{remark} \phantomsection \label{star-facts}
  Fix a \emph{separable} polynomial $Q_0 \in K[X]^d_1$.
  \begin{enumerate}
  \item The map
    \begin{align*}
      K[X]^{<d} &\to K[X]^d_1 \\
      P &\mapsto P \star Q_0
    \end{align*}
    is finite-to-one.  Indeed, suppose we know $P \star Q_0$.  Let
    $r_1,\ldots,r_d$ be the roots of $Q_0$ (in $K^{\alg}$), and
    $s_1,\ldots,s_d$ be the roots of $P \star Q_0$.  Then $P$ is
    determined by the induced map $\{r_1,\ldots,r_d\} \to
    \{s_1,\ldots,s_d\}$, because the $r_i$ are pairwise distinct and
    $\deg P < d$.  There are only finitely many ($\le d^d$)
    possibilities for this map.
%%   \item If $Q_0 \in K[X]^d_1$ is separable, then the set
%%     \begin{equation*}
%%       \{P \in K[X]^{<d} : P \star Q_0 \text{ isn't separable}\}
%%     \end{equation*}
%%     is contained in a $(d-1)$-dimensional algebraic subvariety of
%%     $K[X]^{<d}$ (a hypersurface).  To prove this, we can replace $K$
%%     with $K^\alg$ and assume $K$ is algebraically closed.  Then the
%%     set in question is a union of finitely many sets of the form
%%     \begin{equation*}
%%       \{P \in K[X]^{<d} : P(r_i) = P(r_j)\}
%%     \end{equation*}
%%     where $r_i, r_j$ are distinct roots of $Q_0$.  Every such set is a
%%     hypersurface.
  \item If $Q_0$ and $P \star Q_0$ are separable, then $Q_0$ has a
    $K$-rational root if and only if $P \star Q_0$ has a $K$-rational
    root.  One direction is obvious: if $r$ is a root of $Q_0$, then
    $P(r)$ is a root of $Q_0$.  For the other direction, note that $P$
    induces a surjection from the roots of $Q_0$ to the roots of $P
    \star Q_0$.  Since both polynomials are monic separable of degree
    $d$, these two sets both have size $d$, so the surjection is a
    bijection.  If $r$ is a $K$-rational root of $P \star Q_0$, then
    there is a unique root $s$ of $Q_0$ lifting $r$.  Working in
    $K^{\alg} \models \ACF$, it follows that $s \in \dcl(K)$.  On the
    other hand, $s \in K^{\sep}$ because $Q_0$ is separable.  Thus $s
    \in \dcl(K) \cap K^{\sep} = K$.  \qedhere.
  \end{enumerate}
\end{remark}
Fix a monster model $\Mm$ of some theory, and a definable field $K$.
For the remainder of this section, make the following assumption:
\begin{assumption} \label{A}
  For every definable set $X \subseteq K^n$ there is an associated
  dimension $\dim(X) \in \Nn \cup \{-\infty\}$ satisfying the
  following properties:
  \begin{enumerate}
  \item $\dim(X) = -\infty \iff X = \varnothing$.
  \item $\dim(K) > 0$.
  \item If $\dim(X) > 0$, then $X$ is infinite (but the converse need
    not hold).
  \item $\dim(X \times Y) = \dim(X) + \dim(Y)$.
  \item $\dim(X \cup Y) = \max(\dim(X),\dim(Y))$.
  \item If $f : X \to Y$ is a definable bijection, then $\dim(X) =
    \dim(Y)$.  More generally, if $f : X \to Y$ is a definable
    surjection with finite fibers, then $\dim(X) = \dim(Y)$.
  \end{enumerate}
  Moreover, there is a definable field topology $\tau$ on $K$ with the
  following property:
  \begin{enumerate}[resume]
  \item If $X \subseteq K^n$ then $\dim(X) = \dim(K^n)$ if and only if
    $X$ has non-empty interior.
  \end{enumerate}
\end{assumption}
\begin{remark}
  If $X \subseteq Y$, then $\dim(X) \le \dim(Y)$, because $\dim(Y) =
  \dim(X \cup Y) = \max(\dim(X),\dim(Y))$.  As a corollary, if $f : X
  \to Y$ is a definable injection, or more generally, a definable
  finite-to-one map, then $\dim(X) = \dim(\im(f)) \le \dim(Y)$.
\end{remark}
\begin{remark}
  If $X \subseteq K^n$, then $\dim(\bd(X)) < \dim(K^n)$, because
  $\bd(X)$ can be written as a union of the definable sets $\cl(X)
  \setminus X$ and $X \setminus \ter(X)$, which both have empty
  interior.
\end{remark}
\begin{remark}
  If $P(X_1,\ldots,X_n) \in K[X_1,\ldots,X_d]$ is non-trivial, and
  $V(P)$ is the set of zeros of $P$, then $\dim(V(P)) < \dim(K^n)$.
  This can be seen in a couple different ways.
  \begin{itemize}
  \item Using the proof of the Noether normalization theorem, one can
    change coordinates via a definable bijection, and arrange for the
    projection
    \begin{align*}
      V(P) &\to K^{n-1} \\
      (x_1,\ldots,x_n) &\mapsto (x_1,\ldots,x_{n-1})
    \end{align*}
    to be finite-to-one.  Then $\dim(V(P)) \le \dim(K^{n-1}) <
    \dim(K^n)$.
  \item More directly, if $\dim(V(P)) = \dim(K^n)$, then $V(P)$ has
    non-empty interior.  Thus there is a non-empty open set $U
    \subseteq K^n$ on which $P$ vanishes.  Then we can find infinite
    subsets $S_1,\ldots,S_n \subseteq K$ such that $P$ vanishes on
    $S_1 \times \cdots \times S_n$.  This easily implies that $P$ is
    identically zero, a contradiction.
  \end{itemize}
\end{remark}
\begin{theorem} \label{A-thm}
  Under Assumption~\ref{A}, $(K,\tau)$ is bronze t-henselian.
\end{theorem}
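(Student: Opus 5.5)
Fix a separable $Q_0 \in K[X]^d_1$ with a root $a \in K$. We must find a neighborhood of $Q_0$ in $K[X]^d_1$ all of whose members have a $K$-rational root. Following the Remark after Definition of bt-henselianity, it suffices to show that the set
\begin{equation*}
  S = \{Q \in K[X]^d_1 : Q \text{ is separable and has a root in } K\}
\end{equation*}
contains a neighborhood of $Q_0$. The plan is to use the map $\star$ to move neighborhoods around, and the dimension axioms to see that $S$ is large near $Q_0$.

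First I would show that $S$ has non-empty interior. Consider the map $\phi : K[X]^{<d} \to K[X]^d_1$, $P \mapsto P \star Q_0$. By Remark~\ref{star-facts}(1) it is finite-to-one, so its image has dimension $\dim(K^d) = \dim(K[X]^d_1)$. The set of $P$ with $P \star Q_0$ non-separable is the zero set of the discriminant of $P \star Q_0$, a polynomial in the coefficients of $P$. It is non-trivial because $P = X$ gives $Q_0$, which is separable. So by the Remark on zero sets it has dimension $<\dim(K^d)$. Removing it, the image of the remaining $P$ still has full dimension. By Remark~\ref{star-facts}(2) this image lies in $S$, since $Q_0$ has a rational root. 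Hence $\dim(S) = \dim(K^d)$, and by property (7) $S$ has non-empty interior. Pick an open $W \subseteq S$ and some $Q_1 \in W$.

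Next I would transport this interior to $Q_0$. The key idea is to find $P$ with $P \star Q_1 = Q_0$ such that $Q \mapsto P \star Q$ is open near $Q_1$, or at least such that its continuous local inverse carries a neighborhood of $Q_0$ into $W$. Instead, I would go in the reverse direction using continuity. Choose $P \in K[X]^{<d}$ with $P \star Q_0 \in W$ and $P\star Q_0$ separable; this is possible because the good set of $P$ from the first step has full-dimensional image. Actually, refine the first step: the set $\{P : P\star Q_0 \in \ter(S)\}$ should have full dimension. Otherwise its complement in the good set has full dimension, its image lies in $S\setminus\ter(S) \subseteq \bd(S)$, and $\bd(S)$ has dimension $<\dim(K^d)$, contradicting finite-to-one-ness. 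So this set has non-empty interior, and in particular contains some $P$.

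The next step is the main obstacle. Knowing $P \star Q_0 \in \ter(S)$, I want to pull back to $Q_0$. Note that $Q \mapsto P \star Q$ is continuous, so $V = \{Q : P\star Q \in \ter(S)\}$ is an open neighborhood of $Q_0$. For $Q \in V$ near $Q_0$, the polynomial $Q$ is separable, since separability is an open condition, and $P \star Q$ is separable and has a rational root. By Remark~\ref{star-facts}(2) applied to $Q$ in place of $Q_0$ (the argument there only uses separability of both polynomials), $Q$ has a rational root. Thus $V \cap \{\text{disc} \neq 0\}$ is a neighborhood of $Q_0$ contained in $S$, as required. I expect the delicate part to be the dimension bookkeeping in choosing $P$: one must handle $\bd(S)$ via the earlier Remark on boundaries and the finite-to-one bound, and ensure the discriminant locus is genuinely lower-dimensional.
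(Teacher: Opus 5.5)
Your argument is correct and is essentially the paper's proof. The paper also uses that $P \mapsto P \star Q_0$ is finite-to-one to choose $P_0$ outside the lower-dimensional set $\{P : P \star Q_0 \text{ is non-separable or lies in } \bd(S)\}$, so that $P_0 \star Q_0 \in \ter(S)$, and then pulls back along the continuous map $Q \mapsto P_0 \star Q$, intersects with the open separable locus, and applies Remark~\ref{star-facts}(2). Your preliminary step showing that $S$ has non-empty interior, and the choice of $W$ and $Q_1$, can be dropped: the ``refined'' dimension count already produces the required $P$.
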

\begin{proof}
  Let $n = \dim(K[X]^d_1) = \dim(K[X]^{<d}) = \dim(K^d) = d \cdot
  \dim(K)$.  Let $S \subseteq K[X]^d_1$ be the set of separable
  polynomials with at least one root in $K$.  We must show that $S$ is
  open.  Fix some $Q_0 \in S$.  We will show that $S$ is a
  neighborhood of $Q_0$.  The set $\bd(S)$ has $\dim(\bd(S)) < n$.
  Let $S_1 \subseteq K[X]^d_1$ be the set of non-separable
  polynomials.  Then $S_1$ is contained in a hypersurface (use
  discriminants), so $\dim(S_1) < n$.  Since the map $P \mapsto P
  \star Q_0$ is finite-to-one, the set
  \begin{equation*}
    S_2 = \{P \in K[X]^{<d} : P \star Q_0 \in S_1 \cup \bd(S)\}
  \end{equation*}
  has $\dim(S_2) < n$.  Take some $P_0 \in K[X]^{<d} \setminus S_2$.
  Then $P_0 \star Q_0 \notin S_1$, so both $Q_0$ and $P_0 \star Q_0$
  are separable.  Since $Q_0$ has a $K$-rational root, so does $P_0
  \star Q_0$.  Thus $P_0 \star Q_0 \in S$.  Moreover, $P_0 \star Q_0
  \notin \bd(S)$, so instead $P_0 \star Q_0 \in \ter(S)$.  Since the
  $\star$ operation is continuous, the set
  \begin{equation*}
    U = \{Q \in K[X]^d_1 : P_0 \star Q \in S\}
  \end{equation*}
  is a neighborhood of $Q_0$.  Since $S_1$ is closed (use
  discriminants) and $Q_0 \notin S_1$, it follows that $U \setminus
  S_1$ is also a neighborhood of $Q_0$.  It remains to show that $U
  \setminus S_1 \subseteq S$.  Suppose $Q \in U \setminus S_1$.  Then
  $Q$ and $P_0 \star Q$ are separable, and $P_0 \star Q \in S$, so
  $P_0 \star Q$ has a $K$-rational root.  Therefore $Q$ has a
  $K$-rational root, and $Q \in S$.
\end{proof}
To get silver t-henselianity, we need a more powerful tool from
algebraic geometry, and an additional assumption on $\tau$.
\begin{fact}[Zariski's Main Theorem] \label{zmt}
  Let $K$ be a field.  Let $X, Y$ be irreducible, normal
  $K$-varieties.  Let $f : X \to Y$ be a morphism of $K$-varieties
  such that $f$ is birational and $X(K^\alg) \to Y(K^\alg)$ is a
  bijection.  Then $f$ is an isomorphism.
\end{fact}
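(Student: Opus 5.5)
We would deduce this from the general (Grothendieck) form of Zariski's Main Theorem, which we take as known: if $f : X \to Y$ is a separated, quasi-finite morphism of finite type, then $f$ factors as an open immersion $X \hookrightarrow X'$ followed by a finite morphism $X' \to Y$ (EGA~IV, 8.12.6). The plan has three steps. First show that $f$ is quasi-finite. Then show that $f$ is an open immersion, using birationality and normality of $Y$. Finally, use surjectivity on $K^\alg$-points to conclude that $f$ is an isomorphism. Separatedness and finite type are automatic, since $X$ and $Y$ are $K$-varieties.

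\emph{Quasi-finiteness.} Consider the function $x \mapsto \dim_x f^{-1}(f(x))$. By Chevalley's upper semicontinuity theorem, the set $Z \subseteq X$ where it is $\ge 1$ is closed. Suppose $Z$ is non-empty. Then $Z$ contains a closed point $x$, since $X$ is of finite type over a field and hence Jacobson. The fiber over the closed point $f(x)$ is then a positive-dimensional scheme of finite type over the finite extension $\kappa(f(x))$ of $K$. Such a scheme has infinitely many $K^\alg$-points, and they all map to the same $K^\alg$-point of $Y$. This contradicts injectivity of $X(K^\alg) \to Y(K^\alg)$. Hence every fiber is zero-dimensional, and $f$ is quasi-finite.

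\emph{Open immersion.} Factor $f$ as $X \hookrightarrow X' \xrightarrow{g} Y$ with the first map an open immersion and $g$ finite. Replacing $X'$ by the schematic closure of $X$, we may assume $X'$ is integral with the same function field as $X$. By birationality of $f$, this is also the function field of $Y$. So $g$ is a finite birational morphism onto the normal integral scheme $Y$. Locally, $g$ corresponds to a ring extension $A \subseteq B$ with $B$ finite over $A$ and contained in $\operatorname{Frac}(A)$. Since $A$ is integrally closed, $B = A$, so $g$ is an isomorphism. Therefore $f$ is an open immersion. Normality of $X$ is not actually needed here.

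\emph{Surjectivity.} Let $U = f(X) \subseteq Y$, which is open, and suppose the closed set $Y \setminus U$ is non-empty. By the Jacobson property it contains a closed point $y$. This $y$ is the image of some $K^\alg$-point of $Y$. By surjectivity on $K^\alg$-points, that point lifts to $X$, so $y \in U$, a contradiction. Hence $f$ is a surjective open immersion, i.e.\ an isomorphism.

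The main obstacle is the Grothendieck form of Zariski's Main Theorem itself, which we are simply citing. Everything else is routine bookkeeping with upper semicontinuity of fiber dimension and the Jacobson property.
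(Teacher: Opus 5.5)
Your argument is correct. The paper gives no proof of this statement: it cites Zariski's original form of the Main Theorem from Mumford's Red Book (Chapter III, \S9) and adds that only the smooth case is needed.

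You instead derive the statement from Grothendieck's form (EGA~IV, 8.12.6). Along the way you supply what the citation leaves implicit, namely the passage from the hypothesis on $K^{\alg}$-points to scheme-theoretic properties:
\begin{itemize}
\item injectivity on $K^{\alg}$-points gives quasi-finiteness, via upper semicontinuity of fibre dimension and the Jacobson property;
\item the finite part of the factorization collapses because $Y$ is normal and $f$ is birational;
\item surjectivity on $K^{\alg}$-points gives surjectivity, since every non-empty closed subset of $Y$ contains a closed point.
\end{itemize}

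Your route works directly over an arbitrary $K$ with $Y$ merely normal, and, as you note, normality of $X$ is superfluous. This matters because normality need not survive base change to $K^{\alg}$ when $K$ is imperfect, so in general you cannot just reduce to the algebraically closed case. That limitation is harmless for the paper, since its application (Lemma~\ref{lem-vw}) only involves smooth varieties.

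One small imprecision is in the quasi-finiteness step. The infinitely many $K^{\alg}$-points of the fibre lie over a single $K^{\alg}$-point of $Y$ only once you fix an embedding $\kappa(f(x)) \hookrightarrow K^{\alg}$. You then count $K^{\alg}$-rational points of $X_{f(x)} \otimes_{\kappa(f(x))} K^{\alg}$, which is positive-dimensional of finite type over an algebraically closed field. Alternatively, use pigeonhole over the finitely many such embeddings.
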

According to Mumford~\cite[Chapter III, \S9]{red-book}, Fact~\ref{zmt} is part of Zariski's original form
of his main theorem.  We will only need the case where the varieties
are smooth, which is supposedly a consequence of \cite[Chapter III, \S9, Proposition 1]{red-book}.
\begin{remark} \label{ahem}
  Let $V, W$ be irreducible $K$-varieties and $f : V \to W$ be a
  morphism which is dominant ($\im(f)$ is dense in $W$).  Embed $K$
  into a monster model $\Mm$ of ACF.  Take $\ba \in V(\Mm)$ generic over $K$ and
  let $\bb = f(\ba)$.  Then $\bb$ is generic in $W(\Mm)$ over $K$, and we can
  identify $K(V) = K(\ba)$ and $K(W) = K(\bb)$.  Under this
  identification, the map $f^* : K(W) \to K(V)$ is the inclusion
  $K(\bb) \subseteq K(\ba)$.
\end{remark}
\begin{lemma} \label{lem-vw}
  Let $K$ be a field.  Let $V_d$ and $W_d$ be the affine varieties
  \begin{gather*}
    V_d =\{(P,Q,c) : P \in K^{\alg}[X]^{<d}, Q \in K^{\alg}[X]^d_1, c \in
    K^\alg, Q(c) = 0\} \\
    W_d = \{(P,Q,c) : P \in K^{\alg}[X]^{<d}, Q \in K^{\alg}[X]^d_1, c \in
    K^\alg, (P \star Q)(c) = 0\}.
  \end{gather*}
  Let $f : V_d \to W_d$ be the morphism $(P,Q,c) \mapsto (P,Q,P(c))$.
  Let $V^0_d$ and $W^0_d$ be the open subvarieties
  \begin{gather*}
    V^0_d = \{(P,Q,c) \in V_d : Q \text{ and } P \star Q \text{ are separable}\} \\
    W^0_d = \{(P,Q,c) \in W_d : Q \text{ and } P \star Q \text{ are separable}\}.
  \end{gather*}
  Then $f : V^0_d \to W^0_d$ is an isomorphism of $K$-varieties.
\end{lemma}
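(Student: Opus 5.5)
Zariski's Main Theorem (Fact~\ref{zmt}) reduces the lemma to four checks: $V^0_d$ and $W^0_d$ are irreducible and smooth (hence normal), $f$ maps $V^0_d$ into $W^0_d$, $f$ is a bijection on $K^{\alg}$-points, and $f$ is birational.

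First, $f$ lands in $W^0_d$. If $Q(c)=0$ then $P(c)$ is a root of $P \star Q$ by definition of $\star$, and the separability conditions are the same on both sides. Everything is defined over the prime field, so all of this descends to $K$.

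Next, bijectivity on $K^{\alg}$-points, following Remark~\ref{star-facts}(2). Fix $(P,Q)$ with $Q$ and $P \star Q$ separable. Then $P$ induces a surjection from the $d$ distinct roots of $Q$ onto the $d$ distinct roots of $P \star Q$, so it is a bijection. Hence for each root $e$ of $P \star Q$ there is exactly one root $c$ of $Q$ with $P(c) = e$. So $f$ restricted to $V^0_d$ is a bijection onto $W^0_d$ on $K^{\alg}$-points.

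Then smoothness and irreducibility. Write $U$ for the open set of pairs $(P,Q) \in \mathbb{A}^d \times \mathbb{A}^d$ with $Q$ and $P \star Q$ separable. Consider the projection $V^0_d \to U$. Locally $V^0_d$ is cut out by the single equation $Q(c)=0$ with $\partial_c Q(c) = Q'(c) \ne 0$, because $Q$ is separable. So the projection is étale and $V^0_d$ is smooth; the same argument with $(P\star Q)'(e)\ne 0$ shows $W^0_d$ is smooth. For irreducibility of $V^0_d$, I would reparametrize $V_d$ by $(P, c, R)$ with $Q = (X-c)R$ and $R$ monic of degree $d-1$. This is an isomorphism $V_d \cong \mathbb{A}^d \times \mathbb{A}^1 \times \mathbb{A}^{d-1}$, so $V^0_d$ is a nonempty open subset of affine space and hence irreducible. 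Irreducibility of $W^0_d$ then follows because it is the image of $V^0_d$ under $f$. (One should double-check that $V^0_d$ is nonempty, e.g. using $P = X$ and $Q$ separable.)

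The main obstacle is birationality. Using Remark~\ref{ahem}, take $(P,Q,c)$ generic in $V^0_d$ over $K$, and set $e = P(c)$. The goal is $K(P,Q,c) = K(P,Q,e)$, that is, $c \in K(P,Q,e)$. From the bijectivity above, $c$ is the unique root of $Q$ with $P(c)=e$, so $c \in \dcl(K(P,Q,e))$ in ACF. This means $c$ lies in the perfect closure of $K(P,Q,e)$. On the other hand, $c$ is separable over $K(P,Q)$ because $Q$ is separable. Therefore $c \in K(P,Q,e)^{\sep} \cap K(P,Q,e)^{\mathrm{perf}} = K(P,Q,e)$, which is the same argument as in Remark~\ref{star-facts}(2).

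So $f$ is birational, and by Fact~\ref{zmt} it is an isomorphism $V^0_d \to W^0_d$. The care needed is in applying Zariski's Main Theorem to $K$-varieties, where geometric irreducibility and smoothness have to hold over $K$; these follow from the explicit descriptions above, which are defined over the prime field.
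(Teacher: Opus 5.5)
Your proposal is correct and follows the paper's proof essentially step for step. Both reduce to Zariski's Main Theorem, get smoothness from the nonvanishing $c$-derivative of the defining equation, get bijectivity on $K^{\alg}$-points from the root correspondence, get irreducibility of $W^0_d$ as the image of $V^0_d$, and get birationality from $c \in K(P,Q,P(c))^{\sep} \cap \dcl(K(P,Q,P(c)))$. The only difference is cosmetic: you identify $V_d$ with affine space via the factorization $Q = (X-c)R$, whereas the paper solves for the constant coefficient $y_0$ of $Q$.
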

\begin{proof}
  By Zariski's Main Theorem, it suffices to check the following six underlined statements:
  \begin{enumerate}
  \item \underline{$V^0_d$ is smooth}. It suffices to show that any
    $(P,Q,c) \in V^0_d$ is a smooth point of $V_d$.  Note that $V_d$
    is a hypersurface, the vanishing set of the polynomial $h(P,Q,c) =
    Q(c)$.  To show smoothness, we merely need at least one partial
    derivative of $h$ to be non-zero.  Since $Q(c) = 0$ and $Q$ is
    separable,
    \begin{equation*}
      \frac{\partial h}{\partial c} = \frac{\partial}{\partial c}Q(c)
      \ne 0.
    \end{equation*}
  \item \underline{$W^0_d$ is smooth}.  Similar, using the fact that
    $\frac{\partial}{\partial c}(P \star Q)(c) \ne 0$.
  \item \underline{$f$ is a bijection on $K^\alg$-points}.  This holds
    because if $Q$ and $P \star Q$ are both separable, then $P$
    induces a bijection from the ($K^\alg$-rational) roots of $Q$ to
    the ($K^\alg$-rational) roots of $P \star Q$.
  \item \underline{$V^0_d$ is irreducible}.  As $V^0_d$ is an open
    subvariety of $V_d$, it suffices to show that $V_d$ is
    irreducible.  Note that $V_d$ is a direct product of
    $K^\alg[X]^{<d}$ (which is the irreducible $d$-dimensional affine
    space) and the variety $X = \{(Q,c) \in K^{\alg}[X]^d_1 \times
    K^\alg : Q(c) = 0\}$.  The variety $X$ is
    \begin{equation*}
      \{(y_0,y_1,\ldots,y_{d-1},x) \in (K^\alg)^d : x^d +
      y_{d-1}x^{d-1} + \cdots + y_1x + y_0 = 0\}.
    \end{equation*}
    This is merely affine $d$-space in the coordinates
    $y_1,\ldots,y_{d-1},x$, under the change of variables
    \begin{equation*}
      y_0 = -y_1x  -y_2x^2 - \cdots - y_{d-1}x^{d-1} - x^d.
    \end{equation*}
    In particular, $X$ is geometrically irreducible.  A product of two
    geometrically irreducible varieties is irreducible.
  \item \underline{$W^0_d$ is irreducible}.  This holds because it is
    the image of the irreducible variety $V^0_d$ under the morphism
    $f$.
  \item \underline{$f$ is birational}.  We use Remark~\ref{ahem}.
    Embed $K$ into a monster model $\Mm$ of ACF.  Take $(P,Q,c)$
    generic in $V^0_d$.  Then $K(P,Q,c)$ is the function field
    $K(V^0_d)$.  The image $f(P,Q,c) = (P,Q,P(c))$ is generic in
    $W^0_d$, and $K(P,Q,P(c))$ is the function field $K(W^0_d)$.  The
    natural map $K(W^0_d) \to K(V^0_d)$ induced by $f$ is the
    inclusion $K(P,Q,P(c)) \subseteq K(P,Q,c)$.  We want this
    inclusion to be the identity, and so it suffices to show that $c
    \in K(P,Q,P(c))$.  Note that $Q$ is a separable polynomial with
    coefficients in $K(P,Q,P(c))$, and $c$ is a root of $Q$.
    Therefore $c \in K(P,Q,P(c))^{\sep}$.  Additionally, working in
    $\Mm \models \ACF$, the value $c$ is the unique root of $Q$
    mapping to $P(c)$ under the polynomial map $P$.  Therefore, $c \in
    \dcl(P,Q,P(c))$.  So
    \begin{equation*}
      c \in K(P,Q,P(c))^{\sep} \cap \dcl(K(P,Q,P(c))) = K(P,Q,P(c)).  \qedhere
    \end{equation*}
  \end{enumerate}
\end{proof}
Now return to the setting of Assumption~\ref{A}.
\begin{assumption} \label{B}
  If $U$ is a non-empty open subset of $K^n$ and $f : U
  \rightrightarrows K^m$ is a $k$-correspondence, then the set
  \begin{equation*}
    B = \{\ba \in K^n : f \text{ isn't continuous at } \ba\}
  \end{equation*}
  has $\dim(B) < \dim(K^n)$.
\end{assumption}
See Definition~\ref{corr-def} below for the definition of
``$k$-correspondences'' and their continuity.
\begin{theorem} \label{silver-theorem}
  Under Assumptions~\ref{A} and \ref{B}, the topology $\tau$ is
  st-henselian.
\end{theorem}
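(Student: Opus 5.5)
We adapt the proof of Theorem~\ref{A-thm}, using the $\star$ trick together with Lemma~\ref{lem-vw} and generic continuity (Assumption~\ref{B}) to track \emph{where} the root lands. Fix a separable $Q_0 \in K[X]^d_1$ with a root $a \in K$, and a neighborhood $U \ni a$. Consider the $d$-correspondence
\begin{equation*}
  g : K[X]^d_1 \rightrightarrows K, \qquad g(R) = \{\text{roots of } R \text{ in } K\}
\end{equation*}
(or rather its restriction to the open set of separable polynomials, where each value has size at most $d$; if needed, pad it to an honest $k$-correspondence per Definition~\ref{corr-def}). The aim is to move $Q_0$ by a $\star$-translation to a point where $g$ is continuous, and then pull the conclusion back using the isomorphism of Lemma~\ref{lem-vw}.

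First, let $n = d\dim(K)$ and let $B \subseteq K[X]^d_1$ be the set of discontinuity points of $g$, so $\dim(B) < n$ by Assumption~\ref{B}. As in the proof of Theorem~\ref{A-thm}, let $S_1$ be the closed set of non-separable polynomials. The map $P \mapsto P \star Q_0$ is finite-to-one (Remark~\ref{star-facts}), so
\begin{equation*}
  S_2 = \{P \in K[X]^{<d} : P \star Q_0 \in S_1 \cup B\}
\end{equation*}
has dimension $< n$. Pick $P_0 \notin S_2$. Then $Q_0$ and $P_0 \star Q_0$ are separable, $g$ is continuous at $P_0 \star Q_0$, and $P_0(a)$ is a root of $P_0 \star Q_0$.

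Next we transfer the problem. By Lemma~\ref{lem-vw}, the map $(P,Q,c) \mapsto (P,Q,P(c))$ is an isomorphism $V^0_d \to W^0_d$ of $K$-varieties, so its inverse is given by a polynomial-type (regular) map $(P,Q,e) \mapsto (P,Q,h(P,Q,e))$. Since regular functions are continuous on $K$-points for any field topology, $h$ is continuous on $W^0_d(K)$, and $h(P_0,Q,P_0(c)) = c$. Now $h(P_0,Q_0,P_0(a)) = a \in U$, so by continuity there are neighborhoods $N \ni Q_0$ and $E \ni P_0(a)$ such that $h(P_0,Q,e) \in U$ whenever $Q \in N$, $e \in E$, $(P_0,Q,e) \in W^0_d(K)$, and $Q$ and $P_0 \star Q$ are separable. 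The last condition holds near $Q_0$ because $S_1$ is closed and $\star$ is continuous.

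Continuity of $g$ at $P_0 \star Q_0$, together with $P_0(a) \in g(P_0 \star Q_0)$, gives a neighborhood $M$ of $P_0 \star Q_0$ such that every $R \in M$ has a $K$-root in $E$. By continuity of $\star$, shrink $N$ so that $P_0 \star Q \in M$ and $Q \notin S_1$, $P_0 \star Q \notin S_1$ for $Q \in N$. Then for $Q \in N$, choose a root $e \in E \cap K$ of $P_0 \star Q$ and set $c = h(P_0,Q,e) \in K$. We have $Q(c) = 0$ and $c \in U$, which is exactly st-henselianity.

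The main obstacle is the precise meaning of continuity of a $k$-correspondence (Definition~\ref{corr-def}). We need it to yield lower semicontinuity, i.e.\ each point of $g(R_0)$ persists under perturbation, and we must make $g$ an honest $k$-correspondence, with exactly $k$ values, on an open set. One would handle the latter by stratifying by the number of $K$-rational roots: the sets $\{R : |g(R)| = k\}$ are definable, and on the open set $\ter$ of the appropriate stratum $g$ is a $k$-correspondence. The boundaries of the strata have dimension $< n$, so they can be added to $S_2$, ensuring $P_0 \star Q_0$ lies in the interior of its stratum.
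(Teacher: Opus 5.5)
Your proof is correct and takes essentially the same route as the paper. You both translate $Q_0$ by $P_0 \star (-)$ to avoid the small bad set (non-separable polynomials, stratum boundaries, and the discontinuity points given by Assumption~\ref{B}), and you both pull back along the regular inverse of the isomorphism $V^0_d \to W^0_d$ from Lemma~\ref{lem-vw}. The only difference is that the paper proves the stronger statement that each stratum $S_k$ is open with the root correspondence continuous on all of it, whereas you check the st-henselian condition directly, using only lower semicontinuity at the single point $P_0 \star Q_0$.
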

\begin{proof}
  Let $n = \dim(K[X]^d_1) = \dim(K[X]^{<d}) = \dim(K^d) = d \cdot
  \dim(K)$.  Let $S \subseteq K[X]^d_1$ be the open subset of
  separable polynomials.  For $0 \le k \le d$, let $S_k$ be the subset
  of $P \in S$ such that $P$ has exactly $k$ roots over $K$.  Let $f_k
  : S_k \rightrightarrows K$ be the $k$-correspondence sending a
  polynomial to its set of roots.  To prove st-henselianity, it
  suffices to show that each set $S_k$ is open, and each
  correspondence $f_k$ is continuous.
  
  Let $S^0_k$ be the set of $P \in \ter(S_k)$ such that $f_k$ is
  continuous on a neighborhood of $P$.  Then $S^0_k$ is an open subset
  of $S_k$.  We must show that $S^0_k = S_k$.  First note that
  $\dim(S_k \setminus S^0_k) < n$ by Assumptions~\ref{A} and \ref{B}.
  Fix some $Q_0 \in S_k$.  We will show that $Q_0 \in S^0_k$.  Since
  $Q_0$ is separable, the map
  \begin{align*}
    K[X]^{<d} &\to K[X]^d_1 \\
    P &\mapsto P \star Q_0
  \end{align*}
  is finite-to-one.  As in the proof of bt-henselianity, the set
  $K[X]^d_1 \setminus S$ of non-separable polynomials has dimension $<
  n$.  Since $P \mapsto P \star Q_0$ is finite to one, the set
  \begin{equation*}
    X = \{P \in K[X]_{<d} : P \star Q_0 \notin S \text{ or } P \star
    Q_0 \in S_k \setminus S^0_k\}
  \end{equation*}
  has $\dim(X) < n$.  Fix some $P_0 \in K[X]_{<d} \setminus X$.  Then
  $P_0 \star Q_0 \in S$, so $Q_0$ and $P_0 \star Q_0$ are both
  separable.  Then $P_0$ induces a bijection between the $K$-rational
  roots of $Q_0$ and those of $P_0 \star Q_0$.  Since $Q_0$ has
  exactly $k$ roots over $K$, so does $P_0 \star Q_0$, and so $P_0
  \star Q_0 \in S_k$.  Then $P_0 \star Q_0 \in S^0_k$ (or else $P_0
  \in X$).  In particular, $f_k$ is continuous on a neighborhood of
  $P_0 \star Q_0$.

  Let $r_1, \ldots, r_k$ be the $K$-rational roots of $P_0 \star Q_0$,
  so $\{r_1,\ldots,r_k\} = f_k(P_0 \star Q_0)$.  Because $f_k$ is
  continuous on a neighborhood of $Q_0$, there is a neighborhood $U
  \subseteq S^0_k$ of $P_0 \star Q_0$ and continuous functions
  $g_1,\ldots,g_k : U \to K$ such that
  \begin{equation*}
    f(P) = \{g_1(P),\ldots,g_k(P)\} \text{ for } P \in U.
  \end{equation*}
  As the map $Q \mapsto P_0 \star Q$ is continuous (in fact,
  polynomial), the set \[U' = \{Q \in K[X]^d_1 : Q \in S \text{ and }
  P_0 \star Q \in U\}\] is a neighborhood of $Q_0$, and the
  compositions
  \begin{align*}
    h_i : U' &\to K \\
    h_i(Q) &= g_i(P_0 \star Q)
  \end{align*}
  are continuous for each $i$.  Then $\{h_1(Q),\ldots,h_k(Q)\}$ is the
  set of $K$-rational roots of $P_0 \star Q$ for any $Q \in U'$.
  Moreover, $Q$ and $P_0 \star Q$ are separable for $Q \in U'$.  It
  follows that the $K$-rational roots of $Q$ are
  $\{\eta_1(Q),\ldots,\eta_k(Q)\}$, where $\eta_i(Q)$ is determined by
  the fact that $P_0(\eta_i(Q)) = h_i(Q)$.  In particular, $Q \in S_k$
  for each $Q \in U'$, and so the neighborhood $U' \ni Q_0$ shows $Q_0
  \in \ter(S_k)$.  To complete the proof that $Q_0 \in S^0_k$, it
  remains to show that $f_k$ is continuous on the neighborhood $U' \ni
  Q_0$.  Equivalently, we must show that the functions $\eta_i : U'
  \to K$ are continuous.
  
  Let $V^0_d$ and $W^0_d$ be as in Lemma~\ref{lem-vw}.  Then we have a
  continuous map
  \begin{align*}
    U' &\to W^0_d(K) \\
    Q &\mapsto (P_0,Q,h_i(Q))
  \end{align*}
  for each $i$.  Because the morphism $V^0_d \to W^0_d$ is an
  isomorphism of varieties, the map $V^0_d(K) \to W^0_d(K)$ and its
  inverse are continuous (with respect to $\tau$).  Therefore the
  following composition is continuous:
  \begin{gather*}
    U' \to W^0_d(K) \stackrel{\cong}{\to} V^0_d(K) \\
    Q \mapsto (P_0,Q,h_i(Q)) \mapsto (P_0,Q,\eta_i(Q)).
  \end{gather*}
  Then each $\eta_i$ is continuous on $U'$, as desired.  Stepping back
  through the proof, the correspondence $Q \mapsto
  \{\eta_1(Q),\ldots,\eta_k(Q)\} = f_k(Q)$ is continuous on $U'$,
  ensuring that $U' \subseteq S^0_k$ and $Q_0 \in S^0_k$.  As $Q_0 \in
  S_k$ was arbitrary, we see that $S^0_k = S_k$.  Thus $S_k$ is open,
  and $f_k : S_k \rightrightarrows K$ is continuous, which implies
  that $\tau$ is st-henselian.
\end{proof}

% PART 2: the visceral case

\section{Nice t-minimal theories and the independent neighborhoods property} \label{new-insert}

\begin{definition}[{\cite[Definition~4.21]{wj-visc-1}}]
  A t-minimal theory $T$ is \emph{nice} if correspondences are
  generically continuous on open subsets of $\Mm^n$, in the following
  sense:
  \begin{itemize}
  \item If $U \subseteq \Mm^n$ is a definable open set and $f : U
    \rightrightarrows \Mm^m$ is a definable $k$-correspondence, then
    there is a definable open set $U_0 \subseteq U$ such that $\dim(U
    \setminus U_0) < n$ and $f \restriction U_0$ is continuous.
  \end{itemize}
\end{definition}
Visceral theories are nice \cite[Proposition~3.5]{wj-visc-1}.  It is
well-known that o-minimal and C-minimal theories are nice as well.  An
important generalization of all these cases is t-minimal theories with
the following technical condition:
\begin{definition}[{\cite[Definition~3.1]{castle-hasson}}]
  A t-minimal theory $T$ has the \emph{independent neighborhoods
  property} if for any finite tuple $\ba \in \Mm^n$ and small set $B
  \subseteq \Mm$ and neighborhood $U \ni \ba$, there is $B' \subseteq
  \Mm$ and a $B'$-definable neighborhood $U' \ni \ba$ such that
  \begin{gather*}
    B \subseteq B' \subseteq \Mm \\
    \dim(\ba/B') = \dim(\ba/B) \tag{$\ast$} \\
    \ba \in U \subseteq U'.
  \end{gather*}
\end{definition}
The intuition for ($\ast$) is that $B'$ is ``independent'' from $\ba$
in some sense, and then the $B'$-definable neighborhood $U'$ is also
``independent''.

Castle and Hasson show that o-minimal, weakly o-minimal and C-minimal
theories have the independent neighborhoods property
\cite[Corollary~A.15]{castle-hasson}, and visceral theories do too
\cite[Appendix~B]{castle-hasson}.  More importantly, Castle and Hasson
show that the independent neighborhoods property implies generic
continuity of functions.  The same proof works for correspondences.
For completeness, we include the proof:
\begin{theorem}
  Let $T$ be a t-minimal theory with the independent neighborhoods
  property.  Then $T$ is nice.  In other words, if $U \subseteq \Mm^n$
  is a definable open set and $f : U \rightrightarrows \Mm^m$ is a
  definable $k$-correspondence, then $f$ is continuous on an open set
  $U_0 \subseteq U$ with $\dim(U \setminus U_0) < n$.
\end{theorem}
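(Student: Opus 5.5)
We argue by contradiction with generic points, in the style of Castle and Hasson. Let $B$ be the set of points $\ba \in U$ at which $f$ is not continuous; this set is definable, with the same parameters as $U$ and $f$. Once we know $\dim(B) < n$, we can take $U_0 = U \setminus \cl(B)$. Indeed, $\cl(B) \setminus B \subseteq \bd(B)$ has dimension $< n$ by the dimension theory of \cite[\S2]{wj-visc-1}, so $\dim(U \setminus U_0) < n$. Moreover $f$ is continuous at every point of the open set $U_0$, so $f \restriction U_0$ is continuous. So it suffices to show $\dim(B) < n$.

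Suppose instead that $\dim(B) = n$. Let $A$ be a small set over which everything is defined. Take $\ba \in B$ with $\dim(\ba/A) = n$, so that $\ba$ is generic in $\Mm^n$ over $A$. Write $f(\ba) = \{\bb_1,\ldots,\bb_k\}$. Since $f$ is not continuous at $\ba$, there are neighborhoods $V_i \ni \bb_i$ witnessing discontinuity. After shrinking, we may assume the $V_i$ are pairwise disjoint basic opens. Discontinuity then means that every neighborhood of $\ba$ contains some $\ba'$ such that $f(\ba')$ does not meet each $V_i$ in exactly one point.

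Now apply the independent neighborhoods property to the tuple $\bb = (\bb_1,\ldots,\bb_k)$ over the base $A\ba$ and the product neighborhood $V_1 \times \cdots \times V_k$. This gives $B' \supseteq A\ba$ and $B'$-definable neighborhoods $V_i' \ni \bb_i$ with $V_i' \subseteq V_i$ and $\dim(\bb/B') = \dim(\bb/A\ba) = 0$; the last dimension is $0$ because $\bb \in \acl(A\ba)$. The point we actually need is a version with $\ba$ generic over the parameters of the $V_i'$. So I would instead apply the property to the pair $(\ba,\bb)$ over $A$. This yields $B'$ with $\dim(\ba\bb/B') = \dim(\ba\bb/A) = n$, and a $B'$-definable box $U' \times V_1' \times \cdots \times V_k'$ around $(\ba,\bb)$ inside the given neighborhood. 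Since $\dim(\ba\bb/B') = n$ and $\bb \in \acl(A\ba)$, we get $\dim(\ba/B') = n$.

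Consider the $B'$-definable set $D$ of $\bx \in U \cap U'$ such that $f(\bx)$ meets each $V_i'$ in exactly one point. This set contains $\ba$, and $\ba$ is generic over $B'$. By t-minimal dimension theory, a definable set containing a point generic over its parameters has dimension $n$, so $\ba \in \ter(D)$, using that points of $\Mm^n$ generic over $B'$ avoid the boundary of $D$. Thus on a neighborhood of $\ba$, $f(\bx)$ meets each $V_i' \subseteq V_i$ exactly once. This contradicts the choice of the $V_i$ as witnesses of discontinuity, provided we arranged the witnessing property for all smaller $V_i'$ as well. That is the case, since "not exactly one point in each $V_i$ near $\ba$" persists after shrinking the $V_i$, given that the $V_i$ are disjoint and $f(\ba)$ has exactly one point in each.

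The main obstacle is the bookkeeping in the last two paragraphs. One must use the independent neighborhoods property on the pair $(\ba,\bb)$ so that genericity of $\ba$ over the new parameters $B'$ is preserved. One must also formulate discontinuity so that it survives shrinking the target neighborhoods. Continuity of a $k$-correspondence at $\ba$, as in Definition~\ref{corr-def}, should amount to: for all small enough disjoint neighborhoods $V_i \ni \bb_i$, nearby $\bx$ have $f(\bx)$ meeting each $V_i$ exactly once. I expect this to be the formulation the argument relies on.
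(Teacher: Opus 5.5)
Your argument is correct and is essentially the paper's proof. You reduce to showing $\dim(B)<n$, take $\ba$ generic over the parameters, and apply the independent neighborhoods property to the whole tuple $(\ba,\bb_1,\ldots,\bb_k)$ so that $\dim(\ba/B')=n$ survives. You then conclude that the $B'$-definable set of $\bx$ for which $f(\bx)$ has exactly one point in each shrunken $V_i'$ contains $\ba$ in its interior. The paper phrases this as continuity at a generic point of $\ter(B)$ rather than as a contradiction at a generic point of $B$, which is immaterial.

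One point to tidy: the property as stated yields a single $B'$-definable neighborhood $W \subseteq \Mm^n \times V_1 \times \cdots \times V_k$ of the tuple, not a box. The paper obtains separate $V_i'$ from \cite[Lemma~3.2]{castle-hasson}. You can also just use $W$ directly: let $D$ be the set of $\bx \in U$ admitting an enumeration $(\by_1,\ldots,\by_k)$ of $f(\bx)$ with $(\bx,\by_1,\ldots,\by_k) \in W$. This set still contains $\ba$ and lies inside the set of $\bx$ with exactly one point of $f(\bx)$ in each $V_i$.
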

\begin{proof}
  Let $B$ be the set of points in $U$ where $f$ is discontinuous.  If
  $\dim(B) < n$ then the closure $\overline{B}$ also has dimension
  less than $n$ \cite[Corollary~2.10, Theorem~2.37(4)]{wj-visc-1}, and
  we can take $U_0 = U \setminus B$.  If $\dim(B) = n$ then $B$ has
  non-empty interior \cite[Theorem~2.37(4)]{wj-visc-1}; replacing $U$
  with the interior $\ter(B)$, we can assume $f$ is nowhere continuous
  on $U$.  We will get a contradiction.

  Take a small set $C$ over which everything is defined.  Take $\ba
  \in U$ with $\dim(\ba/C) = n$.  We will prove that $f$ is continuous
  at $\ba$.  Let $f(\ba) = \{\bb_1,\ldots,\bb_k\}$.  For each $i$, let
  $V_i$ be a neighborhood of $\bb_i$, chosen so that $V_1,\ldots,V_k$
  are pairwise disjoint.  Let $S$ be the set of $\bx \in U$ such that
  $f(\bx)$ contains exactly one point from each $V_i$.  Then $\ba \in
  S$, and we must show that $S$ is a neighborhood of $\ba$.

  By the independent
  neighborhoods property (see \cite[Lemma~3.2]{castle-hasson}), we can find $C' \supseteq C$ and
  $C'$-definable neighborhoods $V'_i \ni \bb_i$ with
  \begin{gather*}
    V'_i \subseteq V_i \\
    \dim(\ba,\bb_1,\ldots,\bb_k/C') = \dim(\ba,\bb_1,\ldots,\bb_k/C).
  \end{gather*}
  Since $\bb_1,\ldots,\bb_k$ are algebraic over $C'\ba$, we can
  rewrite this second line as
  \begin{equation*}
    \dim(\ba/C') = \dim(\ba/C).
  \end{equation*}
  Let $S'$ be the set of points $\bx \in U$ such that $f(\bx)$
  contains exactly one point in each $V'_i$.  Since $f$ is a
  $k$-correspondence, $S' \subseteq S$.  Note also that $\ba \in S'$,
  and $S'$ is $C'$-definable.  Then $\bd(S')$ is $C'$-definable and
  has dimension less than $n = \dim(\ba/C')$, so $\ba \notin \bd(S')$.
  It follows that $\ba \in \ter(S')$.  That is, $S'$ is a neighborhood
  of $\ba$.  The larger set $S \supseteq S'$ is also a neighborhood of
  $\ba$, completing the proof.
\end{proof}
Essentially all the theorems of \cite{wj-visc-1} generalize from
visceral theories to nice t-minimal theories
\cite[Remark~4.22]{wj-visc-1}.  Similarly, the theorems about visceral
theories in \S\ref{defman}--\ref{atdf} below all generalize to nice
t-minimal theories, so we will work in this greater generality.

\section{Definable manifolds} \label{defman}
Work in a monster model $\Mm$ of a visceral theory, or more generally,
a nice t-minimal theory in the sense of the previous section.  We use
the following definitions and conventions from
\cite{simonWalsberg,wj-visc-1}:
\begin{definition} \label{corr-def}
  If $X$ and $Y$ are sets, a \emph{$k$-correspondence} $f : X \rra Y$
  is a function assigning to each $x \in X$ a $k$-element subset $f(x)
  \subseteq Y$.  A \emph{correspondence} is a $k$-correspondence for
  some $k \ge 1$.  The \emph{graph} of a correspondence $f : X \rra Y$
  is the set
  \begin{equation*}
    \Gamma(f) = \{(x,y) \in X \times Y : y \in f(x)\}.
  \end{equation*}
  If $X$ and $Y$ are topological spaces, then $f$ is \emph{continuous}
  if for any $a \in X$, there is a neighborhood $a \in U \subseteq X$
  on which $f$ has the form $f(x) = \{g_1(x),\ldots,g_k(x)\}$ for some
  continuous functions $g_1,\ldots,g_k : U \to Y$.  If $X$ and $Y$ are
  definable sets, then $f$ is \emph{definable} if $\Gamma(f)$ is
  definable as a subset of $X \times Y$.
\end{definition}
\begin{definition}
  A \emph{cell} is a set of the form $\sigma(\Gamma(f)) \subseteq
  \Mm^n$ where $U \subseteq \Mm^i$ is a definable open set, $f : U
  \rra \Mm^{n-i}$ is a continuous definable correspondence, and
  $\sigma$ is a coordinate permutation.
\end{definition}
\begin{definition}
  A \emph{definable topological space} is a topological space
  $(X,\tau)$ where $X$ is a definable set, $\tau$ is a topology on
  $X$, and there is a definable family $\{U_y\}_{y \in Y}$ such that
  $\{U_y : y \in Y\}$ is a basis for the topology $\tau$.
\end{definition}
The following definitions are essentially from
\cite[Definition~4.1]{admissible}.
\begin{definition}
  Let $X$ be a definable topological space.
  \begin{enumerate}
  \item $X$ is \emph{locally Euclidean} if for every point $p \in X$,
    there is a definable open neighborhood $p \in U \subseteq X$
    which is definably homeomorphic to an open definable subspace
    of $\Mm^n$ for some $n$ depending on $p$.
%%   \item $X$ is \emph{locally definable} if for every point $p \in X$,
%%     there is an interpretable open neighborhood $p \in U \subseteq X$
%%     which is interpretably homeomorphic to a definable subspace of
%%     $\Mm^n$ for some $n$ depending on $p$.
  \item $X$ is a \emph{definable manifold} if $X = \bigcup_{i=1}^n
    U_i$ for some definable open subsets $U_i \subseteq X$, each
    of which is definably homeomorphic to an open subset of a
    cell.
  \item $X$ is a \emph{definable semimanifold} if $X = \bigcup_{i=1}^n
    U_i$ for some definable open subsets $U_i \subseteq X$, each
    of which is definably homeomorphic to a definable subspace of
    $\Mm^k$ for some $k$ depending on $i$.
  \end{enumerate}
\end{definition}
%% \textsc{Actually}, I should assume that (pre-)manifolds are definable,
%% rather than interpretable, for now.
\begin{remark}
  \begin{enumerate}
  \item Cells and open subsets of cells are locally Euclidean.
    Consequently, definable manifolds are locally Euclidean.    
  \item The definitions of ``definable manifold'' and ``locally
    Euclidean'' are similar, except that for ``definable
    manifold'' we require a finite atlas, while the atlas in
    ``locally Euclidean'' can be infinite.  In particular, definable manifolds are
    locally Euclidean, but locally Euclidean spaces need not be
    definable manifolds.
  \item Any definable open set $U \subseteq \Mm^n$ is a cell, and
    therefore a definable manifold.
%%   \item The four concepts are related as follows:
%%     \begin{equation*}
%%       \xymatrix{
%%           \text{Definable manifolds} \ar@{}[r]|-*[@]{\subseteq} \ar@{}[d]|-*[@]{\subseteq} & \text{Definable pre-manifolds} \ar@{}[d]|-*[@]{\subseteq} \\ \text{Locally euclidean spaces} \ar@{}[r]|-*[@]{\subseteq} & \text{Locally definable spaces.}
%%       }
%%     \end{equation*}
%%   \item If $X$ is locally definable (bzw.\@ a definable pre-manifold)
%%     and $Y$ is an interpretable subspace, then $Y$ is locally
%%     definable (bzw.\@ a definable pre-manifold).
  \item If $X$ is a definable semimanifold, and $Y \subseteq X$ is a
    definable subspace, then $Y$ is a definable semimanifold.
  \item If $X$ is a definable manifold (bzw.\@ locally Euclidean), and
    $Y$ is an open definable subspace, then $Y$ is a definable
    manifold (bzw.\@ locally Euclidean).\footnote{This is why we use ``open subset of a cell'' rather than ``cell'' in the definition of ``definable manifold''.  It's not clear whether any open subset of a cell can be covered by finitely many open sets homeomorphic to cells.}
%%   \item It is a general fact that if $X$ is an interpretable set and
%%     $X$ is covered by the image of finitely many interpretable
%%     injections $f_i : D_i \to X$, with $D_i$ definable, then $X$ is in
%%     interpretable bijection with a definable set, and so $X$ is
%%     ``definable'' in some sense.  Consequently, any definable
%%     (pre-)manifold is ``definable''.  More precisely, any definable
%%     pre-manifold $X$ has an interpretable homeomorphism to a definable
%%     pre-manifold $X'$ whose underlying set is definable, rather than
%%     interpretable.
  \item Definable semimanifolds are essentially the same thing as the
    ``definable spaces'' in \cite{Peterzil-Steinhorn}.  In our
    context, ``definable space'' sounds too close to ``definable topological space'', so we
    use the term ``definable semimanifold'' instead.  The name is
    meant to remind one of semialgebraic sets, which can have edges
    and corners, just like a semimanifold.
  \item In \cite{Peterzil-Steinhorn} and \cite{admissible}, the
    definition of ``definable manifold'' required each of the open
    sets $U_i$ to be definably homeomorphic to an open subspace of
    $\Mm^n$.  Here, we are requiring something weaker: an open subset
    of a cell.  This weakening is essential if we want definable
    groups to be definable manifolds, even in the well-behaved case of
    ACVF$_{0,0}$.  See Appendix~\ref{not-nice}.
  \end{enumerate}
\end{remark}

\section{Some tame topology on definable (semi)manifolds} \label{tame-man}
If $X$ is a definable topological space and $D \subseteq X$, the
\emph{local dimension} of $D$ at $p \in X$, written $\dim_p(D)$, is
the infimum of $\dim(U \cap D)$ as $U$ ranges over neigbhorhoods of
$p$.
\begin{lemma} \label{local-dim}
  Let $X$ be a definable semimanifold.  If $D \subseteq X$ is a
  definable subset, then $\dim(D) = \max_{p \in D} \dim_p(D)$, where
  $\dim_p(D)$ is the local dimension of $D$ at $p$.
\end{lemma}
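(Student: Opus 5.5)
Since $\dim_p(D) \le \dim(D)$ for every $p$ (take $U = X$), the inequality $\max_{p\in D}\dim_p(D) \le \dim(D)$ is immediate. For the reverse inequality we must find a single point $p \in D$ with $\dim_p(D) = \dim(D)$. Note that here $X$ is only a definable semimanifold, so $\dim(D)$ is computed by transporting $D$ through the finitely many charts; we first reduce to the case of a definable subset of $\Mm^k$.

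\emph{Reduction.} Write $X = U_1 \cup \cdots \cup U_n$ with $U_i$ definable open and $\phi_i : U_i \to \Mm^{k_i}$ definable homeomorphisms onto definable subspaces. Then $D = \bigcup_i (D \cap U_i)$, so by the union property $\dim(D) = \dim(D \cap U_i)$ for some $i$. For $p \in D \cap U_i$, neighborhoods of $p$ contained in $U_i$ are cofinal among all neighborhoods, and $\dim(V \cap D) \ge \dim(V \cap D \cap U_i)$. Hence $\dim_p(D) \ge \dim_p(D \cap U_i)$, where the latter is computed in $U_i$. Since $\phi_i$ is a definable bijection and a homeomorphism, it preserves both dimension and local dimension. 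It therefore suffices to treat $D' = \phi_i(D \cap U_i) \subseteq Y = \phi_i(U_i) \subseteq \Mm^k$, computing local dimension in the subspace $Y$. Because $Y$ carries the subspace topology, $\dim_p(D')$ in $Y$ equals $\dim_p(D')$ in $\Mm^k$: every neighborhood in $Y$ has the form $V \cap Y$, and $V \cap Y \cap D' = V \cap D'$. So we are reduced to a definable $D \subseteq \Mm^k$.

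\emph{The Euclidean case.} Let $m = \dim(D)$ and let $B = \{p \in D : \dim_p(D) < m\}$; this set is definable, using a definable basis. Suppose for contradiction that $B = D$. I would invoke the t-minimal dimension theory of \cite[\S2]{wj-visc-1}, in which dimension is characterized by generic points: $\dim(D) = \max\{\dim(\ba/C) : \ba \in D\}$ for a small $C$ over which $D$ is defined. Take $\ba \in D$ with $\dim(\ba/C) = m$. Since $\ba \in B$, some basic neighborhood $V = U_{\bar{e}}$ of $\ba$ from the definable basis has $\dim(V \cap D) < m$. Now consider the definable set $Z = \{\bx \in D : \exists \bar{e}\,(\bx \in U_{\bar e} \wedge \dim(U_{\bar e} \cap D) < m)\}$. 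This uses definability of dimension in t-minimal theories \cite{wj-visc-1}, and we have $Z = D$. We want to show $\dim(Z) < m$, which gives a contradiction.

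\emph{Main obstacle.} The difficulty is the parameter $\bar e$: $\ba$ might not be generic over $C\bar e$. The approach I would try is to use the subadditivity / additivity of dimension for definable families from \cite[\S2]{wj-visc-1}, but the cleanest route is to rely instead on the already-established fact \cite[Theorem~2.37]{wj-visc-1} that $\dim(D)=m$ implies the existence of a coordinate projection $\pi$ to $\Mm^m$ with $\pi(D)$ having nonempty interior; more precisely, this is the local statement in \cite{wj-visc-1} that the set of points of $D$ of full local dimension is nonempty. If a ready-made statement is available there, I would cite it directly; that is the likely intended proof.

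Otherwise, the fallback is as follows. Choose $\bar e$ with $\ba \in U_{\bar e}$ and $\dim(U_{\bar e}\cap D)<m$, and pick $\bar e$ realizing a type independent of $\ba$ over $C$ in the sense $\dim(\ba/C\bar e) = \dim(\ba/C)$. This mirrors the independent neighborhoods property, and uses the extension/exchange properties of the t-minimal dimension. Given such an $\bar e$, $\ba \in U_{\bar e}\cap D$, which is $C\bar e$-definable of dimension $<m$, contradicting $\dim(\ba/C\bar e)=m$. Securing such an independent neighborhood in a merely t-minimal setting is the step I expect to be hardest. Here, however, we are working in a nice theory, and I would obtain it from the generic-point characterization of interior in \cite[Theorem~2.37]{wj-visc-1}.
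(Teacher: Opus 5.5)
Your reduction to a definable $D \subseteq \Mm^k$ is correct, and it is exactly the paper's reduction. The paper then cites \cite[Proposition~3.10]{wj-visc-1}, a result from the visceral (more generally, nice t-minimal) setting. The gap is the Euclidean case, which is the whole content of the lemma, and none of your routes proves it.

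All of your routes rest only on the t-minimal dimension theory of \cite[\S2]{wj-visc-1}: Theorem~2.37, projections with non-empty interior, and the generic-point characterization of interior. The statement cannot follow from these alone, because it is false in general t-minimal theories. Take RCF with the Sorgenfrey topology and the antidiagonal $D = \{(x,-x)\}$. Then $\dim(D)=1$, since $D$ is in definable bijection with $\Mm$. But the box $[x,x+1)\times[-x,-x+1)$ meets $D$ only in $(x,-x)$, so $\dim_p(D)=0$ for every $p\in D$.

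So a projection with non-empty interior does not give a point of full local dimension. Likewise, the independent neighborhood in your fallback cannot be extracted from Theorem~2.37. In this example, a box $[a,b)\times[c,d)$ around $(x,-x)$ meets $D$ in a finite set only if $a=-c=x$, so every such $U_{\bar e}$ has $\dim(\ba/C\bar e)=0$. Saying that we are in a nice theory does not help unless niceness is actually used, and in your argument it never is.

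The missing idea is to use niceness through cells. Let $m=\dim(D)$. By cell decomposition \cite[Theorem~3.8]{wj-visc-1}, $D$ contains a cell $\sigma(\Gamma(f))$, where $U\subseteq\Mm^m$ is definable open and $f:U\rra\Mm^{k-m}$ is a continuous definable correspondence. This is where generic continuity of correspondences enters. You can also get the cell directly: take a weak cell in $D$ of dimension $m$, shrink to an open subset of $\Mm^m$ over which the fibers have constant size, and apply niceness.

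Now let $p=\sigma(\bar u,\bar v)$ lie in this cell and let $V$ be a definable neighborhood of $p$. Near $\bar u$ we have $f=\{g_1,\ldots,g_r\}$ with each $g_j$ continuous, and $\bar v=g_j(\bar u)$ for some $j$. So there is a non-empty open $V'\ni\bar u$ with $\sigma(\bar u',g_j(\bar u'))\in V$ for all $\bar u'\in V'$. Hence the projection of $V\cap\sigma(\Gamma(f))$ to the $U$-coordinates contains $V'$, so it has non-empty interior and dimension $m$. Since this projection is finite-to-one on the cell, $\dim(V\cap D)\ge \dim(V\cap\sigma(\Gamma(f))) = m$. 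Therefore $\dim_p(D)=m$, which is the content of the cited proposition.
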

\begin{proof}
  Since $X$ is covered by \emph{finitely many} open sets which are
  definably homeomorphic to definable subspaces of $\Mm^n$, we reduce
  to the case where $X$ itself is a definable subspace $X \subseteq
  \Mm^n$.  Then this is \cite[Proposition~3.10]{wj-visc-1}.
\end{proof}
\begin{lemma} \label{loc-eu-dim}
  Let $X$ be a locally Euclidean definable semimanifold and $Y$ be a
  definable subset.  Then $Y$ is dense if and only if $\dim(X
  \setminus Y) < \dim(X)$.
\end{lemma}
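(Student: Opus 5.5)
To prove the lemma I would reduce to the two facts already available: local dimension computes dimension on semimanifolds (Lemma~\ref{local-dim}), and in a locally Euclidean space every point has a neighborhood homeomorphic to an open subset of $\Mm^n$, where $\dim(W) = n$ for nonempty open $W \subseteq \Mm^n$. First I would record a preliminary claim: every nonempty definable open subset $V \subseteq X$ has $\dim(V) = \dim(X)$ locally. More precisely, for $p \in X$ with chart $U \ni p$, $U \cong W \subseteq \Mm^{n_p}$ open, every nonempty definable open $V \subseteq U$ has $\dim(V) = n_p$. This holds because the definable homeomorphism preserves dimension and nonempty open subsets of $\Mm^{n_p}$ have dimension $n_p$, since they have nonempty interior and t-minimal dimension satisfies $\dim(D) = n \iff \ter(D) \neq \varnothing$ for $D \subseteq \Mm^n$.

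For the direction ``$\dim(X \setminus Y) < \dim(X) \Rightarrow Y$ dense'', suppose $Y$ is not dense. Then there is a nonempty definable open $V \subseteq X \setminus Y$; I may take $V$ basic, and hence definable. By Lemma~\ref{local-dim}, choose $p \in X$ with $\dim_p(X) = \dim(X) =: d$. The obstacle is that the point $p$ need not lie in $V$, and the chart dimension $n_q$ at points $q \in V$ might be smaller than $d$. So I need an extra step: the dimension of $X$ might be achieved only away from $V$. To handle this, I would instead compare with a bound valid near $V$. Take $q \in V$ with chart $U \ni q$, $U \cong W \subseteq \Mm^{n_q}$. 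If $n_q < d$, this direction seems to fail. For example, $X$ could be the disjoint union of a point and a line, with $Y$ equal to the line. Then $Y$ is not dense, but $\dim(X \setminus Y) = 0 < 1$.

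So I would check whether the intended reading forces pure dimensionality. Presumably ``locally Euclidean'' here is meant with constant $n$, or the lemma is applied to groups, which are homogeneous. I would therefore prove it under the hypothesis that all charts have the same dimension $n$, equivalently that $\dim_p(X) = \dim(X)$ for all $p$. Under that hypothesis, $\dim(V) = n = \dim(X)$, so $\dim(X \setminus Y) \ge \dim(V) = \dim(X)$, which gives the contrapositive.

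For the converse, suppose $Y$ is dense and, for contradiction, that $\dim(X \setminus Y) = \dim(X)$. By Lemma~\ref{local-dim}, pick $p \in X \setminus Y$ with $\dim_p(X \setminus Y) = \dim(X) \ge n_p$. In the chart $U \ni p$, the set $Z = (X \setminus Y) \cap U$ corresponds to a subset of $W \subseteq \Mm^{n_p}$ of full dimension $n_p$. Here I use that $\dim(Z) \ge \dim_p(X \setminus Y)$ and that $\dim(Z) \le n_p$ because $Z \subseteq U$. Hence $Z$ has nonempty interior in $\Mm^{n_p}$, and so also in $X$, since $U$ is open. This gives a nonempty open set disjoint from $Y$, contradicting density. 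This direction needs no homogeneity, because $\dim(X) \ge \dim_p(X \setminus Y) \ge n_p$ already forces equality.

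The main obstacle is therefore the first direction and its dependence on equidimensionality. I would state explicitly that $X$ is locally Euclidean of a fixed dimension $n$, as is the case for the groups treated later, and note the counterexample otherwise.
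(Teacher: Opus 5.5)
Your proposal is correct, and your objection to the statement is justified. With the paper's definition of ``locally Euclidean'', where the chart dimension may depend on the point, the implication ``$\dim(X\setminus Y)<\dim(X)\Rightarrow Y$ dense'' is false. Your point-plus-line example is a counterexample: it is a locally Euclidean definable semimanifold in which the lower-dimensional component is open and disjoint from $Y$. If you want to avoid $\Mm^0$, use line-plus-plane instead.

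This example already lives in $\Mm^2$. So the paper's one-line proof cannot establish this direction: it localizes via the finite atlas and then cites \cite[Proposition~4.18]{wj-visc-1} for subsets of $\Mm^n$, and that citation cannot cover your example without an equidimensionality hypothesis. The localization step in this direction also only gives $\dim((X\setminus Y)\cap U)<\dim(X)$, not $<\dim(U)$.

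What saves the paper is that it only uses the direction ``$Y$ dense $\Rightarrow \dim(X\setminus Y)<\dim(X)$''. That happens in Lemma~\ref{new-tame-sane}(\ref{nts4}), and through it in Claim~\ref{ind-claim}, the proof of Proposition~\ref{almost}, and Lemma~\ref{interior-duh}. This is exactly the direction you prove, with no equidimensionality assumption. Where the paper needs more, it either assumes constant local dimension explicitly (Lemma~\ref{interior-duh}) or gets it from translation invariance (Remark~\ref{interior-duh-2}). This matches your repaired hypothesis.

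As for the route, the two arguments differ. The paper reduces to $X\subseteq\Mm^n$ and relies on \cite{wj-visc-1} for everything else. You argue directly in charts, using only Lemma~\ref{local-dim}, invariance of dimension under definable bijections, and the t-minimal fact that a definable subset of $\Mm^n$ has dimension $n$ iff it has non-empty interior. Your argument is self-contained and shows exactly where pure dimensionality enters. The paper's is shorter but, as written, hides that hypothesis.
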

\begin{proof}
  Again, we can work locally, and reduce to the case where $X$ is a
  subset of $\Mm^n$, in which case the result is
  \cite[Proposition~4.18]{wj-visc-1}.
\end{proof}
\begin{lemma} \label{bookkeeping-rescue}
  Let $X$ be a definable semimanifold.  There is a definable set $D
  \subseteq \Mm^n$ and a definable continuous bijection $f : D \to X$.
\end{lemma}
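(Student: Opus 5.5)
The idea is to disjointify the finite atlas and glue the pieces by a tagged disjoint union. By the definition of definable semimanifold, write $X = \bigcup_{i=1}^m U_i$, where each $U_i$ is a definable open subset of $X$ and $\phi_i : U_i \to V_i$ is a definable homeomorphism onto a definable subspace $V_i \subseteq \Mm^{k_i}$.

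First, disjointify. Set $U'_i = U_i \setminus \bigcup_{j<i} U_j$. These are pairwise disjoint definable sets covering $X$. Let $V'_i = \phi_i(U'_i) \subseteq V_i$. The restriction $\phi_i^{-1} \restriction V'_i : V'_i \to U'_i$ is a definable bijection, and it is continuous as a map into $X$, where $V'_i$ carries the subspace topology from $\Mm^{k_i}$. Indeed, it is the restriction of the homeomorphism $V_i \to U_i$, composed with the inclusion $U_i \hookrightarrow X$, which is continuous because $U_i$ is a subspace of $X$.

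Next, pad to a common ambient space. Let $k = \max_i k_i$. We need disjoint copies of the $V'_i$ inside one $\Mm^n$, placed so that each copy is clopen in their union; this makes continuity on each piece give continuity on the whole. Since $\Mm$ is Hausdorff with no isolated points (t-minimality), it is infinite. Pick distinct parameters $c_1, \ldots, c_m \in \Mm$; adding finitely many parameters is harmless in the monster model, or one can note the statement only requires definability with parameters. Choose pairwise disjoint open neighborhoods $W_i \ni c_i$, which is possible by Hausdorffness, and we may take the $W_i$ definable since $\tau$ has a definable basis. Define
\begin{equation*}
  D = \bigcup_{i=1}^m \{c_i\} \times V'_i \times \{c_1\}^{k-k_i} \subseteq \Mm^{1+k}.
\end{equation*}
The $i$-th piece equals $D \cap (W_i \times \Mm^{k})$, so it is relatively open in $D$. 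Hence the pieces are pairwise disjoint and each is clopen in $D$. Define $f : D \to X$ by $f(c_i, \bar v, \ldots) = \phi_i^{-1}(\bar v)$. This map is definable and bijective, since the $U'_i$ partition $X$. It is continuous because it is continuous on each clopen piece: each piece is homeomorphic to $V'_i$ via the coordinate projection, as the padded coordinates are constant.

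The only step requiring any care is the gluing, namely ensuring the pieces are clopen in $D$, so that continuity of the restrictions gives continuity of $f$. This is exactly where Hausdorffness of the topology on $\Mm$ is used to separate the tags $c_i$. Everything else is bookkeeping.
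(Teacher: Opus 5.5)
Your proof is correct and follows the paper's argument: you partition $X$ into finitely many definable pieces, each contained in a chart (the paper uses the atoms of the Boolean algebra generated by the $U_i$, you use $U_i \setminus \bigcup_{j<i} U_j$), transport each piece into a power of $\Mm$, and realize the topological disjoint union inside a single $\Mm^n$. Your tagging by Hausdorff-separated constants $c_i$ spells out that last step, which the paper leaves implicit.
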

% Wow, this saves a LOT of trouble.
\begin{proof}
  We can cover $X$ with finitely many open definable subsets
  $U_1,\ldots,U_\ell$, each of which has a definable homeomorphism to
  a definable subset of a power of $\Mm$.  Let $Y_1,\ldots,Y_m$ be the
  atoms in the boolean algebra generated by $U_1,\ldots,U_\ell$.  Then
  $\{Y_1,\ldots,Y_m\}$ is a definable finite partition of $X$.  Each
  $Y_i$ is a subset of some $U_j$, and therefore has a definable
  homeomorphism to some definable subset $Y'_i$ of a power of $\Mm$.
  We can realize the topological disjoint union $\coprod_{i=1}^m Y'_i$
  as a definable subset $D \subseteq \Mm^n$ for some $n$.  To conclude, take $f$ to be the natural map
  \begin{equation*}
    D \stackrel{\cong}{\to} \coprod_{i=1}^m Y'_i \to \bigcup_{i=1}^m Y_i = X.  \qedhere
  \end{equation*}
\end{proof}
\begin{lemma} \label{new-tame-sane}
  Let $X$ be a definable semimanifold.
  \begin{enumerate}
  \item \label{nts1} If $Y$ is a definable semimanifold and $f : X
    \rightrightarrows Y$ is a definable $k$-correspondence, then the
    set $X_{bad}$ of points where $f$ is not continuous is the
    complement of a dense set.
  \item \label{nts2} If $Y$ is a definable semimanifold and $f : X \to Y$ is a
    definable function, then the set $X_{bad}$ of points where $f$ is
    not continuous is the complement of a dense set.
  \item \label{nts3} If $D \subseteq X$ is a definable subset and $X_{bad}$ is the
    boundary $\bd(D)$, then $X_{bad}$ is the complement of a dense
    set.
  \item \label{nts4} If $X$ is locally Euclidean, then in each of the previous
    points, the set $X_{bad}$ has $\dim(X_{bad}) < \dim(X)$.
  \end{enumerate}
\end{lemma}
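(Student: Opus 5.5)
The plan is to reduce everything to the case where $X$ (and $Y$) are definable subsets of $\Mm^n$, where the corresponding generic-continuity and boundary statements are known from \cite{wj-visc-1} for nice t-minimal theories. Two properties let us work locally. Continuity of $f$ at a point, and membership in $\bd(D)$, depend only on a neighborhood of the point. Density of a set in $X$ can also be checked on each member of a finite open cover.

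So first cover $X$ by finitely many definable open sets $U_1,\ldots,U_\ell$, each definably homeomorphic to a definable subspace of some $\Mm^{k_i}$. Then $X_{bad}\cap U_i$ equals the bad set of $f\restriction U_i$ (respectively of $\bd_{U_i}(D\cap U_i)$), since $U_i$ is open. If each $U_i\setminus X_{bad}$ is dense in $U_i$, then $X\setminus X_{bad}$ is dense in $X$, because any nonempty open set meets some $U_i$. Transporting along the homeomorphism, we may therefore assume $X\subseteq \Mm^k$ is a definable subspace.

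For $Y$ in parts (\ref{nts1}) and (\ref{nts2}), near a point $a$ we want to replace $Y$ by a chart. For (\ref{nts2}), partition $X$ into the definable pieces $f^{-1}(V_j)$, where $V_1,\ldots,V_m$ is a finite chart cover of $Y$. The trouble is that continuity at $a$ with $f(a)\in V_j$ only requires $f$ to map a neighborhood of $a$ into $V_j$ continuously. I would argue as follows. Let $X_j'=\ter(f^{-1}(V_j))$. The union $\bigcup_j X_j'$ is dense, because the boundaries of the pieces have dense complement by part (\ref{nts3}). On $X_j'$, the map $f$ lands in $V_j\cong$ a subset of $\Mm^{m_j}$, so the Euclidean generic continuity result from \cite{wj-visc-1} applies. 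Hence the continuity points are dense in each $X_j'$, and therefore dense in $X$.

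For correspondences, part (\ref{nts1}), I would use the same idea via the set of $x$ for which the pattern $(|f(x)\cap V_j|)_j$ is locally constant. Alternatively, I would reduce to functions by ordering points locally. This bookkeeping is the step I expect to be the main obstacle: a $k$-correspondence can split its values across several charts, and the reduction needs the statement for subsets of $\Mm^k$ as in \cite[\S4]{wj-visc-1} for correspondences on arbitrary definable domains. I would try to cite that directly and prove (\ref{nts3}) first, since (\ref{nts1}) and (\ref{nts2}) both use it.

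For part (\ref{nts4}), when $X$ is locally Euclidean, apply Lemma~\ref{loc-eu-dim}. The complement $Y=X\setminus X_{bad}$ is definable, because continuity at a point is first-order expressible using the definable bases. It is dense by the earlier parts, so $\dim(X_{bad})=\dim(X\setminus Y)<\dim(X)$.
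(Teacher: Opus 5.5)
\textbf{Parts (\ref{nts2})--(\ref{nts4}).} Your arguments here are sound. For (\ref{nts2}) you take a different route from the paper:
\begin{itemize}
\item You first prove (\ref{nts3}) by localizing $X$ and citing the Euclidean case.
\item You then use (\ref{nts3}) to see that $\bigcup_j \ter(f^{-1}(V_j))$ is dense. This needs the small extra remark that the finitely many boundaries are closed with empty interior, so their union is nowhere dense.
\item Finally you apply Euclidean generic continuity chart by chart.
\end{itemize}

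\textbf{The gap is part (\ref{nts1}).} You leave it open, and the device you suggest does not work as stated. The charts $V_j$ overlap, so a value in $V_1 \cap V_2$ is counted in both $|f(x)\cap V_1|$ and $|f(x)\cap V_2|$. Local constancy of the count vector therefore does not make the sets $f(x)\cap V_j$ a decomposition of $f(x)$. You also give no argument that continuous branches taken from different charts stay distinct near a point, and $Y$ is only $T_1$, not necessarily Hausdorff, so this is not automatic. ``Ordering points locally'' is not available definably in a t-minimal theory either.

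\textbf{The missing idea is Lemma~\ref{bookkeeping-rescue}.} It gives a definable $\tilde Y \subseteq \Mm^n$ with a definable continuous bijection $g : \tilde Y \to Y$. Since $g$ is a bijection, $g^{-1}\circ f : X \rightrightarrows \tilde Y$ is again a definable $k$-correspondence, now with Euclidean target. After localizing $X$, the Euclidean result \cite[Theorem~4.13]{wj-visc-1} gives a dense set of continuity points. Composing with $g$ preserves continuity, and only continuity of $g$ (not of $g^{-1}$) is needed. This handles (\ref{nts1}) and (\ref{nts2}) simultaneously. The paper then gets (\ref{nts3}) from (\ref{nts2}) via $\chi_D$, so no chart splitting is needed at all.

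\textbf{Repairing your own approach.} If you prefer to keep your framework, split along the disjoint pieces $W_j = V_j \setminus \bigcup_{i<j} V_i$ rather than the $V_j$ themselves:
\begin{itemize}
\item Put $f_j(x) = f(x)\cap W_j$, so that the sets $f_j(x)$ partition $f(x)$.
\item Consider the union of the interiors of the level sets of $x \mapsto (|f_j(x)|)_j$. This union is dense by (\ref{nts3}).
\item On it, each nonempty $f_j$ is a correspondence of constant size into the chart $V_j$, so the Euclidean result applies.
\item You can intersect the finitely many resulting good sets. If $B$ is definable with $\ter(B)=\varnothing$, then $\bd(B)=\cl(B)$, so (\ref{nts3}) makes $B$ nowhere dense.
\item The branches then assemble into $k$ distinct continuous branches of $f$ automatically, because the $W_j$ are disjoint.
\end{itemize}
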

\begin{proof}
  \begin{enumerate}
  \item Easy case: $Y$ is a definable subset of $\Mm^n$.  Working
    locally, we reduce to the case where $X$ is a definable subset of
    $\Mm^n$.  Then apply generic continuity of $k$-correspondences in
    $\Mm$ \cite[Theorem~4.13]{wj-visc-1}.

    Hard case: $Y$ is a general definable semimanifold.  In this case,
    use Lemma~\ref{bookkeeping-rescue} to find a definable set
    $\tilde{Y} \subseteq \Mm^n$ and a definable continuous bijection
    $g : \tilde{Y} \to Y$.  Apply the easy case to the composition
    \begin{equation*}
      g^{-1} \circ f : X \rightrightarrows \tilde{Y}
    \end{equation*}
    to see that $g^{-1} \circ f$ is continuous on a dense set.
    Composing with the continuous function $g$, we see that $f$ is
    continuous on a dense set.
  \item Take $k=1$ in the previous point.
  \item Apply the previous point to the characteristic function
    $\chi_D : X \to \{0,1\}$.
  \item Lemma~\ref{loc-eu-dim} translates density into the condition
    on dimensions.  \qedhere
  \end{enumerate}
\end{proof}
\begin{remark}
  It is essential to assume local Euclideanity to get the
  dimension-theoretic statements, because of the ill-behaved examples
  of \cite[Section~5]{wj-visc-1}.  Specifically, there can be
  definable sets $D \subseteq \Mm^n$ with $\dim(\partial D) > \dim(D)$
  (see \cite[Proposition~5.30]{wj-visc-1}).  If $X$ is the definable
  set $\overline{D} = D \cup \partial D$, regarded as a subspace of
  $\Mm^n$, then the pair $D \subseteq X$ contradicts part (\ref{nts4})
  in Lemma~\ref{new-tame-sane}.
\end{remark}
\begin{remark} \label{some-local-eu}
  If $X$ is a non-empty definable semimanifold, there is a point $p
  \in X$ such that $X$ is locally Euclidean at $p$.
\end{remark}
\begin{proof}
  We immediately reduce to the case where $X$ is a definable subspace
  of $\Mm^n$.  Then this is \cite[Theorem~4.17]{wj-visc-1}.
\end{proof}
\begin{lemma} \label{interior-duh}
  Let $X$ be a definable manifold whose local dimension is everywhere
  $k$, so in particular $\dim(X) = k$.  If $Y$ is a definable subset,
  then $\ter(Y) \ne \varnothing \iff \dim(Y) = k$.
\end{lemma}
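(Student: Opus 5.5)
Both directions reduce to the local picture, using the definition of definable manifold and the results of \S\ref{tame-man}. Write $X = \bigcup_{i=1}^m U_i$ with each $U_i$ definable, open in $X$, and definably homeomorphic via $\phi_i$ to an open subset $V_i$ of a cell $C_i \subseteq \Mm^{n_i}$.

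\emph{The forward direction ($\Rightarrow$).} Suppose $\ter(Y) \ne \varnothing$, so $Y \supseteq W$ for some non-empty definable open $W$ (intersect with a basic open set). Pick $p \in W$. The local dimension of $X$ at $p$ is $k$, and $W$ is itself a neighborhood of $p$. Hence for every neighborhood $U \ni p$ we have $\dim(U \cap W) \ge \dim_p(X) = k$, because $U \cap W$ is again a neighborhood of $p$. Therefore $\dim(Y) \ge \dim(W) \ge k$. The reverse bound $\dim(Y) \le \dim(X) = k$ is monotonicity. Lemma~\ref{local-dim} gives $\dim(X) = k$, as the statement notes.

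\emph{The reverse direction ($\Leftarrow$).} Suppose $\dim(Y) = k$. Since $\dim(Y) = \max_i \dim(Y \cap U_i)$, some $i$ has $\dim(Y \cap U_i) = k$. Definable bijections preserve dimension, so we can transfer to $V_i$: the set $Y' = \phi_i(Y \cap U_i)$ is a subset of the open subset $V_i$ of the cell $C_i$, and $\dim(Y') = k$. The local dimension of $V_i$ is everywhere $k$, because local dimension is preserved under homeomorphism and $U_i$ is open in $X$. It therefore suffices to show that $Y'$ has non-empty interior in $V_i$; pulling back along $\phi_i$ then gives a non-empty open subset of $U_i$, hence of $X$, contained in $Y$.

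So we reduce to the case where $X$ is an open subset of a cell $C = \sigma(\Gamma(f))$, with $f : U \rra \Mm^{n-j}$ a continuous definable correspondence and $U \subseteq \Mm^j$ open. The projection $\pi : C \to U$ is finite-to-one, so $\dim(C) = \dim(U) = j$ (by the t-minimal dimension theory of \cite{wj-visc-1}). Since $X$ is open in $C$ and has local dimension $k$ everywhere, we get $k = j$. Continuity of $f$ means that around each point of $U$ the cell is locally a finite union of graphs of continuous functions $g_1,\ldots,g_\ell$ on a neighborhood. Shrinking further around a point, we may assume these graphs are pairwise disjoint and each is relatively open in $C$. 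Projection is a homeomorphism from each such graph piece onto an open subset of $\Mm^j$. Hence $X$ is locally Euclidean of dimension $j$: it is covered by finitely many definable families of charts homeomorphic to opens in $\Mm^j$.

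To conclude, I apply Lemma~\ref{loc-eu-dim}, or argue directly. Because $X$ is a finite union of the sets $X \cap \Gamma$-pieces, namely the images under the coordinate permutation of the graph pieces, some graph piece meets $Y$ in dimension $j$. Projecting that piece to $\Mm^j$ is injective, so its image is a subset of an open set in $\Mm^j$ of dimension $j$. By t-minimal dimension theory \cite[Theorem~2.37(4)]{wj-visc-1}, a subset of $\Mm^j$ of full dimension has non-empty interior. Pulling back through the local homeomorphism gives an open subset of $X$ inside $Y$.

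\emph{Main obstacle.} The delicate step is making the graph pieces definable uniformly. The decomposition of a correspondence into continuous branches is only local and need not be definable globally. I would handle this using the definable set $D$ of pairs (point of $U$, basic neighborhood on which $f$ splits into $\ell$ branches contained in given disjoint basic opens). Finitely many such parameter choices are not available, so instead I would argue by genericity: take $a \in \pi(Y')$ with $\dim(a/B) = j$, choose a splitting neighborhood, and use the independent-neighborhood style argument, or simply apply dimension at a single point, via Lemma~\ref{local-dim}. Concretely, I would work at a point $q \in Y'$ with $\dim_q(Y') = k$, given by Lemma~\ref{local-dim}. Near $q$ only one definable local branch is needed, and its graph is definably homeomorphic to an open subset of $\Mm^j$; this avoids any need for uniformity.
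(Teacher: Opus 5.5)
Your proof is correct in its final form, but the converse direction takes a more hands-on route than the paper's. The forward direction is the same.

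The paper never enters a chart. A definable manifold is locally Euclidean, so parts (\ref{nts3}) and (\ref{nts4}) of Lemma~\ref{new-tame-sane} give $\dim(\bd(Y)) < \dim(X) = k$. Hence $\dim(Y) = k$ forces $Y \not\subseteq \bd(Y)$, i.e.\ $\ter(Y) = Y \setminus \bd(Y) \ne \varnothing$. Your passing suggestion to invoke Lemma~\ref{loc-eu-dim} amounts to the same argument: if $\ter(Y) = \varnothing$, then $X \setminus Y$ is dense, so $\dim(Y) < \dim(X)$.

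What you actually carry out is a by-hand proof of the needed local fact.
\begin{itemize}
\item At a point $q$ with $\dim_q(Y') = k$ (Lemma~\ref{local-dim}), the open subset of the cell is locally the permuted graph of one continuous branch $g$.
\item This branch is definable: by Hausdorffness, choose pairwise disjoint basic neighborhoods $W_1,\ldots,W_\ell$ of the points of $f(u)$, and let $g(u')$ be the unique point of $f(u') \cap W_i$.
\item Projecting this piece homeomorphically onto an open subset of $\Mm^j$, and using that $j$-dimensional subsets of $\Mm^j$ have interior \cite[Theorem~2.37(4)]{wj-visc-1}, finishes the argument.
\end{itemize}
Your version uses only Lemma~\ref{local-dim}, the definition of continuity of correspondences, and t-minimal dimension theory in $\Mm^j$. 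It bypasses the generic-continuity results behind Lemma~\ref{new-tame-sane}. The paper's version is two lines because that local work has already been packaged into Lemma~\ref{new-tame-sane}.

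One caveat concerns your intermediate claim that $X$ is a finite union of graph pieces, one of which meets $Y$ in dimension $j$. This is unjustified: the branch decomposition is only local, and nothing guarantees a finite definable cover by such pieces. Only the single-point version you settle on at the end is sound.

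With that version, the preliminary choice of a chart $U_i$ with $\dim(Y \cap U_i) = k$ becomes unnecessary. Apply Lemma~\ref{local-dim} directly in $X$ to get $q \in Y$ with $\dim_q(Y) = k$, and pass to a chart around $q$.

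Likewise, the equality $k = j$ should come \emph{after} the local-branch description, not before. It rests on the fact that every non-empty open subset of $C$ contains such a graph piece and therefore has dimension $j$.
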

\begin{proof}
  If $p \in \ter(Y)$, then $\dim_p(Y) = \dim_p(X) = k$ so $\dim(Y) =
  k$.  Conversely, if $\dim(Y) = k$, then $\dim(\bd(Y)) < \dim(X) =
  k$, so $\bd(Y) \not\supseteq Y$ and $\ter(Y) = Y \setminus \bd(Y)$
  is non-empty.
\end{proof}

\section{Topologizing definable groups}

\subsection{Uniqueness}

\begin{proposition} \label{prop-unique}
  Let $(G,\cdot)$ be a definable group.  Then there is at most one
  definable topology $\tau$ on $G$ with the following properties:
  \begin{enumerate}
  \item $(G,\tau)$ is a definable semimanifold.
  \item For every $a \in G$, the left translation
    \begin{align*}
      \lambda_a : G &\to G \\
      \lambda_a(x) &= a \cdot x
    \end{align*}
    is continuous with respect to $\tau$.
  \end{enumerate}
\end{proposition}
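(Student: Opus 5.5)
Suppose $\tau_1$ and $\tau_2$ are two definable topologies on $G$, each making $G$ a definable semimanifold with all left translations continuous. Then each left translation $\lambda_a$ is a homeomorphism for both topologies, since its inverse is $\lambda_{a^{-1}}$. By symmetry it suffices to show that the identity map $\id : (G,\tau_1) \to (G,\tau_2)$ is continuous. We will first find a single point where it is continuous, and then spread continuity to every point using translations.

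\textbf{Step 1: a point of continuity.} The map $\id : (G,\tau_1) \to (G,\tau_2)$ is a definable function between definable semimanifolds. By Lemma~\ref{new-tame-sane}(\ref{nts2}), the set of points where it fails to be continuous is the complement of a $\tau_1$-dense set. Because $G$ is non-empty, that dense set is non-empty. So there is some $p \in G$ at which $\id$ is continuous from $\tau_1$ to $\tau_2$.

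\textbf{Step 2: transport by translation.} Let $q \in G$ be arbitrary and set $a = q p^{-1}$, so that $\lambda_a(p) = q$. On $G$ we have the identity of maps
\begin{equation*}
  \id = \lambda_a \circ \id \circ \lambda_{a^{-1}},
\end{equation*}
where $\lambda_{a^{-1}}$ is read as a map $(G,\tau_1) \to (G,\tau_1)$, the middle $\id$ as $(G,\tau_1) \to (G,\tau_2)$, and $\lambda_a$ as $(G,\tau_2) \to (G,\tau_2)$. The map $\lambda_{a^{-1}}$ is $\tau_1$-continuous at $q$ and sends $q$ to $p$. The middle map is continuous at $p$ by Step 1. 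The map $\lambda_a$ is $\tau_2$-continuous everywhere. A composition of maps, each continuous at the relevant point, is continuous at the starting point, so $\id$ is continuous at $q$. Since $q$ was arbitrary, $\id : (G,\tau_1) \to (G,\tau_2)$ is continuous, and hence $\tau_2 \subseteq \tau_1$.

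\textbf{Step 3: conclude.} Exchanging the roles of $\tau_1$ and $\tau_2$ gives $\tau_1 \subseteq \tau_2$, so $\tau_1 = \tau_2$.

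The only substantive input is Step 1, and that is exactly Lemma~\ref{new-tame-sane}(\ref{nts2}) applied with $X = (G,\tau_1)$ and $Y = (G,\tau_2)$. That lemma allows an arbitrary definable semimanifold as the target, which it handles through Lemma~\ref{bookkeeping-rescue}, so Step 1 needs no extra hypotheses. We only need the continuity set to be non-empty, which is why no local Euclideanity or dimension counting is required here.
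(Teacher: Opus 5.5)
Your proof is correct and follows the paper's argument: apply Lemma~\ref{new-tame-sane}(\ref{nts2}) to $\id : (G,\tau_1) \to (G,\tau_2)$ to get one point of continuity, spread continuity to every point by left translations, and finish by symmetry. The paper also first shows that $(G,\tau)$ is locally Euclidean via Remark~\ref{some-local-eu}, but, as you observe, this is not needed, since the argument only uses that the dense set of continuity points is non-empty.
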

\begin{proof}
  Let $k = \dim(G)$.
  Note that the conditions (1) and (2) imply
  \begin{enumerate}
    \setcounter{enumi}{2}
  \item $(G,\tau)$ is locally Euclidean.
  \end{enumerate}
  Indeed, Remark~\ref{some-local-eu} gives \emph{some} point $p \in G$
  such that $(G,\tau)$ is locally Euclidean at $p$.  Since $\tau$ is
  invariant under left translations, it follows that $(G,\tau)$ is
  locally Euclidean everywhere.

  Let $\tau, \tau'$ be two topologies satisfying (1)--(3).  We claim
  that $\tau = \tau'$.  Applying Lemma~\ref{new-tame-sane}(\ref{nts2}) to the
  identity map
  \begin{equation*}
    \id_G : (G,\tau) \to (G,\tau'),
  \end{equation*}
  we get a point $a \in G$ such that $\id_G : (G,\tau) \to (G,\tau')$
  is continuous at $a$.  Applying left translations, it follows that
  $\id_G$ is continuous everywhere.  So every $\tau'$-open set is
  $\tau$-open.  By symmetry, every $\tau$-open set is $\tau'$-open,
  and $\tau = \tau'$.
\end{proof}

\subsection{Existence}
\begin{lemma}
  If $D \subseteq \Mm^n$ is a definable set, there is a countable
  subset $S \subseteq D$ such that if $D' \subseteq D$ is a definable
  subset and $S \subseteq D'$, then $\dim(D') = \dim(D)$.
\end{lemma}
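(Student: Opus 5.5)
Let $k = \dim(D)$ and assume $D \ne \varnothing$ (otherwise take $S = \varnothing$). We will find a countable set $S \subseteq D$ that is not contained in any definable subset of lower dimension. We use the dimension theory of t-minimal theories from \cite[\S2]{wj-visc-1}. There, $\dim(D)$ coincides with the maximum of $\dim(\ba/C)$ for $\ba \in D$ over a small parameter set $C$ defining $D$. Moreover, dimension is definable in families: for a definable family $\{X_c\}_{c}$, the set $\{c : \dim(X_c) = i\}$ is definable.

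The construction is as follows. Fix a countable set $C_0$ of parameters over which $D$ is defined; since $D$ is defined by a single formula, this is possible. We would like a point $\ba \in D$ with $\dim(\ba/C)$ maximal for every small $C$, but that is impossible in general. Instead, we use a countable elementary substructure. Take a countable elementary substructure $M_0 \preceq \Mm$ containing $C_0$, and let $S = D(M_0) = D \cap M_0^n$. Since the language is countable, $M_0$ is countable, so $S$ is countable. The real issue is that a definable $D' \supseteq S$ may use parameters outside $M_0$, so elementarity alone does not immediately control $\dim(D')$.

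To handle parameters, I would argue by contradiction. Suppose $D' = \phi(\Mm, \bar e)$ with $\dim(D') < k$ and $S \subseteq D'$. Consider the family $\{\phi(\Mm,\bar y) \cap D\}_{\bar y}$. By definability of dimension, the set $E = \{\bar y : \dim(\phi(\Mm,\bar y)\cap D) < k\}$ is $C_0$-definable. The condition ``$D(M_0) \subseteq \phi(\Mm, \bar y)$'' is a type over $M_0$, and it is consistent with $\bar y \in E$. This alone gives no contradiction, since $M_0$ is not saturated.

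This is the main obstacle, and my fix is to strengthen the choice of $S$. Build $M_0$ so that it is \emph{$\aleph_1$-like with respect to this family}: enumerate $M_0$ as a union of a chain $M_0 \subseteq M_1 \subseteq \cdots$ of countable models. Then use the fact that a set $D' \cap D$ of dimension $< k$ cannot contain a ``generic'' point of $D$ over the parameters. The cleaner route I would actually try is the following. Choose $\ba_1, \ba_2, \ldots \in D$ such that $\ba_{i+1}$ has $\dim(\ba_{i+1}/C_0\ba_1\ldots\ba_i) = k$, and such that the sequence is indiscernible over $C_0$. If some $\bar e$-definable $D'$ of dimension $<k$ contained all the $\ba_i$, then by NIP-free counting we argue as follows. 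Using t-minimality, dimension is witnessed by ``$\dim(\ba/B) = \max$'' behaving like acl-dimension on unary coordinates: each $\ba_i$ has a coordinate subtuple of length $k$ that is $\acl$-independent over the previous terms. Infinitely many such independent points cannot all lie in a set of dimension $<k$ defined by finitely many parameters $\bar e$, since by exchange only finitely many $\ba_i$ can be dependent with $\bar e$. Making the exchange-type argument rigorous in the t-minimal dimension theory, where $\dim$ is not literally acl-dimension, is the step I expect to need the most care. Failing that, I would instead use the fact that $\dim(X) < k$ is witnessed by a finite-to-one projection failing to have interior, and apply the Baire-type property that a countable dense set in a non-empty open box cannot lie in a set with empty interior. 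Here ``dense'' is taken with respect to the countable basis realized in $M_0$.
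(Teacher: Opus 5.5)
\textbf{There is a genuine gap: the proposal never reaches a complete argument.} The route you settle on depends on a step that is not available in this setting.

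\emph{The indiscernible-sequence route.} This needs a count of the form $\dim(\bar a_1\cdots\bar a_N/C_0\bar e) \ge Nk - |\bar e|$, which you justify by exchange. In a t-minimal theory $\acl$ need not satisfy exchange. The tools you actually have are subadditivity $\dim(ab/C) \le \dim(a/Cb)+\dim(b/C)$ and invariance of $\dim(-/C)$ under interalgebraicity. The reverse inequality, and even monotonicity $\dim(ab/C) \ge \dim(b/C)$, can fail. This follows from the defect noted in the introduction, that a definable surjection can raise dimension: if $b=f(a)$ with $b$ generic in $Y$ and $\dim(X)<\dim(Y)$, then $\dim(ab/C)=\dim(a/C)<\dim(b/C)$. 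So your count has no justification. Nothing in the t-minimal toolkit shows that a countable set of ``independent generic'' points, with no further structure, must escape every lower-dimensional definable set.

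\emph{The fallback route.} It has the same problem. Density with respect to countably many basic open sets is not the property that forces full dimension. You give no argument that a narrow definable set, defined over parameters outside $M_0$, cannot contain such a set. You also never explain how to pass the countable set through a finite-to-one projection and recover $\dim(D')$.

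\textbf{The missing idea is the broadness criterion.} A definable subset of $\Mm^k$ that contains a product $S_1\times\cdots\times S_k$ of infinite sets is broad, and so has dimension $k$ \cite[Theorem~2.37(4)]{wj-visc-1}. The paper's proof runs as follows.
\begin{enumerate}
\item Write $D$ as a finite union of pieces, each with a finite-to-one coordinate projection to $\Mm^k$ \cite[Proposition~2.41]{wj-visc-1}.
\item Some piece $X_1$ has dimension $k$, so $\pi(X_1)$ has non-empty interior and contains a box $\prod_i B_i$.
\item Choose countably infinite $S_i \subseteq B_i$ and set $S=\{a\in X_1 : \pi(a)\in\prod_i S_i\}$. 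This is countable because the fibers of $\pi$ on $X_1$ are finite.
\item If $D'\supseteq S$, then $\pi(D'\cap X_1)\supseteq\prod_i S_i$ is broad, hence of dimension $k$. Finite fibers then give $\dim(D'\cap X_1)=k$.
\end{enumerate}

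This observation would also have rescued your first candidate $S=D(M_0)$, which you abandoned. By elementarity you may take the decomposition and the box to be $M_0$-definable. Then $\pi(D(M_0)\cap X_1)\supseteq\prod_i B_i(M_0)$ is already a product of infinite sets. One further point: your claim that $M_0$ is countable silently assumes a countable language, which the statement does not. Choosing the $S_i$ directly avoids this.
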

This is implicit in the proof of \cite[Corollary~2.43]{wj-visc-1}, but
we spell out the details:
\begin{proof}
  Let $k = \dim(D)$.  By \cite[Proposition~2.41]{wj-visc-1}, we can
  write $D$ as a union of finitely many sets $\bigcup_{i=1}^m X_i$
  where each $X_i$ has a ``near-injective $k$-projection''
  \cite[Definition~2.40]{wj-visc-1}, meaning that there is a
  coordinate projection $\pi : \Mm^n \to \Mm^k$ such that $X_i \to
  \pi(X_i)$ has finite fibers.  By dimension theory
  \cite[Theorem~2.37(2,5,7)]{wj-visc-1}, one of the $X_i$, say, $X_1$
  has dimension $k$, and then the image $\pi(X_1) \subseteq \Mm^k$ has
  dimension $k$ and has non-empty interior.  Then $\pi(X_1)$ contains
  a box $\prod_{i=1}^k B_i$ where each $B_i$ is a basic open set.
  Take a countable infinite subset $S_i \subset B_i$ for each $i$.
  Let $S$ be the preimage of $\prod_{i=1}^k S_i \subseteq \pi(X_1)$
  under the map $X_1 \to \pi(X_1)$:
  \begin{equation*}
    S = \{a \in X_1 : \pi(a) \in \prod_{i=1}^k S_i\}.
  \end{equation*}
  Then $S$ is countable, since $X_1 \to \pi(X_1)$ has finite fibers
  and $\prod_{i=1}^k S_i$ is countable.  Note that $\pi : S \to
  \prod_{i=1}^k S_i$ is surjective, since $\prod_{i=1}^k S_i \subseteq
  \pi(X_1)$.

  Suppose $D' \subseteq D$ contains $S$.  Then $D' \cap X_1$ contains
  $S$, so $\pi(D' \cap X_1)$ contains $\prod_{i=1}^k S_i$.  Then
  $\pi(D' \cap X_1)$ is ``broad'' in the sense of
  \cite[Definition~2.1]{wj-visc-1}, so $\dim(\pi(D' \cap X_1)) = k$ by
  \cite[Theorem~2.37(4)]{wj-visc-1}.  The map $D' \cap X_1 \to \pi(D'
  \cap X_1)$ is surjective with finite fibers, so $\dim(D' \cap X_1) =
  \dim(\pi(D' \cap X_1)) = k$ by \cite[Theorem~2.37(7)]{wj-visc-1},
  and therefore $\dim(D') = k$.
\end{proof}
\begin{proposition} \label{covers}
  Let $(G,\cdot)$ be a definable group.  Let $U \subseteq G$ be a
  definable subset such that $\dim(U \setminus G) < \dim(G)$.  Then
  finitely many left translates of $U$ cover $G$.
\end{proposition}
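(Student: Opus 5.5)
Note the hypothesis should read $\dim(G \setminus U) < \dim(G)$; as written, $U \setminus G$ is empty, so I read the statement this way. The plan is the standard generic-covering argument, using the countable-witness lemma just proved to convert a type-theoretic statement into a definable one via compactness. Let $k = \dim(G)$, and let $S \subseteq G$ be the countable set given by the preceding lemma applied to $D = G$. That lemma says: any definable $D' \subseteq G$ containing $S$ has $\dim(D') = k$.

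The key step is the following claim: for every $g \in G$, the definable set $D_g = \{s \in G : g \in sU\}$ has $\dim(D_g) = k$, i.e.\ $\dim(G \setminus D_g) < k$. We have $D_g = \{s : s^{-1}g \in U\}$, so $G \setminus D_g$ is the image of $G \setminus U$ under the definable bijection $x \mapsto g x^{-1}$. Definable bijections preserve dimension, so $\dim(G \setminus D_g) = \dim(G \setminus U) < k$. Consequently $S \not\subseteq G \setminus D_g$; if it were, the preceding lemma would give $\dim(G \setminus D_g) = k$. So some $s \in S$ lies in $D_g$, that is, $g \in sU$. Hence $G = \bigcup_{s \in S} sU$.

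It remains to pass from the countable cover to a finite one by compactness. Suppose no finite subset of the translates $\{sU\}_{s \in S}$ covers $G$. Then the partial type $\{x \in G\} \cup \{x \notin sU : s \in S\}$ is finitely satisfiable. It is over the countable parameter set $S$ together with the parameters of $U$ and $G$, so by saturation of $\Mm$ it is realized by some $g \in G$. This $g$ lies in no $sU$ with $s \in S$, contradicting the previous paragraph. Therefore finitely many translates $s_1U, \ldots, s_nU$ cover $G$.

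The only delicate point is making sure the preceding lemma applies. It is stated for $D \subseteq \Mm^n$, and $G$ is a definable set, hence a subset of some $\Mm^n$, so this is fine. Likewise the sets $G \setminus D_g$ are definable subsets of $G$, as the lemma requires. I expect no real obstacle beyond checking that dimension is preserved under the definable bijection $x \mapsto gx^{-1}$, which is part of the dimension theory of \cite{wj-visc-1}.
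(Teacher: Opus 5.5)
Your proof is correct and is essentially the paper's argument: take the countable set $S$ from the preceding lemma, note that the set of $s$ with $g \notin sU$ is the image of $G \setminus U$ under a definable bijection (the paper uses the right translate $(G \setminus U)\cdot x^{-1}$, you use $x \mapsto gx^{-1}$), conclude $G = \bigcup_{s \in S} sU$, and pass to a finite subcover by saturation. You are also right that the hypothesis should read $\dim(G \setminus U) < \dim(G)$.
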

\begin{proof}
  Let $S$ be a countable subset of $G$ such that if $D \subseteq G$ is
  definable and $D \supseteq S$, then $\dim(D) = \dim(G)$.
  \begin{claim}
    If $x \in G$, there is $a \in S$ such that $a \cdot x \in U$.
  \end{claim}
  \begin{claimproof}
    Otherwise, for every $a \in S$, we have $a \cdot x \in G \setminus
    U$, or equivalently, $a \in (G \setminus U) \cdot x^{-1}$.  Then
    $S \subseteq (G \setminus U) \cdot x^{-1}$.  By choice of $S$, $(G
    \setminus U) \cdot x^{-1}$ has the same dimension as $G$.  But
    \begin{equation*}
      \dim((G \setminus U) \cdot x^{-1}) = \dim(G \setminus U) <
      \dim(G). \qedhere
    \end{equation*}
  \end{claimproof}
  By the claim, for every $x \in G$, there is $a \in S$ such that $a
  \cdot x \in U$, or equivalently, $x \in a^{-1} \cdot U$.  Therefore
  \begin{equation*}
    G \subseteq \bigcup_{a \in S} a^{-1} \cdot U.
  \end{equation*}
  By saturation, finitely many of the $a^{-1} \cdot U$ cover $G$.
\end{proof}

\begin{lemma} \label{cover-confusion}
  Let $G$ be a definable group.  Let $U \subseteq G$ be a definable
  subset such that finitely many translates of $U$ cover $G$.  Let
  $\tau_0$ be a definable topology on $U$.  Suppose that for any $a
  \in G$, the set $(a \cdot U) \cap U$ is $\tau_0$-open, and the map
  \begin{align*}
    (a^{-1} \cdot U)\cap U &\to (a \cdot U) \cap U \\
    x &\mapsto a \cdot x
  \end{align*}
  is $\tau_0$-continuous.  \textsc{Then}, there is a definable
  topology $\tau$ on $G$ such that
  \begin{enumerate}
  \item $(U,\tau_0) \to (G,\tau)$ is an open embedding, meaning that
    $U$ is $\tau$-open and $\tau_0 = \tau \restriction U$.
  \item Every left translation    \begin{align*}
      \lambda_a : G &\to G \\
      \lambda_a(x) &= a \cdot x
    \end{align*} is
    $\tau$-continous.
  \end{enumerate}
\end{lemma}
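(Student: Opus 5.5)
The plan is the standard ``gluing charts by translation'' argument, in the style of Weil and Pillay. Define $\tau$ by declaring $W \subseteq G$ to be $\tau$-open iff for every $g \in G$ the set $(g^{-1} \cdot W) \cap U$ is $\tau_0$-open in $U$. Equivalently, $\tau$ is generated by the sets $g \cdot V$ with $g \in G$ and $V$ a $\tau_0$-open subset of $U$. This is visibly a topology, and it is invariant under left translations, since $\lambda_a$ permutes the generators. Therefore each $\lambda_a$ is a homeomorphism, which gives (2). Definability follows once we know the family $\{g \cdot V : g \in G,\ V \text{ a basic } \tau_0\text{-open set}\}$ is a basis for $\tau$, because that family is indexed definably by pairs $(g,y)$, using a definable basis $\{V_y\}$ for $\tau_0$.

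The heart of the argument is the following claim: for every $g \in G$ and every $\tau_0$-open $V \subseteq U$, the set $(g \cdot V) \cap U$ is $\tau_0$-open.

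To prove it, note that $(g\cdot V)\cap U = g\cdot\bigl(V \cap (g^{-1}\cdot U)\bigr) \cap U$. The set $V \cap (g^{-1} U) \cap U$ is $\tau_0$-open in $(g^{-1}U)\cap U$, which is itself $\tau_0$-open by hypothesis, applied to $a = g^{-1}$. The map $x \mapsto g\cdot x$ from $(g^{-1}U)\cap U$ to $(gU)\cap U$ is a continuous bijection. Its inverse is $y \mapsto g^{-1} y$, which is continuous by the same hypothesis applied to $a = g^{-1}$. So the map is a homeomorphism between $\tau_0$-open subsets of $U$, and it carries the open set $V\cap g^{-1}U$ onto $(gV)\cap U$. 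This proves the claim.

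With the claim in hand, both remaining points follow.

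First, the generators $g\cdot V$ form a basis. Suppose $p \in g_1V_1 \cap g_2V_2$. Then $g_1^{-1}(g_1V_1\cap g_2V_2) = V_1 \cap \bigl((g_1^{-1}g_2V_2)\cap U\bigr)$, and by the claim this is $\tau_0$-open. Call it $V'$. Then $p \in g_1 V' \subseteq g_1V_1\cap g_2V_2$.

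Second, for (1): $U = 1\cdot U$ is a generator, so $U$ is $\tau$-open. The claim says exactly that $\tau\restriction U$ is contained in $\tau_0$, since each trace $(gV)\cap U$ is $\tau_0$-open. Conversely, every $\tau_0$-open $V$ is itself a generator, so $\tau_0 \subseteq \tau\restriction U$. Hence $(U,\tau_0)\to(G,\tau)$ is an open embedding.

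The hypothesis that finitely many translates of $U$ cover $G$ is not needed for the topology to exist. It only guarantees that $G$ is covered by finitely many open copies of $(U,\tau_0)$, which will matter later for the manifold structure. I expect the main obstacle to be bookkeeping: being careful to invoke the hypothesis with $a^{-1}$ as well as $a$, so that translation is a homeomorphism and not merely continuous.
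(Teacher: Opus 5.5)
Your proposal is correct and is essentially the paper's proof: you use the same basis $\{a\cdot V\}$, the same computation $aV\cap bW = a\bigl(V\cap(U\cap a^{-1}bW)\bigr)$ for closure under intersections, and the same arguments for the open embedding, left-invariance and definability. The one thing you add is making explicit that the hypothesis must be applied at both $g$ and $g^{-1}$, so that $x\mapsto g\cdot x$ is a homeomorphism and $(g\cdot V)\cap U$ is $\tau_0$-open; the paper compresses this step into ``by the assumption on $\tau_0$''.
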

\begin{proof}
  Let $\mathcal{B}$ be the family of sets of the form $a \cdot V$
  where $a \in G$ and $V \subseteq U$ is $\tau_0$-open.
  \begin{claim}
    $\mathcal{B}$ is closed under intersection.
  \end{claim}
  \begin{claimproof}
    Suppose $a,b \in G$, and $V, W \subseteq U$ are $\tau_0$-open.
    Then
    \begin{equation*}
      a \cdot V \cap b \cdot W = a \cdot (V \cap a^{-1} \cdot b \cdot
      W).
    \end{equation*}
    By the assumption on $\tau_0$, the set $U \cap a^{-1} \cdot b
    \cdot W$ is $\tau_0$-open.  Then so is
    \begin{equation*}
      V \cap a^{-1} \cdot b \cdot W = V \cap (U \cap a^{-1} \cdot b
      \cdot W).
    \end{equation*}
    So $a \cdot V \cap b \cdot W$ is a left translate of a
    $\tau_0$-open set.
  \end{claimproof}
  Then $\mathcal{B}$ is a basis for some topology $\tau$ on $G$.  It
  is clearly invariant under left translations.  The set $U = 1 \cdot
  U$ is clearly $\tau$-open.  If $V \subseteq U$ is $\tau_0$-open,
  then $V = 1 \cdot V$ is clearly $\tau$-open.  Conversely, if $V
  \subseteq U$ is $\tau$-open, then $V$ is $\tau_0$-open: $V$ is a
  union of sets $a \cdot W \in \mathcal{B}$, and each set $a \cdot W$
  is $\tau_0$-open by the assumption on $\tau_0$.  Thus $\tau_0 = \tau
  \restriction U$.

  Finally, we must show that $\tau$ is a definable topology.  Let
  $\mathcal{B}_0$ be a definable basis for $\tau_0$.  Then $\{a \cdot
  V : a \in G, ~ V \in \mathcal{B}_0\}$ is a definable basis for
  $\tau$.
\end{proof}
\begin{proposition} \label{almost}
  Let $(G,\cdot)$ be a definable group.  Then there is a topology
  $\tau$ such that
  \begin{enumerate}
  \item $(G,\tau)$ is a definable manifold.
  \item $\tau$ is invariant under left translations.
  \end{enumerate}
\end{proposition}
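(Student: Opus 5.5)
We will build the topology via Lemma~\ref{cover-confusion}: find a large definable subset $U\subseteq G$ carrying a definable manifold topology $\tau_0$ for which left translations are continuous where defined, and then transport it around $G$.

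\textbf{Step 1: the initial chart.} Since $G$ is definable, $G\subseteq \Mm^n$ for some $n$. Give $G$ the subspace topology from $\Mm^n$ and let $k=\dim(G)$. By cell decomposition for nice t-minimal theories \cite{wj-visc-1}, $G$ is a finite union of cells. Let $U_1$ be the union of the interiors (relative to $G$) of the $k$-dimensional cells, with the lower-dimensional cells removed. Then $\dim(G\setminus U_1)<k$, and $U_1$ is locally an open subset of a $k$-dimensional cell, so it is a locally Euclidean definable manifold of local dimension $k$ everywhere.

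\textbf{Step 2: shrinking so translations are continuous.} Consider the definable map $(a,x)\mapsto a\cdot x$ on the set $\{(a,x)\in G\times U_1 : ax\in U_1\}$. For generic $a$ (that is, $\dim(a/C)=k$), the set $(a^{-1}U_1)\cap U_1$ has small complement. By Lemma~\ref{new-tame-sane}(\ref{nts4}), $\lambda_a$ is continuous off a set of dimension $<k$.

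The key trick is the usual Pillay/Peterzil--Steinhorn argument. Let $U$ be the set of $x\in U_1$ such that for generic $a$ over $Cx$, the map $\lambda_a$ is continuous at $x$ with $ax\in U_1$, and $\lambda_{a}^{-1}$ is continuous at $ax$. Given generic independent $x,a$, continuity of $(y,z)\mapsto yz$ at generic pairs follows from generic continuity of functions on the open set $U_1\times U_1$ (dimension $2k$). So $\dim(G\setminus U)<k$. This condition is definable after the standard rephrasing: replace ``for generic $a$'' by ``for all $a$ outside a set of dimension $<k$'', using definability of dimension \cite{wj-visc-1}.

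Then, for arbitrary $g\in G$ and $x\in (g^{-1}U)\cap U$, we can write $\lambda_g=\lambda_{(ga^{-1})}\circ\lambda_a$ with $a$ generic over $g,x$. Both factors are continuous at the relevant points, since $ax$ is generic in $U$ and $ga^{-1}$ is generic. This shows that $\lambda_g:(g^{-1}U)\cap U\to (gU)\cap U$ is continuous. Openness of $(gU)\cap U$ follows by applying the same argument to $\lambda_{g^{-1}}$, which shows that each point of $(gU)\cap U$ has a neighborhood mapped into $U$ by the inverse.

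By Proposition~\ref{covers}, finitely many translates of $U$ cover $G$. Lemma~\ref{cover-confusion} then yields $\tau$ on $G$ with $U$ open, $\tau\restriction U=\tau_0$, and left translations continuous. So $\tau$ is invariant under left translations, since each $\lambda_a$ has continuous inverse $\lambda_{a^{-1}}$. Finally, $G$ is covered by finitely many translates $a_iU$, each homeomorphic to $U$. The set $U$ is an open subset of $U_1$, which is a finite union of open subsets of cells, so $(G,\tau)$ is a definable manifold.

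\textbf{Main obstacle.} The delicate point is Step 2. We must make the ``continuous at generic points'' conditions definable and ensure that the composition trick covers every $x\in(g^{-1}U)\cap U$, not just generic ones. If the generic-point formulation is awkward, we will first try defining $U$ as the set of $x$ such that the set of bad $a$ has dimension $<k$, and then use the countable-set lemma preceding Proposition~\ref{covers} to choose suitable $a$.
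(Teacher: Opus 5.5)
Your architecture matches the paper's: an initial chart from cell decomposition, a big set $U$ on which left translations are continuous where defined, then Proposition~\ref{covers} and Lemma~\ref{cover-confusion}. There is, however, a genuine gap: you never prove that $U$ is open in $U_1$, and you rely on this twice.
\begin{itemize}
\item Lemma~\ref{cover-confusion} needs $(gU)\cap U$ to be open. Continuity of $\lambda_{g^{-1}}$ at $y\in(gU)\cap U$ only sends points near $y$ to points near $g^{-1}y$. It does not send them into $U$ unless $U$ is already a neighborhood of $g^{-1}y$.
\item Your last sentence uses openness of $U$ to conclude that $(U,\tau_0)$ is a definable manifold.
\end{itemize}
A set cut out by a pointwise generic-continuity condition has no reason to be open. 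The fix is the step the paper performs when it passes to the $\tau_1$-interior of the class $C$: replace $U$ by its interior $U^\circ$ in $U_1$. Since $U_1$ is locally Euclidean, Lemma~\ref{new-tame-sane}(\ref{nts3}),(\ref{nts4}) gives $\dim(U\setminus U^\circ)<k$. Your composition argument still applies to $x\in g^{-1}U^\circ\cap U^\circ$, because it only uses $x,gx\in U$. Openness of $U^\circ$ then gives a neighborhood of $x$ mapped into $U^\circ$.

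One correction inside that composition: continuity of $\lambda_{ga^{-1}}$ at $ax$ does not come from genericity of $ax$. That fails for arbitrary $x$, since $ax$ need not be generic over $ga^{-1}$. It comes from the last clause defining $U$, applied at the point $gx\in U$ to the element $ag^{-1}$, which is generic over $C\,gx$.

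Two other steps also need repair, and the paper's setup is built to avoid both.

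\emph{Step~1.} With the subspace topology, $\dim(G\setminus U_1)<k$ amounts to $\dim(C_i\cap\cl(C_j))<k$ for each $k$-dimensional cell $C_i$ and each $j\neq i$. That is a frontier estimate, and the Remark after Lemma~\ref{new-tame-sane} shows such estimates are unavailable in this generality. The paper instead gives $G$ the disjoint-union topology $\tau_1$ of the cells. Then every cell is open and $(G,\tau_1)$ is a definable manifold outright.

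\emph{Step~2.} Your $U$ is defined fiberwise: $x\in U$ iff the $Cx$-definable set $B_x$ of bad $a$ has dimension $<k$. So deducing $\dim(G\setminus U)<k$ from the fact that the bad set in $G\times U_1$ has dimension $<2k$ needs a Fubini-type lower bound, equivalently $\dim(x/C)=\dim(a/Cx)=k\Rightarrow\dim(ax/C)=2k$. This is the reverse of subadditivity, and the paper never uses such a bound for t-minimal dimension. The paper's relation $a\sim b$ avoids it in three ways:
\begin{itemize}
\item transitivity of $\sim$ plays the role of your factorization $\lambda_g=\lambda_{ga^{-1}}\circ\lambda_a$;
\item genericity is only used for a pair generic in $G\times G$, from which $\dim(a/Mc)=k$ follows by subadditivity alone;
\item $\dim(G\setminus C)<k$ follows by choosing a generic point of $C\times(G\setminus C)$.
\end{itemize}
To keep your formulation, you must either justify that lower bound or recast $U$ as such an equivalence class.
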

\begin{proof}
  Suppose $G \subseteq \Mm^n$.  By cell decomposition
  \cite[Theorem~3.8]{wj-visc-1}, we can write $G$ as a disjoint union
  $\coprod_{i=1}^m C_i$, where each $C_i \subseteq \Mm^n$ is a cell.
%%   Order the cells so that $C_1,\ldots,C_\ell$ have $\dim(C_i) =
%%   \dim(G)$, and $C_{\ell+1},\ldots,C_m$ have $\dim(C_i) < \dim(G)$.
  Give each cell $C_i$ the topology as a subspace of $\Mm^n$, and let
  $\tau_1$ be the \emph{disjoint union topology} on $G =
  \coprod_{i=1}^m C_i$.  Then $\tau_1$ is a definable topology on $G$,
  making $G$ into a definable manifold.  But the group operations are
  probably not continuous.

  If $a, b \in G$, let $\lambda_{a,b}$ be the unique left translation
  sending $a$ to $b$, i.e., $\lambda_{a,b}(x) = b \cdot a^{-1} \cdot
  x$.  Note that $\lambda_{a,b}$ is the inverse of $\lambda_{b,a}$.
  Say that $a$ and $b$ are \emph{equivalent}, written $a \sim b$, if
  $\lambda_{a,b}$ is $\tau_1$-continuous at $a$ and $\lambda_{b,a}$ is
  $\tau_1$-continuous at $b$.
  \begin{claim}
    The relation $\sim$ is a definable equivalence relation on $G$.
  \end{claim}
  \begin{claimproof}
    Definability and symmetry are clear.  Reflexivity is true because
    $\lambda_{a,a}$ is the identity map, which is continuous
    everywhere.  For transitivity, suppose $a \sim b$ and $b \sim c$.
    Then $\lambda_{a,c} = \lambda_{b,c} \circ \lambda_{a,b}$.  Since
    $\lambda_{a,b}$ is continuous at $a$ and $\lambda_{b,c}$ is
    continuous at $b = \lambda_{a,b}(a)$, it follows that
    $\lambda_{a,c}$ is continuous at $a$.  Similarly, $\lambda_{c,a}$
    is continuous at $c$.
  \end{claimproof}
  Take a small model $M$ defining $(G,\cdot)$ and $\tau_1$.  Let $k =
  \dim(G)$.  Recall that $\dim(G \times G) = 2k$.
  \begin{claim} \label{ind-claim}
    If $(a,b) \in G \times G$ and $\dim(a,b/M) = 2k$, then $a \sim b$.
  \end{claim}
  \begin{claimproof}
    Let $c = b \cdot a^{-1}$, so that $\lambda_{a,b}(x) = c \cdot x$.
    Since $\dcl(M,a,c) = \dcl(M,a,b)$, we have $\dim(a,c/M) =
    \dim(a,b/M) = 2k$.  By subadditivity of dimension
    \cite[Proposition~2.31]{wj-visc-1},
    \begin{equation*}
      2k = \dim(a,c/M) \le \dim(a/Mc) + \dim(c/M) \le \dim(G) +
      \dim(G) = 2k,
    \end{equation*}
    and so $\dim(a/Mc) = k$.  The set $S$ of points where
    $\lambda_{a,b} : G \to G$ is $\tau_1$-discontinuous is
    $Mc$-definable, and it has dimension $< k$ by
    Lemma~\ref{new-tame-sane}(\ref{nts2}).  Then $a \notin S$ because $\dim(a/Mc) =
    k > \dim(S)$, so $\lambda_{a,b}$ is $\tau_1$-continuous at $a$.  A
    similar argument shows that $\lambda_{b,a}$ is $\tau_1$-continuous
    at $b$.
  \end{claimproof}
  Take some $(a_0,b_0) \in G \times G$ with $\dim(a_0,b_0/M) = \dim(G
  \times G) = 2k$.  Let $C$ be the $\sim$-equivalence class of $a_0$.
  By the subadditivity of dimension, $\dim(b_0/Ma_0) = k$.  Since
  $b_0$ belongs to the $Ma_0$-definable set $C$, we must have $\dim(C)
  = k$.  Then $\dim(G \setminus C) < k$, or else we can find $(a,b)$
  generic in $C \times (G \setminus C)$, and Claim~\ref{ind-claim}
  gives $a \sim b$, a contradiction.  So
  \begin{gather*}
    \dim(C) = k \\
    \dim(G \setminus C) < k.
  \end{gather*}
  Let $U$ be the $\tau_1$-interior of $C$.  Then $\dim(C \setminus U)
  < k$ by Lemma~\ref{new-tame-sane}(\ref{nts3}), and so
  \begin{gather*}
    \dim(U) = k \\
    \dim(G \setminus U) < k.
  \end{gather*}
  Moreover, $U$ is $\tau_1$-open (so $U$ is a definable manifold), and
  $a,b \in U \implies a,b \in C \implies a \sim b$.
  \begin{claim}
    For any $c \in G$, the set $c \cdot U \cap U$ is $\tau_1$-open,
    and the map
    \begin{align*}
      (c^{-1} \cdot U)\cap U &\to (c \cdot U) \cap U \\
      x &\mapsto c \cdot x
    \end{align*}
    is $\tau_1$-continuous.
  \end{claim}
  \begin{claimproof}
    Suppose $a \in c^{-1} \cdot U \cap U$.  Let $b = c \cdot a \in c
    \cdot U \cap U$.  Then $a,b \in U$, so $a \sim b$, and the map
    $\lambda_{a,b}(x) = c \cdot x$ is $\tau_1$-continuous at $a$.  In
    particular, if $x$ is sufficiently $\tau_1$-close to $a$, then $c
    \cdot x \in U$, i.e., $x \in c^{-1} U$.  We have shown that
    $c^{-1} \cdot U \cap U$ is $\tau_1$-open and $x \mapsto c \cdot x$
    is $\tau_1$-continuous on it.
  \end{claimproof}
  By Proposition~\ref{covers}, finitely many left translates of $U$
  cover $G$.  Then Lemma~\ref{cover-confusion} gives a definable
  topology $\tau$ on $G$, invariant under left translation, with $U$
  as a $\tau$-open set, extending $\tau_1 \restriction U$.  If $G =
  \bigcup_{i=1}^m a_i \cdot U$, then each subspace $a_i \cdot U$ is
  definably homeomorphic to $(U,\tau_1 \restriction U)$ via
  translation.  Since $(U,\tau_1 \restriction U)$ is a definable
  manifold, so is each subspace $a_i \cdot U$.  Then since $G$ is
  covered by finitely many open subsets that are definable manifolds,
  $G$ is a definable manifold.
\end{proof}
\begin{proposition} \label{prop-exists}
  Let $(G,\cdot)$ be a definable group.  Then there is a topology
  $\tau$ such that
  \begin{enumerate}
  \item $(G,\tau)$ is a definable manifold.
  \item $\tau$ is a group topology.
  \end{enumerate}
\end{proposition}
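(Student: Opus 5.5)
Start from the topology $\tau$ of Proposition~\ref{almost}: $(G,\tau)$ is a definable manifold and every left translation $\lambda_a$ is a homeomorphism. Local Euclideanity at one point propagates to all points by translation (as in the proof of Proposition~\ref{prop-unique}). So Lemma~\ref{new-tame-sane}(\ref{nts4}) applies to $G$ and to $G \times G$ with the product topology; the latter is again a definable manifold of local dimension $2k$, where $k=\dim(G)$. The goal is to show that this $\tau$ is already a group topology. It suffices to show three things: right translations $\rho_b(x) = x\cdot b$ are continuous, inversion is continuous at $1$, and multiplication is continuous at $(1,1)$. Left-invariance then transports these to all points in the standard way, since $xy = a(a^{-1}x\, b\, b^{-1})\cdot\ldots$ and continuity at the identity plus continuity of conjugations gives a group topology.

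\textbf{Step 1: right translations and conjugations.} Fix $b$. By Lemma~\ref{new-tame-sane}(\ref{nts2},\ref{nts4}), $\rho_b$ is continuous off a set of dimension $<k$, so at some point $a$. Since $\rho_b$ commutes with left translations, $\rho_b = \lambda_{x a^{-1}} \circ \rho_b \circ \lambda_{a x^{-1}}$, and continuity at $a$ transfers to continuity at every $x$. The inverse $\rho_{b^{-1}}$ is continuous by the same argument, so each $\rho_b$ is a homeomorphism. Hence conjugation $x \mapsto b x b^{-1}$ is a homeomorphism as well.

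\textbf{Step 2: multiplication.} The map $\mu : G\times G \to G$ is definable, so by Lemma~\ref{new-tame-sane}(\ref{nts2},\ref{nts4}) it is continuous at some $(a,b)$. Now use $\mu(x,y) = a\cdot \mu(a^{-1}x\, ,\, y)$ and $\mu(x, y) = \rho_{b^{-1}}$-twists: explicitly, $\mu(a u, v b) = a\cdot(uv)\cdot b$, so
\begin{equation*}
  (u,v) \mapsto uv = \lambda_{a^{-1}}\rho_{b^{-1}}\mu(\lambda_a u, \rho_b v).
\end{equation*}
This expresses multiplication near $(1,1)$ as a composition of homeomorphisms with $\mu$ near $(a,b)$, so $\mu$ is continuous at $(1,1)$. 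Continuity at an arbitrary $(c,d)$ then follows from $\mu(cu, vd) = c\,(uv)\,d$ together with Step 1. \textbf{Step 3: inversion.} Similarly, inversion $\iota$ is continuous at some point $a$, and $\iota(ax) = x^{-1}a^{-1} = \rho_{a^{-1}}\iota(x)$. This gives continuity at every point via left and right translations.

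I expect the main obstacle to be the product-space bookkeeping in Step 2. One must check that $G\times G$ with the product of $\tau$ is a locally Euclidean definable semimanifold of dimension $2k$, so that generic continuity yields a point of continuity of $\mu$ (nonemptiness of the good set). A finite atlas for the product is obtained from products of charts. Each chart is an open subset of a product of cells, which embeds homeomorphically in $\Mm^{n+n'}$, and local Euclideanity holds since products of open subsets of $\Mm^n$ are open. Once this is in place, the remaining steps are formal.
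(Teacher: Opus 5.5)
Your proof is correct and is essentially the paper's argument. The multiplication step is identical: generic continuity of $\mu$ on $G\times G$ via Lemma~\ref{new-tame-sane}(\ref{nts2}), then transport by translations. For right translations and inversion, you apply generic continuity directly to $\rho_b$ and $\iota$ and transport by left translations. This is just the unpacked form of the paper's appeal to the uniqueness Proposition~\ref{prop-unique} for $\rho(\tau)$ and $i(\tau)$, whose proof is exactly this generic-continuity-plus-translation argument.

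The detour through (\ref{nts4}) and local Euclideanity of $G\times G$ is unnecessary. Part (\ref{nts2}) already gives a dense, hence non-empty, set of continuity points on any non-empty definable semimanifold.
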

\begin{proof}
  Let $\tau$ be the left-invariant topology from
  Proposition~\ref{almost}.  If $\lambda$ is a left translation of $G$
  and $\rho$ is a right translation of $G$, then
  \begin{equation*}
    \lambda(\rho(\tau)) = \rho(\lambda(\tau)) = \rho(\tau)
  \end{equation*}
  because left and right translations commute.  Therefore $\rho(\tau)$
  is invariant under left translations.  By the uniqueness in
  Proposition~\ref{prop-unique}, $\rho(\tau) = \tau$.  Thus $\tau$ is
  right-invariant, in addition to being left-invariant.  Equivalently,
  right translations are continuous.

  Let $i : G \to G$ be the inverse map.  For any left translation
  $\lambda$, there is a right translation $\rho$ such that $\lambda
  \circ i = i \circ \rho$.  Then
  \begin{equation*}
    \lambda(i(\tau)) = i(\rho(\tau)) = i(\tau),
  \end{equation*}
  so $i(\tau)$ is left-invariant.  By another application of
  uniqueness (Proposition~\ref{prop-unique}), $i(\tau) = \tau$,
  meaning that the inverse map is continuous.

  Let $m : G^2 \to G$ be the multiplication map $m(x,y) = x \cdot y$.
  By generic continuity for definable manifolds
  (Lemma~\ref{new-tame-sane}(\ref{nts2})), there is at least one point $(a,b) \in
  G^2$ such that $m$ is continuous at $(a,b)$.  For any $a_0, b_0$,
  the map
  \begin{equation*}
    m(x,y) = x \cdot y = a_0 \cdot m(a_0^{-1} \cdot x, x \cdot
    b_0^{-1}) \cdot b_0
  \end{equation*}
  is continuous at $(a_0 \cdot a, b \cdot b_0)$, by continuity of $m$
  at $(a,b)$ and continuity of left and right translations everywhere.
  Letting $a_0,b_0$ range over $G$, we see that $m$ is everywhere
  continuous.  Thus $\tau$ is a group topology.
\end{proof}
We now have a uniqueness result for a big class of topologies
(Proposition~\ref{prop-unique}) and an existence result for a small
class of topologies (Proposition~\ref{prop-exists}).  Combining these,
we get the following two theorems:
\begin{theorem} \label{thm-unique}
  If $(G,\cdot)$ is a definable group, there is a unique definable
  topology $\tau$ on $G$ such that
  \begin{enumerate}
  \item $(G,\tau)$ is a definable manifold.
  \item $(G,\cdot,\tau)$ is a topological group.
  \end{enumerate}
\end{theorem}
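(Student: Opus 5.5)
Existence is exactly Proposition~\ref{prop-exists}: it produces a definable topology $\tau$ with $(G,\tau)$ a definable manifold and $\tau$ a group topology, i.e.\ $(G,\cdot,\tau)$ is a topological group. So the work is uniqueness, which I will deduce from Proposition~\ref{prop-unique}.

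Suppose $\tau$ and $\tau'$ both satisfy (1) and (2). I check that each meets the hypotheses of Proposition~\ref{prop-unique}. For hypothesis (1) there: every definable manifold is a definable semimanifold. Indeed, each chart domain $U_i$ is definably homeomorphic to an open subset of a cell $\sigma(\Gamma(f)) \subseteq \Mm^n$, which is a definable subspace of $\Mm^n$, and this is what the semimanifold definition requires. For hypothesis (2) there: in a topological group every left translation $\lambda_a(x) = a\cdot x$ is continuous, being the restriction of the continuous multiplication $G \times G \to G$ to $\{a\} \times G$. Proposition~\ref{prop-unique} then gives $\tau = \tau'$.

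I do not expect a real obstacle here, since both halves are direct applications of Propositions~\ref{prop-exists} and \ref{prop-unique}. The only point needing care is the inclusion of definable manifolds among definable semimanifolds. This matters because Proposition~\ref{prop-unique} uses Remark~\ref{some-local-eu} and Lemma~\ref{new-tame-sane}, and those are stated for semimanifolds.
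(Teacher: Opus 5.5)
Your proposal is correct and is the paper's argument: existence comes from Proposition~\ref{prop-exists}, and uniqueness comes from Proposition~\ref{prop-unique}, using that definable manifolds are definable semimanifolds and that left translations are continuous in a topological group. Your remark that manifolds sit inside semimanifolds, which Proposition~\ref{prop-unique} and its supporting lemmas require, fills in the one detail the paper leaves implicit when it says it is ``combining'' the two propositions.
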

\begin{theorem} \label{thm-improve}
  Let $\tau$ be a definable topology on $G$ such that
  \begin{enumerate}
  \item $(G,\tau)$ is a definable semimanifold.
  \item Every left translation $\lambda(x) = a \cdot x$ is
    $\tau$-continuous.
  \end{enumerate}
  Then $(G,\tau)$ is a definable manifold, and $(G,\cdot,\tau)$ is a
  topological group.
\end{theorem}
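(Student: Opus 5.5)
The plan is to derive Theorem~\ref{thm-improve} from Proposition~\ref{prop-unique} and Proposition~\ref{prop-exists}, which is how the paper says the two theorems arise. Let $\tau$ be as in the statement: a definable semimanifold topology on $G$ for which every left translation is continuous. Proposition~\ref{prop-exists} gives a definable topology $\tau'$ such that $(G,\tau')$ is a definable manifold and $(G,\cdot,\tau')$ is a topological group. I will check that $\tau'$ also satisfies the two hypotheses of Proposition~\ref{prop-unique}, and then uniqueness will force $\tau = \tau'$.

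The check is immediate. A definable manifold is a definable semimanifold, since an open subset of a cell is a definable subspace of some $\Mm^k$. Left translations are continuous in any topological group. So $\tau$ and $\tau'$ both satisfy conditions (1) and (2) of Proposition~\ref{prop-unique}, and hence $\tau = \tau'$. This transfers both desired conclusions from $\tau'$ to $\tau$: $(G,\tau)$ is a definable manifold, and $(G,\cdot,\tau)$ is a topological group.

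The only step needing care is matching the hypotheses of Proposition~\ref{prop-unique} exactly. That proposition asks only for a definable topology, the semimanifold property, and continuity of left translations. It derives local Euclideanity itself from Remark~\ref{some-local-eu} together with translation invariance. Continuity of each $\lambda_a$ with continuous inverse $\lambda_{a^{-1}}$ makes each $\lambda_a$ a homeomorphism, which supplies exactly the translation invariance used there. So there is no real obstacle, and all the substance sits in Propositions~\ref{prop-unique} and \ref{prop-exists}.

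Theorem~\ref{thm-unique} follows the same way: existence is Proposition~\ref{prop-exists}, and uniqueness is Proposition~\ref{prop-unique} applied to any two such topologies.
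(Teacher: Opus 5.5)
Your proposal is correct and matches the paper's argument. The paper obtains Theorem~\ref{thm-improve} by combining the existence result, Proposition~\ref{prop-exists}, with the uniqueness result for left-invariant definable semimanifold topologies, Proposition~\ref{prop-unique}; this forces $\tau$ to coincide with the manifold group topology, exactly as you describe.
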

\begin{definition} \label{cantop0}
  The \emph{canonical topology} on a definable group $(G,\cdot)$ is
  the topology $\tau$ from Theorem~\ref{thm-unique}.
\end{definition}
\begin{remark} \label{oh-its}
  Definable semimanifolds are always $T_1$, because the $T_1$ property
  can be checked locally, and the topology on $\Mm^n$ is $T_1$.  For
  group topologies, Hausdorff is equivalent to $T_1$.  Therefore, the
  canonical topology is Hausdorff.
\end{remark}
\begin{remark} \label{product-obvious}
  Let $\tau_G$ denote the canonical topology on $G$.  A product of two
  definable manifolds is a definable manifold, so the product topology
  $\tau_G \times \tau_H$ is a manifold topology on the product group
  $G \times H$, and also a group topology of course.  By the
  uniqueness of the canonical topology, $\tau_G \times \tau_H$ must be
  $\tau_{G \times H}$.
\end{remark}
\begin{remark} \label{interior-duh-2}
  Let $G$ be a $d$-dimensional definable group and $X \subseteq G$ be
  a definable subgroup.  Then $X$ has non-empty interior in $\tau_G$
  if and only if $\dim(X) = d$.  Indeed, $G$ must have constant local
  dimension (because translations are homeomorphisms), so
  Lemma~\ref{interior-duh} applies.
\end{remark}
\subsection{Homomorphisms}
\begin{theorem} \label{continuity}
  Let $G, H$ be definable groups, and let $f : G \to H$ be a definable
  homomorphism.  Then $f$ is continuous, with respect to the canonical
  topologies on $G$ and $H$.
\end{theorem}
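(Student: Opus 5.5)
Because $f$ is a homomorphism, continuity at a single point propagates to continuity everywhere; so the plan is to find one point of continuity using generic continuity on the manifold $(G,\tau_G)$, and then translate it. Let $\tau_G,\tau_H$ be the canonical topologies from Theorem~\ref{thm-unique}. Both $(G,\tau_G)$ and $(H,\tau_H)$ are definable manifolds, hence definable semimanifolds, and $f : G \to H$ is a definable function. Lemma~\ref{new-tame-sane}(\ref{nts2}) then says the set $G_{bad}$ of points where $f$ is discontinuous is the complement of a dense set. Since $G$ is nonempty, there is at least one point $a \in G$ at which $f$ is $\tau_G$-to-$\tau_H$ continuous.

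Next I would spread continuity from $a$ to an arbitrary point $b \in G$. Write $c = b a^{-1}$, so that $f(x) = f(c)\cdot f(c^{-1}x)$ for all $x$. This expresses $f$ as the composition of three maps:
\begin{equation*}
  x \mapsto c^{-1}x, \qquad y \mapsto f(y), \qquad z \mapsto f(c)\cdot z.
\end{equation*}
The first map is a left translation in $G$ and sends $b$ to $a$; it is a $\tau_G$-homeomorphism because $\tau_G$ is a group topology. The middle map is continuous at $a$. The last map is a left translation in $H$ and is $\tau_H$-continuous. The composition is therefore continuous at $b$. Since $b$ was arbitrary, $f$ is continuous everywhere.

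There is no real obstacle, because the heavy lifting is done by generic continuity for maps between semimanifolds (the ``hard case'' of Lemma~\ref{new-tame-sane}, which handles a general semimanifold target $Y$ via Lemma~\ref{bookkeeping-rescue}). The one point that needs care is that the lemma only gives density of the good set, not a dimension bound. Density is enough here, since all we need is that the good set is nonempty.
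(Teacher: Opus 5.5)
Your proposal is correct and follows essentially the same route as the paper. You get one point of continuity from Lemma~\ref{new-tame-sane}(\ref{nts2}), then spread it to every point via $f(x) = f(c)\cdot f(c^{-1}x)$ and continuity of left translations in $G$ and $H$.
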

\begin{proof}
  By Lemma~\ref{new-tame-sane}, there is \emph{some} $a \in G$ such that
  $f$ is continuous at $a$.  Then for any $b \in G$, the map
  \begin{equation*}
    f(x) = f(b) \cdot f(b^{-1} \cdot x)
  \end{equation*}
  is continuous at $b \cdot a$ by continuity of left translations.
  Therefore $f$ is continuous everywhere.
\end{proof}
\begin{theorem}
  Let $G$ be a definable group.  Let $H$ be a definable subgroup.
  Then the inclusion $H \to G$ is a closed embedding with respect to
  the canonical topologies on $H$ and $G$.
\end{theorem}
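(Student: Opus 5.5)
We have to show two things: the inclusion $\iota : H \to G$ is a topological embedding, meaning $\tau_H = \tau_G \restriction H$, and $H$ is $\tau_G$-closed. The plan has three steps.

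\textbf{Continuity and the subspace topology.} By Theorem~\ref{continuity}, the inclusion $\iota$ is a definable homomorphism, so it is continuous. Hence every $\tau_G \restriction H$-open set is $\tau_H$-open. For the converse, let $\tau'$ denote the subspace topology $\tau_G \restriction H$. This is a definable topology on $H$, since we can intersect a definable basis of $\tau_G$ with $H$. By the remarks in \S\ref{defman}, a definable subspace of a definable semimanifold is a definable semimanifold, so $(H,\tau')$ is a definable semimanifold. Left translations by elements of $H$ are restrictions of homeomorphisms of $G$ that preserve $H$, so they are $\tau'$-continuous. Theorem~\ref{thm-improve} then shows that $(H,\tau')$ is a definable manifold and a topological group. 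By the uniqueness in Theorem~\ref{thm-unique}, $\tau' = \tau_H$. So $\iota$ is an embedding.

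\textbf{Closedness.} Let $\overline{H}$ be the $\tau_G$-closure of $H$. Since $\tau_G$ is a group topology, $\overline{H}$ is a subgroup of $G$, and it is definable because $\tau_G$ has a definable basis. Give $\overline{H}$ its canonical topology; by the previous step this is the subspace topology from $G$. Then $H$ is a dense definable subgroup of $\overline{H}$, so it suffices to treat the case $\overline{H} = G$, i.e., $H$ dense in $G$, and to show $H = G$.

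\textbf{The dense case.} Suppose $H$ is dense in $G$. Now $(G,\tau_G)$ is a definable manifold, hence locally Euclidean, so Lemma~\ref{loc-eu-dim} gives $\dim(G \setminus H) < \dim(G)$. Then Proposition~\ref{covers} (whose hypothesis should read $\dim(G \setminus U) < \dim(G)$) shows that finitely many cosets of $H$ cover $G$, so $[G:H]$ is finite. I expect the main obstacle here, because finite index alone does not give $H = G$.

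To finish, note that finite index makes $H$ closed, since its complement is a finite union of cosets. Each coset is closed, being a translate of $\overline{H}$; here we may already have reduced to the case where $\overline{H}$ is closed in the relevant sense. A cleaner route: if $[G:H] = n$ is finite, then $G$ is the disjoint union of the cosets $g_iH$. Each coset $g_iH$ is dense, as a translate of a dense set, and has the same dimension as $G$. Lemma~\ref{interior-duh} then gives each coset non-empty interior. By translation $H$ is open, hence so is every coset. Each coset is therefore clopen, hence closed and dense, so each coset equals $G$. Thus $n = 1$ and $H = G$. Consequently $H = \overline{H}$ is closed in $G$.
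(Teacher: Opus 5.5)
Your proof is correct. Its first two steps match the paper's proof: the subspace topology $\tau_G \restriction H$ is the canonical topology on $H$ by Theorem~\ref{thm-improve} and uniqueness, and you then reduce to the case where $H$ is dense in $\overline{H}$, whose canonical topology is its subspace topology.

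The finish differs slightly. Both arguments get $\dim(\overline{H} \setminus H) < \dim(\overline{H}) =: k$, hence $\dim(H) = k$. You get this via Lemma~\ref{loc-eu-dim} and the paper via Remark~\ref{interior-duh-2}, which amount to the same thing. The paper then observes that $\overline{H} \setminus H$ is a union of cosets of $H$, each of dimension $k$, so it must be empty. You instead use Lemma~\ref{interior-duh} to give $H$ non-empty interior and make $H$ open by translation. The open subgroup $H$ is then clopen and dense, hence all of $\overline{H}$. The paper's finish is a pure dimension count needing no second appeal to the interior criterion; yours is the standard topological-group argument and is equally valid.

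A few things in your write-up should be cleaned up:
\begin{itemize}
\item The detour through Proposition~\ref{covers} is unnecessary. $\dim(H) = k$ already follows from $\overline{H} = H \cup (\overline{H} \setminus H)$ and $\dim(\overline{H} \setminus H) < k$.
\item Your first attempted finish (``each coset is closed, being a translate of $\overline{H}$'') is circular. Cosets of $H$ are translates of $H$, and closedness of $H$ is exactly what you are proving. Delete it and keep only the ``cleaner route.''
\item When you apply Lemma~\ref{interior-duh}, state that the local dimension of $\overline{H}$ is constant, since left translations are homeomorphisms. That is the lemma's hypothesis, and it is what Remark~\ref{interior-duh-2} packages.
\end{itemize}
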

\begin{proof}
  Let $\tau$ be the canonical topology on $G$.  The restriction $\tau
  \restriction H$ makes $H$ into a definable semimanifold, and $\tau
  \restriction H$ is fixed by left translations of $H$.  By
  Theorem~\ref{thm-improve}, $\tau \restriction H$ is the canonical
  topology on $H$.  So the inclusion $H \to G$ is a topological
  \emph{embedding}.  It remains to show that it is a \emph{closed
  embedding}, i.e., that $H$ is $\tau$-closed.

  Let $\overline{H}$ be the $\tau$-closure of $H$.  Then
  $\overline{H}$ is a $\tau$-closed definable subgroup of $G$.  The
  inclusions $H \hookrightarrow \overline{H} \hookrightarrow G$ are
  embeddings (with respect to the canonical topologies) by the
  previous paragraph.  Restricting to $\overline{H}$, we have a
  definable group $\overline{H}$ with a dense subgroup $H$.  Let $k =
  \dim(\overline{H})$.  Since $\overline{H} \setminus H$ has empty interior in $\overline{H}$, we have $\dim(\overline{H} \setminus H) < k$ by Remark~\ref{interior-duh-2}.  Consequently, $\dim(H) = k$.  But
  $\overline{H} \setminus H$ is a union of cosets of $H$, so it is
  either empty or has dimension at least $k$.  Therefore, it is empty,
  and $H = \overline{H}$.
\end{proof}
\begin{corollary}
  If $f : G \to H$ is a definable injective homomorphism, then $f$ is
  a closed embedding with respect to the canonical topologies on $G$
  and $H$.
\end{corollary}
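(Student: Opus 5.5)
The plan is to factor $f$ through its image and apply the two results just proved.

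Let $H' = f(G)$, which is a definable subgroup of $H$, and let $g : G \to H'$ be the map $f$ with its codomain restricted to $H'$. Since $f$ is injective, $g$ is a definable group isomorphism. By the previous theorem, the inclusion $H' \hookrightarrow H$ is a closed embedding with respect to the canonical topologies $\tau_{H'}$ and $\tau_H$. Hence it suffices to show that $g : (G,\tau_G) \to (H',\tau_{H'})$ is a homeomorphism. Then $f$, being the composition of a homeomorphism with a closed embedding, is a closed embedding.

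To see that $g$ is a homeomorphism, apply Theorem~\ref{continuity} twice. First, $g$ is a definable homomorphism $G \to H'$, so it is continuous. Second, $g^{-1} : H' \to G$ is a definable homomorphism (its graph is the transpose of the graph of $g$), so it is also continuous. Therefore $g$ is a homeomorphism.

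Alternatively, one could transport $\tau_G$ along $g$. The resulting topology $g(\tau_G)$ on $H'$ makes $H'$ a definable manifold and a topological group, so it equals $\tau_{H'}$ by Theorem~\ref{thm-unique}. Either route works.

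The only point needing care is bookkeeping: the previous theorem identifies $\tau_H \restriction H'$ with $\tau_{H'}$, and we need this to conclude that $\tau_G$ corresponds to the subspace topology on $f(G)$, and not merely to some topology on $H'$. Beyond that I expect no real obstacle, since every ingredient has already been established.
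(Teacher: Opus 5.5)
Your proof is correct and matches the paper's intended argument: the paper states this corollary without proof right after the theorem that definable subgroups are closed-embedded. It follows exactly as you describe, by factoring $f$ as the definable isomorphism $G \to f(G)$ (a homeomorphism by Theorem~\ref{continuity} applied to $f$ and its inverse, or by uniqueness of the canonical topology) followed by the closed inclusion $f(G) \hookrightarrow H$.
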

With a little more work, we get the following:
\begin{corollary}
  Let $G$ be a definable group and $H$ be a definable subgroup.  The
  following are equivalent:
  \begin{enumerate}
  \item $H$ is open as a subset of $G$.
  \item $H$ is clopen as a subset of $G$.
  \item $\dim(H) = \dim(G)$.
  \end{enumerate}
\end{corollary}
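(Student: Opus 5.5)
We prove the cycle of implications $(1)\Rightarrow(3)\Rightarrow(2)\Rightarrow(1)$. The implication $(2)\Rightarrow(1)$ holds by definition, so the work is in the other two steps. Throughout, $\tau$ is the canonical topology on $G$ and $d = \dim(G)$.

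For $(1)\Rightarrow(3)$, suppose $H$ is open. Since $H$ is non-empty (it contains the identity), it has non-empty interior. By Remark~\ref{interior-duh-2}, applied to the definable subgroup $H$, this forces $\dim(H) = d$. That remark rests on Lemma~\ref{interior-duh} together with the constant local dimension of $G$.

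For $(3)\Rightarrow(2)$, suppose $\dim(H) = d$. Remark~\ref{interior-duh-2} then gives that $H$ has non-empty interior in $\tau$. Pick a point $h_0 \in \ter(H)$ and an open set $V$ with $h_0 \in V \subseteq H$. Each left translation is a homeomorphism, since its inverse is again a left translation. So for any $h \in H$ the set $h h_0^{-1} V$ is an open neighborhood of $h$ contained in $h h_0^{-1} H = H$. Hence $H$ is open.

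It remains to see that $H$ is closed. There are two ways to do this.
\begin{itemize}
\item Use the preceding theorem, which says the inclusion $H \to G$ is a closed embedding. In particular $H$ is $\tau$-closed.
\item Argue directly: $G \setminus H$ is a union of left cosets $gH$, each of which is open because it is a translate of the open set $H$. So the complement of $H$ is open.
\end{itemize}
Either way $H$ is clopen, which gives (2).

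I do not expect a serious obstacle. The only delicate point is that Remark~\ref{interior-duh-2} is stated for definable subgroups, but its proof via Lemma~\ref{interior-duh} works for any definable subset. That lemma requires $G$ to be a definable manifold of constant local dimension $d$. This holds because $(G,\tau)$ is a definable manifold by Theorem~\ref{thm-unique}, and translations are homeomorphisms, so the local dimension $\dim_p(G)$ is the same at every point. By Lemma~\ref{local-dim} this common value is $d$.
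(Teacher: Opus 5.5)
Your proof is correct and follows essentially the paper's route: (1)$\Rightarrow$(3) is the local-dimension argument packaged in Remark~\ref{interior-duh-2}. Closedness comes from the closed-embedding theorem or the standard open-subgroup argument, both of which the paper also cites. The only cosmetic difference is in (3)$\Rightarrow$(2): the paper notes that $\bd(H)$ is invariant under translation by $H$, hence empty or of full dimension, and excludes the latter by Lemma~\ref{new-tame-sane}, whereas you use the same small-boundary fact (via Lemma~\ref{interior-duh}) to get an interior point and then translate it around $H$.
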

\begin{proof}
  (1)$\iff$(2) is clear because $H$ is a closed subgroup, or by the standard proof that open subgroups are clopen subgroups in any topological group.  By
  Lemma~\ref{local-dim} and translation invariance of the topology,
  $\dim(G) = \dim_1(G)$, i.e., $\dim(G)$ equals the local dimension
  $\dim_p(G)$ with $p =1 \in G$.  If $H$ is open, then $\dim(H) =
  \dim_1(H) = \dim_1(G) = \dim(G)$, so (1)$\implies$(3).

  If (3) holds, then every coset of $H$ has the same dimension as $G$.  So if $X \subseteq G$ is invariant under left translation by $G$ (meaning $G \cdot X = X$), then either $X$ is empty or $\dim(X) = \dim(G)$.  Taking $X = \bd(H)$, we get two cases:
  \begin{itemize}
  \item $\dim(\bd(H)) = \dim(G)$, contradicting
    Lemma~\ref{new-tame-sane}(\ref{nts3}).
  \item $\bd(H) = \varnothing$, and then $H$ is clopen.
  \end{itemize}
  Thus (3)$\implies$(2).
\end{proof}
Based on \cite[Corollary~5.22]{admissible} we might expect the following
to hold:
\begin{nontheorem}
  Let $f : G \to H$ be a definable surjective homomorphism.  Then $f$
  is an open map with respect to the canonical topologies on $G$ and
  $H$.
\end{nontheorem}
In fact, this fails already in 2-sorted RCVF $(K,\Gamma)$
\emph{without} an angular component.  In fact, the valuation map $\val
: K \to \Gamma$ fails to be open, since the open set $\{x \in K :
\val(x) = 0\}$ maps to the non-open set $\{0\} \subseteq \Gamma$.
Since we did not include the angular component, this example is
dp-minimal.  We will see in a later paper that the core problem with
this example is the failure of the exchange property (in the 2-sorted
language).

\subsection{Visceral groups}
The next theorem says that if $T$ is a visceral theory (or nice t-minimal theory) expanding the
theory of groups, then we can change the topology witnessing
viscerality to be a group topology.  This clears up some ambiguity in
phrases like ``visceral groups.''
\begin{theorem}
  Suppose the monster model $\Mm$ is a group with respect to
  some definable group operation $\ast$.  Then there is a definable
  group topology $\tau$ such that $\Mm$ is visceral with respect to
  $\tau$.
\end{theorem}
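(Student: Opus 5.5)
The plan is to take $\tau$ to be the canonical topology on the definable group $(\Mm,\ast)$ from Theorem~\ref{thm-unique} (Definition~\ref{cantop0}), computed with respect to the original visceral (or nice t-minimal) structure. Then we check directly that $\tau$ witnesses viscerality. Three things need checking: $\tau$ is a definable Hausdorff topology induced by a definable uniformity; $\Mm$ has no $\tau$-isolated points; and a unary definable set has non-empty $\tau$-interior iff it is infinite.

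First, $\tau$ is a definable group topology by Theorem~\ref{thm-unique}, and it is Hausdorff by Remark~\ref{oh-its}. For the uniformity, fix a definable family $\{V_y\}_{y \in Y}$ of $\tau$-neighborhoods of the identity $e$ forming a neighborhood basis at $e$. Such a family can be extracted from the definable basis of $\tau$ by taking those basic sets containing $e$. Then the entourages
\[E_y = \{(x,z) \in \Mm^2 : x^{-1} \ast z \in V_y\}\]
form a definable family. Because $\tau$ is a group topology, they give a basis for the left uniform structure of $(\Mm,\ast,\tau)$, and this uniformity induces $\tau$. This step is routine topological-group bookkeeping: composition of entourages comes from continuity of multiplication at $(e,e)$, and symmetry from continuity of inversion.

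Next comes the t-minimality condition. The home sort has $\dim(\Mm) = 1$ in the original t-minimal dimension theory. Since $\tau$ is translation invariant and $(\Mm,\tau)$ is a definable manifold, the local dimension is constant, and by Lemma~\ref{local-dim} it equals $1$ everywhere. Lemma~\ref{interior-duh} (equivalently Remark~\ref{interior-duh-2}) then gives, for definable $D \subseteq \Mm$,
\[\ter_\tau(D) \ne \varnothing \iff \dim(D) = 1.\]
In the original t-minimal theory, a unary set has dimension $1$ iff it has non-empty interior in the original topology, which holds iff it is infinite. So $\ter_\tau(D) \neq \varnothing \iff |D| = \infty$. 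In particular, singletons are finite and so have empty $\tau$-interior, which means there are no isolated points. Hence $\Mm$ is t-minimal with respect to $\tau$, and together with the definable uniformity it is visceral in the sense of Dolich--Goodrick.

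The main point requiring care is matching the exact formulation of viscerality in \cite{visceral}. That definition requires the uniformity to have a definable basis of entourages, the induced topology to be Hausdorff, and every infinite definable unary set to have interior. I expect the only delicate step to be verifying that the entourage family $\{E_y\}$ satisfies the uniform-structure axioms uniformly in the parameter $y$. I would handle this by noting that the axioms only require, for each $E_y$, the existence of some $E_{y'}$ with $E_{y'} \circ E_{y'} \subseteq E_y$ and $E_{y'}^{-1} \subseteq E_y$. Both are guaranteed by continuity of the group operations at the identity, so no uniform choice of $y'$ is needed.
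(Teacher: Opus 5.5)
Your proposal is correct and follows essentially the same route as the paper. You take the canonical topology from Theorem~\ref{thm-unique}, build a definable uniformity from translates of a definable neighborhood basis of the identity, and use the dimension theory of the definable manifold $(\Mm,\tau)$ to get t-minimality. The only difference is packaging: you verify the ``non-empty interior iff infinite'' formulation in one step via Lemma~\ref{interior-duh}, whereas the paper checks ``no isolated points'' via translation invariance and Lemma~\ref{local-dim}, and ``finite boundary'' via Lemma~\ref{new-tame-sane}(\ref{nts3}). You also use the left uniformity where the paper uses the right one, which does not matter.
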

\begin{proof}
  Let $\tau$ be the canonical topology on $(\Mm,\ast)$.  Then $\tau$
  is a definable group topology.  It lifts to a definable uniformity
  in one of the two standard ways.  For example, if $\mathcal{B}$ is a
  definable neighborhood basis of $1 \in \Mm$, then
  \begin{equation*}
    \{E_B : B \in \mathcal{B}\}
  \end{equation*}
  is a definable basis for a uniformity inducing $\tau$, where
  \begin{equation*}
    E_B = \{(x,y) \in \Mm^2 : x \cdot y^{-1} \in B\}.
  \end{equation*}
  It remains to check the following two properties:
  \begin{itemize}
  \item $\tau$ has no isolated points.
  \item Every definable set $D \subseteq \Mm$ has finite boundary with
    respect to $\tau$.
  \end{itemize}
  If some point $a \in \Mm$ is
  $\tau$-isolated, then \emph{every} point is $\tau$-isolated by
  translation-invariance.  By local dimension (Lemma~\ref{local-dim}),
  it follows that $\dim(\Mm) = 0$.  But $\dim(\Mm^n) = n$, so
  $\dim(\Mm^1) = 1$, a contradiction.  Thus there are no isolated
  points.

  Finally, suppose $D \subseteq \Mm$ is definable.  By
  Lemma~\ref{new-tame-sane}(\ref{nts3}), $\dim(\bd(D)) < \dim(\Mm) = 1$, so
  $\bd(D)$ is finite.  Then $\tau$ satisfies all the requirements to
  be a visceral topology.
\end{proof}

\section{Application to definable fields} \label{atdf}
Continue to work in a monster model $\Mm$ of a visceral theory, or
more generally a nice t-minimal theory.
\begin{proposition}
  Let $(K,+,\cdot)$ be a definable field.  Let $\tau$ be the canonical
  topology on the additive group $(K,+)$.
  \begin{itemize}
  \item $\tau$ is a field topology on $K$: the field operations are
    continuous.
  \item $\tau \restriction K^\times$ is the canonical topology on
    $K^\times$.
  \end{itemize}
\end{proposition}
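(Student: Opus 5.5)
We already know $\tau$ is a group topology for $(K,+)$, so addition and negation are continuous; to get a field topology we still need multiplication and inversion to be continuous. The plan is to compare $\tau$ with the canonical topology $\tau^\times$ of $(K^\times,\cdot)$ using uniqueness (Proposition~\ref{prop-unique}, Theorem~\ref{thm-improve}) and continuity of definable homomorphisms (Theorem~\ref{continuity}). The field may be assumed infinite, since for finite $K$ every definable manifold topology is discrete and both claims are trivial.

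First I treat multiplication by a fixed constant. For $c \in K^\times$, the map $x \mapsto cx$ is a definable automorphism of $(K,+)$, so it is $\tau$-continuous by Theorem~\ref{continuity}. Next I show $K^\times$ is $\tau$-open. Since $\tau$ is Hausdorff (Remark~\ref{oh-its}), $\{0\}$ is closed, so $K^\times = K\setminus\{0\}$ is open.

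Then I identify the two topologies on $K^\times$. The restriction $\tau\restriction K^\times$ is an open subspace of a definable manifold, hence a definable semimanifold (indeed a manifold). Each multiplicative left translation $x\mapsto cx$ is $\tau$-continuous by the previous step, so it is continuous on $K^\times$. Theorem~\ref{thm-improve} then says $(K^\times,\tau\restriction K^\times)$ is a topological group with manifold topology. By Theorem~\ref{thm-unique}, $\tau \restriction K^\times = \tau^\times$, which is the second bullet. As a byproduct, multiplication $K^\times\times K^\times\to K^\times$ and inversion $K^\times\to K^\times$ are $\tau$-continuous, so inversion is settled.

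The main obstacle is continuity of multiplication $K\times K\to K$ at pairs with a zero coordinate. I would use Lemma~\ref{new-tame-sane}(\ref{nts2}) on the definable manifold $(K^2,\tau\times\tau)$ to get a point $(a,b)$ where multiplication is continuous. I then transport continuity to an arbitrary point $(a_0,b_0)$ via the identity
\[xy = (x - a_0 + a)(y - b_0 + b) - a(y-b_0) - b(x-a_0) - ab + (\text{terms continuous in } x,y),\]
and more precisely the exact identity $xy = (x+s)(y+t) - sy - tx - st$ with $s=a-a_0$, $t = b-b_0$. The right side is a composite of additive translations, multiplication near $(a,b)$ (continuous there), multiplication by constants (continuous), and addition. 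Hence multiplication is continuous at $(a_0,b_0)$. This sidesteps the zero case entirely, so the genuine work is in the group-theoretic uniqueness already proved. The point to double-check is that Lemma~\ref{new-tame-sane} applies to $K^2$ with the product topology, which holds because products of definable manifolds are definable manifolds (Remark~\ref{product-obvious}).
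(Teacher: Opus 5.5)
Your proposal is correct and follows essentially the same route as the paper. You get continuity of $x \mapsto cx$ from Theorem~\ref{continuity}, then use Theorem~\ref{thm-improve} together with uniqueness to identify $\tau \restriction K^\times$ with the canonical topology of $K^\times$ (which settles inversion). Finally you use the translation identity $xy = (x+s)(y+t) - tx - sy - st$ to spread continuity of multiplication from one point to all of $K^2$.

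The differences are minor. You make explicit that $K^\times$ is $\tau$-open by Hausdorffness; the paper uses this tacitly when it deduces continuity of multiplication at $(1,1)$ from the topological group structure on $K^\times$. You also take the base point of continuity from generic continuity on $K^2$ (Lemma~\ref{new-tame-sane}(\ref{nts2})) rather than from $(1,1) \in (K^\times)^2$; that is the argument the paper itself uses later in Theorem~\ref{fieldtop}.
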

\begin{proof}
  For any $a \in K^\times$, there is a definable homomorphism
  \begin{align*}
    (K,+) &\to (K,+) \\
    x &\mapsto a \cdot x.
  \end{align*}
  By continuity of homomorphisms (Theorem~\ref{continuity}), this map
  is continuous.  It follows that $\tau \restriction K^\times$ is
  translation-invariant.  Then (2) holds by Theorem~\ref{thm-improve}.
  Therefore the maps
  \begin{align*}
    (x,y) &\mapsto xy \\
    x &\mapsto x^{-1}
  \end{align*}
  are continuous, except possibly around 0.  It remains to show that
  multiplication $f(x,y) = xy$ is continuous everywhere.  This
  essentially follows by applying translations.  In more detail, note
  that
  \begin{itemize}
  \item $f$ is continuous in each variable separately (proved above).
  \item $f$ is continuous at $(1,1)$, since $(1,1) \in (K^\times)^2$.
  \end{itemize}
  Let $(a,b) \in K^2$ be
  arbitrary.  The function
  \begin{equation*}
    f(x,y) = (x+a)(y+b) - bx - ay + ab = f(x+a,y+b) - f(b,x) - f(a,y)
    + ab,
  \end{equation*}
  is continuous at $(x,y) = (1-a,1-b)$ since $f$ is continuous at
  $(1,1)$ and continuous in each variable separately (and $+,-$ are
  continuous).  Then $f$ is continuous at any point.
\end{proof}
\begin{example}
  If $T$ is a visceral (or nice t-minimal) theory of fields, we can
  choose the definable topology to be a field topology.
\end{example}
\begin{theorem}
  Let $K$ be an infinite definable field and $\tau$ be
  the canonical field topology on $K$.  Then $\tau$ is st-henselian
  and $K$ is large.
\end{theorem}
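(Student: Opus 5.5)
The plan is to verify Assumptions~\ref{A} and \ref{B} for $K$ with its canonical field topology $\tau$. Once both hold, Theorem~\ref{silver-theorem} gives that $\tau$ is st-henselian. Since st-henselian implies bt-henselian, Theorem~\ref{bronze-large} then gives that $K$ is large.

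For Assumption~\ref{A}, take $\dim$ to be the t-minimal dimension of \cite{wj-visc-1}, applied to definable subsets $X \subseteq K^n \subseteq \Mm^{Nn}$.

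Properties (1), (3), (4), (5) and (6) are standard facts from \cite[Theorem~2.37]{wj-visc-1}: empty sets have dimension $-\infty$, dimension $0$ means finite, products add, unions take the max, and finite-to-one surjections preserve dimension. Property (2), $\dim(K)>0$, holds because $K$ is infinite and dimension-$0$ sets are finite.

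We already know $\tau$ is a definable field topology by the preceding Proposition. For the interior condition (7), note that $K^n$ is a definable group under addition. By Remark~\ref{product-obvious}, the product topology $\tau^n$ is the canonical topology on $(K^n,+)$. Remark~\ref{interior-duh-2}, or really Lemma~\ref{interior-duh}, applies to arbitrary definable subsets, since the argument only uses that $K^n$ is a definable manifold of constant local dimension. This gives that $X \subseteq K^n$ has non-empty $\tau^n$-interior iff $\dim(X)=\dim(K^n)$. One should double-check that constant local dimension holds; it follows from translation invariance together with Lemma~\ref{local-dim}.

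For Assumption~\ref{B}, let $U \subseteq K^n$ be non-empty and $\tau$-open, and let $f : U \rra K^m$ be a definable $k$-correspondence. Here $(K^n,\tau^n)$ and $(K^m,\tau^m)$ are definable manifolds, so in particular locally Euclidean definable semimanifolds. The open subset $U$ is again a locally Euclidean definable semimanifold. Lemma~\ref{new-tame-sane}(\ref{nts1}) and (\ref{nts4}) then show that the set $B$ of discontinuity points has $\dim(B) < \dim(U) \le \dim(K^n)$.

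The step needing the most care is matching dimensions. Lemma~\ref{new-tame-sane}(\ref{nts4}) bounds $\dim(B)$ by $\dim(U)$, and $U$ is open and non-empty, so $\dim(U)=\dim(K^n)$ by Lemma~\ref{interior-duh}. Beyond this, everything is bookkeeping. Finally, with both assumptions verified, apply Theorem~\ref{silver-theorem} and then Theorem~\ref{bronze-large}.
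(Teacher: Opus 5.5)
Your proposal is correct and follows the paper's proof. Assumption~\ref{A} comes from the t-minimal dimension theory together with Remark~\ref{product-obvious} and the interior-versus-dimension criterion, Assumption~\ref{B} comes from Lemma~\ref{new-tame-sane}, and Theorems~\ref{silver-theorem} and \ref{bronze-large} finish. You are also right to read Remark~\ref{interior-duh-2} as covering arbitrary definable subsets (its word ``subgroup'' is a slip); that is exactly what Lemma~\ref{interior-duh} gives once constant local dimension is noted.
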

\begin{proof}
  Note that
  \begin{itemize}
  \item $\dim(K) > 0$ because $K$ is infinite.
  \item The product topology on $K^n$ agrees with the canonical
    topology on $(K^n,+)$ (Remark~\ref{product-obvious}).
  \item A definable subset $X \subseteq K^n$ has non-empty interior in
    this topology if and only if it has maximum dimension
    (Remark~\ref{interior-duh-2}).
  \end{itemize}
  Then Assumption~\ref{A} holds, with respect to the t-minimal
  dimension theory---each point is either an intrinsic property of
  t-minimal dimension from \cite[\S2.5]{wj-visc-1} or one of the three
  points above.  Assumption~\ref{B}---generic continuity of
  correspondences---holds by Lemma~\ref{new-tame-sane}(\ref{nts1}).
  Theorem~\ref{silver-theorem} then shows that $\tau$ is st-henselian,
  and $K$ is large by Theorem~\ref{bronze-large}.
\end{proof}

% PART 3: the t-minimal case

\section{The t-minimal case: groundwork} \label{ground}

For the remainder of the paper, work in a monster model $\Mm$ of a t-minimal theory.

%% Let $(G,+)$ be a
%% definable abelian group.  We're going to construct a non-trivial
%% definable group topology $\tau$ on $G$ with the property that a set $D
%% \subseteq G$ has non-empty interior if and only if $\dim(D) =
%% \dim(G)$.  The topology is characterized by the fact that
%% \begin{equation*}
%%   \{X-X : X \text{ is a definable subset of $G$ with } \dim(X) = \dim(G)\}
%% \end{equation*}
%% is a neighborhood basis of 0.  In the case of the additive group
%% $(K,+)$ of a definable field $(K,+,\cdot)$, the topology will even be
%% Hausdorff, and will be a field topology on $K$.

%% We'll also prove that generic continuity holds in some form: if $G$
%% and $H$ are two definable abelian groups, and $f : G \to H$ is a
%% definable function, then $f$ is continuous at almost all points, with
%% respect to the new topologies $\tau_G$ and $\tau_H$.

%% Remember that in a t-minimal theory, generic continuity can fail for
%% the original, given topology.  For example, in RCF with the lower
%% limit topology, the function $f(x) = -x$ is nowhere continuous.  On
%% some level, the construction of the new topology $\tau$ is meant to
%% ``repair'' broken examples like the lower limit topology.

\subsection{Good families and good sets} \label{somewhere}
%% A definable family means a family $\{X_b\}_{b \in Y}$ where the sets
%% $Y$ and $\{(a,b) : b \in Y, ~ a \in X_b\}$ are definable.
\begin{definition}
  Let $D$ be a non-empty definable set.
  \begin{enumerate}
  \item A \emph{good family} on $D$ is a definable family
    $\mathcal{F}$ of subsets of $D$ such that
    \begin{itemize}
    \item If $X \in \mathcal{F}$, then $\dim(X) = \dim(D)$.
    \item If $X \subseteq D$ is definable and $\dim(X) = \dim(D)$,
      then there is $X' \in \mathcal{F}$ with $X' \subseteq X$.
    \end{itemize}
  \item $D$ is \emph{good} if it admits a good family.
  \end{enumerate}
\end{definition}
\begin{example}
  If $D \subseteq \Mm^n$ has dimension $n$, let $\mathcal{F}$ be the
  family of non-empty basic open sets $B \subseteq \Mm^n$ with $B
  \subseteq D$.  Then $\mathcal{F}$ is a good family:
  \begin{itemize}
  \item Each non-empty definable open set has dimension $n$.
  \item If $X \subseteq D$ has $\dim(X) = \dim(D) = n$, then $X$ has
    non-empty interior, so it contains a non-empty basic open set.
  \end{itemize}
  Therefore, $D$ is good.
\end{example}
If $a \in \acl(B)$, let $\mult(a/B)$ denote the number of conjugates
of $a$ over $B$, i.e., the size of the orbit $\{a' \in \Mm^n : a'
\equiv_B a\}$.  Note that $1 \le \mult(a/B) < \omega$.
\begin{proposition} \label{good-prop}
  If $X$ is a definable non-empty set, then there is a good definable
  subset $X_0 \subseteq X$ with $\dim(X_0) = \dim(X)$.
\end{proposition}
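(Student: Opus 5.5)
We reduce to the model case in the Example above by using a near-injective coordinate projection. Let $k = \dim(X)$ and $X \subseteq \Mm^n$. By \cite[Proposition~2.41]{wj-visc-1}, $X$ is a finite union of definable sets with near-injective $k$-projections. One piece, say $X_1$, has $\dim(X_1) = k$. It comes with a coordinate projection $\pi : \Mm^n \to \Mm^k$ such that $X_1 \to \pi(X_1)$ has finite fibers. The image $\pi(X_1)$ has dimension $k$, so it has non-empty interior in $\Mm^k$.

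The naive choice would be a set on which $\pi$ is injective, so that $X_0$ is in definable bijection with an open subset of $\Mm^k$. We cannot simply cut down to such a set, since fibers of size $m$ admit no definable choice of a point. Instead we use the function $\mult$ just introduced. Fix a small set $A$ over which $X$, $X_1$ and $\pi$ are defined. Take $a \in X_1$ with $\dim(a/A) = k$, and put $b = \pi(a)$. Then $a \in \acl(Ab)$, and we let $m = \mult(a/Ab)$.

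To get a single coset of this kind of data definably, I would work with fiber size. Let $Y_m = \{x \in X_1 : |\pi^{-1}(\pi(x)) \cap X_1| = m\}$. The sets $Y_m$ partition $X_1$ into finitely many pieces, since fiber sizes are uniformly bounded by compactness. One of them has dimension $k$, so we may assume every fiber of $X_1 \to \pi(X_1)$ has exactly $m$ points. Shrinking further, we may assume $\pi(X_1)$ is open: replace it by $\ter(\pi(X_1))$, which is nonempty and has full dimension, and pull back.

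Now define $\mathcal{F}$ to consist of the sets $\pi^{-1}(B) \cap X_1$ for non-empty basic open $B \subseteq \pi(X_1)$. Each such set has dimension $k$, because it maps onto $B$ with finite fibers. The hard direction is to show that a definable $X' \subseteq X_1$ with $\dim(X') = k$ contains a member of $\mathcal{F}$. The problem is that $X'$ might contain only some of the $m$ points over each base point.

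So I would instead take $\mathcal{F}$ to be a richer definable family. Its members are the sets $\pi^{-1}(B) \cap Z$, where $B$ is basic open and $Z$ ranges over a definable family of ``sections'': subsets of $X_1$ meeting each fiber over $B$ in a prescribed nonempty subset. This is exactly where the obstacle lies, since arbitrary definable subsets of $X_1$ do not form a definable family. To fix this I would use t-minimality once more.

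For $X'$ of dimension $k$, consider the function $y \mapsto |X' \cap \pi^{-1}(y)|$. Some level set $\{y : 1 \le |X' \cap \pi^{-1}(y)|\}$ has dimension $k$, hence contains a basic open $B$. The key point is then to show that, after shrinking $B$, the map $y \mapsto X' \cap \pi^{-1}(y)$ agrees on $B$ with a map chosen from a fixed definable family. I would achieve this by coding the points of each fiber with finitely many parameters, ordering them through a definable coding of $\Mm^n$-tuples. Since there are at most $2^m$ choices of subset, we can take $X_0$ to be a set on which a definable ``label'' $X_1 \to \{1,\dots,m\}$ is available.

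I expect the main obstacle to be producing such a labelling, that is, eliminating the finite-cover ambiguity. If that fails, the fallback is an induction on $m$ via $\mult$, using a generic point, to pass to a definable subset with injective projection.
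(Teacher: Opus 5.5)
\textbf{The gap.} You correctly identify the obstacle: a big $X' \subseteq X_1$ may contain only some of the $m$ points over each base point. The remedy you propose is not available, though.

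A definable labelling $X_1 \to \{1,\ldots,m\}$ that is injective on fibers would make each label class a full-dimensional definable set on which $\pi$ is injective. That is exactly the ``naive choice'' you already said cannot be obtained by definable choice. ``Ordering the fiber points through a definable coding of $\Mm^n$-tuples'' presupposes a definable linear order, or definable choice from finite sets, and t-minimality provides neither.

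For the same reason, nothing makes the correspondence $y \mapsto X' \cap \pi^{-1}(y)$ agree on a box with a member of a fixed definable family. Arbitrary definable subsets of $X_1$ are not uniformly parametrized.

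Your fallback aims at the same endpoint, a full-dimensional subset with injective projection. Its existence amounts to being able to push the fiber size all the way down to $1$, and you give no argument for that. The paper's proof is designed not to need it. Note also that your $\mult(a/Ab)$ is taken over a fixed $A$, while a big $Z \subseteq X_1$ may be defined over new parameters that change multiplicities. So that quantity does no work in your argument.

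\textbf{The missing idea is minimality, not labelling.} Consider all pairs $(X_0,\pi)$ such that $X_0 \subseteq X$ is definable over arbitrary parameters, $\dim(X_0)=\dim(X)=d$, and every fiber of $\pi \restriction X_0$ has exactly $m$ points. Your own reduction shows such pairs exist. Choose one with $m$ least. Then your first family $\{X_0 \cap \pi^{-1}(B)\}$ is already good:
\begin{enumerate}
\item Given big $Z \subseteq X_0$, let $Z'$ be the set of $x \in Z$ whose fiber $X_0 \cap \pi^{-1}(\pi(x))$ is not contained in $Z$.
\item If $\dim(Z') = d$, partition $Z'$ by fiber size. Some piece is full-dimensional with constant fiber size $< m$, contradicting minimality.
\item Hence $\dim(Z \setminus Z') = d$. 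So $\pi(Z \setminus Z')$ contains a non-empty basic open $B$, and $X_0 \cap \pi^{-1}(B) \subseteq Z$.
\end{enumerate}

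The paper runs the same argument at the level of types. It minimizes $\mult(a/Ca_0)$ over all triples $(a,a_0,C)$, with $C$ ranging over all finite sets defining $X$. In Case~II it shows that a base point generic over the parameters $C'$ of $Z$ would have at least $k$ conjugate lifts in $Z$, which fill its fiber.

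So the descent on $m$ you mention as a fallback is the right tool. Its conclusion is not injectivity: once the descent cannot continue, the box preimages already form a good family.
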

\begin{proof}
  Let $\mathcal{H}$ be the family of triples $(a,a_0,C)$ where $C$ is
  a finite set defining $X$, $a \in X$, $\dim(a/C) = \dim(X)$, and
  $a_0$ is an acl-basis of $a$ over $C$.  The family $\mathcal{H}$ is
  non-empty (take $C$ defining $X$, take $a \in X$ with $\dim(a/C) =
  \dim(X)$, and take $a_0$ an acl-basis of $a$ over $C$).  Take a
  triple $(a,a_0,C) \in \mathcal{H}$ minimizing $\mult(a/Ca_0)$.  Let
  $\pi$ be the coordinate projection such that $\pi(a) = a_0$.  Let $k
  = \mult(a/Ca_0)$.  Then we can find an $\mathcal{L}(C)$-formula
  $\phi(x,y)$ such that
  \begin{itemize}
  \item $a \in \phi(\Mm,a_0)$.
  \item $|\phi(\Mm,b)| \le k$ for any $b$.
  \end{itemize}
  Let $X_0$ be the set of $a' \in X$ such that $\phi(a',\pi(a'))$
  holds.  Then $a \in X_0$ and $X_0$ is $C$-definable, so $\dim(X_0)
  \ge \dim(a/C) = \dim(X)$ and thus $\dim(X_0) = \dim(X)$.

  Let $Y_0 = \pi(X_0)$.  By choice of $\phi$, the projection $\pi :
  X_0 \to Y_0$ has fibers of size at most $k$.  If $d = \dim(X)$, then
  \begin{equation*}
    \dim(Y_0) = \dim(X_0) = \dim(X) = d = \dim(a/C) = |a_0|.
  \end{equation*}
  Then $\Mm^d$ is the codomain of $\pi$, so $Y_0 \subseteq \Mm^d$ and
  $Y_0$ is a broad subset of $\Mm^d$.  Let \[\mathcal{F} =
  \{\pi^{-1}(B) : B \text{ is a non-empty basic open set and } B
  \subseteq Y_0\},\] where $\pi^{-1}(B)$ is the preimage of $B$ in
  $X_0$.  We claim that $\mathcal{F}$ is a good family on $X_0$:
  \begin{itemize}
  \item If $B$ is a non-empty basic open set in $\Mm^d$ and $B
    \subseteq Y_0$, then $\dim(B) = d$, so the preimage $\pi^{-1}(B)$
    has dimension $d$ also, where $d = \dim(X)$.
  \item Conversely, suppose that $Z \subseteq X_0$ and $\dim(Z) = d$.
    Let $W = X_0 \setminus Z$.  Since $\pi : X_0 \to Y_0$ has finite
    fibers, the image $\pi(Z) \subseteq \Mm^d$ has dimension $d$.
    Then we must be in at least one of the following two cases:
    \begin{description}
    \item[Case I:] $\pi(Z) \setminus \pi(W)$ has dimension $d$.  Then
      there is a non-empty basic open set $B \subseteq \pi(Z)
      \setminus \pi(W) \subseteq \pi(X_0) = Y_0$.  The preimage
      $\pi^{-1}(B) \subseteq X_0$ is contained in $Z$, since $B$ is
      disjoint from $\pi(W)$.  Therefore $Z$ contains a set in
      $\mathcal{F}$.
    \item[Case II:] $\pi(Z) \cap \pi(W)$ has dimension $d$.  Take a
      finite set $C' \supseteq C$ defining $Z$.  The set $\pi(Z) \cap
      \pi(W)$ is a $C'$-definable broad subset of $\Mm^d$, so there is
      some $a'_0 \in \pi(Z) \cap \pi(W)$ such that $\dim(a'_0/C') =
      d$, or equivalently, $a'_0$ is acl-independent over $C'$.  Take
      $a' \in Z$ with $\pi(a') = a'_0$.  Since $\pi : X_0 \to Y_0$ has
      finite fibers, $a'$ and $a'_0$ are interalgebraic over $C$ and
      $C'$.  Then $a'_0$ is an acl-basis for $a'$ over $C'$.  Also,
      $\dim(a'/C) = \dim(a'_0/C) = d = \dim(X)$.  Therefore
      $(a',a'_0,C') \in \mathcal{H}$.  By choice of $(a,a_0,C)$, we
      have
      \begin{equation*}
        \mult(a'/C'a'_0) \ge \mult(a/Ca_0) \ge k,
      \end{equation*}
      where $k$ bounds the sizes of the fibers of $\pi : X_0 \to Y_0$.
      Let $b_1,\ldots,b_k$ be distinct conjugates of $a'$ over
      $C'a'_0$.  For each $i$, $b_i \in Z$ because $a' \in Z$ and $Z$
      is $C'$-definable, and $\pi(b_i) = a'_0$ because $\pi(a') =
      a'_0$.  Then $\{b_1,\ldots,b_k\}$ is in the fiber of $\pi : X_0
      \to Y_0$ over $a'_0$.  Since this fiber has size at most $k$,
      the set $\{b_1,\ldots,b_k\}$ must be the entire fiber.  But
      $\{b_1,\ldots,b_k\}$ is disjoint from $W$, contradicting the
      fact that $a'_0 \in \pi(W)$.  This case cannot happen.  \qedhere
    \end{description}
  \end{itemize}
\end{proof}

\subsection{Miscellaneous facts}
\begin{fact}[Lemma on products] \label{prod-fact}
  Let $X,Y,D$ be definable sets with
  \begin{gather*}
    D \subseteq X \times Y \\
    \dim(D) = \dim(X \times Y).
  \end{gather*}
  Then there are definable sets $X_0 \subseteq X$ and $Y_0 \subseteq Y$ with
  \begin{gather*}
    X_0 \times Y_0 \subseteq D \\
    \dim(X_0 \times Y_0) = \dim(D) = \dim(X \times Y).
  \end{gather*}
\end{fact}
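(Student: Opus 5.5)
The idea is to find a generic point $(a,b)$ of $D$ and take $X_0, Y_0$ to be definable sets built around $a$ and $b$ separately. Let $n = \dim(X)$, $m = \dim(Y)$, and fix a small set $C$ over which $X, Y, D$ are defined. The t-minimal dimension theory of \cite[\S2]{wj-visc-1} gives three tools we will use: dimension is additive on products, definable sets of dimension $\ge k$ contain points of dimension $k$ over any small set, and subadditivity holds, i.e. $\dim(a,b/C) \le \dim(a/Cb) + \dim(b/C)$.

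First choose $(a,b) \in D$ with $\dim(a,b/C) = n+m$. By subadditivity and the bounds $\dim(a/Cb) \le n$, $\dim(b/C) \le m$, we get $\dim(b/C) = m$ and $\dim(a/Cb) = n$. Consider the $Cb$-definable fiber $D_b = \{x : (x,b) \in D\}$. It contains $a$, so $\dim(D_b) = n$.

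The main obstacle is to make this fiber condition uniform in $b$ while keeping independence in both directions. One cannot simply take $X_0 = D_b$, because the points $b'$ with $D_b \times \{b'\} \subseteq D$ need not form a set of dimension $m$. To handle this I will use a definable witness $D'$ that does not depend on $b$. Set
\begin{equation*}
Y_1 = \{y \in Y : \dim(D_y) = n\},
\end{equation*}
which is $C$-definable by definability of dimension and contains $b$, so $\dim(Y_1) = m$. Then I would use the $a$-side symmetrically. Alternatively, use the reduction via near-injective projections (\cite[Proposition~2.41]{wj-visc-1}) to reduce to the case $X \subseteq \Mm^n$ and $Y \subseteq \Mm^m$ open, up to finite-to-one maps. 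In that case $D$ has nonempty interior in $\Mm^{n+m}$, so it contains a box $B_1 \times B_2$, and taking $X_0, Y_0$ to be the preimages of $B_1, B_2$ intersected appropriately finishes the proof.

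Concretely, I would proceed in three steps. First, shrink $X$ and $Y$ (keeping full dimension) so that there are coordinate projections $\pi_X : X \to \Mm^n$ and $\pi_Y : Y \to \Mm^m$ with finite fibers, bounded by $k_X$ and $k_Y$. Second, handle the non-injectivity of these projections by a fiber-counting argument in the style of Proposition~\ref{good-prop}. This step is delicate. For $(u,v) \in \Mm^n \times \Mm^m$, count the pairs $(x,y)$ in the fiber of $\pi_X \times \pi_Y$ over $(u,v)$ that lie in $D$; the set $E_j$ where this count is at least $j$ is definable, and the counts range over finitely many values. Third, pick a generic $(a,b) \in D$ and let $C'$ be a finite set of parameters over which the relevant data are defined, and apply the independent-genericity argument at $(\pi_X a, \pi_Y b)$.

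The cleanest route is probably to pass to good subsets via Proposition~\ref{good-prop}. Replace $X$ and $Y$ by good subsets $X'$, $Y'$ of full dimension, chosen so that $\dim(D \cap (X' \times Y')) = n+m$; this is possible by using the generic point $(a,b)$ when choosing them. Then take $X'$ and $Y'$ with good families $\mathcal F_X$ and $\mathcal F_Y$ built from basic open boxes through projections with fibers of exactly the minimal multiplicity. With this choice, $D \cap (X' \times Y')$ projects onto a broad set $Z \subseteq \Mm^{n+m}$. The Case II argument of Proposition~\ref{good-prop}, run for the product projection, shows that the part of $Z$ over which the fiber is not entirely contained in $D$ has dimension less than $n+m$. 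Hence $Z$ minus that part has nonempty interior, so it contains a box $B_1 \times B_2$. Taking $X_0 = \pi_X^{-1}(B_1)$ and $Y_0 = \pi_Y^{-1}(B_2)$ then gives $X_0 \times Y_0 \subseteq D$ with full dimension. The step I expect to be hardest is justifying this multiplicity-minimality for the product, that is, showing that the product of minimal-multiplicity choices is again minimal for $X' \times Y'$. I would attempt it using $\mult(ab/Ca_0b_0) \le \mult(a/Ca_0)\mult(b/Cb_0)$, with equality for independent generic pairs.
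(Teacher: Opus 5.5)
\textbf{There is a genuine gap.} Your last route reduces the lemma to finding full-dimensional definable sets $X'\subseteq X$ and $Y'\subseteq Y$ with two properties at once:
\begin{enumerate}
\item[(i)] the multiplicity minimality from Proposition~\ref{good-prop}: for every $a'\in X'$ and every finite $C''$ (containing the parameters of $X'$) over which $\pi_X(a')$ is broad, the fiber bound $k_X$ of $\pi_X$ on $X'$ equals $\mult(a'/C''\pi_X(a'))$, and likewise for $Y'$;
\item[(ii)] $\dim(D\cap(X'\times Y'))=n+m$.
\end{enumerate}
Granting both, your Case~II argument for $\pi_X\times\pi_Y$, followed by the box $B_1\times B_2$, does finish the proof.

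The gap is (ii).
\begin{itemize}
\item Proposition~\ref{good-prop} builds its set around a \emph{minimizing} triple. So you cannot prescribe that its point be the first coordinate of your generic $(a,b)\in D$, and ``using the generic point when choosing them'' is not an argument.
\item Nor can (ii) be extracted from fiber dimensions, e.g.\ by working inside $\{x:\dim(D_x)=m\}$ as in your first sketch. In a t-minimal theory you only have subadditivity $\dim(ab/C)\le\dim(a/Cb)+\dim(b/C)$, not the reverse inequality.
\item Suppose you restrict the minimization to points $a$ that admit a partner $b$ with $(a,b)\in D$ generic over the current parameters. Then (ii) can be arranged, but (i) now holds only over parameter sets over which such a partner still exists. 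In the Case~II step, take $(a',b')\in D\cap(X'\times Y')$ over a generic $(a_0',b_0')$. You need the bound over $C'b_0'$ (or $C'a'$), i.e.\ some $b''$ with $a'b''\equiv_{C'}a'b'$ and $\dim(a'b''/C'b_0')=n+m$. Producing such a $b''$ is exactly the cloning lemma (Lemma~\ref{cloning}), which the paper proves with mutually indiscernible arrays witnessing broadness. Nothing in your proposal supplies it.
\item The same problem undermines your ``alternative'' box argument. After reducing to finite-to-one projections, only the image of $D$ in $\Mm^{n+m}$ is broad. Preimages of a box in that image need not have product inside $D$, because a fiber of $\pi_X\times\pi_Y$ can be split between $D$ and its complement.
\end{itemize}

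\textbf{You have also misplaced the difficulty.} The identity $\mult(ab/Ca_0b_0)=\mult(a/Ca_0)\mult(b/Cb_0)$ is false for dimension-independent pairs in general. In ACVF$_{0,0}$ take $a_0,b_0$ algebraically independent over $C$ with $b_0/a_0\in 1+\mathfrak{m}$ ($\mathfrak{m}$ the maximal ideal). Let $a=(a_0,\alpha)$ and $b=(b_0,\beta)$ with $\alpha^2=a_0$, $\beta^2=b_0$, $\beta/\alpha\in 1+\mathfrak{m}$, and $v(a_0)$ not divisible by $2$ in the value group of $\dcl(Ca_0b_0)$. Then $\mult(a/Ca_0)=\mult(b/Cb_0)=2$. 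But $\beta/\alpha$ is the Hensel square root of $b_0/a_0$ in $1+\mathfrak{m}$, which lies in $\dcl(Ca_0b_0)$. So the left side is $2$ while the right side is $4$.

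Conversely, under the unrestricted minimality (i) the product formula is automatic. Indeed $\mult(a'b'/C'a_0'b_0')=\mult(a'/C'a_0'b_0')\cdot\mult(b'/C'a'b_0')$, and minimality applies over $C'b_0'$ and $C'a'$. So the real issue is reconciling (i) with (ii).

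The paper resolves this by fixing a generic $(a,b)\in D$ and proving the Lemma on multiplicities (Lemma~\ref{mult-lem}). It minimizes $\mult(a/C'a_0)+\mult(b/C'b_0)$ over $C'\supseteq C$ with $\dim(ab/C')=\dim(ab/C)$. Cloning shows that adjoining $b$ (resp.\ $a$) does not lower $\mult(a/C'a_0)$ (resp.\ $\mult(b/C'b_0)$). This makes every pair in $\phi(\Mm,a_0)\times\psi(\Mm,b_0)$ conjugate to $(a,b)$ over $Ca_0b_0$. Then $X_0,Y_0$ are cut out over a box $U\times V$ inside the open set of realizations of the broad type $\tp(a_0b_0/C)$ (Lemma~\ref{open-lemma}).
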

If $X$ and $Y$ were open subsets of $\Mm^n$ and $\Mm^m$, this would be relatively trivial.  For the general case, we reduce to the open case by doing something with multiplicities $\mult(a/Ca_0)$ similar to the proof of Proposition~\ref{good-prop}.  However, the proof is much more
complicated, and we postpone it to Appendix~\ref{app-B}.
\begin{proposition} \label{splitting}
  If $X \subseteq \Mm^n$ is definable and infinite, then there are
  disjoint definable subsets $X_0,X_1 \subseteq X$ with $\dim(X_0) =
  \dim(X_1) = \dim(X)$.
\end{proposition}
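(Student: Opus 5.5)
We aim to reduce to the case where $X$ is, up to a finite-to-one coordinate projection, a subset of $\Mm^d$ with non-empty interior, where $d = \dim(X)$. If $d = 0$, then $X$ is infinite of dimension $0$, so we just split $X$ into two infinite definable pieces. This needs care, because ``infinite'' does not follow definably from dimension $0$ alone. However, $\dim(X_0) = \dim(X_1) = 0$ only requires $X_0, X_1$ to be non-empty, so any partition of $X$ into two non-empty definable sets works, e.g.\ $\{p\}$ and $X \setminus \{p\}$ for a point $p \in X$. (We only need $X$ to have at least two points.) So assume $d \ge 1$.

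For $d \ge 1$, apply Proposition~\ref{good-prop}, or more directly the decomposition used in its proof. We get a definable $X' \subseteq X$ with $\dim(X') = d$ and a coordinate projection $\pi : X' \to \Mm^d$ with finite fibers, whose image $Y' = \pi(X')$ has dimension $d$. By t-minimal dimension theory, $Y'$ has non-empty interior in $\Mm^d$, so it contains a box $B_1 \times \cdots \times B_d$ of basic open sets. The home sort has no isolated points and the topology is Hausdorff, so $B_1$ contains two distinct points $p \ne q$. Hausdorffness gives disjoint definable open neighborhoods $V_p, V_q \subseteq B_1$. Each of these is an infinite unary definable set (it is non-empty open and there are no isolated points), hence has non-empty interior.

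Set $Z_0 = V_p \times B_2 \times \cdots \times B_d$ and $Z_1 = V_q \times B_2 \times \cdots \times B_d$. These are disjoint subsets of $Y'$ with non-empty interior, so $\dim(Z_i) = d$. Let $X_i = \pi^{-1}(Z_i) \cap X'$. The $X_i$ are disjoint because the $Z_i$ are. Each map $X_i \to Z_i$ is a surjection with finite fibers, so $\dim(X_i) = \dim(Z_i) = d = \dim(X)$ by the finite-fiber property of dimension \cite[Theorem~2.37(7)]{wj-visc-1}.

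The only step needing care is obtaining the finite-to-one projection onto a broad subset of $\Mm^d$. This is exactly \cite[Proposition~2.41]{wj-visc-1} (near-injective $k$-projections), as already used in the countable-subset lemma above, so I expect no real obstacle.
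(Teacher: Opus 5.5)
Your proof is correct and is essentially the paper's argument. The paper uses the weak cell decomposition \cite[Proposition~2.50]{wj-visc-1} to get a top-dimensional piece with a finite-to-one coordinate projection onto an open subset of $\Mm^d$, then pulls back two disjoint non-empty open sets, just as you pull back your two boxes. Your separate $d=0$ case is harmless but vacuous: in t-minimal dimension theory an infinite definable set already has $\dim(X)>0$, which is why the paper simply writes ``since $X$ is infinite, $d>0$''.
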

\begin{proof}
  Recall from \cite[Definition~2.47]{wj-visc-1} that a set $C
  \subseteq \Mm^n$ is a \emph{weak cell} if there is a coordinate
  projection $\pi : \Mm^n \to \Mm^k$ such that $\pi(C)$ is a non-empty
  open set, and the map $C \to \pi(C)$ has finite fibers.  Every
  definable set can be written as a finite disjoint union of weak
  cells \cite[Proposition~2.50]{wj-visc-1}.  Decompose the given
  set $X$ into weak cells $X = C_1 \sqcup \cdots \sqcup C_m$.  Some
  $C_i$ has the same dimension as $X$.  Replacing $X$ with $C_i$, we
  may assume $X$ is a weak cell.  Let $\pi : X \to \pi(X)$ be the
  finite-to-one projection onto an open subset of $\Mm^d$, where $d =
  \dim(X)$.  Since $X$ is infinite, $d > 0$.  Then by Hausdorffness we
  can find two disjoint non-empty definable open sets $B_0, B_1
  \subseteq \pi(X)$.  Let $X_i$ be the preimage of $B_i$ in $X$.
  Since $X \to \pi(X)$ is a finite-to-one surjection, $\dim(X_i) =
  \dim(B_i) = \dim(\Mm^d) = d$.  The preimages $X_0$ and $X_1$ are
  disjoint because $B_0$ and $B_1$ are.
\end{proof}
\begin{remark} \label{subset}
  If $X \subseteq \Mm^n$ is definable and $d \le \dim(X)$ then there
  is a definable $X' \subseteq X$ with $\dim(X') = d$.
\end{remark}
\begin{proof}
  The proof is similar to Proposition~\ref{splitting}.  We can assume $X$ is a weak
  cell with a finite-to-one projection $\pi : X \to \pi(X)$, where
  $\pi(X) \subseteq \Mm^m$ is open and $m = \dim(X)$.  Since $\pi(X)$
  is open and $d \le m$, we can easily find a definable subset $D
  \subseteq \pi(X)$ with $\dim(D) = d$.  (For example, $\pi(X)$
  contains a product $\prod_{i=1}^m B_i$ where each $B_i$ is a
  non-empty basic open set.  Take $a_i \in B_i$ for each $i$.  Then we
  can take $D = \prod_{i=1}^d B_i \times \prod_{i=d+1}^m \{a_i\}$.)
  Finally, take $X' = \pi^{-1}(D)$.  Then $\dim(X') = \dim(D) = d$.
\end{proof}

\section{The canonical topology on definable abelian groups}
Fix a definable abelian group $(G,+)$ of dimension $d$.  A
definable set $X \subseteq G$ is \emph{big} if $\dim(X) = d$.  If $X,Y
\subseteq G$, then $X-Y$ denotes $\{x-y : x \in X, y \in Y\}$.
\begin{question}
  Can we generalize the arguments in this section to deal with
  non-abelian definable groups?
\end{question}
\subsection{Basic neighborhoods}
\begin{definition}
  A \emph{basic neighborhood (of 0)} is a set of the form $X - X$,
  where $X \subseteq G$ is big.
\end{definition}
\begin{remark}\label{duh}
Any basic neighborhood $X-X$ is big, because it contains $b-X$ for
any $b \in X$, and $\dim(b-X) = \dim(X) = d$.  
\end{remark}
\begin{lemma} \label{slide}
  If $X,Y \subseteq G$ are big, then there is $\delta \in G$ such that
  $X \cap (Y + \delta)$ is big.
\end{lemma}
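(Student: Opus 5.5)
Consider the definable set
\begin{equation*}
  D = \{(x,\delta) \in G \times G : x \in X \text{ and } x - \delta \in Y\}.
\end{equation*}
The plan is to compute $\dim(D)$ in two ways and then look at the fibers over $\delta$.

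First, the map $(x,\delta) \mapsto (x, x-\delta)$ is a definable bijection $G \times G \to G \times G$. It carries $D$ onto $X \times Y$, so
\begin{equation*}
  \dim(D) = \dim(X \times Y) = 2d,
\end{equation*}
using invariance of dimension under definable bijections and additivity on products.

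Second, for each $\delta$ the fiber $D_\delta = \{x : (x,\delta) \in D\}$ is exactly $X \cap (Y+\delta)$. So it suffices to find $\delta$ with $\dim(D_\delta) = d$. Suppose for contradiction that $\dim(D_\delta) < d$ for every $\delta \in G$. Work over a small set $C$ defining $X$ and $Y$, and take $(x,\delta) \in D$ with $\dim(x,\delta/C) = \dim(D) = 2d$. Subadditivity of t-minimal dimension \cite[Proposition~2.31]{wj-visc-1} gives
\begin{equation*}
  2d = \dim(x,\delta/C) \le \dim(x/C\delta) + \dim(\delta/C).
\end{equation*}
Here $x$ lies in the $C\delta$-definable set $D_\delta$, so $\dim(x/C\delta) \le \dim(D_\delta) < d$. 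Also $\dim(\delta/C) \le \dim(G) = d$. Together these give $2d < 2d$, a contradiction. Hence some fiber $D_\delta = X \cap (Y+\delta)$ has dimension $d$, i.e.\ is big.

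The only delicate point is that t-minimal dimension can misbehave under surjections, as the introduction notes. So the argument must not use an ``additivity over fibers'' statement for the projection $D \to G$. It relies only on the inequality direction of subadditivity for types, together with the existence of a point of $D$ of full dimension $\dim(D)$ over $C$. This is the same pattern the paper already uses in Claim~\ref{ind-claim} and in the proof of Proposition~\ref{almost}, so I expect this step to go through in the t-minimal setting.
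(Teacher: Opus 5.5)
Your proof is correct and is essentially the paper's argument. The paper takes $(a,b)$ of full dimension $2d$ in $X \times Y$ over $C$, sets $\delta = a-b$, and uses subadditivity to get $\dim(a/C\delta) = d$ with $a \in X \cap (Y+\delta)$. Your generic point of $D$ is exactly this pair transported by your bijection, and the proof by contradiction is harmless but unnecessary.
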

\begin{proof}
  Take a small set $C$ defining $X$ and $Y$.  Take $(a,b) \in X \times
  Y$ with \[\dim(a,b/C) = \dim(X \times Y) = \dim(X) + \dim(Y) = 2d.\]
  Let $\delta = a - b$.  Since $(a,b)$ and $(a,\delta)$ are
  interdefinable over $C$, we have $\dim(a,\delta/C) = \dim(a,b/C) =
  2d$.  By subadditivity of dimension
  \cite[Proposition~2.31(5)]{wj-visc-1},
  \begin{equation*}
    2d = \dim(a,\delta/C) \le \dim(a/C\delta) + \dim(\delta/C) \le 2d,
  \end{equation*}
  where the inequality on the right holds because $a$ and $\delta$ are
  elements of $G$ and $\dim(G) = d$.  Then equality must hold, and
  $\dim(a/C\delta) = d$.  The element $a = b+\delta$ belongs to the
  $C\delta$-definable set $X \cap (Y + \delta)$, so this set must have
  dimension at least $d$.
\end{proof}
\begin{proposition} \label{filtered}
  If $U$ and $V$ are basic neighborhoods, there is a
  basic neighborhood $W$ such that $W \subseteq U \cap V$.
\end{proposition}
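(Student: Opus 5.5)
Write $U = X - X$ and $V = Y - Y$ with $X, Y \subseteq G$ big. The plan is to find a single big set $Z$ that sits inside both $X$ and a translate of $Y$, and then take $W = Z - Z$.

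First, apply Lemma~\ref{slide} to the big sets $X$ and $Y$. This gives some $\delta \in G$ such that $Z := X \cap (Y + \delta)$ is big. Since $Z$ is a definable big subset of $G$, the set $W = Z - Z$ is a basic neighborhood.

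Next, check the two inclusions. Since $Z \subseteq X$, we have $Z - Z \subseteq X - X = U$. Since $Z \subseteq Y + \delta$, any element of $Z - Z$ has the form $(y + \delta) - (y' + \delta) = y - y'$ with $y, y' \in Y$. Hence $Z - Z \subseteq Y - Y = V$, so $W \subseteq U \cap V$.

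This is where abelianity of $G$ is used: the translation by $\delta$ cancels in the difference. I do not expect any real obstacle here, because all the dimension-theoretic work is already contained in Lemma~\ref{slide}. The only thing left to confirm is that $Z$ is definable, which holds because $\delta$ is just a parameter.
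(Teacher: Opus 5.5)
Your proof is correct and is exactly the paper's argument: use Lemma~\ref{slide} to translate $Y$ so that $Z = X \cap (Y+\delta)$ is big, and then $Z - Z \subseteq (X-X) \cap (Y-Y)$. One small correction to your commentary: the cancellation $(y+\delta)-(y'+\delta) = y-y'$ does not need commutativity, since it holds for right translates in any group, so this step is not where abelianity is used.
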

\begin{proof}
  Write $U$ and $V$ as $X-X$ and $Y-Y$.  Replacing $Y$ with a
  translate $Y+\delta$ (which doesn't affect $Y-Y$), we may assume the
  set $Z = X \cap Y$ is big.  Then $Z-Z \subseteq (X-X) \cap (Y-Y)$.
\end{proof}
\begin{lemma} \label{later-interior}
  If $D \subseteq G$ is big, then $D$ contains $X-X + \delta$ for some
  big set $X$ and element $\delta \in G$.
\end{lemma}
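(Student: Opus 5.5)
Consider the map $D \times D \to G \times G$, $(x,y) \mapsto (x, x-y)$, or more simply apply the Lemma on products (Fact~\ref{prod-fact}) to a big subset of a product built from $D$. Concretely, I will consider
\begin{equation*}
  E = \{(x,y) \in G \times G : x + y \in D\}.
\end{equation*}
The map $(x,y) \mapsto (x, x+y)$ is a definable bijection $E \to G \times D$, so $\dim(E) = \dim(G \times D) = 2d = \dim(G \times G)$. By Fact~\ref{prod-fact}, there are definable $X_0, Y_0 \subseteq G$ with $X_0 \times Y_0 \subseteq E$ and $\dim(X_0 \times Y_0) = 2d$. Since $\dim(X_0), \dim(Y_0) \le d$ and the dimensions add, both $X_0$ and $Y_0$ are big. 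The inclusion $X_0 \times Y_0 \subseteq E$ says exactly that $X_0 + Y_0 \subseteq D$.

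It remains to convert the sumset $X_0 + Y_0$ into the form $X - X + \delta$. For this I use Lemma~\ref{slide}, applied to the big sets $Y_0$ and $-X_0$ (big because negation is a definable bijection). This gives $\delta' \in G$ such that $X := Y_0 \cap (\delta' - X_0)$ is big. Then $X \subseteq Y_0$, and $X \subseteq \delta' - X_0$ gives $-X \subseteq X_0 - \delta'$. So
\begin{equation*}
  X - X = X + (-X) \subseteq Y_0 + X_0 - \delta' \subseteq D - \delta'.
\end{equation*}
Taking $\delta = \delta'$ yields $X - X + \delta \subseteq D$ with $X$ big, as required. Abelianity is used freely in rearranging $Y_0 + X_0 - \delta'$.

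The only substantive input is the Lemma on products, whose proof is deferred to Appendix~\ref{app-B}. Given that fact, the main point needing care is the dimension bookkeeping: we need $\dim(E) = 2d$, which follows from invariance of dimension under definable bijections. We also need that a product of dimension $2d$ forces each factor to be big, which uses additivity $\dim(X_0 \times Y_0) = \dim(X_0) + \dim(Y_0)$ together with the bound $\dim \le d$ for subsets of $G$. Both are standard properties of t-minimal dimension from \cite{wj-visc-1}.
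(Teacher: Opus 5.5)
Your proof is correct and is essentially the paper's argument. The paper applies the Lemma on products to $\{(x,y) \in G^2 : x - y \in D\}$, obtaining big $U, V$ with $U - V \subseteq D$. It then uses Lemma~\ref{slide} to find $\delta$ with $X = U \cap (V + \delta)$ big, which gives $X - X + \delta \subseteq D$. Your version uses $x + y$ instead of $x - y$, which only costs you the extra step of negating $X_0$ before applying Lemma~\ref{slide}.
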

\begin{proof}
  Let $S = \{(x,y) \in G^2 : x-y \in D\}$.  Then $S$ is in definable
  bijection with $G \times D$ via the map
  \begin{align*}
    S &\to G \times D \\
    (x,y) &\mapsto (x,x-y).
  \end{align*}
  In particular, $\dim(S) = \dim(G \times D) = 2d$.  By the Lemma on
  Products (Fact~\ref{prod-fact}), the set $S \subseteq G \times G$
  contains a product $U \times V$ of two big sets.  In particular, $U
  - V \subseteq D$.  By Lemma~\ref{slide}, there is some $\delta$ such
  that $X = U \cap (V + \delta)$ is big.  Then $X \subseteq U$ and $X
  - \delta \subseteq V$ and so
  \begin{equation*}
    X - X + \delta = X - (X - \delta) \subseteq U - V \subseteq D.  \qedhere
  \end{equation*}
\end{proof}
\begin{proposition} \label{subtraction}
  If $U$ is a basic neighborhood, then there is a basic neighborhood
  $V$ such that $V - V \subseteq U$.
\end{proposition}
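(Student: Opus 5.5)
Write $U = X - X$ with $X$ big.  The idea is to apply the Lemma on Products (Fact~\ref{prod-fact}) to a set built from $X$, producing two big sets $A, B$ with $A - B$ (or a translate of it) sitting inside a ``difference set'' of $X$.  Then Lemma~\ref{slide} and the trick in Lemma~\ref{later-interior} turn this into $V - V$ for a single big $V$.

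Concretely, Lemma~\ref{later-interior} applied to the big set $D = X - X$ gives a big $Y$ and $\delta$ with $Y - Y + \delta \subseteq X - X$.  This does not immediately give $(Y-Y)-(Y-Y) \subseteq X - X$, so I would work at the level of $X$ itself rather than $X - X$.

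Consider the set
\begin{equation*}
  S = \{(x,y) \in G^2 : x \in X,\ y \in X,\ \dots\}.
\end{equation*}
More usefully, take $T = \{(x, t) \in G \times G : x \in X,\ x + t \in X\}$.  Its dimension is computed via the bijection $(x,t) \mapsto (x, x+t)$ onto $X \times X$, giving $\dim(T) = 2d = \dim(G \times G)$.  The Lemma on Products then yields big $A, B$ with $A \times B \subseteq T$, so $A + B \subseteq X$ and $A \subseteq X$.  For $a, a' \in A$ and $b, b' \in B$ we get
\begin{equation*}
  (a + b) - (a' + b') \in X - X.
\end{equation*}
By Lemma~\ref{slide}, pick $\epsilon$ with $V = A \cap (B + \epsilon)$ big, hence $V - \epsilon \subseteq B$.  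For $v_1, v_2, v_3, v_4 \in V$,
\begin{equation*}
  (v_1 - v_2) - (v_3 - v_4) = \bigl(v_1 + (v_4 - \epsilon)\bigr) - \bigl(v_3 + (v_2 - \epsilon)\bigr).
\end{equation*}
Here $v_1, v_3 \in A$ and $v_4 - \epsilon,\ v_2 - \epsilon \in B$, so the right side is a difference of two elements of $A + B \subseteq X$.  It therefore lies in $X - X = U$.  Abelianity is used in the rearrangement, so $V - V$ is the required basic neighborhood inside $U$.

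The main obstacle is the dimension computation that makes the Lemma on Products applicable, namely that $T$ has full dimension $2d$ in $G \times G$.  This is handled by the definable bijection to $X \times X$ together with additivity of dimension on products.  Everything else is bookkeeping with the translation $\epsilon$.
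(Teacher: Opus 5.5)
Your proof is correct, and in substance it is the proof of Lemma~\ref{later-interior} redone with sums in place of differences. After the substitution $t \mapsto -t$, your set $T$ is the set $S$ from that proof with $D = X$, cut down to $X \times G$, and you then use Lemma~\ref{slide} in the same way.

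The paper avoids redoing this work by applying Lemma~\ref{later-interior} to $X$ itself, not to $X - X$; applying it to $X-X$ is why your first attempt stalled. Applied to $X$, the lemma gives a big $Y$ and an element $\delta$ with $Y - Y + \delta \subseteq X$. Then $(Y-Y)-(Y-Y) = (Y-Y+\delta)-(Y-Y+\delta) \subseteq X - X = U$, and the proof is finished in two lines.

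Your route instead places a translate of the sumset $V+V$ inside $X$, rather than a translate of the difference set. That is why you need the four-term rearrangement at the end. In this abelian setting the two versions are interchangeable, so your proof is longer but not weaker. The condition $x \in X$ in the definition of $T$ is harmless but never used.
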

\begin{proof}
  Write $U$ as $D - D$ with $D$ big.  Lemma~\ref{later-interior} gives
  a basic neighborhood $V = X-X$ and element $\delta \in G$ with $V +
  \delta \subseteq D$.  Then
  \begin{equation*}
    U = D - D \supseteq (V + \delta) - (V + \delta) = V - V. \qedhere
  \end{equation*}
\end{proof}
\subsection{The topology}
\begin{theorem} \label{cantop}
  There is a group topology $\tau$ on $(G,+)$ characterized by the
  fact that the basic neighborhoods form a neighborhood basis of 0.
\end{theorem}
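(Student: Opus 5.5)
We will use the standard criterion for group topologies given by a filter base at the identity. For an abelian group $(G,+)$, a family $\mathcal{N}$ of subsets of $G$ is a neighborhood basis of $0$ for a (unique) group topology provided that:
\begin{enumerate}
\item every $U \in \mathcal{N}$ contains $0$;
\item for $U, V \in \mathcal{N}$ there is $W \in \mathcal{N}$ with $W \subseteq U \cap V$;
\item for $U \in \mathcal{N}$ there is $V \in \mathcal{N}$ with $V - V \subseteq U$ (this handles both continuity of addition and of negation);
\item for $U \in \mathcal{N}$ and $a \in U$ there is $V \in \mathcal{N}$ with $a + V \subseteq U$, or, in the weaker form, the topology is defined by declaring $O$ open iff for each $a \in O$ some $a + U \subseteq O$.
\end{enumerate}
Since $G$ is abelian, no conjugation-invariance condition is needed. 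We take the second route: define $\tau$ by declaring $O \subseteq G$ open iff for every $a \in O$ there is a basic neighborhood $U$ with $a + U \subseteq O$. Then we check that each $a + U$ is a neighborhood of $a$ (not necessarily open), which suffices for a neighborhood basis.

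The verification proceeds as follows. First, $0 \in X - X$ because big sets are nonempty (indeed $\dim X = d \ge 0 > -\infty$). Condition (2) is Proposition~\ref{filtered}, and condition (3) is Proposition~\ref{subtraction}. From these, closure under finite intersections and arbitrary unions is immediate, so $\tau$ is a topology, invariant under translation by construction.

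To see that $U$ is a $\tau$-neighborhood of $0$, set $U^\circ = \{a : \exists V \text{ basic},\ a + V \subseteq U\}$. Given $a \in U^\circ$ with $a + V \subseteq U$, choose a basic $W$ with $W + W \subseteq V$. This is possible because $W - W \subseteq V$, and $W = -W$ since $X - X$ is symmetric, so $W + W = W - W$. Then for $b \in a + W$ we get $b + W \subseteq a + W + W \subseteq U$, hence $a + W \subseteq U^\circ$. So $U^\circ$ is open, contains $0$, and lies inside $U$. This is the only mildly delicate step, and the symmetry $W = -W$ disposes of it. It follows that the translates $a + U$ form a neighborhood basis at $a$, and in particular the basic neighborhoods form a neighborhood basis of $0$.

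Continuity of subtraction $(x,y) \mapsto x - y$ at $(a,b)$ follows from (3): given $U$, take $V$ with $V - V \subseteq U$; then $(a + V) - (b + V) \subseteq a - b + U$. Hence $\tau$ is a group topology. Uniqueness is automatic, since a group topology is determined by its neighborhood filter at $0$.
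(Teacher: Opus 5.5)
Your proposal is correct and follows the paper's approach. The paper simply says the theorem follows formally from Propositions~\ref{filtered} and \ref{subtraction} together with $0 \in U$, and you have correctly written out that standard verification: the topology generated by translates, the interior argument using $W = -W$ to get $W + W \subseteq V$, continuity of subtraction, and uniqueness via the neighborhood filter at $0$.
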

(Note that we are not saying that $\tau$ is definable or Hausdorff.)
\begin{proof}
  This follows formally from Propositions~\ref{filtered} and
  \ref{subtraction}, together with the trivial fact that $0 \in U$ for
  any basic neighborhood $U$.
\end{proof}
\begin{definition}
  The \emph{canonical topology} on a definable abelian group $(G,+)$
  is the topology from Theorem~\ref{cantop}.
\end{definition}
\begin{theorem} \label{definable}
  The topology $\tau$ is definable: there is a definable family
  $\mathcal{B}$ which is a basis of open sets.
\end{theorem}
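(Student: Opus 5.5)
The plan is to use Proposition~\ref{good-prop} to replace the non-definable family of all big sets by a definable family, so that the basic neighborhoods obtained from it still form a neighborhood basis of $0$. Since $G$ has dimension $d$, Proposition~\ref{good-prop} gives a good definable subset $G_0 \subseteq G$ with $\dim(G_0) = d$, together with a good family $\mathcal{F}$ on $G_0$. Each member of $\mathcal{F}$ has dimension $d$, so it is big in $G$. We will show that the definable family
\begin{equation*}
  \mathcal{N} = \{(X - X) + \delta' : X \in \mathcal{F}\},
\end{equation*}
with $\delta'$ a translation parameter still to be fixed, is a neighborhood basis of $0$ after translation. Then $\mathcal{B} = \{a + N : a \in G,\ N \in \mathcal{N}_0\}$ is definable, where $\mathcal{N}_0$ is a definable family of basic neighborhoods cofinal among all of them.

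The key step is cofinality. Take an arbitrary basic neighborhood $D - D$ with $D$ big. We want some $X' \in \mathcal{F}$ and some $\delta$ with $X' - X' \subseteq D - D$. By Lemma~\ref{slide} applied to $G_0$ and $D$, there is $\delta$ with $G_0 \cap (D + \delta)$ big. This is a definable subset of $G_0$ of dimension $d = \dim(G_0)$, so goodness gives $X' \in \mathcal{F}$ with $X' \subseteq G_0 \cap (D+\delta)$. Then
\begin{equation*}
  X' - X' \subseteq (D+\delta) - (D+\delta) = D - D.
\end{equation*}
So $\mathcal{N}_0 = \{X - X : X \in \mathcal{F}\}$ is a definable family of basic neighborhoods that is cofinal, and no translation parameter is needed after all. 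Definability of $\mathcal{N}_0$ is immediate: $\mathcal{F}$ is a definable family, and $X \mapsto X - X$ is uniformly definable.

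It remains to produce an honest basis of \emph{open} sets, since $X - X$ need not be $\tau$-open. The translates $a + (X - X)$ form a basis of neighborhoods in the group topology, but a basis of open sets requires open members. For this, take $\mathcal{B}$ to be the family of $\tau$-interiors $\ter_\tau(a + (X - X))$. Each point $p \in a + (X - X)$ lies in this interior exactly when some $Y \in \mathcal{F}$ satisfies $p + (Y - Y) \subseteq a + (X - X)$, and this condition is first-order in $(p, a, X)$ by the cofinality above. Hence the interiors form a uniformly definable family. They form a basis: given an open $O \ni p$, choose $X$ with $p + (X - X) \subseteq O$, and then choose $Y$ with $(Y - Y) + (Y - Y) \subseteq X - X$ by Proposition~\ref{subtraction} and cofinality. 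The interior of $p + (Y - Y)$ contains $p$, and it is contained in $O$.

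The main obstacle is exactly this passage from a definable neighborhood basis to a definable basis of open sets. It is handled by observing that the $\tau$-interior is definable, since membership in it is witnessed by the definable cofinal family $\mathcal{F}$.
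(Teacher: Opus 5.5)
Your proof is correct and follows the paper's argument. The paper likewise takes a good family $\mathcal{F}$ on a big good subset $G_0 \subseteq G$, uses Lemma~\ref{slide} to translate an arbitrary big set so that its intersection with $G_0$ is big, and concludes that $\{X - X : X \in \mathcal{F}\}$ is a definable neighborhood basis of $0$; it then says only that definability of $\tau$ ``follows formally''. Your passage to the uniformly definable interiors $\ter_\tau(a + (X-X))$ spells out that formal step. Two small remarks: the preliminary parameter $\delta'$ in your first paragraph plays no role, and the detour through $(Y-Y)+(Y-Y) \subseteq X-X$ is unnecessary, since $p \in \ter_\tau(p + (X-X))$ already holds.
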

\begin{proof}
  By Proposition~\ref{good-prop}, there is a big subset $G_0 \subseteq
  G$ admitting a good family $\mathcal{F}$.  Each $X \in \mathcal{F}$
  has $\dim(X) = \dim(G_0) = \dim(G)$, so each $X \in \mathcal{F}$ is
  big.
  \begin{claim} \label{huati}
    If $Y \subseteq G$ is big, there is $X \in \mathcal{F}$ and
    $\delta \in G$ with $X + \delta \subseteq Y$.
  \end{claim}
  \begin{claimproof}
    Lemma~\ref{slide} gives some $\delta$ such that $G_0 \cap (Y -
    \delta)$ is big.  Since $\mathcal{F}$ is a good family, there is
    some $X \in \mathcal{F}$ with $X \subseteq G_0 \cap (Y - \delta)$.
    Then $X + \delta \subseteq Y$.
  \end{claimproof}
  Let $\mathcal{B}_0 = \{X-X : X \in \mathcal{F}\}$.  Then
  $\mathcal{B}_0$ is a definable family of basic neighborhoods.  We
  claim that $\mathcal{B}_0$ is a neighborhood basis of 0.  Given a
  basic neighborhood $U$, we can write $U$ as $Y-Y$ for some big set
  $Y$.  Claim~\ref{huati} gives $X \in \mathcal{F}$ and $\delta \in G$
  with $X + \delta \subseteq Y$.  Then
  \begin{gather*}
    \mathcal{B}_0 \ni X - X \\
    X - X = (X + \delta) - (X + \delta) \subseteq Y
    - Y = U.
  \end{gather*}
  Thus $\mathcal{B}_0$ is a definable neighborhood basis of 0.  It
  follows formally that the topology $\tau$ is definable.
%%   In more detail,
%%   \begin{itemize}
%%   \item First use the existence of a definable neighborhood basis to
%%     show that interiors are uniformly definable: if $\{X_b\}_{b \in
%%       Y}$ is a definable family then so is $\{\ter(X_b)\}_{b \in Y}$,
%%     since $a \in \ter(X_b)$ if and only if there is $U \in
%%     \mathcal{B}_0$ such that $a + U \subseteq X_b$.
%%   \item Then observe that the definable family
%%   \begin{equation*}
%%     \{\ter(U) + \delta : U \in \mathcal{B}_0, ~ \delta \in G\}
%%   \end{equation*}
%%   is a basis of open sets.  \qedhere
%%   \end{itemize}
\end{proof}
\subsection{A property like viscerality}
\begin{theorem} \label{quasi}
  If $D \subseteq G$ is definable, then the ($\tau$-)interior
  $\ter(D)$ is non-empty if and only if $D$ is big.
\end{theorem}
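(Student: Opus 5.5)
We prove the two directions separately. Both rest on the fact that $\tau$ is a group topology in which the basic neighborhoods $X-X$ with $X$ big form a neighborhood basis of $0$ (Theorem~\ref{cantop}).

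For the direction ``big $\implies$ non-empty interior'', suppose $D$ is big. Lemma~\ref{later-interior} gives a big set $X$ and an element $\delta \in G$ with $X - X + \delta \subseteq D$. The set $X-X$ is a basic neighborhood of $0$, so $X - X + \delta$ is a $\tau$-neighborhood of $\delta$, because translations are homeomorphisms in a group topology. Hence $\delta \in \ter(D)$, and $\ter(D) \ne \varnothing$.

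For the converse, suppose $\ter(D) \ne \varnothing$ and pick $a \in \ter(D)$.
\begin{itemize}
\item By the definition of $\tau$, there is a basic neighborhood $X - X$, with $X$ big, such that $a + (X - X) \subseteq D$.
\item By Remark~\ref{duh}, $X - X$ is big, since it contains $b - X$ for any $b \in X$.
\item Translation $x \mapsto a + x$ is a definable bijection, so $a + (X-X)$ is big.
\item Since $D$ contains this big definable set, $\dim(D) \ge d$. Also $D \subseteq G$, so $\dim(D) \le d$, and therefore $D$ is big.
\end{itemize}
There is one subtlety: $\ter(D)$ need not be definable a priori. The argument never uses its definability; it only uses the single point $a$ and a definable subset of $D$ around it.

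Given the earlier lemmas, neither direction is a real obstacle. The substantive content, namely the Lemma on Products behind Lemma~\ref{later-interior}, has already been handled.
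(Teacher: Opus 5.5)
Your proof is correct and follows the paper's argument essentially verbatim. For a point $a \in \ter(D)$ you place a translate $a + (X-X)$ of a basic (hence big) neighborhood inside $D$; for big $D$ you use Lemma~\ref{later-interior} to get $X - X + \delta \subseteq D$, which puts $\delta$ in $\ter(D)$.
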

\begin{proof}
  If $a \in \ter(D)$, then $D$ contains $a+U$ for some basic
  neighborhood $U$.  The set $U$ is big, so $a+U$ and $D$ are big too.

  Conversely, suppose $D$ is big.  Lemma~\ref{later-interior} shows
  that $D$ contains $U + \delta$ for some basic neighborhood $U = X-X$
  and some element $\delta \in G$.  Then $\delta \in \ter(D)$.
\end{proof}
\begin{corollary}
  If $D \subseteq G$ is definable, then the ($\tau$-)boundary $\bd(D)$
  isn't big.
\end{corollary}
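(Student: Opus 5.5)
We need to show $\bd(D)$ is not big. The plan is to write $\bd(D)$ as the union of the two definable sets $D \setminus \ter(D)$ and $\overline{D} \setminus D$, where $\overline{D}$ is the $\tau$-closure. Since $\tau$ is definable (Theorem~\ref{definable}), interiors and closures of definable sets are definable, so both pieces are definable subsets of $G$. Because $\dim(X \cup Y) = \max(\dim X, \dim Y)$, it suffices to show that neither piece is big.

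For the first piece, $E = D \setminus \ter(D)$, suppose toward a contradiction that $E$ is big. Theorem~\ref{quasi} then gives a point $a \in \ter(E)$, so some $\tau$-neighborhood $a+U$ is contained in $E \subseteq D$. That puts $a$ in $\ter(D)$, contradicting $a \in E$, which excludes $\ter(D)$. Hence $E$ is not big.

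For the second piece, $F = \overline{D} \setminus D$, suppose $F$ is big. Theorem~\ref{quasi} again gives $a \in \ter(F)$, so some open neighborhood $W \ni a$ satisfies $W \subseteq F$. Then $W \cap D = \varnothing$, which contradicts $a \in \overline{D}$. Hence $F$ is not big either.

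The only delicate point is definability: we need $\ter(D)$ and $\overline{D}$ to be definable in order to apply Theorem~\ref{quasi} to $E$ and $F$. This follows from the definable neighborhood basis $\mathcal{B}_0$ of $0$ produced in the proof of Theorem~\ref{definable}. Indeed, $a \in \ter(D)$ if and only if $a + U \subseteq D$ for some $U \in \mathcal{B}_0$, and $a \in \overline{D}$ if and only if $(a+U) \cap D \neq \varnothing$ for every $U \in \mathcal{B}_0$. Both conditions are first-order. Everything else is immediate.
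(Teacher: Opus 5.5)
Your proposal is correct and follows essentially the same route as the paper: it writes $\bd(D)$ as the union of $\cl(D) \setminus D$ and $D \setminus \ter(D)$, notes that both pieces are definable and have empty interior, and then applies Theorem~\ref{quasi} to each. Your explicit check of definability through the neighborhood basis $\mathcal{B}_0$ is a correct expansion of the paper's one-line remark that definability of $\tau$ makes closures and interiors definable.
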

\begin{proof}
  Since the topology is definable, the closure $\cl(D)$ and interior
  $\ter(D)$ are definable.  The boundary $\bd(D)$ is the union of the
  two sets $\cl(D) \setminus D$ and $D \setminus \ter(D)$.  Both sets
  have empty interior, so neither is big, and neither is their union.
\end{proof}
\begin{corollary} \label{big-inter}
  If $D \subseteq G$ is big, then the interior $\ter(D)$ is big.
\end{corollary}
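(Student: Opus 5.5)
We argue directly from the preceding corollary and the dimension theory of unions. Let $D \subseteq G$ be big. Since $\tau$ is definable (Theorem~\ref{definable}), the interior $\ter(D)$ is a definable set. Write
\begin{equation*}
  D = \ter(D) \cup (D \setminus \ter(D)).
\end{equation*}
Dimension is additive for unions in the sense that $\dim(X \cup Y) = \max(\dim(X),\dim(Y))$, a basic property of t-minimal dimension from \cite[\S2.5]{wj-visc-1}. Since $\dim(D) = d$, at least one of the two pieces must have dimension $d$.

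The second piece satisfies $D \setminus \ter(D) \subseteq \bd(D)$. By the preceding corollary, $\bd(D)$ is not big, so $\dim(\bd(D)) < d$. Monotonicity of dimension then gives $\dim(D \setminus \ter(D)) < d$. Hence the first piece carries the dimension: $\dim(\ter(D)) = d$, i.e.\ $\ter(D)$ is big.

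Alternatively, one can bypass the boundary corollary. The set $D \setminus \ter(D)$ is definable and has empty interior, since any interior point of it would be an interior point of $D$ lying outside $\ter(D)$. By Theorem~\ref{quasi}, a definable set with empty interior is not big, which yields the same conclusion.

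There is no real obstacle here. The only points to check are that $\ter(D)$ is definable, which follows from the definability of $\tau$, and that dimension behaves well under finite unions and subsets.
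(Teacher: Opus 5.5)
Your proof is correct and is essentially the paper's argument. The paper notes that $D \setminus \ter(D)$ is not big (it lies in $\bd(D)$, or equivalently has empty interior) and concludes that $\ter(D)$ must be big, implicitly using $\dim(X \cup Y) = \max(\dim(X),\dim(Y))$ and the definability of $\ter(D)$, exactly as you do.
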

\begin{proof}
  $D \setminus \ter(D)$ isn't big, so $\ter(D)$ must be big.
\end{proof}
\begin{remark} \label{upgrade}
  If $\dim(G) = 1$, then Theorem~\ref{quasi} really implies that $G$
  is visceral with respect to $\tau$, and so \cite{visceral,wj-visc-1}
  immediately give all the desired ``tame topology'' theorems on $G$.
  For example, definable functions $f : G^n \to G^m$ are generically
  continuous.  When $\dim(G) > 1$, this will take more work.
\end{remark}
\begin{lemma} \label{characterization}
  $\tau$ is the unique definable group topology on $G$ with the
  property that for any definable set $D \subseteq G$, \[\dim(D) =
  \dim(G) \iff \ter_\tau(D) \ne \varnothing.\]
\end{lemma}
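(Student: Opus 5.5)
Existence is already done: Theorem~\ref{definable} shows $\tau$ is a definable topology, Theorem~\ref{cantop} shows it is a group topology, and Theorem~\ref{quasi} gives exactly the required equivalence $\dim(D)=\dim(G) \iff \ter_\tau(D)\ne\varnothing$. So the whole content of the lemma is uniqueness. Let $\tau'$ be another definable group topology on $G$ with the same property. Since both are group topologies, it suffices to show they have the same neighborhoods of $0$. I will prove this in two inclusions.

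\emph{Every $\tau$-neighborhood of $0$ is a $\tau'$-neighborhood of $0$.} It suffices to treat a basic neighborhood $X-X$ with $X$ big. By hypothesis on $\tau'$, the big set $X$ has non-empty $\tau'$-interior, so there are $a\in X$ and a $\tau'$-neighborhood $W$ of $0$ with $a+W\subseteq X$. Then $W = (a+W)-a \subseteq X - X$, so $X-X$ is a $\tau'$-neighborhood of $0$. Note that this direction does not use definability of $\tau'$.

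\emph{Every $\tau'$-neighborhood of $0$ is a $\tau$-neighborhood of $0$.} Let $N$ be a $\tau'$-neighborhood of $0$. Because $\tau'$ is a group topology, there is a $\tau'$-neighborhood $W$ of $0$ with $W-W\subseteq N$. Because $\tau'$ is definable, we may shrink $W$ to a definable $\tau'$-open set containing $0$; here I use that a definable basis of opens is available. Since $W$ is definable with non-empty $\tau'$-interior, the hypothesis gives $\dim(W)=\dim(G)$, so $W$ is big. Hence $W-W$ is a basic neighborhood of $0$ for $\tau$, and it lies inside $N$. Thus $N$ is a $\tau$-neighborhood of $0$.

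Combining the two inclusions, $\tau$ and $\tau'$ have the same neighborhood filter at $0$. By translation invariance of group topologies, $\tau=\tau'$.

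The only point needing care is the second inclusion, specifically passing from an arbitrary $\tau'$-neighborhood to a definable one so that the dimension hypothesis applies. This is exactly where definability of $\tau'$ is used: a basis element of a definable basis containing $0$ is itself a definable set. Everything else follows formally from the earlier results.
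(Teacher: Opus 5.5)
Your proposal is correct and follows the paper's proof essentially step for step. Like the paper, you show that a big $X$ with non-empty $\tau'$-interior gives a $\tau'$-neighborhood $X-b\subseteq X-X$ of $0$, and that a definable $\tau'$-neighborhood $V$ with $V-V\subseteq U$ is big, so $V-V$ is a basic $\tau$-neighborhood inside $U$.
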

\begin{proof}
  Let $\tau'$ be another group topology with this property.  It
  suffices to prove the two claims below:
  \begin{claim}
    If $U$ is a $\tau$-neighborhood of 0, then $U$ is a
    $\tau'$-neighborhood of 0.
  \end{claim}
  To prove this, take a big definable set $X \subseteq G$ with $U
  \supseteq X-X$.  By the assumption on $\tau'$, there is some $b \in
  X$ with $b \in \ter_{\tau'}(X)$.  Then $U \supseteq X - X \supseteq
  X-b$, and $X-b$ is a $\tau'$-neighborhood of 0.
  \begin{claim}
    If $U$ is a $\tau'$-neighborhood of 0, then $U$ is a
    $\tau$-neighborhood of 0.
  \end{claim}
  To prove this, take a definable $\tau'$-neighborhood $V \ni 0$ with
  $V-V \subseteq U$.  Then $0 \in \ter_{\tau'}(V) \ne \varnothing$, so
  $V$ is big, and $V-V$ is a $\tau$-neighborhood of 0.
\end{proof}
% Note to self: it's not enough to assume that $\tau$ is a monoid topology, because of the lower limit topology on RCF.
\begin{remark} \label{2-approaches-agree}
  If the theory is visceral or nicely t-minimal, then the canonical
  topology we constructed earlier (Definition~\ref{cantop0}) satisfies
  the condition that a definable set $X \subseteq G$ has non-empty
  interior iff $\dim(X) = \dim(G)$ (see Remark~\ref{interior-duh-2}).
  By Lemma~\ref{characterization}, it agrees with the new canonical
  topology constructed here.
\end{remark}

\subsection{Discreteness and triviality?}
\begin{theorem}
  If $\dim(G) > 0$, then the topology $\tau$ is non-discrete, and has
  no isolated points.
\end{theorem}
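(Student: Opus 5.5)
Since $\tau$ is a group topology, translations are homeomorphisms, so it suffices to show that $0$ is not isolated, i.e.\ that $\{0\}$ is not a $\tau$-neighborhood of $0$. The plan is to argue by contradiction using Theorem~\ref{quasi}. The set $\{0\}$ is definable. If it were open, then $\ter(\{0\}) \ne \varnothing$, and Theorem~\ref{quasi} would make $\{0\}$ big, i.e.\ $\dim(\{0\}) = d = \dim(G)$. But a finite set has dimension $0$, by Assumption~\ref{A}-style properties of t-minimal dimension: a singleton is in definable bijection with a point of $\Mm^0$, or alternatively $\dim(X)>0$ implies $X$ infinite. This contradicts $d > 0$.

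Spelled out, the steps are as follows. First, note that $\dim(\{0\}) = 0$, since singletons are finite and positive-dimensional sets are infinite. Second, suppose some point $a$ is isolated. Then $\{a\}$ is open, so $\ter(\{a\}) \ne \varnothing$ and $\{a\}$ is big by Theorem~\ref{quasi}, giving $0 = d$, a contradiction. This handles all points at once, even without translation invariance. Third, non-discreteness follows immediately, because $G$ is non-empty and no point is isolated.

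Alternatively, one can argue directly from the definition without invoking Theorem~\ref{quasi}. Every basic neighborhood $X - X$ is big by Remark~\ref{duh}, hence infinite since $d>0$. So every neighborhood of $0$ contains points other than $0$. I do not expect any real obstacle. The only point needing care is the fact that big sets are infinite when $d>0$ (equivalently, finite sets have dimension $0$), which is a basic property of t-minimal dimension from \cite[\S2]{wj-visc-1}.
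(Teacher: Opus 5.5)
Your proof is correct and is essentially the paper's. The paper argues exactly as in your ``alternative'' paragraph: every basic neighborhood $X-X$ is big, hence infinite, so $0$ is not isolated, and translation handles every other point. Your main route via Theorem~\ref{quasi} repackages the same argument, because the direction of Theorem~\ref{quasi} you use is proved by noting that $a+U$ is big for a basic neighborhood $U$; translation invariance is still used there, just implicitly.
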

\begin{proof}
  Every basic neighborhood is big, hence infinite, so $0$ is not an
  isolated point.  Since the translation $f(x) = x+a$ is a
  homeomorphism, no point $a \in G$ is isolated.
\end{proof}
\begin{theorem} \label{non-triv}
  If $G$ is non-trivial (meaning $|G| > 1$), then the topology $\tau$
  is non-trivial---there are open sets other than $G$ and $\{0\}$.
\end{theorem}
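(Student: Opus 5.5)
We split into the cases $G$ finite and $G$ infinite. In both cases the goal is the same: exhibit a $\tau$-neighborhood of $0$ that is a proper subset of $G$. Its interior is then an open set which is neither $\varnothing$ (it contains $0$) nor $G$.

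\emph{Finite case.} Here $\dim(G)=0$, so every non-empty definable subset is big. In particular $X=\{0\}$ is big, and $X-X=\{0\}$ is a basic neighborhood. Hence $\{0\}$ is open, $\tau$ is discrete, and since $|G|>1$ the open set $\{0\}$ differs from both $\varnothing$ and $G$.

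\emph{Infinite case.} The idea is to build a big set $Z$ together with an element $\delta \ne 0$ such that $Z$ and $Z-\delta$ are disjoint. This forces $\delta \notin Z-Z$: if $z-z'=\delta$ with $z,z' \in Z$, then $z'=z-\delta \in Z\cap(Z-\delta)$. The steps are as follows.
\begin{enumerate}
\item By Proposition~\ref{splitting}, applied to the infinite definable set $G$, there are disjoint definable $X_0,X_1\subseteq G$ with $\dim(X_0)=\dim(X_1)=\dim(G)$, so both are big.
\item By Lemma~\ref{slide}, there is $\delta\in G$ such that $Z=X_0\cap(X_1+\delta)$ is big.
\item Then $Z\subseteq X_0$ and $Z-\delta\subseteq X_1$. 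Since $X_0\cap X_1=\varnothing$, we get $Z\cap(Z-\delta)=\varnothing$.
\item Because $Z$ is non-empty and disjoint from $Z-\delta$, we must have $\delta\ne 0$.
\item By the observation above, $\delta\notin Z-Z$. So the basic neighborhood $Z-Z$ is a proper subset of $G$.
\item The interior $\ter(Z-Z)$ contains $0$ and omits $\delta$, so it is a non-trivial open set.
\end{enumerate}

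There is no serious obstacle. The only point needing care is obtaining two disjoint big subsets, which is exactly what Proposition~\ref{splitting} provides; Lemma~\ref{slide} then makes them overlap after a translation. Note that this argument does not show $\tau$ is Hausdorff: it separates $0$ from one particular $\delta$, not from every point.
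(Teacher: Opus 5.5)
Your proof is correct and matches the paper's argument: the finite case is handled by discreteness, and the infinite case uses Proposition~\ref{splitting} and Lemma~\ref{slide} to produce a big $Z$ with $\delta \notin Z-Z$. The only difference is cosmetic: you phrase the disjointness as $Z \cap (Z-\delta) = \varnothing$, while the paper writes the equivalent $Z \cap (Z+\delta) = \varnothing$.
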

\begin{proof}
  First suppose $G$ is finite.  Then any non-empty subset of $G$ is
  big, and it is easy to see that the topology $\tau$ is discrete.  As
  long as $|G| > 1$, the discrete topology is non-trivial.

  Next suppose $G$ is infinite.  Proposition~\ref{splitting} gives two
  disjoint definable big sets $X_0, X_1 \subseteq G$.
  Lemma~\ref{slide} gives $\delta$ such that the intersection $Z = X_0
  \cap (X_1 + \delta)$ is big.  Then $Z + \delta \subseteq X_0 +
  \delta$ and $Z \subseteq X_1 + \delta$, so $Z \cap (Z + \delta) =
  \varnothing$.  Equivalently, $\delta \notin Z - Z$.  Then
  \emph{some} basic neighborhood $Z-Z$ is strictly smaller than $G$,
  which prevents the topology from being trivial.
\end{proof}
\begin{question} \label{hq}
  Is $\tau$ always Hausdorff?
\end{question}
\begin{remark} \label{haus-rem}
  Let $G_0$ be the intersection of all the basic neighborhoods.  Using
  Proposition~\ref{subtraction} and Theorem~\ref{definable}, one can
  show that $G_0$ is a definable subgroup of $G$.  Question~\ref{hq}
  equivalently asks whether $G_0$ is trivial.  We can at least show
  that $\dim(G_0) < \dim(G)$ as follows.  Theorem~\ref{quasi} implies that any
  big definable set contains a coset of $G_0$.  If $G_0$ itself is
  big, then $G_0 \setminus \{1\}$ is big, but fails to contain a coset
  of $G_0$, a contradiction.
\end{remark}

\subsection{The canonical topology on products}
Let $G$ and $H$ be two abelian definable groups.
\begin{theorem} \label{product-top}
  The canonical topology $\tau_{G \times H}$ on $G \times H$ is the
  product topology of $\tau_G$ and $\tau_H$.
\end{theorem}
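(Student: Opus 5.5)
Both $\tau_{G\times H}$ and $\tau_G\times\tau_H$ are group topologies on $G\times H$, so it suffices to compare neighborhood bases of $(0,0)$. Let $d=\dim(G)$, $e=\dim(H)$, so $\dim(G\times H)=d+e$. A basic neighborhood of $(0,0)$ for $\tau_{G\times H}$ is $Z-Z$ with $Z\subseteq G\times H$ big. A basic neighborhood of $(0,0)$ for the product topology is $(X-X)\times(Y-Y)$ with $X\subseteq G$ and $Y\subseteq H$ big. We show each family refines the other.

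\emph{Product neighborhoods are $\tau_{G\times H}$-neighborhoods.} Given big $X\subseteq G$ and $Y\subseteq H$, the set $Z=X\times Y$ has $\dim(Z)=d+e$, so it is big in $G\times H$. Moreover
\begin{equation*}
  Z-Z=(X-X)\times(Y-Y).
\end{equation*}
Hence every basic product neighborhood is itself a basic neighborhood for $\tau_{G\times H}$.

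\emph{$\tau_{G\times H}$-neighborhoods are product neighborhoods.} Given big $Z\subseteq G\times H$, we need big $X$ and $Y$ with $(X-X)\times(Y-Y)\subseteq Z-Z$. Since $\dim(Z)=\dim(G\times H)$, the Lemma on Products (Fact~\ref{prod-fact}) gives definable $X_0\subseteq G$ and $Y_0\subseteq H$ with $X_0\times Y_0\subseteq Z$ and $\dim(X_0\times Y_0)=d+e$. Since $\dim(X_0)\le d$ and $\dim(Y_0)\le e$, additivity of dimension forces $\dim(X_0)=d$ and $\dim(Y_0)=e$, so both are big. Then
\begin{equation*}
  (X_0-X_0)\times(Y_0-Y_0)=(X_0\times Y_0)-(X_0\times Y_0)\subseteq Z-Z.
\end{equation*}

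Combining the two directions, the two group topologies have mutually cofinal neighborhood bases at the identity, so they coincide. All the real work is in the Lemma on Products (Fact~\ref{prod-fact}), which is exactly the step that can fail to be obvious for sets that are not open; once it is granted, the rest is bookkeeping. As a cross-check, the product topology is a definable group topology in which $D\subseteq G\times H$ has non-empty interior iff $D$ is big. The forward direction is immediate. The converse uses Fact~\ref{prod-fact} together with Theorem~\ref{quasi} on each factor. So Lemma~\ref{characterization} gives the same conclusion.
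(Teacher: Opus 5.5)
Your proof is correct and matches the paper's argument. Each product basic neighborhood $(X-X)\times(Y-Y)$ is already the basic neighborhood $(X\times Y)-(X\times Y)$, and conversely the Lemma on Products (Fact~\ref{prod-fact}) places a product of big sets inside any big $Z$; your extra cross-check via Lemma~\ref{characterization} is also valid but not needed.
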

\begin{proof}
  Both topologies are group topologies, so it suffices to show that
  they have the same neighborhoods of 0.  In the product topology, a
  basic neighborhood of 0 has the form $U \times V$ where $U$ and $V$
  are basic neighborhoods in $G$ and $H$, respectively.  Write $U$ as
  $X-X$ and $V$ as $Y-Y$ for some big definable subsets $X \subseteq
  G$ and $Y \subseteq H$.  Then $X \times Y$ is a big definable subset
  of $G \times H$, and
  \begin{equation*}
    (X \times Y) - (X \times Y) = \{(x_1-x_2,y_1-y_2) : x_1,x_2 \in X, y_1,y_2 \in Y\} =  (X - X) \times (Y - Y) = U \times V.
  \end{equation*}
  So $U \times V$ is already a basic neighborhood in the canonical
  topology on $G \times H$.

  Conversely, suppose $W$ is a basic neighborhood in the canonical
  topology on $G \times H$.  Then $W = Z - Z$ for some big $Z
  \subseteq G \times H$.  By the Lemma on Products
  (Fact~\ref{prod-fact}), $Z$ contains $X \times Y$ for some big sets
  $X \subseteq G$ and $Y \subseteq H$.  Then $W$ contains $(X \times
  Y) - (X \times Y) = (X-X) \times (Y-Y)$, which is a basic
  neighborhood in the product topology.
\end{proof}

\section{A partial form of generic continuity} \label{apof}
Continue to fix a definable abelian group $(G,+)$.
\subsection{Hammer lemma}
Our only tool is the following hammer.
\begin{lemma}[Hammer lemma] \label{hammer}
  Let $\mathcal{B}_0$ be a definable neighborhood basis of 0 in
  $(G,+)$.  Let $X$ be a definable set.  Let $\bowtie$ be a relation
  between $X$ and $\mathcal{B}_0$, with $a \bowtie N$ pronounced as
  ``$a$ is compatible with $N$''.  Suppose $\bowtie$ satisfies the
  following properties:
  \begin{enumerate}
  \item For any $a \in X$, there is $N \in \mathcal{B}_0$ such that $a
    \bowtie N$.
  \item If $a \bowtie N$ and $N \supseteq N' \in \mathcal{B}_0$, then
    $a \bowtie N'$.
  \item The relation $\bowtie$ is definable, i.e., the set $\{(a,N) :
    a \bowtie N\}$ is a definable subset of $X \times \mathcal{B}_0$.
  \end{enumerate}
  Then there is a definable subset $X' \subseteq X$ with $\dim(X') =
  \dim(X)$ and a neighborhood $N \in \mathcal{B}_0$ such that for any
  $a \in X'$, $a \bowtie N$.
\end{lemma}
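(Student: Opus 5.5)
The plan is a genericity argument in the monster model, followed by compactness. Fix a small set $C$ over which $X$, $\mathcal{B}_0$ (with its parametrization $\{N_b\}_{b \in Y}$) and $\bowtie$ are defined. Let $e = \dim(X)$ and take $a \in X$ with $\dim(a/C) = e$. By (1) there is some $N \in \mathcal{B}_0$ with $a \bowtie N$. The difficulty is that this $N$ depends on $a$. We want a single $N$ that works for a big set of $a$'s, and we would like $N$ to be chosen ``independently'' of $a$.

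\textbf{Making $N$ independent of $a$.} We use the structure of $\mathcal{B}_0$ as a basis of neighborhoods of $0$ in the canonical topology. Each basic neighborhood has the form $Z - Z$ with $Z$ big, and the topology satisfies Theorem~\ref{quasi}.

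Given $N = N_b$ with $a \bowtie N_b$, Theorem~\ref{quasi} and Lemma~\ref{later-interior} show that $N_b$ contains $Z - Z$ for a big $Z$. We now choose $Z$ generically. Take $z$ in the relevant big set with $\dim(z/Cab) = d$. Then there is a basic neighborhood of the form $(W - z)$ or $W - W$, built from a good family as in Theorem~\ref{definable}.

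A cleaner route runs as follows. Consider $D = \{(x, b) : x \in X,\ x \bowtie N_b\}$. For our fixed generic $a$, the fiber $D_a$ contains $b$ together with, by (2), every $b'$ with $N_{b'} \subseteq N_b$. Choose $b' \in D_a$ with $\dim(b'/Ca)$ as large as possible inside this downward-closed set. Then, by Proposition~\ref{subtraction} and Claim~\ref{huati} (which produce many parameters for smaller neighborhoods), we try to arrange $\dim(a/Cb') = \dim(a/C) = e$. This is the key exchange-type step. I would do it with Lemma~\ref{slide}: shrink $N_b$ to $X_0 - X_0$ where $X_0 = G_0' \cap (Y' + \delta)$, and pick $\delta$ generic over $Ca$, so that the new neighborhood is coded by a parameter $\delta$ (together with a family parameter) that is independent from $a$.

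\textbf{Conclusion.} Once we have $N' \in \mathcal{B}_0$, defined over $C\beta$ with $\dim(a/C\beta) = e$, and $a \bowtie N'$, set $X' = \{x \in X : x \bowtie N'\}$. This set is $C\beta$-definable by (3) and contains $a$, so $\dim(X') \ge e$, and the lemma follows.

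\textbf{Main obstacle.} The hard step is producing the independent parameter $\beta$. I expect to handle it by showing that the set of $\delta$ for which the translated neighborhood works is big, and then choosing $\delta$ generic over $Ca$.

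If that fails, the fallback is a compactness-and-dimension argument. For each $b$, let $X_b = \{x \in X : x \bowtie N_b\}$. If every $X_b$ had $\dim < e$, then by (2) these sets form a directed union covering $X$, by (1). By saturation and the definability of dimension for t-minimal theories \cite{wj-visc-1}, we would then obtain a contradiction by choosing $a$ generic over a small model containing the parameters.
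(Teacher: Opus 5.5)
Neither of your two routes is complete.

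\textbf{Main route.} The step you flag as the obstacle is finding $N' \in \mathcal{B}_0$ with $a \bowtie N'$ whose code $\beta$ satisfies $\dim(a/C\beta) = e$. That step is really the whole content of the lemma, and the tools you cite do not supply it. Condition (2) only lets you pass to neighborhoods $N' \subseteq N_b$, and $N_b$ depends on $a$.

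Shrinking to $X_0 - X_0$ with $X_0 = G_0' \cap (Y'+\delta)$ does not help. We have $X_0 - X_0 \subseteq Y' - Y'$ regardless of $\delta$. Moreover, $Y'$ (or the good-family member from Claim~\ref{huati}) must satisfy $Y' - Y' \subseteq N_b$, so its parameter stays tied to $a$.

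There is a further problem. In a t-minimal theory you only have subadditivity $\dim(ab/C) \le \dim(a/Cb) + \dim(b/C)$, not the reverse inequality. So choosing a parameter generic over $Ca$ does not give $\dim(a/Cb') = \dim(a/C)$. Neither does choosing $b' \in D_a$ with $\dim(b'/Ca)$ maximal.

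\textbf{Fallback.} Your fallback sets up the paper's argument correctly. The sets $X_N = \{x \in X : x \bowtie N\}$ form a definable family. It is upward directed, since $\mathcal{B}_0$ is downward directed and (2) holds, and its union is $X$ by (1).

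But the contradiction you announce is not there. A single $a$ generic over $M$ lies in some $X_{N_b}$ with $b$ depending on $a$, and $\dim(a/Mb) < e$ is perfectly consistent. Your sketch never actually uses directedness. It would apply equally to the covering of $X$ by its singletons, where every member is small.

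\textbf{The missing idea.} You need directedness together with saturation to capture a whole dimension-witnessing set inside one member of the family. The paper does this by citing \cite[Corollary~2.43]{wj-visc-1}. Concretely:
\begin{enumerate}
\item Take a countable $S \subseteq X$ such that every definable $D' \subseteq X$ containing $S$ has $\dim(D') = \dim(X)$. This is the lemma preceding Proposition~\ref{covers}, whose proof uses only t-minimal dimension theory.
\item Write $\mathcal{B}_0 = \{N_y\}_{y \in Y}$. The partial type $\{y \in Y\} \cup \{s \bowtie N_y : s \in S\}$ is finitely satisfiable. Given $s_1,\ldots,s_k \in S$, pick $N_i$ with $s_i \bowtie N_i$, then some $N \in \mathcal{B}_0$ with $N \subseteq N_1 \cap \cdots \cap N_k$, and apply (2).
\item By saturation some $N$ has $S \subseteq X_N$. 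Hence $\dim(X_N) = \dim(X)$, and $X' = X_N$ works.
\end{enumerate}
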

\begin{proof}
  Like the proof of \cite[Lemma~3.1]{wj-visc-1}.  In more detail,
  let $X_N = \{a \in X : a \bowtie N\}$ for $N \in \mathcal{B}_0$.
  Then $\{X_N\}_{N \in \mathcal{B}_0}$ is a definable family by
  condition (3).  Condition (2) says that if $N' \subseteq N$ then
  $X_{N'} \supseteq X_N$.  Since the family $\mathcal{B}_0$ is
  downward directed, the family $\{X_N : N \in \mathcal{B}_0\}$ is
  upward directed.  By condition (1), $X = \bigcup_{N \in
    \mathcal{B}_0} X_N$.  The union is filtered, so
  \cite[Corollary~2.43]{wj-visc-1} gives some $N$ such that
  $\dim(X_N) = \dim(X)$.  Take $X' = X_N$.
\end{proof}

\subsection{Local dimension}
If $X \subseteq G$ is a definable set and $p \in X$, let $\dim_p(X)$
denote the local dimension of $X$ at $p$, i.e., $\dim(X \cap N)$ for
any sufficiently small definable neighborhood $N$ of $p$ in the
canonical topology.  Local dimension depends definably on $X$ and $p$,
because dimension is definable \cite[Theorem~2.51]{wj-visc-1} and
the canonical topology is definable.
\begin{lemma}
  If $X \subseteq G$ is definable and $\dim_p(X) \le k$ for every $p
  \in X$, then $\dim(X) \le k$.
\end{lemma}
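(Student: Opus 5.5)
The plan is to apply the Hammer lemma (Lemma~\ref{hammer}) to the relation ``$p$ is compatible with $N$'', defined by $\dim((p+N) \cap X) \le k$. Fix a definable neighborhood basis $\mathcal{B}_0$ of $0$ for the canonical topology, which exists by Theorem~\ref{definable}. For $p \in X$ and $N \in \mathcal{B}_0$, say $p \bowtie N$ iff $\dim(X \cap (p+N)) \le k$. We then check the three hypotheses of the Hammer lemma.
\begin{enumerate}
\item Condition (1) is exactly the hypothesis $\dim_p(X) \le k$: some sufficiently small basic neighborhood $p+N$ witnesses the local dimension.
\item Condition (2) is monotonicity of dimension, since $N' \subseteq N$ gives $X \cap (p+N') \subseteq X \cap (p+N)$.
\item Condition (3) is definability of dimension \cite[Theorem~2.51]{wj-visc-1}, applied to the definable family $\{X \cap (p+N)\}_{p,N}$.
\end{enumerate}
The Hammer lemma then yields a definable $X' \subseteq X$ with $\dim(X') = \dim(X)$, together with a single $N \in \mathcal{B}_0$ such that $\dim(X \cap (p+N)) \le k$ for every $p \in X'$.

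Next we use this uniform $N$ to bound $\dim(X')$. By Proposition~\ref{subtraction}, choose $V \in \mathcal{B}_0$ with $V - V \subseteq N$; we may take $V$ to be a basic neighborhood inside $N$. By Proposition~\ref{covers}-style covering, or more directly by Theorem~\ref{quasi}, the set $V$ is big. We want to cover $X'$ by finitely many translates $g + V$ such that each piece $X' \cap (g+V)$ lies inside $X \cap (p + N)$ for some $p \in X'$. Indeed, if $p \in X' \cap (g+V)$, then $g + V \subseteq p - V + V \subseteq p + N$. So each nonempty piece $X' \cap (g+V)$ has dimension $\le k$.

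The finitely-many-translates step is the part I expect to be the main obstacle. If finitely many translates $g_1+V, \ldots, g_m+V$ cover $G$, then
\[
  \dim(X) = \dim(X') = \max_i \dim(X' \cap (g_i+V)) \le k,
\]
and we are done. To get the cover, I would mimic the proof of Proposition~\ref{covers}. Take a countable $S$ as in the lemma preceding it; that lemma was stated for nice theories but uses only t-minimal dimension theory. For any $x \in G$, if $x - S$ avoided $V$, then $S \subseteq x - (G\setminus V)$. That alone gives no contradiction, because $G \setminus V$ may well be big.

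So instead I would argue by compactness. Suppose $\dim(X') > k$, and take $a \in X'$ generic, with $\dim(a/C) = \dim(X') > k$ over parameters $C$ defining everything, including $N$ and $V$. The set $X' \cap (a + V)$ need not be $C$-definable. To handle this, choose $g \in G$ with $\dim(a, g/C)$ maximal and $a - g \in V$; this is possible since $V$ is big, as in Lemma~\ref{slide}. Subadditivity then gives $\dim(a/Cg) = \dim(a/C) > k$. But $a$ lies in the $Cg$-definable set $X' \cap (g + V)$, which has dimension $\le k$ by the previous paragraph, a contradiction.

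The delicate point is arranging $\dim(a/Cg) = \dim(a/C)$ together with $a \in g+V$. To do this, pick $v \in V$ with $\dim(v/Ca) = d$ and set $g = a - v$. Then $\dim(a,g/C) = \dim(a,v/C) = \dim(a/C) + d$ by additivity. Subadditivity gives $\dim(a/Cg) \ge \dim(a,g/C) - \dim(g/C) \ge \dim(a/C)$, since $\dim(g/C) \le d$. This avoids the covering altogether.
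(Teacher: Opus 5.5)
Your skeleton matches the paper's proof. You apply the hammer lemma to $p \bowtie N \iff \dim(X\cap(p+N))\le k$, choose $V$ with $V-V\subseteq N$, and observe that every translate $g+V$ meets $X'$ in a set of dimension $\le k$; that observation is the paper's Claim~\ref{claim-a}. The gap is in the final dimension count. You fix $a$ generic in $X'$ first, then take an arbitrary $v\in V$ with $\dim(v/Ca)=d$, and assert $\dim(a,v/C)=\dim(a/C)+d$ ``by additivity.'' In a t-minimal theory only subadditivity, $\dim(ab/C)\le\dim(a/Cb)+\dim(b/C)$, is available \cite[Proposition~2.31(5)]{wj-visc-1}. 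The reverse inequality $\dim(a/C)+\dim(b/Ca)\le\dim(ab/C)$ is not available. In fact it would rule out the defect the paper flags in the introduction, namely definable surjections that raise dimension. Suppose $f:X\to Y$ is a $C$-definable surjection, $y\in Y$ is generic over $C$, and $x\in f^{-1}(y)$. Then the reverse inequality gives $\dim(Y)=\dim(y/C)\le\dim(y,x/C)=\dim(x/C)\le\dim(X)$. So for your sequentially chosen $v$, nothing prevents $\dim(a,v/C)<\dim(a/C)+d$, and then $\dim(a/Cg)\ge\dim(a/C)$ does not follow. This is why Lemma~\ref{slide}, which you cite, picks the pair jointly generic in a product rather than extending a generic $a$ by a generic $v$ over $Ca$.

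The repair is small: choose the pair jointly. Take $(a,v)\in X'\times V$ with $\dim(a,v/C)=\dim(X'\times V)=\dim(X')+d$, using the set-level product formula. Put $g=a-v$. Subadditivity gives $\dim(X')+d=\dim(a,g/C)\le\dim(a/Cg)+\dim(g/C)\le\dim(a/Cg)+d$. On the other hand, $a\in X'\cap(g+V)$, which is a $Cg$-definable set of dimension $\le k$. Hence $\dim(X)=\dim(X')\le k$.

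Alternatively, you can keep your fixed $a$. By compactness, and since $\dim(\phi(\Mm)\times V)\ge\dim(a/C)+d$ for every $\phi\in\tp(a/C)$, some $v\in V$ satisfies $\dim(a,v/C)=\dim(a/C)+d$. What is unjustified is only the claim that \emph{every} $v$ with $\dim(v/Ca)=d$ works.

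After this fix, your argument is the generic-point version of the paper's last step. The paper instead applies the fiber inequality \cite[Theorem~2.37(8)]{wj-visc-1} to the map $X_1\times N_2\to G$, $(x,y)\mapsto x-y$, whose fibers are in bijection with $X_1\cap(a+N_2)$. The covering detour in your middle paragraphs is unnecessary and can be deleted.
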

\begin{proof}
  Fix a definable neighborhood basis $\mathcal{B}_0$ of $0$ in $G$.
  For $a \in X$ and $N \in \mathcal{B}_0$, say that $a \bowtie N$ if
  $\dim((a + N) \cap X) \le k$.  Then $\bowtie$ satisfies the
  conditions of the hammer lemma (Lemma~\ref{hammer}), so there is
  some definable set $X_1 \subseteq X$ with $\dim(X_1) = \dim(X)$ and
  some neighborhood $N_1 \in \mathcal{B}_0$ such that for any $a \in
  X_1$, we have $a \bowtie N_1$, i.e., $\dim((a + N_1) \cap X) \le k$.
  Since $X_1 \subseteq X$, we also have
  \begin{equation*}
    \dim((a + N_1) \cap X_1) \le k \text{ for } a \in X_1.
  \end{equation*}
  Take $N_2 \in \mathcal{B}_0$ small enough that $N_2 - N_2 \subseteq
  N_1$.
  \begin{claim} \label{claim-a}
    If $a \in G$, then $\dim((a + N_2) \cap X_1) \le k$.
  \end{claim}
  \begin{claimproof}
    If $(a + N_2) \cap X_1 = \varnothing$, then the dimension is
    $-\infty$, which is at most $k$.  Otherwise, take some $a' \in (a
    + N_2) \cap X_1$.  Then $a \in a' - N_2$, so
    \begin{equation*}
      a + N_2 \subseteq (a' - N_2) + N_2 \subseteq a' + N_1.
    \end{equation*}
    Then $(a + N_2) \cap X_1$ is a subset of $(a' + N_1) \cap X_2$,
    which has dimension at most $k$.
  \end{claimproof}
  Consider the map
  \begin{gather*}
    f : X_1 \times N_2 \to G \\
    f(x,y) = x-y.
  \end{gather*}
  \begin{claim} \label{claim-b}
    The fibers of $f$ have dimension at most $k$.
  \end{claim}
  \begin{claimproof}
    Fix $a \in G$.  The fiber $f^{-1}(a)$ is the set
    \begin{equation*}
      \{(x,y) \in X_1 \times N_2 : x-y = a\} =
      \{(x,x-a) : x \in X_1 \cap (a + N_2)\}.
    \end{equation*}
    By Claim~\ref{claim-a}, this set has dimension at most $k$.
  \end{claimproof}
  By
  \cite[Theorem~2.37(8)]{wj-visc-1}, Claim~\ref{claim-b} implies
  \begin{equation*}
    \dim(X_1) + \dim(N_2) \le k + \dim(G).  \tag{$\ast$}
  \end{equation*}
  Let $d = \dim(G)$.  Every neighborhood of 0 has dimension
  $d$ (Remark~\ref{duh}), so $\dim(N_2) = d$.  Subtracting $d$ from
  both sides of ($\ast$) gives $\dim(X_1) \le k$.  But $X_1$ was
  chosen so that $\dim(X) = \dim(X_1)$.
\end{proof}
\begin{corollary} \label{local-cor}
  $\dim(X) = \max_{p \in X} \dim_p(X)$ for definable $X \subseteq G$.
\end{corollary}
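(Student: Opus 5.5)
We prove the two inequalities separately. For the easy direction, fix $p \in X$ and note that $\dim_p(X) = \dim(X \cap N)$ for some sufficiently small definable neighborhood $N$ of $p$. Since $X \cap N \subseteq X$, monotonicity of dimension gives $\dim_p(X) \le \dim(X)$. Thus $\max_{p \in X} \dim_p(X) \le \dim(X)$.

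For the reverse inequality, the main point is that the maximum actually exists. Local dimension takes values in the finite set $\{-\infty, 0, 1, \ldots, \dim(X)\}$, so the supremum over $p \in X$ is attained at some point. Set
\begin{equation*}
  k = \max_{p \in X} \dim_p(X).
\end{equation*}
Then $\dim_p(X) \le k$ for every $p \in X$. The preceding lemma applies directly and yields $\dim(X) \le k$.

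Combining the two inequalities gives $\dim(X) = \max_{p \in X} \dim_p(X)$. If $X = \varnothing$, the maximum over the empty set is read as $-\infty$, which agrees with $\dim(\varnothing) = -\infty$.

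There is no real obstacle here, since all the substantive work was done in the preceding lemma. The only point needing care is that "sufficiently small neighborhood" is well defined: $\dim(X \cap N)$ is monotone in $N$, takes finitely many values, and the neighborhoods of $p$ are downward directed, so this dimension stabilizes.
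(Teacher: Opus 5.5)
Your proposal is correct and matches the paper's intended argument. The paper records this as an immediate corollary of the preceding lemma. The inequality $\max_{p \in X} \dim_p(X) \le \dim(X)$ comes from monotonicity of dimension. The reverse inequality comes from applying that lemma with $k$ equal to the maximum local dimension, exactly as you do.
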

\begin{example}
  If $X \subseteq G$ is definable and discrete, then $\dim_p(X) = 0$
  for any $p \in X$, so $\dim(X) = 0$ and $X$ is finite.
\end{example}
We can also stratify definable sets by local dimension:
\begin{theorem} \label{stratify}
  Let $X$ be a non-empty definable subset of $G$.  For $k \le
  \dim(X)$, let $X_{(k)}$ be the set of $p \in X$ with $\dim_p(X) = k$.
  \begin{enumerate}
  \item $\dim_p(X_{(k)}) = k$ for each $p \in X_{(k)}$.  In
    particular, $\dim(X_{(k)}) = k$ or $X_{(k)}$ is empty.
  \item If $k = \dim(X)$, then $X_{(k)} \ne \varnothing$.
  \end{enumerate}
\end{theorem}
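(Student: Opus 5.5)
Part (2) comes first because it is immediate. By Corollary~\ref{local-cor}, $\dim(X) = \max_{p \in X} \dim_p(X)$, so some $p \in X$ has $\dim_p(X) = \dim(X)$. Hence $X_{(\dim X)} \ne \varnothing$. Note also that each $X_{(k)}$ is definable, since local dimension depends definably on $p$.

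For part (1), fix $p \in X_{(k)}$. We show both inequalities for $\dim_p(X_{(k)})$.

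The upper bound is easy. Since $X_{(k)} \subseteq X$, we have $\dim_p(X_{(k)}) \le \dim_p(X) = k$.

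For the lower bound, choose a definable neighborhood $N$ of $p$ small enough that $\dim(X \cap N') = k$ for every neighborhood $N' \subseteq N$ of $p$. It suffices to show that $\dim(X_{(k)} \cap N') = k$ for every definable open neighborhood $N' \subseteq N$ of $p$; take $N'$ open, replacing it by its interior if needed.

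Split $X \cap N'$ according to local dimension. A point $q \in X \cap N'$ lies in $X_{(j)}$ for some $j$, and because $N'$ is open, $\dim_q(X \cap N') = \dim_q(X) = j$. Since $q \in N \cap N'$ and $\dim(X \cap N') \le k$, we get $j \le k$. So
\begin{equation*}
  X \cap N' = \bigcup_{j \le k} (X_{(j)} \cap N').
\end{equation*}
Now suppose toward a contradiction that $\dim(X_{(k)} \cap N') < k$. Then $\dim(X_{(j)} \cap N') < k$ for every $j < k$ as well. To see this, apply the lemma preceding Corollary~\ref{local-cor} to the set $Y = X_{(j)} \cap N'$. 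For $q \in Y$ we have $\dim_q(Y) \le \dim_q(X) = j$, so that lemma gives $\dim(Y) \le j < k$. Taking the finite union, $\dim(X \cap N') < k$, contradicting the choice of $N$. Therefore $\dim(X_{(k)} \cap N') = k$, and so $\dim_p(X_{(k)}) = k$.

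The ``in particular'' clause follows by applying Corollary~\ref{local-cor} to $X_{(k)}$. If $X_{(k)}$ is nonempty, every point has local dimension $k$, so $\dim(X_{(k)}) = k$.

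The only delicate point is the identity $\dim_q(X \cap N') = \dim_q(X)$ for $q$ in the open set $N'$. This holds because sufficiently small neighborhoods of $q$ lie inside $N'$. Here we use that $\tau$ is a definable group topology, so it has a definable basis of open sets (Theorem~\ref{definable}) and we may take $N'$ open and definable.
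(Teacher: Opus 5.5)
Your proof is correct and follows essentially the same route as the paper. You get the upper bound by monotonicity, then derive a contradiction by splitting $X \cap N$ into $X_{(k)} \cap N$ and the lower strata, each of dimension $<k$ via the lemma preceding Corollary~\ref{local-cor}. The only cosmetic difference is that the paper first bounds $\dim(X_{(<k)}) < k$ globally and then localizes, whereas you bound each $X_{(j)} \cap N'$ directly inside the neighborhood.
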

\begin{proof}
  The second point is immediate from Corollary~\ref{local-cor}.
  Let $X_{(<k)} = \bigcup_{i=0}^{k-1} X_{(i)}$ and $X_{(>k)} =
  \bigcup_{i=k+1}^\infty X_{(i)}$.  First note that if $p \in
  X_{(k)}$, then $ \dim_p(X_{(k)}) \le \dim_p(X) = k$, so
  $\dim(X_{(k)}) \le k$ by Corollary~\ref{local-cor}.  Consequently,
  $\dim(X_{(<k)}) < k$.
  %% If $k = \dim(X)$, then $\dim(X_{(<k)}) < k =
%%   \dim(X)$, so the complement $X \setminus X_{(<k)} = X_{(k)}$ is
%%   non-empty, proving the second point.

  Given $p \in X_{(k)}$, it remains to show that $\dim_p(X_{(k)}) \ge
  k$.  Suppose that $\dim_p(X_{(k)}) < k$ for the sake of
  contradiction.  Then there is an open neighborhood $N \ni p$ with
  $\dim(X_{(k)} \cap N) < k$.  Taking $N$ small enough, we may assume
  $\dim(X \cap N) = k$.  If $q \in X \cap N$, then the neighborhood $N
  \ni q$ shows that $\dim_q(X) \le k$, so $q \notin X_{(>k)}$.  Then
  $X \cap N$ splits as
  \begin{equation*}
    X \cap N = (X_{(k)} \cap N) \sqcup (X_{(<k)} \cap N).
  \end{equation*}
  We assumed $\dim(X_{(k)} \cap N) < k$, and we showed earlier that
  $\dim(X_{(<k)}) < k$.  Then $\dim(X \cap N) < k$, a contradiction.
  This proves the first point.
\end{proof}
\begin{corollary} \label{stratify-cor}
  If $X \subseteq G$ is definable and non-empty, there is a non-empty
  definable $X' \subseteq X$ with $\dim_p(X') = \dim(X') = \dim(X)$
  for any $p \in X'$.
\end{corollary}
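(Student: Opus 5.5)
Take $k = \dim(X)$ and set $X' = X_{(k)}$, the set of points $p \in X$ with $\dim_p(X) = k$, as in Theorem~\ref{stratify}. This set is definable: local dimension depends definably on $X$ and $p$, because dimension is definable and the canonical topology is definable (Theorem~\ref{definable}). Since $X$ is non-empty, part (2) of Theorem~\ref{stratify} gives $X' \ne \varnothing$.

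Next apply part (1) of Theorem~\ref{stratify}. It gives $\dim_p(X') = k$ for every $p \in X'$, and also $\dim(X') = k$ because $X'$ is non-empty. Alternatively, $\dim(X') = k$ follows from Corollary~\ref{local-cor}, since $\dim(X') = \max_{p \in X'} \dim_p(X') = k$. Hence $\dim_p(X') = \dim(X') = \dim(X)$ for all $p \in X'$, which is exactly the required property.

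The argument has no real obstacle, since the corollary is the case $k = \dim(X)$ of the stratification theorem. The only point to check is that $X_{(k)}$ is indeed a definable set, and this is the definability of local dimension noted above.
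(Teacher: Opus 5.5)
Your proposal is correct and is exactly the paper's argument: take $X' = X_{(k)}$ with $k = \dim(X)$, and read off non-emptiness from part (2) and the local and global dimension claims from part (1) of Theorem~\ref{stratify}. Your check that $X_{(k)}$ is definable is the one detail the paper leaves implicit here; it follows from the definability of local dimension, which the paper notes at the start of the local dimension subsection.
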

\begin{proof}
  In the notation of Theorem~\ref{stratify}, take $X' = X_{(k)}$ with
  $k = \dim(X)$.
\end{proof}
\begin{example}
  For any $0 \le d \le \dim(G)$, there is a non-empty definable set $X
  \subseteq G$ such that $\dim(X) = d$ and $\dim_p(X) = d$ for any $p
  \in X$.
\end{example}
\begin{proof}
  By Remark~\ref{subset}, there is a definable set $Y \subseteq G$
  with $\dim(Y) = d$.  Applying Corollary~\ref{stratify-cor} we get a
  subset $X \subseteq Y$ with $\dim_p(X) = d$ for any $p \in X$.
\end{proof}

\subsection{Generic continuity}
\begin{theorem} \label{gencon}
  Let $(G,+)$ and $(H,+)$ be two definable abelian groups.  Let $X
  \subseteq G$ be a big definable subset, meaning $\dim(X) = \dim(G)$.
  Let $f : X \to H$ be a definable function.  Then
  \begin{equation*}
    \dim \{p \in X : f \text{ isn't continuous at } p\} < \dim(G).
  \end{equation*}
\end{theorem}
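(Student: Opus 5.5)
Suppose for contradiction that the bad set $B = \{p \in X : f \text{ isn't continuous at } p\}$ is big. Since both canonical topologies are definable (Theorem~\ref{definable}), $B$ is definable. Replacing $X$ with $\ter(B)$, which is big by Corollary~\ref{big-inter}, we may assume that $X$ is open and that $f$ is nowhere continuous on $X$. The goal is then to find one point of $X$ where $f$ is continuous.

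Fix definable neighborhood bases $\mathcal{B}_G$ of $0$ in $G$ and $\mathcal{B}_H$ of $0$ in $H$. The idea is to turn discontinuity into a uniform statement using the Hammer lemma. Fix $M \in \mathcal{B}_H$. For each $p \in X$, either $f$ is ``$M$-continuous'' at $p$ (some $N$ with $f(p+N) \subseteq f(p)+M$) or not. I will work with the sets $D_M = \{p \in X : f^{-1}(f(p)+M)$ is not a neighborhood of $p\}$. Discontinuity everywhere means $X = \bigcup_M D_M$, and this union is filtered: shrinking $M$ enlarges $D_M$. By \cite[Corollary~2.43]{wj-visc-1}, exactly as in the proof of Lemma~\ref{hammer}, some $D_M$ is big. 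Shrink $X$ to $\ter(D_M)$ and choose $M' \in \mathcal{B}_H$ with $M'-M' \subseteq M$.

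Next, cover by small pieces of the target. Consider the definable family $\{f^{-1}(c+M')\}_{c \in H}$. For each $p$, $p \in f^{-1}(f(p)+M')$. The key step is to find a big definable $Y \subseteq X$ and some $c$ with $f(Y) \subseteq c+M'$. Then for $y \in Y$ we have $f(Y) \subseteq f(y)+M'-M' \subseteq f(y)+M$. Since $\ter(Y) \neq \varnothing$ by Theorem~\ref{quasi}, any $p \in \ter(Y)$ has $f^{-1}(f(p)+M) \supseteq \ter(Y)$, which is a neighborhood of $p$. This contradicts $p \in D_M$.

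The main obstacle is producing $Y$. If $H$ were small the pieces would be trivial, but in general $\{c+M'\}$ is an infinite cover of $H$. My approach is to use Proposition~\ref{covers}-style reasoning in $H$: pick a big $Z$ with $Z-Z \subseteq M'$ and consider the graph $\Gamma \subseteq X \times H$. I would work with generic points instead. Take a small $C$ defining everything and $p \in X$ with $\dim(p/C) = d = \dim(G)$. Choose $c \in H$ so that $f(p) \in c + (Z-Z)$ and $\dim(p/Cc) = d$. To do this, take $z$ generic in $Z$ over $Cp$ and put $c = f(p)-z$; by the subadditivity argument of Lemma~\ref{slide}, $\dim(p/Cc) = d$. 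Then $p$ lies in the $Cc$-definable set $f^{-1}(c+Z) \subseteq f^{-1}(c+M')$. Since $p$ is generic over $Cc$, this set is big, and we take $Y = f^{-1}(c+Z) \cap X$, which finishes the proof. The delicate point is verifying $\dim(p/Cc)=d$: the pair $(p,z)$ has dimension $d+\dim(Z) = d+\dim(H)$ over $C$ and is interalgebraic with $(p,c)$ over $C$. By the same subadditivity computation as in Lemma~\ref{slide}, this forces $\dim(p/Cc) = d$.
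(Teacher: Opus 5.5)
Your reduction to an open set on which $f$ is nowhere continuous matches the paper. So does your uniformization step: your sets $D_M$ are the paper's relation $a \bowtie N$ fed into Lemma~\ref{hammer}. The final contradiction via a point of $\ter(Y)$ is also sound.

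The step that does not hold up as written is the one you flag as delicate. You choose $p$ with $\dim(p/C)=d$ \emph{first*}, then $z$ generic in $Z$ over $Cp$, and assert $\dim(p,z/C)=d+\dim(Z)$. That is the inequality $\dim(pz/C)\ge\dim(p/C)+\dim(z/Cp)$, the reverse of subadditivity, and t-minimal dimension does not provide it. If it held in general, take a definable surjection $g:X_1\to Y_1$, a point $y$ with $\dim(y/C)=\dim(Y_1)$, and some $x\in g^{-1}(y)$. You would get $\dim(X_1)\ge\dim(x/C)=\dim(xy/C)\ge\dim(y/C)=\dim(Y_1)$, ruling out exactly the defect the introduction warns about. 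This is why Lemma~\ref{slide} picks its pair \emph{jointly}.

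The repair is to do the same. Take $(p,z)\in X\times Z$ with $\dim(p,z/C)=\dim(X\times Z)=d+\dim(H)$. Subadditivity then gives $\dim(p/C)=d$ for free, and your computation $\dim(p/Cc)=d$ goes through unchanged. At the level of sets the same fix reads: $\{(x,c)\in X\times H : f(x)-c\in Z\}$ is in definable bijection with $X\times Z$, so by the fiber bound \cite[Theorem~2.37(8)]{wj-visc-1} some fiber $X\cap f^{-1}(c+Z)$ has dimension $d$.

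A smaller slip: from $Z-Z\subseteq M'$ you cannot infer $f^{-1}(c+Z)\subseteq f^{-1}(c+M')$. Either take $Z=M'$, which is big since it contains a basic neighborhood (Remark~\ref{duh}). Or observe directly that $f(Y)\subseteq c+Z$ gives $f(Y)\subseteq f(y)+(Z-Z)\subseteq f(y)+M$ for every $y\in Y$.

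With these fixes your proof is correct. It reaches the key intermediate object, a big set whose image lies in a single small translate, by a different route from the paper's. The paper picks a point $(a,f(a))$ of the graph where the local dimension is $d$ (Corollary~\ref{local-cor}, applied in the group $G\times H$). It then cuts the graph down by $G\times(f(a)+N_1)$, which needs Theorem~\ref{product-top} to know this is a neighborhood in the canonical topology of $G\times H$.

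Your translation trick is the mechanism of Lemma~\ref{slide} transported through $f$. It uses only $\dim(X\times Z)=\dim(X)+\dim(Z)$, interdefinability, and subadditivity. So it bypasses the local-dimension machinery and the product-topology theorem entirely; both proofs still rely on Theorem~\ref{quasi} to produce an interior point. Your route also adapts to the correspondence case of Theorem~\ref{gencon2}, with $c+Z$ for a big $Z\subseteq N_0$ in place of $b+N_0$, since strong separation works the same way.
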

The proof is based on the proof of \cite[Proposition~3.12]{visceral}.
\begin{proof}
  Let $X'$ be the set of points of discontinuity.  Let $X''$ be the
  interior of $X'$.  By Theorem~\ref{quasi}, it suffices to show that
  $X''$ is empty.  Otherwise, we have a non-empty, open, big
  definable subset $X'' \subseteq G$ and a definable function $f : X''
  \to H$ which is nowhere continuous.  Replacing $X$ with $X''$, we
  may assume that $f : X \to H$ is nowhere continuous.

  Let $\mathcal{B}^G_0$ and $\mathcal{B}^H_0$ be definable
  neighborhood bases of 0 in the two groups $G$ and $H$.  For $a \in
  X$ and $N \in \mathcal{B}^H_0$, let $a \bowtie N$ mean that for any
  $N' \in \mathcal{B}^G_0$, there is $x \in a + N'$ with $f(x) \notin
  f(a) + N$.  For any $a \in X$, the discontinuity of $f$ at $a$ means that $a
  \bowtie N$ for some $N \in \mathcal{B}^H_0$.  The relation $\bowtie$
  then satisfies the conditions of the hammer lemma
  (Lemma~\ref{hammer}), so there is some $N_0 \in \mathcal{B}^H_0$ and
  some big $X' \subseteq X$ such that for any $a \in X'$, we have $a
  \bowtie N_0$.  Take a smaller $N_1 \in \mathcal{B}^H_0$ with $N_1 -
  N_1 \subseteq N_0$.

  Let $d = \dim(G) = \dim(X) = \dim(X')$.  Let $\Gamma$ be the graph
  of $f \restriction X'$:
  \begin{equation*}
    \Gamma = \{(x,f(x)) : x \in X'\}.
  \end{equation*}
  Then $\dim(\Gamma) = \dim(X') = d$.  By Corollary~\ref{local-cor},
  there is some $(a,f(a)) \in \Gamma$ such that the local dimension of
  $\Gamma$ at $(a,f(a))$ is $d$.  Note that $G \times (f(a) + N_1)$ is
  a neighborhood of $(a,f(a))$ by Theorem~\ref{product-top},
  so \[\dim((G \times (f(a) + N_1)) \cap \Gamma) = d.\] Let $X''
  \subseteq X'$ be the projection of $(G \times (f(a) + N_1)) \cap
  \Gamma$ onto the first coordinate.  The projection is injective on
  $\Gamma$, so $\dim(X'') = d$ and $X''$ has non-empty interior
  (Theorem~\ref{quasi}).  Take a point $a' \in \ter(X'') \subseteq X''
  \subseteq X'$.  Because $a' \in X'$, we have $a' \bowtie N_0$.  As
  $X''$ is a neighborhood of $a'$, there is $a'' \in X'$ such that
  $f(a'') \notin f(a') + N_0$.  That is
  \begin{equation*}
    f(a'') - f(a') \notin N_0.
  \end{equation*}
  On the other hand, $a', a'' \in X''$, so $f(a'), f(a'') \in f(a) +
  N_1$.  Then $f(a'') - f(a') \in N_1 - N_1 \subseteq N_0$, a
  contradiction.
\end{proof}
\begin{corollary} \label{homcor}
  If $f : G \to H$ is a definable homomorphism, then $f$ is
  continuous.
\end{corollary}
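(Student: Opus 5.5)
The plan is to use Theorem~\ref{gencon} to find one point of continuity, and then move it to every other point by translation, exactly as in the proof of Theorem~\ref{continuity} in the visceral case.

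First, apply Theorem~\ref{gencon} with $X = G$. This is legitimate because $G$ is trivially big, since $\dim(G) = \dim(G)$. It tells us that the set $B$ of points where $f$ is discontinuous satisfies $\dim(B) < \dim(G)$. Hence $B \ne G$; if $G$ is non-empty this is automatic, because $\dim(G) \ge 0 > -\infty$ forces $G \setminus B$ to be non-empty. So we can fix some $a \in G$ at which $f$ is continuous with respect to $\tau_G$ and $\tau_H$.

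Next, take an arbitrary $b \in G$. Since $f$ is a homomorphism, we can write
\begin{equation*}
  f(x) = f(b-a) + f(x - (b-a)).
\end{equation*}
The map $x \mapsto x - (b-a)$ is a translation in $G$, hence a $\tau_G$-homeomorphism (Theorem~\ref{cantop} makes $\tau_G$ a group topology), and it sends $b$ to $a$. The map $y \mapsto f(b-a) + y$ is a translation in $H$, hence a $\tau_H$-homeomorphism. So $f$ is the composition of a homeomorphism carrying $b$ to $a$, the map $f$ (continuous at $a$), and a homeomorphism; therefore $f$ is continuous at $b$. Since $b$ was arbitrary, $f$ is continuous everywhere.

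There is no real obstacle here. The only points to check are that $G$ itself qualifies as a big definable subset, so that Theorem~\ref{gencon} applies, and that the dimension inequality genuinely yields a point of continuity.
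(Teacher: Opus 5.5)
Your proposal is correct and follows the paper's proof: you get one point of continuity from Theorem~\ref{gencon} applied to $X = G$, then transport it to every point by composing $f$ with translations in $G$ and $H$, using the homomorphism identity $f(x) = f(b-a) + f(x-(b-a))$. Your extra checks, that $G$ is big and that $\dim(B) < \dim(G)$ forces $B \ne G$, are exactly the small points the paper leaves implicit.
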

\begin{proof}
  Theorem~\ref{gencon} gives \emph{some} point of continuity $a_0 \in
  G$.  Applying translations, we see that $f$ is continuous at
  \emph{any} point $a \in G$.  Indeed, if $t_1 : G \to G$ is the
  translation moving $a$ to $a_0$ and $t_2 : H \to H$ is the
  translation moving $f(a_0)$ to $f(a)$, then the composition $t_2
  \circ f \circ t_1$ is continuous at $a$.  But this composition is
  exactly $f$:
  \begin{gather*}
    t_2(f(t_1(x))) = t_2(f(x + (a_0 - a))) = t_2(f(x) + f(a_0) - f(a)) \\
    = (f(x) + f(a_0) - f(a)) + (f(a) - f(a_0)) = f(x).  \qedhere
  \end{gather*}
\end{proof}
\begin{remark} \label{isorem}
  If $f : G \to H$ is a definable \emph{isomorphism}, then continuity
  of $f$ can be seen much more directly: because the canonical
  topology is defined in a canonical way, the two topologies $\tau_G$
  and $\tau_H$ must correspond under the isomorphism $f$.
\end{remark}
\begin{question}
  If $f : G \to H$ is an injective definable homomorphism, is $f$
  necessarily a closed embedding?
\end{question}
\begin{remark}
  This question is connected to Question~\ref{hq}, since the case where
$G$ is trivial amounts to the question of whether $\{0_H\}$ is closed
in $H$, which is equivalent to asking whether $H$ is Hausdorff.  More
generally, if $G$ is a definable abelian group and $G_0$ is the group
from Remark~\ref{haus-rem}, then the subspace topology on $G_0$ is
trivial, so the inclusion $G_0 \to G$ could not be an embedding unless $G_0$ is a
trivial group, by Theorem~\ref{non-triv}.
\end{remark}
\begin{question}
  In visceral theories, we can prove other stronger forms of generic
  continuity, such as the following:
  \begin{itemize}
  \item If $f : X \to Y$ is definable, then we can partition $X$ into
    finitely many definable subsets on which $f$ is continuous
    \cite[Theorem~3.14(1)]{wj-visc-1}.
  \item If $f : X \to Y$ is definable and $X$ is non-empty, then $f$
    is continuous on a dense subset of $X$
    \cite[Theorem~4.13]{wj-visc-1}.
  \end{itemize}
  Could we prove similar results in this setting?
\end{question}
\begin{remark}
  The proof of Theorem~\ref{gencon} \emph{almost} proves the following:
  \begin{nonclaim} \label{nc}
    If $X \subseteq G$ is definable and $f : X \to H$ is definable,
    then $\{p \in X : f \text{ is discontinuous at } p\}$ has lower
    dimension than $X$.
  \end{nonclaim}
  Unfortunately, the proof fails near the end, when we need a big
  subset of $X$ to have non-empty interior (i.e., when choosing the
  element $a'$).  The analogue of Non-Claim~\ref{nc} in visceral
  theories fails \cite[Example~4.5]{wj-visc-1}, so it is probably
  too optimistic to hope for Non-Claim~\ref{nc} to hold in the current
  setting.
\end{remark}
In order to verify Assumption~\ref{B} for definable fields, we will
also need generic continuity of correspondences:
\begin{theorem}\label{gencon2}
  Let $(G,+)$ and $(H,+)$ be two definable abelian groups.  Suppose
  the canonical topology on $H$ is Hausdorff.  Let $X \subseteq G$ be
  a big definable set.  Let $f : X \rightrightarrows H$ be a definable
  $k$-correspondence.  Then
  \begin{equation*}
    \dim \{p \in X : f \text{ isn't continuous at } p \} < \dim(G).
  \end{equation*}
\end{theorem}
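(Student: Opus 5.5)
Write $B$ for the set of points of discontinuity and $B'$ for its $\tau_G$-interior. By Theorem~\ref{quasi} together with Corollary~\ref{big-inter}, it suffices to show $B'=\varnothing$. So, as in the proof of Theorem~\ref{gencon}, we assume $X$ is open, non-empty and big, and that $f$ is nowhere continuous on $X$, and we aim for a contradiction. The plan is to first reduce to single-valued functions, so that Theorem~\ref{gencon} can be applied, and only then deal with the genuinely multivalued part.

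\textbf{Step 1: choose single-valued branches.} Fix a small set $C$ over which everything is defined. Take $a\in X$ with $\dim(a/C)=d=\dim(G)$, and write $f(a)=\{b_1,\dots,b_k\}$. Since $\tau_H$ is Hausdorff, choose pairwise disjoint open neighborhoods $V_1,\dots,V_k$ of the $b_i$. We may take them of the form $V_i=b_i+N$ with $N\in\mathcal{B}^H_0$ and $(N-N)$-translates disjoint. The key step is to make the $V_i$ defined over parameters independent from $a$. Let $\beta(a)$ be the set of $N\in\mathcal{B}^H_0$ such that the sets $b_i+N$ are pairwise disjoint; this set is non-empty. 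Apply the hammer lemma (Lemma~\ref{hammer}) to the relation ``$a\bowtie N$ iff the translates $b+N$, $b\in f(a)$, are pairwise disjoint.'' This relation is downward closed and definable, so it yields a big $X_1\subseteq X$ and a single $N_1$ that works uniformly on $X_1$. Shrink $X_1$ to its interior, which is still big by Corollary~\ref{big-inter}.

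\textbf{Step 2: continuity of the branches near a generic point.} Let $S$ be the set of $x\in X_1$ such that exactly one point of $f(x)$ lies in each $b_i+N_2$, where $N_2$ satisfies $N_2-N_2\subseteq N_1$. The hope is to use the $N_1$-separation to define, on a big subset, definable single-valued selections $g_1,\dots,g_k$ with $f=\{g_1,\dots,g_k\}$. For this, take a point $a'\in X_1$, a base point $c$, and let $g_i(x)$ be the unique element of $f(x)$ lying in $c_i+N_2$.

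Alternatively, and more robustly, use the hammer to get $N_1$ separating $f(x)$ for all $x$ in a big open $X_1$. Then apply Theorem~\ref{gencon} to a single-valued definable function extracted from $f$. For instance, fix a definable linear-order-free choice: let $h(x)=\sum_{b\in f(x)}$ (indicator data) is unavailable, so instead use the function $F:X_1\to H^k/\mathrm{Sym}$. That is not a group, so instead proceed as in the proof of Theorem~\ref{gencon} directly. Pick $a\in X_1$ where $\Gamma(f\restriction X_1)$ has full local dimension. Restrict to $X''$, the set of $x$ near $a$ whose image $f(x)$ lies within $f(a)+N_2$ in the matched sense. This $X''$ is big, so it has interior by Theorem~\ref{quasi}. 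On it, each $x$ has exactly one value in each $b_i+N_2$, which gives branches $g_i$. Each $g_i$ is a definable function on a big set, so by Theorem~\ref{gencon} each $g_i$ is continuous off a non-big set. Take a common point $a''\in\ter(X'')$ where all the $g_i$ are continuous. Then $f=\{g_i\}$ near $a''$, so $f$ is continuous at $a''$, a contradiction.

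\textbf{Main obstacle.} The delicate step is showing that $X''$ (the set where $f(x)$ distributes one point into each $b_i+N_2$) is big. A graph-of-a-single-branch argument using Corollary~\ref{local-cor} only controls one coordinate. I would try applying Corollary~\ref{local-cor} to the finite-to-one image $\{(x,\bar y): \bar y \text{ enumerates } f(x)\}\subseteq G\times H^k$, using Theorem~\ref{product-top} so that $G\times\prod_i(b_i+N_2)$ is a neighborhood. Its projection to $G$ is finite-to-one, so it preserves dimension $d$. Injectivity of the selection within that neighborhood then follows from the uniform $N_1$-separation.
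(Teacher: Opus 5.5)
Your proposal is correct, but only once you strip it down to the argument in your ``Main obstacle'' paragraph. The hammer lemma (Lemma~\ref{hammer}) gives a single separating $N_1$ on a big open $X_1$. The enumerated graph $\tilde\Gamma=\{(x,y_1,\ldots,y_k): x\in X_1,\ \{y_1,\ldots,y_k\}=f(x)\}\subseteq G\times H^k$ has dimension $d$, since its fibers over $X_1$ have size $k!$. A point $(a,\bar b)$ of local dimension $d$ (Corollary~\ref{local-cor}) and the product neighborhood $G\times\prod_i(b_i+N_2)$ (Theorem~\ref{product-top}) cut out a big projection $X''$. On $X''$, the condition $N_2-N_2\subseteq N_1$ forces exactly one point of $f(x)$ into each $b_i+N_2$, and Theorem~\ref{gencon} applied to each resulting branch $g_i$ finishes.

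The paper decomposes differently. It inducts on $k$, working with the ordinary graph $\Gamma\subseteq G\times H$ and one neighborhood $b+N_0$ of a single value. It splits $f=\{g\}\cup h$ on the resulting big set and applies the induction hypothesis to the $(k-1)$-correspondence $h$. So your worry that a single-branch argument ``only controls one coordinate'' is unfounded: induction handles the remaining values. Your route instead trades the induction for one local-dimension computation in $G\times H^k$ and reduces directly to the function case. Both are equally valid.

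When you write it up, drop the unused threads: making the $V_i$ independent of a generic $a$, the set $S$, and $H^k/\mathrm{Sym}$. Also, at the end, take $a''$ in the interior of the big set of common continuity points of the $g_i$ in $X''$. Then $f$ is given by continuous functions on a whole open set, as the paper does with its $U$.
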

\begin{proof}
  Proceed by induction on $k$.  The base case $k=1$ is
  Theorem~\ref{gencon}.  Suppose $k > 1$.  As in the proof of
  Theorem~\ref{gencon}, we may assume that $f$ is nowhere continuous.

  Let $\mathcal{B}^H_0$ be a definable neighborhood basis of 0.  For
  $a \in X$ and $N \in \mathcal{B}^H_0$, let $a \bowtie N$ mean that
  $N$ strongly separates the elements of $f(a)$, in the sense that
  \begin{equation*}
    x,y \in f(a), x \ne y \implies x-N \cap y-N = \varnothing.
  \end{equation*}
  Because the topology on $H$ is Hausdorff, for any $a \in X$ there is
  some $N$ with $a \bowtie N$.  By the hammer lemma
  (Lemma~\ref{hammer}), there is a big subset $X' \subseteq X$ and an
  $N_0$ such that $a \bowtie N_0$ for any $a \in X'$.  Replacing $X'$
  with its interior, we may assume $X'$ is open in $G$.  Then $f
  \restriction X'$ is nowhere continuous.

  Let $\Gamma$ be the graph of $f \restriction X'$, i.e., $\{(x,y) \in
  X' \times H : y \in f(x)\}$.  Then $\Gamma$ has a finite-to-one
  projection onto $X'$, so $\dim(\Gamma) = \dim(X') = \dim(G) =: d$.
  Take a point $(a,b) \in \Gamma$ where the local dimension
  $\dim_{(a,b)}(\Gamma) = d$.  Then $G \times (b + N_0)$ is a
  neighborhood of $(a,b)$, so
  \begin{equation*}
    \dim((G \times (b + N_0)) \cap \Gamma) = d.
  \end{equation*}
  Let $X'' \subseteq X'$ be the projection of $(G \times (b + N_0))
  \cap Gamma$ onto the first coordinate.  As the projection is
  finite-to-one on $\Gamma$, we have $\dim(X'') = d$.  For any $x \in
  X''$, there is at least one $y$ in $f(x) \cap (b + N_0)$, by
  definition of $X''$.  If there was a second such $y'$, then
  \begin{equation*}
    y, y' \in b + N_0, \text{ so } b \in (y - N_0) \cap (y' - N_0)
  \end{equation*}
  contradicting the fact that $N_0$ strongly separates the elements of
  $f(x)$.  Therefore, for every $x \in X'$ there is a \emph{unique}
  point in $f(x) \cap (b + N_0)$.  Let $g(x)$ be this unique value.
  Then $g : X'' \to H$ is a definable function, and
  \begin{equation*}
    f(x) = \{g(x)\} \cup h(x) \text{ for } x \in X'',
  \end{equation*}
  for some definable $(k-1)$-correspondence $h : X'' \rightrightarrows
  H$.  By induction, $g$ and $h$ are continuous at almost every point
  of $X''$.  In particular, we can find a non-empty open set $U
  \subseteq X''$ such that $g$ and $h$ are continuous on $U$.  Then
  $f$ is continuous on $U$, a contradiction.
\end{proof}

\section{The case of fields} \label{elevensy}
\begin{theorem} \label{fieldtop}
  Let $(K,+,\cdot)$ be a definable field.  Let $\tau$ be the canonical
  topology on $(K,+)$.
  \begin{enumerate}
  \item $\tau$ is Hausdorff.
  \item $\tau$ is a field topology.
  \item $\tau$ restricted to $K^\times = K \setminus \{0\}$ agrees
    with the canonical topology of the multiplicative group.
  \end{enumerate}
\end{theorem}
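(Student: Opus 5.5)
The plan is to exploit the fact that for each $a \in K^\times$ the map $\mu_a(x) = a\cdot x$ is a definable automorphism of $(K,+)$. By Remark~\ref{isorem} (or Corollary~\ref{homcor} applied to $\mu_a$ and $\mu_{a^{-1}}$), each $\mu_a$ is therefore a $\tau$-homeomorphism fixing $0$. If $K$ is finite, the canonical topology is discrete and all three claims are trivial, so I will assume $K$ is infinite; then $\dim(K)>0$.

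\emph{Hausdorffness.} I expect this to be the main obstacle, since for general abelian groups it is open (Question~\ref{hq}). For fields, the plan is to use the ideal structure. Let $G_0$ be the intersection of all $\tau$-neighborhoods of $0$, which is a definable subgroup by Remark~\ref{haus-rem}. Each $\mu_a$ is a homeomorphism fixing $0$, so it permutes the neighborhoods of $0$, and hence $a \cdot G_0 = G_0$. Thus $G_0$ is an additive subgroup closed under multiplication by $K^\times$, i.e.\ an ideal of $K$. So $G_0 = 0$ or $G_0 = K$. Remark~\ref{haus-rem} shows $\dim(G_0) < \dim(K)$, which rules out $G_0 = K$. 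Hence $G_0 = 0$, and since $\tau$ is a group topology, $\tau$ is Hausdorff.

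\emph{Field topology.} For multiplication, I will apply Theorem~\ref{gencon} to $f : K\times K \to K$, $f(x,y)=xy$. The domain $K \times K$ is big in the group $K\times K$, whose canonical topology is the product topology by Theorem~\ref{product-top}. This gives some point $(a_0,b_0)$ at which $f$ is continuous. Then, exactly as in the visceral case, I will use the identity
\begin{equation*}
  xy = (x+a)(y+b) - bx - ay - ab.
\end{equation*}
The right-hand side is continuous at $(x,y) = (a_0-a,\,b_0-b)$ because $f$ is continuous at $(a_0,b_0)$, the maps $x\mapsto bx$ and $y \mapsto ay$ are continuous (they are $\mu_b,\mu_a$, or constant $0$), and addition is continuous. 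Since $(a,b)$ is arbitrary, $f$ is continuous everywhere. For inversion, note first that $K^\times$ is $\tau$-open because $\{0\}$ is closed by Hausdorffness, and $K^\times$ is big. Theorem~\ref{gencon} applied to $\iota(x)=x^{-1}$ on $K^\times$ gives a point $c$ of continuity. For any $a \in K^\times$ we can write $\iota(x) = (c/a)\cdot\iota\big((c/a)\,x\big)$. This is a composition of the homeomorphism $\mu_{c/a}$, which sends $a \mapsto c$, then $\iota$, which is continuous at $c$, then $\mu_{c/a}$ again. So $\iota$ is continuous at $a$.

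\emph{The multiplicative group.} By the previous paragraph, $\tau\restriction K^\times$ is a definable group topology on $(K^\times,\cdot)$. It is definable because $K^\times$ is a definable open set. Since $K^\times$ is $\tau$-open, a definable $D \subseteq K^\times$ has nonempty interior in the subspace topology iff it has nonempty $\tau$-interior. By Theorem~\ref{quasi}, this happens iff $\dim(D)=\dim(K)=\dim(K^\times)$. Then Lemma~\ref{characterization}, applied to the abelian group $K^\times$, identifies $\tau\restriction K^\times$ with its canonical topology.
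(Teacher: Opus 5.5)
Your proof is correct. Parts (2) and (3) follow the paper essentially verbatim. You get a point of continuity from Theorem~\ref{gencon}, using Theorem~\ref{product-top} for $K^2$. You spread it everywhere by additive translations and the scalings $\mu_b$ for multiplication, and by multiplicative translations for inversion. Then you apply Lemma~\ref{characterization} to $K^\times$.

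The real difference is in part (1).

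\begin{itemize}
\item \emph{The paper's route.} It separates points directly. Theorem~\ref{non-triv} gives a neighborhood $U \ni 0$ and some $\delta \notin U$. Taking $V$ with $V - V \subseteq U$ and $s$ with $a + s\delta = b$, the neighborhoods $a + sV$ and $b + sV$ are disjoint.
\item \emph{Your route.} You note that $G_0 = \overline{\{0\}}$ is stable under every $\mu_a$. Hence it is an ideal, and so it is $0$ or $K$.
\end{itemize}

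Both arguments run on the same mechanism. The scalings are homeomorphisms fixing $0$, and $K^\times$ acts transitively on $K \setminus \{0\}$. So separating one nonzero element from $0$ separates all of them. Your version is more conceptual: it shows why the obstruction behind Question~\ref{hq} vanishes for fields. The paper's version is fully explicit and needs no information about $G_0$.

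A small simplification is available. You do not need Remark~\ref{haus-rem} to rule out $G_0 = K$; that remark's definability claim is only sketched, and its dimension argument needs $\dim(K) > 0$. Instead, $G_0 = K$ would make $\tau$ indiscrete, contradicting Theorem~\ref{non-triv}. Also, $G_0$ is a subgroup simply because it is the closure of $\{0\}$ in a group topology. With these changes, your separate treatment of finite $K$ becomes unnecessary.
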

\begin{proof}
  First note that for any $a \in K$, the multiplication-by-$a$ map
  $\mu_a : K \to K$ is continuous by Corollary~\ref{homcor}, or more
  simply by Remark~\ref{isorem}.  Since addition is continuous, any
  affine map $f(x) = ax+b$ is continuous, and any invertible affine
  map is a homeomorphism.
  \begin{enumerate}
  \item By Theorem~\ref{non-triv}, $\tau$ is non-trivial.  Therefore
    there is a neighborhood $U \ni 0$ and element $\delta$ with
    $\delta \notin U$.  Take $V \ni 0$ a smaller neighborhood such
    that $V-V \subseteq U$.  Given any distinct $a,b \in K$ let $s$ be
    such that $a + s\delta = b$.  Then $a+sV$ and $b+sV$ are
    neighborhoods of $a$ and $b$.  We claim that they are disjoint.
    Otherwise, there are $x,y \in V$ with
    \begin{gather*}
      a + sx = b + sy \\
      s\delta = b-a = s(x-y) \\
      \delta = x-y \in V-V \subseteq U,
    \end{gather*}
    a contradiction.
  \item By Theorem~\ref{product-top}, the canonical topology on
    $(K^2,+)$ is the product topology $(K,+) \times (K,+)$.  Then
    Theorem~\ref{gencon} gives \emph{some} point $(a,b)$ where the
    multiplication map $m(x,y) = xy$ is continuous.  Given any $c,d
    \in K$, the maps
    \begin{align*}
      (x,y) &\mapsto m(x-c,y-d) \\
      (x,y) &\mapsto m(x-c,y-d) + xd + yc - cd
    \end{align*}
    are continuous at $(a+c,b+d)$, because of the continuity of
    addition, $\mu_c$, and $\mu_d$.  But the second map is merely $m$
    because
    \begin{equation*}
      m(x-c,y-d) + xd + yc - cd = (x-c)(y-d) + xd + yc - cd = xy.
    \end{equation*}
    In particular, multiplication is continuous at $(a+c,b+d)$.  As
    $c$ and $d$ were arbitrary, multiplication is everywhere
    continuous.

    Similarly, let $i(x) = x^{-1}$.  Theorem~\ref{gencon} gives
    \emph{some} point $a$ where $i$ is continuous.  Using the strategy
    of Corollary~\ref{homcor}, one sees that $i$ is everywhere
    continuous.
  \item If we restrict $\tau$ to $K^\times$, the result is a definable
    group topology, by the previous point.  If $D \subseteq K^\times$
    is definable, then $D$ is big if and only if $D$ has non-empty
    $\tau$-interior, since this was already true in the bigger
    topological space $K$.  By Lemma~\ref{characterization}, $\tau
    \restriction K^\times$ is the canonical topology on $K^\times$.
    \qedhere
  \end{enumerate}
\end{proof}
\begin{corollary} \label{target}
  If $K$ is an infinite definable field, then the canonical topology
  on $K$ is st-henselian and $K$ is large.
\end{corollary}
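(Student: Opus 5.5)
We will deduce this from the abstract criteria of Section~\ref{via-dims}: we verify Assumptions~\ref{A} and \ref{B} for $K$ with the canonical topology $\tau$ on $(K,+)$, using the t-minimal dimension theory of \cite[\S2]{wj-visc-1}. Theorem~\ref{silver-theorem} then gives st-henselianity, and Theorem~\ref{bronze-large} gives largeness, since st-henselian implies bt-henselian.

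First we check Assumption~\ref{A}. The dimension axioms (1), (4), (5), (6) are intrinsic properties of t-minimal dimension on definable subsets of $K^n \subseteq \Mm^{mn}$. For (2), $\dim(K) > 0$ because $K$ is infinite, and a dimension-$0$ definable set is finite. Axiom (3) holds for the same reason. By Theorem~\ref{fieldtop}, $\tau$ is a definable Hausdorff field topology. For axiom (7) we must show that a definable $X \subseteq K^n$ has non-empty interior in the product topology $\tau^n$ if and only if $\dim(X) = \dim(K^n)$. By Theorem~\ref{product-top}, applied inductively, $\tau^n$ is the canonical topology of the abelian group $(K^n,+)$. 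Theorem~\ref{quasi}, applied to $G = K^n$, then gives exactly this equivalence. This already yields bt-henselianity via Theorem~\ref{A-thm}, and hence largeness.

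Next we check Assumption~\ref{B}, which is needed for st-henselianity. Let $U \subseteq K^n$ be non-empty open and $f : U \rra K^m$ a definable $k$-correspondence. Again by Theorem~\ref{product-top}, $K^n$ and $K^m$ carry the canonical topologies of the groups $G = (K^n,+)$ and $H = (K^m,+)$. The topology on $H$ is Hausdorff, being a product of the Hausdorff topology $\tau$ from Theorem~\ref{fieldtop}(1). The open set $U$ is big by Theorem~\ref{quasi}. So Theorem~\ref{gencon2} applies and gives
\[
  \dim\{\ba \in U : f \text{ isn't continuous at } \ba\} < \dim(G) = \dim(K^n).
\]
This is precisely Assumption~\ref{B}.

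The main point needing care is compatibility of notions. Continuity of $f$ at a point in Theorem~\ref{gencon2} refers to the canonical topologies, and we must make sure these are the product topologies on $K^n$, $K^m$ used in the proof of Theorem~\ref{silver-theorem}; Theorem~\ref{product-top} handles this. We should also confirm that every continuity or openness claim in that proof concerns polynomial maps, which are $\tau$-continuous because $\tau$ is a field topology. With these checks in place, Theorem~\ref{silver-theorem} gives that $\tau$ is st-henselian, and Theorem~\ref{bronze-large} gives that $K$ is large.
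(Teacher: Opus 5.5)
Your proposal is correct and is essentially the paper's own proof: it verifies Assumptions~\ref{A} and \ref{B} using Theorems~\ref{fieldtop}, \ref{product-top}, \ref{quasi} and \ref{gencon2}, then concludes via Theorems~\ref{silver-theorem} and \ref{bronze-large}, and it even makes explicit the Hausdorff hypothesis of Theorem~\ref{gencon2}, which the paper leaves implicit. Two small citation nits: definability of $\tau$ comes from Theorem~\ref{definable} rather than Theorem~\ref{fieldtop}, and axiom (3) of Assumption~\ref{A} uses that finite non-empty sets have dimension $0$, which is the converse of the fact you cite for axiom (2).
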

\begin{proof}
  Assumptions~\ref{A} and \ref{B} hold:
  \begin{itemize}
  \item Most of the properties in Assumption~\ref{A} are formal
    properties of t-minimal dimension.
  \item $\dim(K) > 0$ because $K$ is infinite.
  \item The canonical topology is a field topology by
    Theorem~\ref{fieldtop}.
  \item The product topology on $K^n$ agrees with the canonical
    topology on $(K^n,+)$ (Theorem~\ref{product-top}), and so a subset
    $X \subseteq K^n$ has non-empty interior in the product topology
    if and only if it has maximum dimension (Theorem~\ref{quasi}).
  \item Definable correspondences are generically continuous by
    Theorem~\ref{gencon2}.  \qedhere
  \end{itemize}
\end{proof}
\begin{remark}
  If we just care about proving largeness, we only need
  bt-henselianity, so we only need Assumption~\ref{A}.  In this case,
  we can do without the generic continuity of correspondences, though
  we still need the generic continuity of functions to prove
  continuity of multiplication.
\end{remark}

\appendix

\section{Why we need open subsets of cells} \label{not-nice}
\begin{definition}
  A \emph{strict definable manifold} is a definable topological
  space $X$ covered by finitely many definable open subsets $U_i
  \subseteq X$, each of which is definably homeomorphic to an open
  subset of $\Mm^n$
\end{definition}
This would be analogous to the definition of ``definable manifold'' in
\cite{Peterzil-Steinhorn,admissible}.
\begin{proposition}
  In ACVF$_{0,0}$, there is a 1-dimensional definable group $G$ which
  is not a strict definable manifold with respect to any definable
  topology.
\end{proposition}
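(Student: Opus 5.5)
The plan has two steps. First, reduce the statement to a property of the canonical topology. Then construct a specific $G$ for which the canonical topology is locally like a non-split finite cover of an open subset of $K$, rather than an open subset of $K$.

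\emph{Step 1 (reduction).} I read ``any definable topology'' as ``any definable topology for which left translations are continuous''. This reading is what makes the proposition relevant to the remark explaining why finite covers are allowed in the definition of definable manifold. Let $\tau$ be such a topology and suppose $(G,\tau)$ is a strict definable manifold. Then $(G,\tau)$ is in particular a definable semimanifold, and left translations are continuous. Theorem~\ref{thm-improve} makes $(G,\tau)$ a definable manifold and a topological group, so $\tau$ is the canonical topology by Theorem~\ref{thm-unique}. It therefore suffices to exhibit a $1$-dimensional definable group $G$ whose canonical topology is not strict. Using translation invariance, this reduces further to showing that no definable open neighborhood of $1 \in G$ is definably homeomorphic to an open subset of $K$.

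\emph{Step 2 (the example).} I will take $G \subseteq K^2$ to be a curve-like group with a definable homomorphism $\pi : G \to H$ onto an open subgroup $H$ of $K^\times$ or $(K,+)$, where $\pi$ is finite-to-one (say with kernel $\{\pm 1\}$). The group $G$ should be chosen so that $\pi$ restricted to any neighborhood of the identity does not admit a definable continuous section over any ball around $\pi(1)$ in the relevant topology. By construction, $G$ with its canonical topology is an open subset of a cell: the graph of a continuous $2$-correspondence over an open subset of $K$. The point is that the cover is locally connected in the definable sense across the ball, even though it is still $2$-to-$1$. Concretely, I would take $G$ to be a $2$-to-$1$ cover of the multiplicative group $1+\mathfrak m$, or of an annulus-type subgroup, defined by a square-root-type equation, in such a way that the two branches are swapped by a definable continuous loop-like family. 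The candidate is the twisted piece built from the residue field: the relevant quotients are only interpretable, so the example must be realised as a genuinely definable subset of $K^2$.

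\emph{Step 3 (no strict chart).} Suppose $\phi : B \to V$ is a definable homeomorphism from an open $B \subseteq K$ onto an open neighborhood $V$ of $1$ in $G$. Then $\pi \circ \phi : B \to H$ is a definable continuous finite-to-one map. In ACVF$_{0,0}$ such a map is, after shrinking $B$ to a ball, analytic-like and open. Its image is then a ball $B'$, and composing $\phi$ with a local inverse of $\pi\circ\phi$ on $B'$ yields a continuous definable section of $\pi$ over $B'$, which contradicts Step 2. The tools here are generic continuity and the openness of finite-to-one definable maps between $1$-dimensional open sets in ACVF.

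\emph{Main obstacle.} The genuine difficulty is the construction in Step 2. Square roots do have continuous branches on small balls away from $0$ by Hensel's lemma. So the non-splitting cannot come from a naive \'etale double cover of an open subset of $K$. It has to come from the way the cover interacts with the valuation, so that the two sheets are glued along the value group or residue field. What I would try first is to define $G$ as an extension of an open subgroup of $(K,+)$ by $\mathbb{Z}/2$, twisted by a definable cocycle that depends on the residue of the coordinate. I would then verify the no-section property by a residue-field counting argument, using that the residue field is algebraically closed.
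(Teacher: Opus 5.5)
\textbf{There is a genuine gap: your strategy aims at a property that no definable group in ACVF$_{0,0}$ can have.} So Steps~2--3 cannot be completed for any choice of $G$.

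ACVF$_{0,0}$ is C-minimal, hence a nice t-minimal theory. By Theorem~\ref{thm-unique}, the canonical topology makes every definable group a definable manifold, and definable manifolds are locally Euclidean: near a point, the graph of a continuous correspondence is the graph of a single continuous branch. By translation invariance and Lemma~\ref{local-dim}, $\dim_1(G)=\dim(G)=1$. Hence the identity always has a definable open neighborhood definably homeomorphic to an open subset of $K$. For the elliptic curve, for instance, projection to the $x$-coordinate is such a chart near any point with $y\neq 0$, by Hensel's lemma.

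Translation invariance only spreads one chart into the infinite family $\{g\cdot V\}_{g\in G}$; it does not give a finite atlas. Finiteness of the atlas is exactly what separates ``strict definable manifold'' from ``locally Euclidean,'' so the obstruction is not local or topological at all. Separately, restricting to translation-invariant topologies proves a weaker statement than the one claimed. Step~2 also never specifies a group.

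\textbf{The missing idea is a global, definability-theoretic obstruction that ignores the topology.} The paper takes $G$ to be the elliptic curve $y^2=x^3-x$ together with its point at infinity.

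Suppose $G=\bigcup_{i=1}^n U_i$ with definable bijections $f_i : U_i\to V_i\subseteq \Mm^{m_i}$, where $V_i$ is open; then $m_i\le 1$. Fix a small model $M$ over which all the $f_i$ are defined. Take $a$ realizing the generic type of the closed unit ball over $M$, and $b$ with $b^2=a^3-a$. Then $(a,b)$ lies in some $U_i$, so it is $M$-interdefinable with a single $c\in V_i\subseteq\Mm^1$. It follows that $M(a,b)^h=M(c)^h$.

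Passing to residue fields and using Fact~\ref{somebody-knows} gives $k(\alpha,\beta)\subseteq k(\gamma)$, where $\alpha,\beta$ are the residues of $a,b$. But $k(\alpha,\beta)$ is the function field of a genus-one curve over $k$, so by L\"uroth's theorem it cannot be a subfield of $k(\gamma)$. This rules out every definable topology at once, because it only uses that finitely many $M$-definable charts cover a generic point.
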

\begin{proof}
  Let $G$ be the elliptic curve $\{(x,y) \in \Mm^2 : y^2 = x^3 - x\}
  \cup \{\infty\}$, where $\infty$ is the point at infinity.  Then
  $\dim(G) = 1$.  If $G$ can be made into a strict manifold, then $G =
  \bigcup_{i=1}^n U_i$ for some non-empty definable sets $U_i$ with
  definable bijections $f_i : U_i \to V_i$ for some definable open
  sets $V_i \subseteq \Mm^{m_i}$.  Note that
  \begin{equation*}
    m_i = \dim(V_i) = \dim(U_i) \le \dim(G) = 1 \text{ for each $i$}.
  \end{equation*}
  Let $M \prec \Mm$ be a small model over which all the maps $f_i :
  U_i \to V_i$ are definable.  Let $a \in \Mm$ be such that $\tp(a/M)$
  is the generic type of the closed unit ball, meaning that $v(a) = 0$
  and $\res(a)$ is transcendental over $M$.  Let $b$ be one of the
  square roots of $a^3 - a$.  Then $(a,b) \in G$, so $(a,b)$ is in one
  of the sets $U_i$.  Then $f_i$ shows that $(a,b)$ is interdefinable
  over $M$ with some point $c \in V_i \subseteq
  \Mm^1$: \[\dcl(Mab) = \dcl(Mc).\] If $K$ is a subfield of $\Mm$ then
  $\dcl(K)$ is the henselization $K^h$, so
  \begin{equation*}
    M(a,b)^h = M(c)^h.
  \end{equation*}
  Taking residue fields of both sides, and using the fact that
  $\res(K^h) = \res(K)$, we see that
  \begin{equation*}
    \res(M(a,b)) = \res(M(c)). \tag{$\ast$}
  \end{equation*}
  Let $k$ be the residue field of $M$, and let $\alpha, \beta \in
  \res(\Mm)$ be the residues of $a$ and $b$, respectively.  Then
  \begin{equation*}
    k(\alpha,\beta) \subseteq \res(M(a,b)) = \res(M(c)) = k(\gamma)
  \end{equation*}
  for some $\gamma \in \res(\Mm)$, by Fact~\ref{somebody-knows} below.
  Because $\alpha$ is transcendental over $k$ and $\beta^2 = \alpha^3
  - \alpha$, the field $k(\alpha,\beta)$ is the function field $k(E)$,
  where $E$ is the elliptic curve $y^2 = x^3 - x$.  The inclusion
  $k(\alpha,\beta) \subseteq k(\gamma)$ contradicts L\"uroth's
  theorem, essentially.  More precisely, the inclusion
  $k(\alpha,\beta) \subseteq k(\gamma)$ corresponds to a dominant
  rational map $\Pp^1 \to E$ of $k$-varieties, but such a map cannot
  exist because $E$ has genus 1 and $\Pp^1$ has genus 0.
\end{proof}
\begin{fact} \label{somebody-knows}
  Let $\Mm$ be a monster model of ACVF.  Let $M \prec \Mm$ be a small
  submodel and let $k$ be the residue field $\res(M) \subseteq
  \res(\Mm)$.  If $a \in \Mm^1$, then the residue field of $M(a)$ is
  either $k$ or $k(\alpha)$ for some $\alpha \in \res(\Mm)$.
\end{fact}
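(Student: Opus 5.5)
The plan is to classify the extension $M(a)/M$ by how $a$ sits relative to the balls of $M$, using that $M \models \ACF$ is algebraically closed. Every element of $M(a)$ is then a quotient $f(a)/g(a)$ with $f,g$ products of linear factors $(a-r)$, $r \in M$, times constants in $M$. It therefore suffices to understand the valuations and residues of the ratios $(a-r)/c$ for $r,c \in M$. If $a \in M$ there is nothing to prove, so assume $a \notin M$. We split into three cases.

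\emph{Case 1: $a$ is residually transcendental.} Suppose there are $b,c \in M$ with $v(a-b)=v(c)$ and $\alpha := \res((a-b)/c)$ transcendental over $k$; this is the case where $\tp(a/M)$ is the generic type of the closed ball $b+c\mathcal{O}$. Fix $r \in M$. If $v(r-b) \ge v(c)$, then $(a-r)/c$ has value $0$ and residue $\alpha - \res((r-b)/c)$, which is a nonzero element of $k[\alpha]$. If $v(r-b) < v(c)$, then $a-r = (b-r)(1+\epsilon)$ with $v(\epsilon)>0$. Consequently every element of $M(a)$ can be written as $m \cdot (1+\epsilon)\cdot P(t)/Q(t)$, where $t=(a-b)/c$, $m \in M$, $v(\epsilon)>0$, and $P,Q$ are products of linear polynomials whose residues are nonzero. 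Such an element has value $v(m)$, and when this value is $0$ its residue is $\res(m)\bar P(\alpha)/\bar Q(\alpha) \in k(\alpha)$. Hence $\res(M(a)) \subseteq k(\alpha)$. The reverse inclusion holds because $\alpha$ is the residue of an element of $M(a)$. So $\res(M(a)) = k(\alpha)$.

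\emph{Case 2: no such $b,c$ exist.} Then for each $r \in M$ one of two things happens. Either (i) there is $r' \in M$ with $v(a-r') > v(a-r)$, or (ii) $v(a-r)$ is maximal among $\{v(a-r'): r' \in M\}$. Under (ii), $\gamma := v(a-r) \notin v(M)$. Indeed, if $\gamma = v(c)$ with $c \in M$, then maximality forces $\res((a-r)/c)$ to avoid $k$, and since $k$ is algebraically closed this residue would be transcendental over $k$, placing us in Case 1.

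\emph{Computing residues in Case 2.} In situation (i), write $a-r = (r'-r)(1+\epsilon)$ with $v(\epsilon)>0$; this factor contributes a constant from $M$ times a $1$-unit. In situation (ii), any two factors $a-r$ and $a-r'$ both attaining the value $\gamma$ satisfy $v(r-r') \ge \gamma$, hence $v(r-r') > \gamma$ because $\gamma \notin v(M)$. So their ratio is a $1$-unit, and all such factors equal a single $(a-r_0)$ up to $1$-units. Thus every element of $M(a)$ has the form $m (a-r_0)^n (1+\epsilon)$ with $m \in M$, $n \in \mathbb{Z}$, and $v(\epsilon)>0$; in subcase (i) there is no $r_0$ and we take $n=0$. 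Suppose such an element has value $0$. If $n \neq 0$, then $v(m)+n\gamma=0$ puts $n\gamma \in v(M)$, and divisibility of $v(M)$ would give $\gamma \in v(M)$, a contradiction. So $n=0$, and the residue is $\res(m) \in k$. Hence $\res(M(a))=k$.

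The main obstacle is the bookkeeping that makes the case split exhaustive. In particular, one must justify that a maximal value of $v(a-r)$ lying in $v(M)$ forces a residue transcendental over $k$; this uses algebraic closedness of $k$ and the fact that a residue lying in $k$ would let us improve $r$. One must also check that each factor reduction is uniform, so that the residue computation of a product can be carried out factor by factor. An alternative route, as a fallback, is the Abhyankar inequality: since $\mathrm{trdeg}(M(a)/M)=1$, the residue extension has transcendence degree at most $1$. Case 1 would then be exactly the situation where this degree equals $1$. However, this route still requires the explicit Gauss-valuation computation above to get $k(\alpha)$ on the nose rather than a finite extension of it.
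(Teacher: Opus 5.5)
Your proof is correct, and it is essentially the argument the paper intends. The paper gives no proof beyond saying the fact comes from the classification of 1-types in ACVF; your split into residually transcendental, maximal value $v(a-r)\notin v(M)$, and no maximal value is exactly that classification, with the residue-field computation carried out explicitly in each case. Your closing worries are already settled by your own argument: the split is exhaustive by construction, and you showed that a maximal value lying in $v(M)$ forces a residue transcendental over $k$. So the Abhyankar fallback is not needed.
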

Fact~\ref{somebody-knows} is well-known, and comes from the
classification of 1-types in ACVF.

\section{The lemma on products} \label{app-B}
In this appendix, we prove Fact~\ref{prod-fact}.
\subsection{Dimension independence}
\begin{remark} 
  If $a_1, a_2$ are finite tuples and $B$ is a set of parameters, then
  \begin{equation*}
    \dim(a_1a_2/B) \le \dim(a_1/a_2B) + \dim(a_2/B) \le \dim(a_1/B) +
    \dim(a_2/B),
  \end{equation*}
  by subadditivity of dimension
  \cite[Proposition~2.31(5)]{wj-visc-1}.  More generally,
  \begin{equation*}
    \dim(a_1a_2 \cdots a_n/B) \le \sum_{i=1}^n \dim(a_i/B)
  \end{equation*}
  by induction on $n$.
\end{remark}
\begin{definition} 
  A finite sequence of tuples $(a_i : i \in I)$ is \emph{dimension
  independent} over $B$ if
  \begin{equation*}
    \dim(\ba/B) = \sum_{i \in I} \dim(a_i/B),
  \end{equation*}
  where $\ba$ is the concatenation of the tuples.
\end{definition}
\begin{lemma} \label{subtuple}
  If a finite sequence $(a_i : i \in I)$ is dimension independent over
  $B$ and if $I_0 \subseteq I$, then the subsequence $(a_i : i \in
  I_0)$ is dimension independent over $B$.
\end{lemma}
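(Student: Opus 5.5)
We prove Lemma~\ref{subtuple} by combining subadditivity with the hypothesis of dimension independence. Write $\ba_0$ for the concatenation of $(a_i : i \in I_0)$ and $\ba_1$ for the concatenation of $(a_i : i \in I \setminus I_0)$, so that $\ba$ is (up to reordering coordinates, which does not affect dimension since it is a definable bijection) the concatenation $\ba_0\ba_1$.

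First apply the preceding remark to the two-element sequence $\ba_0, \ba_1$:
\begin{equation*}
  \dim(\ba/B) \le \dim(\ba_0/B) + \dim(\ba_1/B).
\end{equation*}
Then apply the general form of the remark separately to each of $\ba_0$ and $\ba_1$:
\begin{gather*}
  \dim(\ba_0/B) \le \sum_{i \in I_0} \dim(a_i/B), \\
  \dim(\ba_1/B) \le \sum_{i \in I \setminus I_0} \dim(a_i/B).
\end{gather*}
Chaining these gives
\begin{equation*}
  \dim(\ba/B) \le \dim(\ba_0/B) + \dim(\ba_1/B) \le \sum_{i \in I} \dim(a_i/B).
\end{equation*}

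By the hypothesis of dimension independence, the two ends of this chain are equal, so every inequality in the chain must be an equality. We know $\dim(\ba_0/B) \le \sum_{i \in I_0}\dim(a_i/B)$ and $\dim(\ba_1/B) \le \sum_{i \in I\setminus I_0}\dim(a_i/B)$, and the sum of the left-hand sides equals the sum of the right-hand sides. All quantities are finite natural numbers, since the tuples are finite, so each of the two inequalities must itself be an equality. In particular $\dim(\ba_0/B) = \sum_{i \in I_0}\dim(a_i/B)$, which is the claim.

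The only care needed is bookkeeping. We must check that reordering the concatenation does not change $\dim(\cdot/B)$, which holds because $\ba$ and its permutation are interdefinable. We must also check that the degenerate case $I_0 = \varnothing$ is handled: there the empty tuple has dimension $0$, the sum is empty, and the statement is trivial.
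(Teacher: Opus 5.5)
Your proposal is correct and is essentially the paper's argument: split the concatenation into the $I_0$-part and its complement, and apply subadditivity both to the split and within each part. The paper argues by contradiction (a strict inequality on the $I_0$-part would make $\dim(\ba/B) < \sum_{i \in I}\dim(a_i/B)$), while you argue directly that equality at the two ends forces equality in each inequality of the chain, using that all the dimensions are finite; these are the same proof.
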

\begin{proof}
  Without loss of generality, $a_1,\ldots,a_n$ is dimension
  independent over $B$ and we must show that $a_1,\ldots,a_m$ is
  dimension independent, for some $m < n$.  Otherwise,
  \begin{gather*}
    \dim(a_1,\ldots,a_m/B) < \sum_{i=1}^m \dim(a_i/B) \\
    \dim(a_{m+1},\ldots,a_n/B) \le \sum_{i=m+1}^n \dim(a_i/B),
  \end{gather*}
  and so
  \begin{gather*}
    \dim(a_1,\ldots,a_n/B) \le \dim(a_1,\ldots,a_m/B) +
    \dim(a_{m+1},\ldots,a_n/B) < \sum_{i=1}^n \dim(a_i/B),
  \end{gather*}
  a contradiction.
\end{proof}
Then the following definition makes sense.
\begin{definition}
  An infinite
  sequence $(a_i : i \in I)$ is \emph{dimension independent} over $B$
  if any finite subsequence is.
\end{definition}
\begin{remark} \label{broad-indep}
  Let $a_1,\ldots,a_n$ be a sequence of tuples such that $\tp(a_i/B)$
  is broad for each $i$.  Then $\dim(a_i/B)$ is the length $|a_i|$,
  and so the following are equivalent:
  \begin{itemize}
  \item $a_1,\ldots,a_n$ is dimension independent over $B$.
  \item $\dim(a_1,\ldots,a_n/B) = \sum_{i=1}^n |a_i|$.
  \item $\dim(a_1,\ldots,a_n/B)$ equals the length of the tuple
    $(a_1,\ldots,a_n)$.
  \item $\tp(a_1,\ldots,a_n/B)$ is broad.
  \end{itemize}
\end{remark}
\begin{remark} \label{move-by-acl}
  Suppose $a_1,\ldots,a_n$ is dimension independent over $C$, and
  $a'_1,\ldots,a'_n$ is another sequence such that $\acl(Ca_i) =
  \acl(Ca'_i)$ for each $i$.  Then $a'_1,\ldots,a'_n$ is dimension
  independent over $C$.  Indeed,
  \begin{gather*}
    \dim(a'_i/C) = \dim(a_i/C) \text{ for each $i$} \\
    \dim(a'_1,\ldots,a'_n/C) = \dim(a_1,\ldots,a_n/C)
  \end{gather*}
  by \cite[Proposition~2.31(1)]{wj-visc-1}.
\end{remark}

\subsection{The cloning lemma}

\begin{lemma}
  The following are equivalent for an $n$-tuple $a \in \Mm^n$ and a
  small set of parameters $B$:
  \begin{enumerate}
  \item $\tp(a/B)$ is broad.
  \item There is a mutually $B$-indiscernible array $\{c_{i,j}\}_{1
    \le i \le n, ~ 0 \le j < \omega}$ whose first column
    $\{c_{i,0}\}_{1 \le i \le n}$ equals $a$, and the elements in each
    row are distinct.
  \end{enumerate}
\end{lemma}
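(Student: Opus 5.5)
The plan is to prove the two directions separately. Throughout, "broad" is used in the sense of \cite{wj-visc-1}: $\tp(a/B)$ is broad if every $B$-definable set containing $a$ contains a product $S_1 \times \cdots \times S_n$ of infinite sets.

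\medskip

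\noindent\emph{(2)$\implies$(1).} Suppose the array $\{c_{i,j}\}$ is given, and let $\phi(x_1,\ldots,x_n)$ be a formula over $B$ satisfied by $a = (c_{1,0},\ldots,c_{n,0})$. I will show that $\phi(c_{1,j_1},\ldots,c_{n,j_n})$ holds for every choice of indices $j_1,\ldots,j_n < \omega$.

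To see this, change one coordinate at a time. Row $1$ is indiscernible over $B$ together with all the other rows. Hence $c_{1,0}$ may be replaced by $c_{1,j_1}$ without changing the truth value of $\phi$, with the other coordinates treated as parameters. Then do the same for row $2$, and so on through row $n$.

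Consequently $\phi(\Mm)$ contains $\prod_{i=1}^n \{c_{i,j} : j<\omega\}$. Each factor is infinite because the elements in each row are distinct. So every formula in $\tp(a/B)$ is broad, i.e.\ $\tp(a/B)$ is broad.

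\medskip

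\noindent\emph{(1)$\implies$(2).} Let $p = \tp(a/B)$. The construction has three steps.

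First, I build an array $\{d_{i,j}\}_{1 \le i \le n,\ j < \omega}$ (or indexed by a long enough linear order $\lambda$) satisfying two conditions:
\begin{itemize}
\item the entries in each row are pairwise distinct;
\item every transversal $(d_{1,j_1},\ldots,d_{n,j_n})$ realizes $p$.
\end{itemize}
This is a compactness argument. A finite fragment involves finitely many formulas of $p$, whose conjunction is a single formula $\phi \in p$, and finitely many column indices, say $N$ of them. Since $p$ is broad, $\phi(\Mm) \supseteq S_1 \times \cdots \times S_n$ with each $S_i$ infinite. Choosing $N$ distinct elements from each $S_i$ satisfies the fragment.

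Second, I extract from this array a mutually $B$-indiscernible array $\{c_{i,j}\}_{i \le n,\ j<\omega}$ that is locally based on it. For this I take the rows long enough (via compactness) and apply the standard extraction lemma for mutually indiscernible arrays (iterated Ramsey/Erdős--Rado plus compactness, as in Tent--Ziegler or Chernikov's treatment of arrays). The properties to preserve are expressed by formulas that hold on the finite pieces of the original array: distinctness within rows and "every transversal satisfies $\phi$" for each $\phi \in p$. Since the extracted array is locally based on the original one, these properties transfer.

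Third, $(c_{1,0},\ldots,c_{n,0})$ realizes $p = \tp(a/B)$. So there is an automorphism $\sigma \in \Aut(\Mm/B)$ sending this column to $a$. Applying $\sigma$ to the whole array preserves mutual $B$-indiscernibility and distinctness in each row, which gives (2).

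The main obstacle is the extraction step. One must state the mutual-indiscernibility extraction carefully, so that the extracted array stays locally based on the original array row-by-row and column-wise simultaneously. This is what guarantees that transversals still realize $p$. I would cite the standard lemma rather than reprove it.
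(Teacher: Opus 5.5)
Your proof is correct and follows essentially the same route as the paper. The paper introduces the intermediate condition of an array with distinct rows all of whose transversals realize $\tp(a/B)$, with and without mutual indiscernibility, and then gets (1)$\iff$(4) from the definition of broadness, (4)$\implies$(3) by extracting a mutually indiscernible array, (3)$\implies$(2) by an automorphism over $B$, and (2)$\implies$(3) by mutual indiscernibility; your one-row-at-a-time argument for (2)$\implies$(1) simply spells out that last step.
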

\begin{proof}
  We claim that (1) and (2) are equivalent to (3) and (4):
  \begin{enumerate}
    \setcounter{enumi}{2}
  \item There is a mutually $B$-indiscernible array $\{c_{i,j}\}_{1
    \le i \le n, ~ 0 \le j < \omega}$ such that for every function
    $\eta : \{1,\ldots,n\} \to \omega$, the tuple
    $\{c_{i,\eta(i)}\}_{1 \le i \le n}$ realizes $\tp(a/B)$, and in
    each row the elements are distinct.
  \item There is an array $\{c_{i,j}\}_{1 \le i \le n, ~ 0 \le j <
    \omega}$ such that for every function $\eta : \{1,\ldots,n\} \to
    \omega$, the tuple $\{c_{i,\eta(i)}\}_{1 \le i \le n}$ realizes
    $\tp(a/B)$, and in each row the elements are distinct.
  \end{enumerate}
  The equivalence of (1) and (4) is essentially the definition of
  ``broad'', and the equivalence of (3) and (4) holds because we can
  extract mutually indiscernible arrays.  Condition (2) implies
  condition (3) because the array from (2) must satisfy the condition
  in (3) by mutual indiscernibility.  Finally, (3) implies (2) because
  given an array as in (3), we can move it by an automorphism over $B$
  to make the first column equal $a$.
\end{proof}
Say that an array $\{c_{i,j}\}$ \emph{witnesses broadness} of
$\tp(a/B)$ if it is mutually indiscernible with first column equal to
$a$.

The following is well-known:
\begin{fact} \label{acl-irrel}
  A sequence is indiscernible over a set $A$ iff it
  is indiscernible over $\acl(A)$.
\end{fact}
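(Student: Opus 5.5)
The plan is to prove the two directions separately. In both, the key point is that an $\Aut(\Mm/A)$-automorphism fixing $\acl(A)$ setwise lets us reduce formulas over $\acl(A)$ to formulas over $A$. Throughout, $I = (a_i : i \in \Nn)$ denotes the sequence, which we may take to be indexed by $\omega$ (or by any linear order; the argument is the same).

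The direction ``indiscernible over $\acl(A)$ $\Rightarrow$ indiscernible over $A$'' is trivial, since $A \subseteq \acl(A)$ and indiscernibility over a set is inherited by subsets.

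For the converse, I would use the standard Ramsey/compactness extraction argument together with an automorphism.

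First, extract from $I$ a sequence $J = (b_i)$ which is indiscernible over $\acl(A)$ and realizes the Ehrenfeucht--Mostowski type of $I$ over $\acl(A)$. Because $I$ is already $A$-indiscernible, every finite increasing tuple of $J$ has the same type over $A$ as every increasing tuple of $I$ of the same length. Hence $J \equiv_A I$ as sequences.

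Second, take $\sigma \in \Aut(\Mm/A)$ with $\sigma(J) = I$. Then $I$ is indiscernible over $\sigma(\acl(A))$. Since $\acl(A)$ is fixed setwise by $\sigma$, this set is just $\acl(A)$, and the proof is complete.

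The only delicate step is the extraction. Given a finite tuple $c \in \acl(A)$ and a formula $\phi(x_1,\ldots,x_n,c)$, we need the EM-type over $\acl(A)$ to be consistent with being indiscernible. This is the usual Ramsey argument: since $\acl(A)$ is small, compactness and Ramsey's theorem give an $\acl(A)$-indiscernible sequence whose increasing $n$-tuples satisfy every $\acl(A)$-formula true of all increasing $n$-tuples of $I$ on a tail. In particular they satisfy all $A$-formulas true of increasing $n$-tuples of $I$, which is exactly what is needed.

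I expect no real obstacle here; the fact is standard (the paper itself calls it well-known), so the write-up can be brief.
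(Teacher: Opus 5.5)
Your proof is correct and matches the paper's argument: extract an $\acl(A)$-indiscernible $J$ from $I$, note $J \equiv_A I$ because $I$ is already $A$-indiscernible, and pull back along some $\sigma \in \Aut(\Mm/A)$, which fixes $\acl(A)$ setwise. One small correction: your parenthetical ``any linear order'' should read ``any infinite linear order.'' Extraction needs $I$ to be infinite, and the fact genuinely fails for finite sequences; for example, in $\ACF_0$ the pair $(i,-i)$ is $\mathbb{Q}$-indiscernible but not $\mathbb{Q}^{\alg}$-indiscernible.
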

Indeed, if $I$ is $A$-indiscernible and $J$ is an
$\acl(A)$-indiscernible sequence of the same length, extracted from
$I$, then $I \equiv_A J$.  Taking $\sigma \in \Aut(\Mm/A)$ with
$\sigma(J)=I$, we see that $I=\sigma(J)$ is indiscernible over
$\sigma(\acl(A)) = \acl(A)$.
\begin{lemma}[Cloning lemma] \label{cloning}
  Let $a,b$ be two finite tuples which are dimension independent over
  $C$.  Then there is $b'$ such that $a,b,b'$ are dimension
  independent over $C$ and $ab' \equiv_C ab$.
\end{lemma}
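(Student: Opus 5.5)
Fix $C$ and tuples $a,b$ with $\dim(ab/C) = \dim(a/C) + \dim(b/C)$. The natural plan is to reduce to broad types via acl-bases, then use broadness of the base to produce many conjugates of $b$ over $Ca$ and pick one dimension independent from $b$.

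\emph{Reduction to broad types.} Let $b_0$ be an acl-basis of $b$ over $Ca$. Then $\dim(b/Ca) = |b_0|$ and $\tp(b_0/Ca)$ is broad. Subadditivity gives $\dim(ab/C) \le \dim(b/Ca) + \dim(a/C)$, so dimension independence forces $\dim(b/Ca) = \dim(b/C)$. Once we find $b_0'$ with $ab_0' \equiv_C ab_0$ and $a,b_0,b_0'$ dimension independent, an automorphism over $Ca$ sending $b_0$ to $b_0'$ carries $b$ to some $b'$ with $ab'\equiv_C ab$. Moreover $\acl(Cab_0') = \acl(Cab')$, $\acl(Cab_0) = \acl(Cab)$, and similarly over $C$ alone. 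The issue is that $b_0$ is only an acl-basis over $Ca$, not over $C$. To handle this, I would aim to show $\dim(ab b'/C) = \dim(a/C) + 2\dim(b/Ca)$ directly, since $\dim(b'/C)=\dim(b/C)=\dim(b/Ca)$. This reduces the problem to showing $\dim(b'/Cab) = \dim(b'/Ca)$, with the extra care that dimension is computed via acl (Proposition~2.31 of \cite{wj-visc-1}).

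\emph{Construction.} Since $\tp(b_0/Ca)$ is broad, the cloning-type characterization above gives a mutually $Ca$-indiscernible array $\{c_{i,j}\}$ whose first column is $b_0$ and whose rows consist of distinct elements. Every column $b_0^{(j)} = (c_{i,j})_i$ realizes $\tp(b_0/Ca)$. I would take $b_0' = (c_{i,1})_i$, or more flexibly $(c_{i,\eta(i)})_i$. The key claim is that $\tp(b_0, b_0'/Ca)$ is broad, i.e.\ $\dim(b_0b_0'/Ca) = 2|b_0|$. To verify it, I would exhibit broadness using the array: splitting each row into two disjoint infinite subsequences of columns yields a $2|b_0|$-row array whose generalized columns all realize $\tp(b_0b_0'/Ca)$ by mutual indiscernibility. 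This is condition~(4) in the lemma above, hence broad. Then $\dim(b_0'/Cab_0) = |b_0|$ by additivity in the broad case (Remark~\ref{broad-indep}).

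\emph{Assembling.} We have $\dim(a b_0 b_0'/C) \ge \dim(a/C) + \dim(b_0b_0'/Ca)$. Getting this lower bound is the main obstacle, since t-minimal dimension is only subadditive. I would obtain it by choosing an acl-basis $a_0$ of $a$ over $C$ and noting that $a_0b_0b_0'$ realizes a broad type over $C$. Broadness of $\tp(a_0/C)$ and of $\tp(b_0b_0'/Ca)$ combine (broad over broad is broad, via \cite[Proposition~2.31]{wj-visc-1}) to give $\dim = |a_0| + 2|b_0|$. Dimension independence of $a,b,b'$ then follows from Remark~\ref{move-by-acl} and the upper bound $\sum \dim(\cdot/C)$.
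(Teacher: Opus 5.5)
Your proposal has a genuine gap, at exactly the step you flag as the main obstacle. The principle ``broad over broad is broad'' says that if $\tp(a_0/C)$ and $\tp(e/Ca_0)$ are broad then $\tp(a_0e/C)$ is broad. This is the full-dimensional case of superadditivity, $\dim(xy/C)\ge\dim(x/C)+\dim(y/Cx)$, and it is false for t-minimal dimension. The dimension theory of \cite{wj-visc-1} gives you acl-invariance and subadditivity, not this.

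Here is a concrete failure. Let $p,q$ be single elements with $q\in\acl(Cp)\setminus\acl(C)$ but $p\notin\acl(Cq)$; this is a failure of exchange, which t-minimality does not rule out. Then $\tp(q/C)$ and $\tp(p/Cq)$ are broad. But choose a $C$-formula $\psi(y,x)$ true of $(q,p)$ whose fibers $\psi(\Mm,x)$ all have size at most $n$. It defines a narrow set, so $\tp(qp/C)$ is not broad.

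The same unavailable additivity is behind your earlier reduction to showing $\dim(b'/Cab)=\dim(b'/Ca)$. Conditional dimensions only bound $\dim(abb'/C)$ from above. The structural problem is that your array is indiscernible over $Ca$. It can certify broadness over $Ca$, but it says nothing about what happens when $a$ itself is moved, and that is what a lower bound over $C$ needs.

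The missing idea is to do the cloning inside a single array that is indiscernible over $C$ and also has rows for $a$.
\begin{enumerate}
\item Take acl-bases $\alpha,\beta$ of $a,b$ over $C$. Use bases over $C$ rather than $Ca$, so that Remark~\ref{move-by-acl} transfers the conclusion back to $a,b,b'$; this is the issue you noticed with your $b_0$.
\item By hypothesis $\tp(\alpha\beta/C)$ is broad, so take a mutually $C$-indiscernible array $\begin{pmatrix} A \\ B\end{pmatrix}$ witnessing this.
\item Extend it to $\omega+\omega$ columns, $\begin{pmatrix} A & A' \\ B & B'\end{pmatrix}$, and let $\beta'$ be the first column of $B'$.
\item Then $\begin{pmatrix} A \\ B \\ B'\end{pmatrix}$ directly witnesses broadness of $\tp(\alpha\beta\beta'/C)$, with no additivity needed.
\item Since $\begin{pmatrix} B & B'\end{pmatrix}$ is mutually indiscernible over $CA\supseteq C\alpha$, Fact~\ref{acl-irrel} gives $\beta\equiv_{Ca}\beta'$. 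Finally, move $b$ along with $\beta$ to obtain $b'$.
\end{enumerate}

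The $\omega+\omega$ extension also fixes a smaller problem in your construction. Splitting an $\omega$-indexed row into two interleaved infinite subsequences, such as even and odd columns, does not witness broadness of $\tp(b_0b_0'/Ca)$. Rows are only order-indiscernible, so a generalized column that picks a later index in the first part than in the second realizes the reversed type. One block must lie entirely to the left of the other.
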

\begin{proof}
  To simplify notation, suppose $\dim(a/C) = 3$ and $\dim(b/C) = 2$.
  Take $\alpha \in \Mm^3$ and $\beta \in \Mm^2$ to be acl-bases of $a$
  and $b$ over $C$.  Then $\alpha, \beta$ are dimension-independent
  over $C$ (Remark~\ref{move-by-acl}), so $\tp(\alpha,\beta/C)$ is
  broad (Remark~\ref{broad-indep}).  Take a $5 \times \omega$ mutually
  $C$-indiscernible array witnessing broadness of
  $\tp(\alpha\beta/C)$.  In block notation, this array looks like
  \begin{equation*}
    \begin{pmatrix}
      A \\
      B
    \end{pmatrix}
  \end{equation*}
  where $A$ is a $3 \times \omega$ array whose first column is
  $\alpha$, and $B$ is a $2 \times \omega$ array whose first column is
  $\beta$.  Extend the array on the right, to a $5 \times (\omega +
  \omega)$ mutually $C$-indiscernibe array
  \begin{equation*}
    \begin{pmatrix}
      A & A' \\
      B & B'
    \end{pmatrix}
  \end{equation*}
  where $A'$ is a $3 \times \omega$ array and $B'$ is a $2 \times
  \omega$ array.  Let $\beta'$ be the first column of $B'$.  Then
  \begin{equation*}
    \begin{pmatrix}
      A \\
      B \\
      B'
    \end{pmatrix}
  \end{equation*}
  is a mutually $C$-indiscernible array witnessing that
  $\tp(\alpha,\beta,\beta'/C)$ is broad.  In particular, the three
  tuples $\alpha, \beta, \beta'$ are dimension-independent over $C$ (Remark~\ref{broad-indep}).
  Note that the array
  \begin{equation*}
    \begin{pmatrix}
      B & B'
    \end{pmatrix}
  \end{equation*}
  is mutually indiscernible over $CA$, and in particular over
  $C\alpha$.  So its sequence of columns is indiscernible over
  $C\alpha$, hence indiscernible over $\acl(C\alpha) \supseteq Ca$ by
  Fact~\ref{acl-irrel}.  Therefore, $\beta \equiv_{Ca} \beta'$.  Take
  $b'$ such that $\beta b \equiv_{Ca} \beta' b'$.  Then $b'$ is
  interalgebraic with $\beta'$.  The fact that $\alpha, \beta, \beta'$
  is $C$-independent implies that $a, b, b'$ is $C$-independent
  (Remark~\ref{move-by-acl}).  We arranged $b \equiv_{Ca} b'$.
\end{proof}

\subsection{The lemma on multiplicities}
\begin{remark} \label{help1}
  If $a_1,a_2,a_3$ are dimension independent over $B$, then $a_1a_2$ and $a_3$ are dimension independent over $B$.  Otherwise,
  \begin{equation*}
    \dim(a_1a_2a_3/B) < \dim(a_1a_2/B) + \dim(a_3/B) \le \dim(a_1/B) + \dim(a_2/B) + \dim(a_3/B).
  \end{equation*}
\end{remark}
\begin{remark} \label{help2}
  If $a_1, a_2$ are dimension independent over $B$, then
  $\dim(a_1/Ba_2) = \dim(a_1/B)$.  Otherwise, $\dim(a_1/Ba_2) <
  \dim(a_1/B)$ and so
  \begin{equation*}
    \dim(a_1a_2/B) \le \dim(a_1/Ba_2) + \dim(a_2/B) < \dim(a_1/B) +
    \dim(a_2/B)
  \end{equation*}
  by subadditivity of dimension
  \cite[Proposition~2.31(5)]{wj-visc-1}.
\end{remark}
If $a \in \acl(B)$, let $\mult(a/B)$ denote the number of conjugates
of $a$ over $B$, as in Section~\ref{somewhere}.  Note that if $a \in
\acl(B)$ and $B \subseteq B'$, then $\mult(a/B') \le \mult(a/B)$ since
$\tp(a/B')$ has fewer realizations than $\tp(a/B)$.  By the orbit
stabilizer theorem, $\mult(a/B)$ is the index of $\Aut(\Mm/aB)$ in
$\Aut(\Mm/B)$.
\begin{lemma}[Lemma on multiplicities] \label{mult-lem}
  Let $a$ and $b$ be dimension independent over $C$.  Let $a_0$ and
  $b_0$ be $\acl$-bases of $a$ and $b$ over $C$, respectively.  Then
  there is a small set $C' \supseteq C$ such that the following hold:
  \begin{itemize}
  \item $a$ and $b$ are dimension independent over $C'$.
  \item $\dim(a/C') = \dim(a/C)$ and $\dim(b/C') = \dim(b/C)$.
  \item $a_0$ and $b_0$ are $\acl$-bases of $a$ and $b$ over $C'$.
  \item $\mult(ab/C'a_0b_0) = \mult(a/C'a_0)\mult(b/C'b_0)$.
  \end{itemize}
\end{lemma}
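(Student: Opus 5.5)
The plan is to choose $C'$ by a minimality argument and to use the cloning lemma (Lemma~\ref{cloning}) to show that a minimal choice must satisfy the multiplicativity condition, much as multiplicities were minimized in the proof of Proposition~\ref{good-prop}. Call a small set $C' \supseteq C$ \emph{admissible} if it satisfies the first three bullets. $C$ itself is admissible. Among admissible sets, choose $C'$ minimizing the positive integer
\begin{equation*}
N(C') = \mult(a/C'a_0)\cdot\mult(b/C'b_0).
\end{equation*}

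There is always an easy inequality. The group $\Aut(\Mm/C'a_0b_0)$ acts on the conjugates of $ab$, and the coordinate projection sends conjugates of $ab$ to pairs (conjugate of $a$ over $C'a_0$, conjugate of $b$ over $C'b_0$). This gives $\mult(ab/C'a_0b_0) \le \mult(a/C'a_0b_0)\mult(b/C'a_0b_0) \le N(C')$. So it suffices to rule out strict inequality at a minimal $C'$.

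Suppose $\mult(ab/C'a_0b_0) < N(C')$. Apply the cloning lemma over $C'$ to get $b^*$ with $a,b,b^*$ dimension independent over $C'$ and $ab^* \equiv_{C'} ab$. Put $C'' = C'b^*$. By Remarks~\ref{help1} and \ref{help2}:
\begin{itemize}
\item $\dim(ab/C'') = \dim(ab/C')$, $\dim(a/C'') = \dim(a/C')$ and $\dim(b/C'') = \dim(b/C')$. Hence $a,b$ remain dimension independent over $C''$.
\item $a_0, b_0$ remain acl-bases, since dimensions are unchanged and $a \in \acl(C''a_0)$ still holds.
\end{itemize}
So $C''$ is admissible. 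Also $\mult(b/C''b_0) \le \mult(b/C'b_0)$, because multiplicity can only drop over larger sets.

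The key estimate concerns $\mult(a/C''a_0)$. Its value is the number of conjugates $a'$ of $a$ over $C'a_0$ with $a'b^* \equiv_{C'} ab^*$. Transporting by an automorphism over $C'$ that sends $ab^*$ to $ab$ (which fixes $a_0$), this is the number of $a'$ with $a'b \equiv_{C'} ab$, i.e.\ the size of the fiber over $b$ of the projection of the orbit of $ab$ under $\Aut(\Mm/C')$ (working over $C'a_0$). The orbit–stabilizer theorem, applied to the projection from conjugates of $ab$ to conjugates of $b$, should give
\begin{equation*}
\mult(a/C''a_0) \le \frac{\mult(ab/C'a_0b_0)}{\mult(b/C'b_0)},
\end{equation*}
possibly after adjusting base points from $C'a_0b_0$ to $C'a_0$ or $C'b_0$. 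Then strict inequality $\mult(ab/C'a_0b_0) < N(C')$ would give $\mult(a/C''a_0) < \mult(a/C'a_0)$, so $N(C'') < N(C')$, contradicting minimality.

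The main obstacle is this counting step: making the orbit–stabilizer bookkeeping precise when the base sets differ ($C'a_0b_0$ versus $C'a_0$ versus $C'b_0$). If the direct count does not close, I would first minimize lexicographically, making $\mult(b/C'b_0)$ and $\mult(b/C'a_0b_0)$ equal by an initial cloning step on $a$. Only then would I run the argument above, so that the fiber count really is $\mult(ab/C'a_0b_0)/\mult(b/C'b_0)$.
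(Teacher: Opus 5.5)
Your overall strategy matches the paper's: minimize a multiplicity invariant over admissible $C'$, then use the cloning lemma to show that a minimizer works. Your check that $C'' = C'b^*$ is admissible is also fine. The gap is the key estimate, which is stated in the wrong direction.

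Transporting along $ab^* \mapsto ab$ correctly gives $\mult(a/C''a_0) = \mult(a/C'a_0b)$. The index chain $\Aut(\Mm/C'a_0b_0) \supseteq \Aut(\Mm/C'a_0b) \supseteq \Aut(\Mm/C'ab)$ then yields
\[
\mult(a/C'a_0b) = \frac{\mult(ab/C'a_0b_0)}{\mult(b/C'a_0b_0)} \ge \frac{\mult(ab/C'a_0b_0)}{\mult(b/C'b_0)},
\]
which is $\ge$, not $\le$. The problem is that the denominator has base $C'a_0b_0$, not $C'b_0$. Concretely, the deficit $\mult(ab/C'a_0b_0) < N(C')$ can lie entirely in $\mult(b/C'a_0b_0) < \mult(b/C'b_0)$ while $\mult(a/C'a_0b) = \mult(a/C'a_0)$. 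In that case cloning $b$ alone need not lower $N$, so your main argument does not close.

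Your fallback of also cloning $a$ is the right repair, and it is what the paper does. No lexicographic order is needed. The paper minimizes the sum $\mult(a/C'a_0)+\mult(b/C'b_0)$; your product works equally well, because each factor can only drop over a larger admissible base, so minimality forces each factor to stay fixed.

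The argument then runs as follows. At a minimizer, cloning $b$ forces $\mult(a/C'a_0) = \mult(a/C'a_0b)$, and symmetrically cloning $a$ forces $\mult(b/C'b_0) = \mult(b/C'ab_0)$. Sandwiching $\mult(a/C'a_0) \ge \mult(a/C'a_0b_0) \ge \mult(a/C'a_0b)$ gives $\mult(a/C'a_0b_0) = \mult(a/C'a_0)$. The other index chain $\Aut(\Mm/C'a_0b_0) \supseteq \Aut(\Mm/C'ab_0) \supseteq \Aut(\Mm/C'ab)$ then gives $\mult(ab/C'a_0b_0) = \mult(a/C'a_0b_0)\,\mult(b/C'ab_0) = \mult(a/C'a_0)\,\mult(b/C'b_0)$ directly. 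So no contradiction argument or preliminary inequality is needed.
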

\begin{proof}
  Let $\mathcal{F}$ be the family of sets $C' \subseteq \Mm$ such that
  $|C'| \le |C| + \aleph_0$ and $C' \supseteq C$ and $\dim(a,b/C') =
  \dim(a,b/C)$.  If $C'$ is in $\mathcal{F}$, then
  \begin{equation*}
    \dim(a,b/C) = \dim(a,b/C') \le \dim(a/C') + \dim(b/C') \le \dim(a/C)
    + \dim(b/C),
  \end{equation*}
  so the inequalities are equalities:
  \begin{gather*}
    \dim(a,b/C') = \dim(a/C') + \dim(b/C') \\
    \dim(a/C') = \dim(a/C) \text{ and } \dim(b/C') = \dim(b/C).
  \end{gather*}
  In particular, $a$ and $b$ are dimension independent over any $C'
  \in \mathcal{F}$.  Since $C' \supseteq C$, it continues to be true
  that $a_0$ and $a$ are interalgebraic over $C'$.
  Then \[\dim(a_0/C') = \dim(a/C') = \dim(a/C) = \dim(a_0/C) =
  |a_0|,\] so $a_0$ is acl-independent over $C'$, and $a_0$ is an
  acl-basis of $a$.  Similarly, $b_0$ is an acl-basis of $b$.

  Take $C' \in \mathcal{F}$ minimizing $\mult(a/C'a_0) +
  \mult(b/C'b_0)$.  For any larger $C'' \in \mathcal{F}$, we have
  \begin{equation*}
    \mult(a/C''a_0) + \mult(b/C''b_0) \le \mult(a/C'a_0) + \mult(a/C'b_0),
  \end{equation*}
  so equalities hold:
  \begin{gather*}
    \mult(a/C''a_0) = \mult(a/C'a_0) \tag{$\ast$} \\
    \mult(b/C''b_0) = \mult(b/C'b_0).
  \end{gather*}
  As noted above, $a$ and $b$ are dimension independent over $C'$.  By
  the cloning lemma (Lemma~\ref{cloning}), there is $b'$ such that
  $a,b,b'$ are dimension independent over $C'$, and $ab \equiv_{C'}
  ab'$.  By Remarks~\ref{help1} and \ref{help2}, $ab$ is dimension
  independent from $b'$ and $\dim(ab/C'b') = \dim(ab/C') =
  \dim(ab/C)$.  Then $C'b' \in \mathcal{F}$.  By ($\ast$), we have
  \begin{gather*}
    \mult(a/C'a_0) = \mult(a/C'b'a_0) = \mult(a/C'ba_0)
  \end{gather*}
  since $ab \equiv_{C'} ab'$.  From the
  inequality $\mult(a/C'a_0) \ge \mult(a/C'b_0a_0) \ge
  \mult(a/C'ba_0)$ we see
  \begin{equation*}
    \mult(a/C'a_0) = \mult(a/C'a_0b_0) = \mult(a/C'a_0b).
  \end{equation*}
  By symmetry, we also get
  \begin{equation*}
    \mult(b/C'b_0) = \mult(b/C'a_0b_0) = \mult(b/C'ab_0).
  \end{equation*}
  Finally, the multiplicity $\mult(ab/C'a_0b_0)$ can be calculated as
  the index of certain automorphism groups:
  \begin{align*}
    \mult(ab/C'a_0b_0) &= |\Aut(\Mm/C'a_0b_0) : \Aut(\Mm/C'ab)| \\
    &= |\Aut(\Mm/C'a_0b_0) : \Aut(\Mm/C'ab_0)| \cdot |\Aut(\Mm/C'ab_0) : \Aut(\Mm/C'ab)| \\
    &= \mult(a/C'a_0b_0) \cdot \mult(b/C'ab_0) \\
    &= \mult(a/C'a_0) \cdot \mult(b/C'b_0).  \qedhere
  \end{align*}
\end{proof}

\subsection{The lemma on products}
\begin{lemma} \label{open-lemma}
  If $p \in S_n(B)$ is broad, then the set of realizations $p(\Mm) =
  \{a \in \Mm^n : a \models p\}$ is open.
\end{lemma}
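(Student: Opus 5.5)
To show $p(\Mm)$ is open, I will fix a realization $a \models p$ and produce a neighborhood of $a$ contained in $p(\Mm)$. The claim is equivalent to saying that every $B$-definable set $D \ni a$ contains a neighborhood of $a$. If it can be done uniformly, I also want to find a single neighborhood that works for all such $D$ simultaneously.

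\textbf{Step 1 (each formula is a neighborhood).} Take a $B$-definable set $D \ni a$. The boundary $\bd(D)$ is $B$-definable. In a t-minimal theory, boundaries of definable subsets of $\Mm^n$ have dimension $< n$ (\cite[\S2]{wj-visc-1}: $\cl(D)\setminus D$ and $D \setminus \ter(D)$ have empty interior, hence are not broad). Because $\tp(a/B)$ is broad, $\dim(a/B) = n$, so $a \notin \bd(D)$. Since $a \in D$, this forces $a \in \ter(D)$. Hence every formula in $p$ defines a neighborhood of $a$.

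\textbf{Step 2 (intersecting all formulas).} The difficulty is that $p(\Mm) = \bigcap_{\phi \in p} \phi(\Mm)$ is an infinite intersection of neighborhoods, and such an intersection need not be a neighborhood. For each $\phi \in p$, I will use Step 1 to pick a basic open box $U_\phi \ni a$ with $U_\phi \subseteq \phi(\Mm)$. The goal is a single box inside all of the $U_\phi$. My first attempt is the cloning-array characterization of broadness together with compactness.
\begin{itemize}
\item Consider the partial type $\Sigma(y)$ in the parameters of a basic box, saying that the box $B_y$ contains $a$ and that $B_y \subseteq \phi(\Mm)$ for each $\phi \in p$.
\item Finite satisfiability of $\Sigma(y)$ follows from Step 1 applied to finite conjunctions $\phi_1 \wedge \cdots \wedge \phi_m$, which again lie in $p$.
\item Saturation of $\Mm$ (since $B$ is small) then gives a realization $y^*$, and $B_{y^*}$ is an open neighborhood of $a$ contained in $p(\Mm)$.
\end{itemize}
This works because the conditions ``$a \in B_y$'' and ``$B_y \subseteq \phi(\Mm)$'' are first-order in $y$ over $aB$, and $|p| \le |B| + |T|$ is small.

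\textbf{Step 3.} Finally, every realization $a$ of $p$ has such a neighborhood, so $p(\Mm)$ is a union of open sets and is therefore open. The main obstacle is checking Step 1 carefully, namely that boundaries of $B$-definable sets are non-broad in the t-minimal setting, which I take from \cite[Theorem~2.37]{wj-visc-1}. Step 2 is then a routine compactness argument using the monster model's saturation over the small set $aB$.
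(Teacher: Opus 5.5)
Your proposal is correct and follows the paper's proof. The paper likewise uses saturation to reduce to a single $B$-definable set $D=\bigcap_i \phi_i(\Mm)$ from $p$, and then notes that $a \in D \setminus \ter(D)$ would place $a$ in a $B$-definable set with empty interior, hence narrow, contradicting broadness; your Step 1 via $\bd(D)$ is the same point, and the paper merely phrases your finite-satisfiability step for $\Sigma(y)$ as a proof by contradiction (your aside about cloning arrays is not needed).
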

\begin{proof}
  Fix $a$ in $p(\Mm)$.  We claim that there is a basic open set $B$
  such that $a \in B$ and $B \subseteq \phi(\Mm)$ for every formula
  $\phi \in p$.  Otherwise, by saturation, there are finitely many
  formulas $\phi_1,\ldots,\phi_n \in p$ such that \emph{no} basic open
  set $B$ satisfies $a \in B \subseteq \bigcap_{i=1}^n \phi_i(\Mm)$.
  Equivalently, $a$ is not in the interior of the $B$-definable set $D
  = \bigcap_{i=1}^n \phi_i(\Mm)$.  However, $a \in D$ because $a
  \models p$ and each $\phi_i$ is from $p$.  Thus
  \begin{equation*}
    a \in D \setminus \ter(D).
  \end{equation*}
  The set on the right is $B$-definable, with empty interior, so it is
  narrow \cite[Proposition~2.7]{wj-visc-1} contradicting the fact
  that $p = \tp(a/B)$ is broad.
\end{proof}
%% Recall the notion of ``weak $k$-cells'' from
%% \cite[Definition~2.47]{wj-visc-1} or Section~\ref{somewhere}.
\begin{lemma}[Lemma on products]
  Suppose $X$ and $Y$ are non-empty definable sets and $D \subseteq X
  \times Y$ with $\dim(D) = \dim(X) + \dim(Y)$.  Then there are
  definable sets $X_0 \subseteq X$ and $Y_0 \subseteq Y$ with
  \begin{gather*}
    X_0 \times Y_0 \subseteq D \\
    \dim(X_0) = \dim(X) \text{ and } \dim(Y_0) = \dim(Y).
  \end{gather*}
\end{lemma}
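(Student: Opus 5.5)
We pick a generic point of $D$ and use the lemma on multiplicities to arrange that the fibres of the acl-basis projections split as a product; Lemma~\ref{open-lemma} then supplies the boxes. Take a small set $C$ over which $X,Y,D$ are defined, and pick $(a,b)\in D$ with $\dim(ab/C)=\dim(D)=\dim(X)+\dim(Y)$. Since $\dim(a/C)\le\dim(X)$ and $\dim(b/C)\le\dim(Y)$, subadditivity forces equality everywhere. Hence $a,b$ are dimension independent over $C$, with $\dim(a/C)=\dim(X)=:m$ and $\dim(b/C)=\dim(Y)=:n$. Choose acl-bases $a_0,b_0$ of $a,b$ over $C$, with coordinate projections $\pi,\rho$ satisfying $\pi(a)=a_0$ and $\rho(b)=b_0$. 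Apply the lemma on multiplicities (Lemma~\ref{mult-lem}) to get $C'\supseteq C$ such that all of this persists over $C'$ and
\begin{equation*}
\mult(ab/C'a_0b_0)=\mult(a/C'a_0)\,\mult(b/C'b_0).
\end{equation*}
By Remark~\ref{broad-indep}, $\tp(a_0b_0/C')$ is broad, since it has dimension $m+n=|a_0b_0|$.

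Let $k=\mult(a/C'a_0)$ and $l=\mult(b/C'b_0)$. Choose $C'$-formulas $\phi(x,x_0)\in\tp(a a_0/C')$ with $|\phi(\Mm,a_0')|\le k$ for all $a_0'$, and $\psi(y,y_0)$ similarly with bound $l$. The $kl$ conjugates of $ab$ over $C'a_0b_0$ all lie in $D$ and satisfy $\phi(x,a_0)\wedge\psi(y,b_0)$. The set $\phi(\Mm,a_0)\times\psi(\Mm,b_0)$ has size at most $kl$. Hence it equals the set of conjugates, and in particular it is contained in $D$.

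This is a first-order property of $(a_0,b_0)$ over $C'$: the formula saying $|\phi(\Mm,x_0)|=k$, $|\psi(\Mm,y_0)|=l$ and $\phi(\Mm,x_0)\times\psi(\Mm,y_0)\subseteq D$. Call the resulting $C'$-definable set $E\subseteq\Mm^{m+n}$; it contains $a_0b_0$. By Lemma~\ref{open-lemma}, the realizations of the broad type $\tp(a_0b_0/C')$ form an open set. That open set lies inside $E$, so $E$ contains a box $B_1\times B_2$ around $(a_0,b_0)$ with $B_1\subseteq\Mm^m$ and $B_2\subseteq\Mm^n$ basic open.

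Set $X_0=\{x\in X:\exists x_0\in B_1\ \phi(x,x_0)\wedge \pi(x)=x_0\}$, and define $Y_0$ analogously using $B_2$, $\psi$ and $\rho$. The constraint $\pi(x)=x_0$ is needed so that each $x$ is attached to a single base point. Then $X_0\times Y_0\subseteq D$, because each pair lies in $\phi(\Mm,x_0)\times\psi(\Mm,y_0)\subseteq D$ for some $(x_0,y_0)\in B_1\times B_2\subseteq E$.

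For the dimensions, $\pi$ maps $X_0$ finite-to-one onto $B_1$, since every $x_0\in B_1$ has a witness with $\pi(x)=x_0$. Hence $\dim(X_0)=\dim(B_1)=m$, and likewise $\dim(Y_0)=n$. The witness exists because the conjugates of $a$ over $C'a_0$ all project to $a_0$, so $E$ can also include the condition ``$\forall x\,(\phi(x,x_0)\to\pi(x)=x_0)$'' and its $\psi$ analogue.

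The main obstacle is the counting step, which needs exactly the multiplicativity of multiplicities: without Lemma~\ref{mult-lem} the fibre of $ab$ over $a_0b_0$ need not be a product. The remaining steps are bookkeeping.
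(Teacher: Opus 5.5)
Your proof is correct and follows the paper's argument: you pick a generic point of $D$ and use the lemma on multiplicities to make the conjugates of $ab$ over $C'a_0b_0$ fill out all of $\phi(\Mm,a_0)\times\psi(\Mm,b_0)$. You then get a box around $(a_0,b_0)$ from broadness and use the same sets $X_0,Y_0$.

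The only difference is cosmetic. You encode the product-splitting and projection conditions in a $C'$-definable set $E$ and take the box inside $E$, whereas the paper takes a box of realizations of $\tp(a_0b_0/C)$ and transfers these properties by automorphisms. Your version only needs $a_0b_0\in\ter(E)$, not the full strength of Lemma~\ref{open-lemma}.
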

\begin{proof}
%%   By \cite[Lemma~2.49]{wj-visc-1}, we can write $X$ and $Y$ as
%%   disjoint unions of weak cells:
%%   \begin{gather*}
%%     X = \coprod_{i=1}^n X_i \\
%%     Y = \coprod_{j=1}^m Y_j.
%%   \end{gather*}
%%   One of the intersections $D \cap (X_i \times Y_j)$ must have the
%%   same dimension as $X \times Y$.  Then
%%   \begin{gather*}
%%     \dim(D \cap (X_i \times Y_j)) = \dim(X_i \times Y_j) \\
%%     \dim(X_i) = \dim(X) \text{ and } \dim(Y_j) = \dim(Y).
%%   \end{gather*}
%%   Replacing $X$ with $X_i$, $Y$ with $Y_j$, and $D$ with $D \cap (X_i
%%   \times Y_j)$, we reduce to the case where $X$ and $Y$ are weak
%%   cells.  Let $\pi_1$ and $\pi_2$ be the coordinate projections
%%   witnessing that $X$ and $Y$ are weak cells.  Thus $X \to \pi_1(X)$
%%   is finite-to-one map from $X$ onto an open set $\pi_1(X)$, and
%%   similarly for $Y$ and $\pi_2$.
  
  Take a small set $C$ defining the sets $X,Y,D$.  Take $(a,b) \in D$
  with $\dim(a,b/C) = \dim(D) = \dim(X) + \dim(Y)$.  Then
  \begin{equation*}
    \dim(a,b/C) \le \dim(a/C) + \dim(b/C) \le \dim(X) + \dim(Y)
  \end{equation*}
  so the inequalities are equalities, implying that $a$ and $b$ are
  dimension independent over $C$ and
  \begin{gather*}
    \dim(a/C) = \dim(X) \\
    \dim(b/C) = \dim(Y).
  \end{gather*}
  Let $a_0$ and $b_0$ be
%%   Let $a_0 = \pi_1(a)$ and $b_0 = \pi_2(b)$.  Then $a_0,b_0$ are
  acl-bases of $a$ and $b$ over $C$.  % by Remark~\ref{weak-cell-basis}.
  By the lemma on multiplicities (Lemma~\ref{mult-lem}), we can
  replace $C$ with a bigger set and arrange for
  \begin{equation*}
    \mult(ab/Ca_0b_0) = \mult(a/Ca_0)\mult(b/Cb_0). \tag{$\ast$}
  \end{equation*}
  Let $\phi$ be an $\Ll(C)$-formula such that $\phi(\Mm,a_0)$ is the
  set of conjugates of $a$ over $Ca_0$.  Thus $|\phi(\Mm,a_0)| =
  \mult(a/Ca_0)$.  Similarly, let $\psi$ be an $\Ll(C)$-formula such
  that $\psi(\Mm,b_0)$ is the set of conjugates of $b$ over $Cb_0$.
  Then $|\psi(\Mm,b_0)| = \mult(b/Cb_0)$.
  \begin{claim} \label{whatever}
    If $a' \in \phi(\Mm,a_0)$ and $b' \in \psi(\Mm,b_0)$, then $a'b'
    \equiv_{Ca_0b_0} ab$.
  \end{claim}
  \begin{claimproof}
    Let $S$ be the set of conjugates of $ab$ over $Ca_0b_0$.  Clearly
    $S \subseteq \phi(\Mm,a_0) \times \psi(\Mm,b_0)$.  But ($\ast$)
    shows
    \begin{equation*}
      |S| = \mult(ab/Ca_0b_0) = \mult(a/Ca_0)\mult(b/Cb_0) =
      |\phi(\Mm,a_0)| \cdot |\psi(\Mm,b_0)|.
    \end{equation*}
    Thus $S = \phi(\Mm,a_0) \times \psi(\Mm,b_0)$.
  \end{claimproof}
  Note that $a_0$ and $b_0$ are dimension independent over $C$ because
  they are interalgebraic with $a$ and $b$ (see
  Remark~\ref{move-by-acl}), and therefore $\tp(a_0,b_0/C)$ is broad
  (see Remark~\ref{broad-indep}).  By Lemma~\ref{open-lemma}, the set
  of realizations of $\tp(a_0,b_0/C)$ is open.  Then it contains a
  product $U \times V$ where $U$ is a definable open neighborhood of
  $a_0$ and $V$ is a definable open neighborhood of $b_0$.  (The sets
  $U$ and $V$ need not be $C$-definable.)  Let $\pi_1$ and $\pi_2$ be
  the coordinate projections such that $\pi_1(a) = a_0$ and $\pi_2(b)
  = b_0$.  Let
  \begin{gather*}
    X_0 = \{a' \in X : \pi_1(a') \in U \text{ and } a' \in \phi(\Mm,\pi_1(a'))\} \\
    Y_0 = \{b' \in Y : \pi_2(b') \in V \text{ and } b' \in \psi(\Mm,\pi_2(b'))\}.
  \end{gather*}
  \begin{claim}
    The map $\pi_1 : X_0 \to U$ is surjective.
  \end{claim}
  \begin{claimproof}
    Given $\alpha \in U$, take any $\beta \in V$.  Then
    $(\alpha,\beta) \in U \times V$, so $\alpha\beta \equiv_C a_0b_0$.
    Take $a'$ such that \[a'\alpha\beta \equiv_C aa_0b_0.\] The fact
    that $\pi_1(a) = a_0$ implies that $\pi_1(a') = \alpha$.  The fact
    that $a \in X$ implies $a' \in X$ (since $X$ is $C$-definable).
    The fact that $a \in \phi(\Mm,a_0)$ implies $a' \in
    \phi(\Mm,\alpha)$ (since $\phi$ is a formula over $C$).  Then $a'
    \in X_0$.
  \end{claimproof}
  \begin{claim}
    The map $\pi_1 : X_0 \to U$ has finite fibers.
  \end{claim}
  \begin{proof}
    If $\alpha \in U$, then as in the proof of the previous claim,
    $\alpha \equiv_C a_0$, and then $\phi(\Mm,\alpha)$ is finite
    because $\phi(\Mm,a_0)$ is.  The fiber of $\pi_1 : X_0 \to U$ over
    $\alpha$ is the set $\{a' \in X : a' \in \phi(\Mm,\alpha)\}$,
    which is finite.
  \end{proof}
  It follows that \[\dim(X_0) \ge \dim(U) = |a_0| = \dim(a/C) =
  \dim(X).\] Therefore $\dim(X_0) = \dim(X)$, and similarly $\dim(Y_0)
  = \dim(Y)$.

  It remains to show that $X_0 \times Y_0 \subseteq D$.  Take $(a',b')
  \in X_0 \times Y_0$.  Let $a'_0 = \pi_1(a')$ and $b'_0 = \pi_2(b')$.
  By definition of $X_0$ and $Y_0$, we have
  \begin{gather*}
    (a'_0,b'_0) \in U \times V \\
    a' \in \phi(\Mm,a'_0) \text{ and } b' \in \psi(\Mm,b'_0).
  \end{gather*}
  By choice of $U \times V$, $a'_0b'_0 \equiv_C a_0b_0$.  Take $\sigma
  \in \Aut(\Mm/C)$ with $\sigma(a'_0b'_0) = a_0b_0$.  Then
  \begin{gather*}
    \sigma(a') \in \phi(\Mm,\sigma(a'_0)) = \phi(\Mm,a_0) \\
    \sigma(b') \in \psi(\Mm,\sigma(b'_0)) = \psi(\Mm,b_0).
  \end{gather*}
  By Claim~\ref{whatever}, $a'b' \equiv_C \sigma(a'b') \equiv_C ab$
  and so $(a',b') \in D$ because $(a,b) \in D$.  This proves that $X_0
  \times Y_0 \subseteq D$.
\end{proof}

\begin{acknowledgment}
  The author was supported by the National Natural Science Foundation
  of China (Grants No.\@ 12101131 and W2532009) as well as the Ministry of Education
  of China (Grant No.\@ 22JJD110002).
\end{acknowledgment}

\bibliographystyle{plain} \bibliography{mybib}{}

\begin{thebibliography}{10}

\bibitem{acosta-hasson}
Juan~Pablo Acosta~L{\'{o}}pez and Assaf Hasson.
\newblock On groups and fields definable in 1-h-minimal fields.
\newblock {\em J. Inst. Math. Jussieu}, 24(1):203--248, 2025.

\bibitem{castle-hasson}
Benjamin Castle and Assaf Hasson.
\newblock Topologically 1-based t-minimal structures.
\newblock {arXiv:2508.18558v1 [math.LO]}, August 2025.

\bibitem{hensquot2}
Philip Dittmann, Erik Walsberg, and Jinhe Ye.
\newblock When is the {\'e}tale open topology a field topology?
\newblock {\em Israel Journal of Mathematics}, 2025.

\bibitem{visceral}
Alfred Dolich and John Goodrick.
\newblock Tame topology over definable uniform structures.
\newblock {\em Notre Dame J. Formal Logic}, 63(1):51--79, 2022.

\bibitem{admissible}
Will Johnson.
\newblock Topologizing interpretable groups in $p$-adically closed fields.
\newblock {\em Notre Dame J. Formal Logic}, 64(4):571--609, November 2023.

\bibitem{wj-visc-1}
Will Johnson.
\newblock Visceral theories without assumptions.
\newblock {arXiv:2404.11453v1 [math.LO]}, April 2024.

\bibitem{large-gt}
Will Johnson.
\newblock Largeness and generalized t-henselianity.
\newblock {arXiv:2508.15362v1 [math.LO]}, August 2025.

\bibitem{tops-rings}
Will Johnson.
\newblock Translating between {NIP} integral domains and topological fields.
\newblock {\em J. Math. Logic}, page Online ready, 2026.

\bibitem{CXF}
Will Johnson and Jinhe Ye.
\newblock Curve-excluding fields.
\newblock {\em J. Eur. Math. Soc.}, page Online first, 2025.

\bibitem{mathews}
Larry Mathews.
\newblock Cell decomposition and dimension functions in first-order topological
  structures.
\newblock {\em Proc. London Math. Soc.}, 70(3):1--32, 1995.

\bibitem{red-book}
David Mumford.
\newblock {\em The Red Book of Varieties and Schemes}, volume 1358 of {\em
  Lecture Notes in Mathematics}.
\newblock Springer Berlin / Heidelberg, 2004.

\bibitem{Peterzil-Steinhorn}
Y.~Peterzil and C.~Steinhorn.
\newblock Definable compactness and definable subgroups of o-minimal groups.
\newblock {\em Journal of the London Mathematical Society}, 59(3):769--786,
  1999.

\bibitem{Pop-little}
Florian Pop.
\newblock Little survey on large fields - old {\&} new.
\newblock In {\em Valuation Theory in Interaction}, pages 432--463. European
  Mathematical Society Publishing House, 2014.

\bibitem{prestel-ziegler}
Alexander Prestel and Martin Ziegler.
\newblock Model theoretic methods in the theory of topological fields.
\newblock {\em Journal f\"ur die reine und angewandte Mathematik},
  299-300:318--341, 1978.

\bibitem{simonWalsberg}
Pierre Simon and Erik Walsberg.
\newblock Tame topology over dp-minimal structures.
\newblock {\em Notre Dame J. Formal Logic}, 60(1):61--76, 2019.

\end{thebibliography}
\end{document}